\newcommand{\mf}{\mathfrak}
\newcommand{\mc}{\mathcal}
\newcommand{\C}{\mathbb{C}}
\newcommand{\Q}{\mathbb{Q}}
\newcommand{\im}{\operatorname{im}}
\newcommand{\R}{\mathbb{R}}
\newcommand{\Z}{\mathbb{Z}}
\newcommand{\PP}{\mathbb{P}}
\newcommand{\Hom}{\operatorname{Hom}}
\newcommand{\End}{\operatorname{End}}
\newcommand{\Ext}{\operatorname{Ext}}
\DeclareMathOperator{\ext}{ext}
\newcommand{\supp}{\operatorname{supp}}
\newcommand{\on}{\operatorname}
\newcommand{\Id}{\operatorname{Id}}
\newcommand{\ad}{\operatorname{ad}}
\newcommand{\Ad}{\operatorname{Ad}}
\newcommand{\Spec}{\operatorname{Spec}}
\newcommand{\Aut}{\operatorname{Aut}}
\newcommand{\Heis}{\mc{H}eis}
\newcommand{\bbP}{\mathbb P}
\newcommand{\csth}{{\C^\ast_\hbar}}
\newcommand{\glas}{\mf{gl}(\mf a^*)}
\newcommand{\Serre}{\mathbb{S}}
\newcommand{\sslash}{\mathbin{/\mkern-6mu/}}
\renewcommand{\tilde}{\widetilde}
\newcommand{\GL}{\operatorname{GL}}
\newcommand{\rk}{\operatorname{rk}}
\newcommand{\msc}{\mathscr}
\newcommand{\Kos}{\on{Kos}}
\newcommand{\mrm}{\mathrm}
\newcommand{\pt}{\mrm{pt}}
\DeclareMathOperator{\Rep}{Rep}
\DeclareMathOperator{\NS}{NS}
\DeclareMathOperator{\SL}{SL}
\DeclareMathOperator{\Stab}{Stab}
\DeclareMathOperator{\Coh}{Coh}
\DeclareMathOperator{\Pos}{Pos}
\DeclareMathOperator*{\Qcoh}{Qcoh}
\DeclareMathOperator{\gr}{gr}
\DeclareMathOperator{\Pic}{Pic}
\DeclareMathOperator{\Sym}{Sym}
\DeclareMathOperator{\A}{\mathbb{A}}
\DeclareMathOperator{\shbm}{shbimod}
\DeclareMathOperator{\Tors}{Tors}
\DeclareMathOperator{\tors}{tors}
\DeclareMathOperator{\Jac}{Jac}
\newcommand{\Ind}{\on{Ind}}
\newcommand{\Res}{\on{Res}}
\newcommand{\perf}{\on{perf}}
\titleformat{\subsubsection}[runin]{\bfseries}{\thesubsubsection}{0.5em}{}
\newtheorem{thm}{Theorem}[section]
\newtheorem{cor}[thm]{Corollary}
\newtheorem{prop}[thm]{Proposition}
\newtheorem{lem}[thm]{Lemma}
\newtheorem{conj}[thm]{Conjecture}
\theoremstyle{definition}
\newtheorem{defn}[thm]{Definition}
\newtheorem{rmk}[thm]{Remark}
\let\c@equation\c@thm
\numberwithin{equation}{section}
\title{\vspace{-2cm} Toroidal analogues of the Grothendieck-Springer map }
\author{Samuel DeHority}
\begin{document}
\maketitle
\begin{abstract}
We study moduli spaces of objects in the derived category of noncommutative ruled surfaces over orbifold curves to find equivariant deformations of moduli spaces of framed sheaves on equivariant elliptic surfaces. These derived categories are a common generalization of the category of modules of the preprojective algebra and of the category of twisted D-modules over a curve. We show that the birational geometry of the Hilbert scheme of points on an equivariant elliptic surface is controlled by an elliptic root system, and conjecture in general and prove in special cases that moduli spaces of objects in these derived categories provides a deformation of birational models for the Hilbert scheme. We also study the applications of this deformation to the geometric representation theory of the central fiber, specifically to produce geometric correspondences giving rise to Lie algebra actions on the equivariant cohomology of the central fiber and give formulas for the action of the Namikawa-Markman Weyl group of monodromy reflections in terms of this Lie algebra. 
\end{abstract}

\tableofcontents

\section{Introduction} 
The study of hyperk\"ahler geometry roughly trifurcates into the study of resolutions of conical symplectic singularities, the study of algebraic integrable systems, and the study of compact hyperk\"ahler manifolds. There are methodological differences between the branches of the field and nonetheless there is a confluence among the major results in each branch. 

The present work is a continuation of \cite{dehority_toroidal_vosa} and uses methods from all three branches (but  mostly the conical and compact cases and not the intermediate branch in which it properly lies) to study a class of examples of hyperk\"ahler manifolds 
$ X_R^{[n]}$ of Hilbert schemes associated to a series of elliptic surfaces $X_R$ projective over $\A^1$ with singular fiber encoded by the root system 
\begin{equation}\label{eq:exc_series} R \in \{ A_{-1}, A_0, A_1, A_2, D_4, E_6, E_7, E_8\} .  \end{equation} 
The surfaces $X_R$ all admit an action of an algebraic torus $T = \C^\ast_{\hbar}$ which also acts on their compactification $\overline{X_R}$ which is $E\times \PP^1$ for $R = A_{-1}$ and is a rational elliptic surface otherwise. 

Given a chern character $v$ and a framing sheaf $\mc F \in \Coh(D_\infty)$ where $D_\infty = \overline{X_R} \backslash X_R$ which is equal to $\mc E|_{D_\infty}$ for some object in $D^b_{Coh}(\overline X_R)$ obtained as $\mc E = \Phi(\mc O_{\overline{X_R}})$ where $\Phi$ is a relative Fourier-Mukai transform, let $M^{tf}(v; \mc F)$ denote the moduli stack of $\mc F$-framed torsion-free sheaves on $\overline{X_R}$ whose $S$-points consist of pairs $(\mc E_S, \phi_S)$ of a torsion-free sheaf $\mc E_S$ on $S\times \overline{X_R}$ flat over $S$ whose restriction to each $s\times \overline{X_R}$ is torsion-free, equipped with a framing $\phi: \mc E_S|_{S\times D_\infty} \simeq \mc F\boxtimes \mc O_S$. When $\mc F = \mc O_{D_\infty}$ we have $M^{tf}(v; \mc F)$ is represented by $X_R^{[n(v)]}$ for some $n(v) \in \Z$. 

\subsection{Toroidal algebra action}

Let $\mf g_{R}$ denote the Lie algebra associated to the surface $X_R$ in \cite{dehority_toroidal_vosa}. When $R\neq A_{-1}$ this is a toroidal extended affine Lie algebra and when $R = A_{-1}$ it is the algebra $\mf g_{T^*E}$ generated by symbols $w_{\gamma}^{a,b}$ for $(a,b) \in \Z^2\backslash \{(0,0)\}$  and for $\gamma \in \{ \pt, \sigma_+, \sigma_- , E\}$ which is a basis for $H^*(E)$ and by central elements $\pmb{c}_s, \pmb{c}_t$ satisfying the relation 
\begin{equation}\label{eq:g_TE_brack} [w^{a,b}_{\gamma}, w^{c,d}_{\eta} ] = -(ad-bc) w^{a+c, b+d}_{\gamma \star \eta} +\delta_{a+c, 0}\delta_{b+d,0}(a\pmb c_s + b\pmb c_t).\end{equation}

Let $V_{T^*E}$ denote the vector space 

\[ V_{T^*E} = \bigoplus_{m,n} H^*_T(M^{tf}((1,0,0) + m[E]  +n [\pt]; \mc O_{D_\infty}) .\] 

In \cite{dehority_toroidal_vosa} the author constructed an action 
\begin{equation}
  \rho: \mf g_{T^*E} \to \End(V_{T^*E})
\end{equation}
using algebraic and vertex operator theoretic methods and one of the main theorems of this paper, Theorem \ref{thm:main_repn}, is a geometric interpretation of $\rho(w^{a,b}_\gamma)$.  Specifically we produce cycles 
\[ Z_{\alpha, \gamma}\subset \bigsqcup_{m,n} M^{tf}((v+ m[E] + n[\pt]; \mc O_{D_\infty}))\times M^{tf}((v+ m[E] + n[\pt] + \alpha; \mc O_{D_\infty}))\] 
for $\alpha = a[E] + b[\pt]$ and $\gamma \in H^*(E)$ which satisfy the relations \eqref{eq:g_TE_brack} under convolution. 

In the other elliptic cases let $\mf h^{ell} \simeq \mf h \oplus \Q \delta_E^\vee \oplus \Q\delta_\pt^\vee$ denote the Cartan subalgebra of an elliptic or toroidal Lie algebra where $\mf h$ is the Cartan subalgebra of the finite dimensional semisimple Lie algebra. More generally, if $\mf g_{KM}$ is a Kac-Moody algebra associated to a star-shaped Dynkin diagram with cartan subalgebra $\mf h_{KM}$ let $\widehat{\mf h_{KM}} = \mf h_{KM} \oplus \Q \delta^\vee$ denote its Cartan subalgebra. We will be concerned with holomorphic symplectic moduli spaces whose positive weight deformation spaces are naturally isomorphic to $\mf h\otimes \C$ for $\mf h = \mf h^{ell}$ or $\mf h = \widehat{\mf h_{KM}}$ .

\subsection{Moduli spaces for noncommutative surfaces} 

Let $Q$ be the quiver with one node and one self loop and let 
\begin{equation}
  \label{eq:quiver}
  M_{\theta, \zeta}(v,w) = \mu_{\C}^{-1}(\zeta)^{\theta -ss}\sslash G(v) 
\end{equation}
denote the Nakajima quiver variety with complex moment map parameter $\zeta$. 

When $\zeta = 0$ and $w = 1$ this is also the moduli space of ideal sheaves $\mc I_Y$ of length $v$ sheaves in $\A^2$ if $\theta >0$ and the moduli space of derived duals of ideal sheaves $\mathbb{D}(\mc I_Y)$ if $\theta  < 0$. When $\zeta \neq 0$ then $M_{\theta, \zeta}(v,w)$ for any $\theta$ is isomorphic to the moduli space of right ideals in the Weyl algebra $\C[x, \partial_x]$ with fixed discrete data labelled by $v$ \cite{Berest_Wilson_2002}. 

Inspired by this our goal is to give a deformation of $X_R^{[n]}$ using particular families of noncommutative surfaces. General statements that noncommutative deformations of Poisson surfaces give rise to commutative deformations of their Hilbert schemes of points are known in \cite{rains19birational_arxiv,Matviichuk_Pym_Schedler_2022}.

Our main conjecture concerned relative moduli spaces of objects in a family of dg-categories $\msc D$ over $S$ thought of as noncommutative surfaces, where each $\mc E \in \msc D$ admits a restriction $\mc E|_{D_\infty}$ to a commutative curve in the noncommutative surface.

Then the moduli spaces 
$M_{\underline{\sigma}}(v; \mc F)$
 have $\C$-points consisting of pairs $(\mc E, \phi)$ of a $\underline{\sigma}$-stable object $\mc E$ of chern character $v$ in $\msc D$ equipped with an isomorphism
  $\phi: \mc E|_{D_\infty}\xrightarrow{\sim} \mc F$.
   Let $\Stab(\msc D_R)$ denote the space of relative stability conditions on the homotopy category of $\msc D_R$. Recall that a quasiprojective variety is semiprojective \cite{Hausel_Villegas_2013} if it admits a $\C^\ast$-action with proper fixed points and all limits at one infinity of $\C^\ast$. For $\mc F = \mc O_{D_\infty}$ we conjecture the following: 

\begin{conj}\label{conj:main_conj_geo}
  \begin{itemize} \item[1)]
    For each $R$ in \eqref{eq:exc_series} there is a family $\msc D_R/\mf h^{ell}$ of dg-categories and a space $U\subset \Stab(\msc D_R)$ so that for $\underline{\sigma} \in U $ the relative moduli spaces $ M_{\underline{\sigma}}(v; \mc F)/\mf h^{ell}$ are semiprojective and quasiprojective over $\mf h^{ell}$.
    \item[2)] The fibers over zero $ M_{\underline{\sigma}_0}(v; \mc F)$ exhaust K-trivial projective relative birational models of $X_R^{[n]}/\A^n$. 
    \item[3)]There is a 1-1 correspondence between walls in $N^1(M_{\underline{\sigma}_0}(v; \mc F)/\A^n)_\R$ of singular birational models, walls in $\mf h^{ell}$ where the wall contraction is not biregular, and a subset of roots of $R^{ell}$. 
  \end{itemize}
\end{conj}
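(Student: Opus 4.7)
The plan is to realize $\msc D_R$ as the derived category of modules over a sheaf of noncommutative algebras on an orbifold base curve, extending the picture of \cite{Berest_Wilson_2002} where $R = A_{-1}$ corresponds to the Weyl algebra and its twisted variants. The parameters in $\mf h^{ell}$ should encode both the Poisson deformation of $\overline{X_R}$ detected by $H^0(\overline{X_R}, \wedge^2 T_{\overline{X_R}})$ and a twisting parameter along the base familiar from twisted D-modules, together spanning the positive weight first-order noncommutative deformations.

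For part (1), representability of the framed moduli functor would follow from a To\"en--Vaqui\'e style representability statement for moduli of objects in a saturated dg-category, combined with openness of $\underline{\sigma}$-stability on the base. Semiprojectivity would come from extending the $T = \csth$ action on $X_R$ to a $T$-action on $\msc D_R$ that fixes $\mc F = \mc O_{D_\infty}$; properness of $T$-fixed loci and existence of attracting limits then reduces to a cohomological finiteness statement for framed objects together with the semiprojectivity of $X_R^{[n]}$ at the origin of $\mf h^{ell}$.

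For part (2), I would establish a Bayer--Macr\`{\i} style correspondence between a chamber decomposition of $U \subset \Stab(\msc D_R)$ and K-trivial relative birational models of $X_R^{[n]}/\A^n$. One distinguished chamber is identified with $X_R^{[n]}$ itself by sending the noncommutative parameters to zero and taking $\underline{\sigma}_0$ to be Gieseker stability. Crossing a wall then corresponds to a relative Mukai flop or divisorial contraction, and exhaustiveness follows by matching the resulting chamber structure on $U$, via its projection to $\mf h^{ell}$, with the movable cone decomposition computed in \cite{dehority_toroidal_vosa}.

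For part (3), I would match three enumerations: walls in $N^1(M_{\underline{\sigma}_0}(v;\mc F)/\A^n)_\R$ where the birational model becomes singular; walls in $\mf h^{ell}$ where the deformation fails to give a biregular wall contraction; and a subset of roots of $R^{ell}$. The first two are matched by the deformation picture of (2), while the second and third follow by identifying $\mf h^{ell}$ with the Cartan of the elliptic Lie algebra and recognizing walls as root hyperplanes, detected by the central charge vanishing on a minimal class. The main obstacle is part (2): surjecting onto the movable cone requires either a global Torelli-type statement in the noncompact $T$-equivariant holomorphic symplectic setting or a case-by-case analysis exhibiting each K-trivial birational model as some $M_{\underline{\sigma}}$, and I expect this to be tractable in the low-rank entries of the series \eqref{eq:exc_series} while a fully uniform proof remains out of reach.
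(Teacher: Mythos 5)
Your overall architecture matches the paper's: the categories $\msc D_R$ are indeed the noncommutative $\PP^1$-bundles $D^b_{Coh}(\PP_{[E/\Gamma]_S}(\Xi))$ over $S = HH_0([E/\Gamma]) \simeq \mf h^{ell}$, the moduli spaces are cut out by relative Bridgeland stability conditions in the sense of Bayer--Lahoz--Macr\`{\i}--Nuer--Perry--Stellari on top of To\"en's moduli of objects, and parts 2) and 3) are run through a Bayer--Macr\`{\i} wall-and-chamber analysis. Note, however, that the statement is a conjecture which the paper only proves for $R = A_{-1}$ (with partial results for the other types), so the relevant comparison is with that proof.

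Two of the paper's essential devices are absent from your outline. First, for part 2) you correctly identify surjectivity onto all K-trivial birational models as the main obstacle, and propose either a Torelli-type statement or a case-by-case analysis; the paper instead sidesteps Torelli entirely by working \emph{relative to the base of the Lagrangian fibration} $\A^n$. This collapses the relevant piece of $N^1$ to a half-space $N^1(T^*E^{[n]}/\A^n)_{>0}$ of rank two, and surjectivity of the Bayer--Macr\`{\i} map onto it is then read off from the explicit numerical formula \eqref{eq:bm_exactly} for $\ell_{\underline\sigma}$ together with Proposition \ref{prop:all_walls_are_GU}, which forces every relevant wall to be of Gieseker--Uhlenbeck type $\phi(v) = \phi((0,rE,s))$. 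Your appeal to ``the movable cone decomposition computed in \cite{dehority_toroidal_vosa}'' is not how the paper proceeds; the relative cone structure is computed here directly (the horosphere picture of \S\ref{ssec:horosphere}). Second, for part 1) the hard point is quasiprojectivity \emph{over} $\mf h^{ell}$, not representability: the paper proves it (Theorem \ref{thm:main_quasiprojectivity}) by showing $L_{\underline\sigma}$ is ample on the central fiber via the identification with Gieseker/Uhlenbeck moduli, and then propagating acyclicity and basepoint-freeness to all fibers over the affine base using semicontinuity together with $\csth$-equivariance of the bad locus. Your proposal does not address this step, and your semiprojectivity argument via $T$-fixed loci is replaced in the paper by an explicit analysis of $\csth$-orbit closures at the level of Koszul data. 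Finally, for part 3) the walls in $\mf h^{ell}$ are identified in the paper not by central charge vanishing but by the splitting loci $S_\beta$ of \eqref{eq:sbeta} (Proposition \ref{prop:roots_are_hyperplanes}), and the biregularity statement away from $S_\beta$ rests on the observation that a destabilizing subobject of class $(0,rE,s)$ with trivial restriction to $D_\infty$ can only exist over $S_\beta$; this concrete mechanism is what ties the three enumerations together.
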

The bijection in 3) must be compatible with the identification between the elliptic root system and torsion classes in $K_{num}(X_R)$, see \cite{Burban_Schiffmann_2013} an \cite{Rota_2022} for the cases $A_{-1}, D_4, E_6, E_7, E_8$.  Part 2) of this result is heavily inspired by \cite{Bayer_Macrì_2014_MMP} and its extensions to other projective surfaces in \cite{Kopper_2021,Arcara_Bertram_Lieblich_2013} although simplified by restricting to birational geometry relative to the base of the Lagrangian fibration. 

\begin{thm}
Conjecture \ref{conj:main_conj_geo} is true for $R= A_{-1}$. 
\end{thm}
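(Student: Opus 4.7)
For $R = A_{-1}$ we have $\overline{X_R} = E\times\PP^1$, so the noncommutative surfaces involved will be noncommutative deformations of the trivial $\PP^1$-bundle over $E$. The Cartan $\mf h^{ell}\simeq \Q\delta_E^\vee\oplus\Q\delta_\pt^\vee$ is two-dimensional, matching the expected two-parameter family of Van den Bergh type noncommutative deformations of $E\times\PP^1$: one direction twists the fiber structure by a sheaf of twisted differential operators on $E$ with parameter determined by $\delta_E^\vee$, and the other twists the ruling by a line bundle with parameter determined by $\delta_\pt^\vee$. I would define $\msc D_{A_{-1}}$ as the perfect derived category of a family of sheaves $\mc A_\eta$ of $\mc O_E$-bimodule algebras produced by this construction, with the restriction functor to $D_\infty = E\times\{\infty\}$ coming from the chosen commutative divisor at infinity.

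Part 1) of the conjecture would then follow by applying GIT in families to the moduli problem of $\mc O_{D_\infty}$-framed torsion-free $\mc A_\eta$-modules with given discrete invariants: representability and quasi-projectivity over $\mf h^{ell}$ are standard for such moduli of framed modules over flat families of noncommutative algebras, and semiprojectivity comes from the $\C^\ast_\hbar$-action descending from the $\PP^1$-factor, whose fixed locus is indexed by combinatorial data (tuples of partitions recording torus-fixed modules) and whose flow has proper limits at one end. For generic $\eta$ the fibers are smooth and affine; this is the elliptic analog of the Berest-Wilson identification of Calogero-Moser spaces with moduli of right ideals in the Weyl algebra.

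For part 2), I would follow the Bayer-Macr\`i MMP philosophy, which is substantially simplified here by working relative to the Lagrangian base $\Sym^n(\A^1)$: the relative movable cone is finite-dimensional and controlled by Namikawa-Markman monodromy reflections, and every K-trivial relative birational model is a moduli of Bridgeland-stable objects in $D^b(\overline{X_R})$. One translates these Bridgeland chambers into chambers in $U\subset \Stab(\msc D_{A_{-1}})$ using the family of Fourier-Mukai transforms $\Phi$ from the introduction. For part 3), note that for $R=A_{-1}$ the elliptic root system consists purely of imaginary roots $a\delta_E + b\delta_\pt$, and walls in $\mf h^{ell}$ where the contraction fails to be biregular are exactly the hyperplanes orthogonal to these roots; matching these with walls in $N^1$ and with torsion classes in $K_{num}(X_R)$ follows the Burban-Schiffmann classification for the elliptic case.

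The hard part will be the exhaustion statement in part 2): showing that every K-trivial projective relative birational model of $(T^*E)^{[n]}/\Sym^n(\A^1)$ is actually realized by some $\underline{\sigma}$, not just that each $\underline{\sigma}$ gives a model. The forward direction follows from standard wall-crossing, but the backwards direction requires knowing that the constructed stability conditions cover every chamber in the movable cone. My approach would be to use the toroidal algebra action from \cite{dehority_toroidal_vosa} to generate enough birational autoequivalences of $V_{T^*E}$ coming from $\mf g_{T^*E}$-Weyl group elements, and then match these with the reflections generating the Namikawa-Markman Weyl group of monodromy reflections in the relative setting.
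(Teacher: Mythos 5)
Your overall architecture (noncommutative $\PP^1$-bundles over $E$ parametrized by the two-dimensional space of bimodule extensions, relative Bayer--Macr\`i MMP over the Lagrangian base, root hyperplanes in $\mf h^{ell}$ for part 3) matches the paper's, but two of your load-bearing steps are gaps rather than routine verifications.

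First, part 1) is not obtained in the paper by ``GIT in families for framed modules over a flat family of noncommutative algebras,'' and this is not standard: the paper's own GIT-style presentation of the moduli problem (the chart $[\mu^{-1}(\zeta)/G]$ built from Quot schemes and an affine moment-map-like bundle $Z$) is explicitly left as a conjecture. What the paper actually proves is the existence of \emph{relative Bridgeland stability conditions} on $D^b_{Coh}(\PP_{E_S}(\Xi))$ over $S=HH_0(E)$ (Theorem \ref{thm:stability_family}, which itself requires semistable reduction \`a la Langton for $\Z$-algebras, a Bogomolov inequality transported from the central fiber by $\csth$-scaling and properness, and openness of stability), then invokes the good-moduli-space machinery of Bayer--Lahoz--Macr\`i--Nuer--Perry--Stellari and the positivity of the determinant line bundle $L_{\underline\sigma}$. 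Quasiprojectivity over $S$ (Theorem \ref{thm:main_quasiprojectivity}) is then deduced by showing $L_{\underline\sigma}$ is ample on the central fiber (via derived equivalences to Gieseker/Uhlenbeck moduli) and propagating acyclicity and basepoint-freeness to all of $S$ using semicontinuity together with $\csth$-invariance of the bad locus; semiprojectivity uses relative properness and an orbit-closure analysis at the level of Koszul data. None of this is replaced by citing Berest--Wilson as an analogy, and your claim that representability and quasiprojectivity are ``standard'' here skips precisely the content of Sections 5.3--5.6 of the paper.

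Second, you correctly isolate the exhaustion statement in part 2) as the hard point, but your proposed fix --- generating enough birational models from Weyl-group elements of the $\mf g_{T^*E}$-action --- inverts the paper's logical order and risks circularity: the monodromy representation (Theorem \ref{thm:monodromy}) and the geometric realization of the algebra action are \emph{consequences} of the deformation and birational results, not inputs to them. The paper instead proves surjectivity of the Bayer--Macr\`i map $U \to N^1(T^*E^{[n]}/\A^n)_{>0}$ directly from the explicit formula \eqref{eq:bm_exactly} for $\ell_{\underline\sigma}$ paired against the two curve classes $C_1, C_2$ of \eqref{eq:n1_basis_real2}, and then checks that every class in the half-space is realized by a model still carrying a fibration to $\A^n$ (by extending the base coordinates $s_i$ across the flops via the analytic embedding into a K3 Hilbert scheme). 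Theorem \ref{thm:def_equiv} then converts this surjectivity into the exhaustion of $K$-trivial relative models. If you want a self-contained argument you need this explicit cone computation, or an equivalent one; the algebra action will not supply it.
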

Part 1) is Theorem \ref{thm:main_quasiprojectivity} and 2) and 3) are Theorem \ref{thm:vwalls_roots} and Theorem \ref{thm:def_equiv} which also shows the result for any stable framing sheaf $\mc F$ of coprime rank $r> 0$ and degree $d$.  

We also establish partial results for the other types. The following is Theorem \ref{thm:vwalls_roots} and Theorem \ref{thm:other_types_birational} below. 

\begin{thm}
Moduli spaces of framed Bridgeland stable objects on $\overline{X_R}$ exhaust K-trivial projective relative birational models for $X_R^{[n]}/\A^n$. The walls are in bijection with a set of roots of $R^{ell}$. 
\end{thm}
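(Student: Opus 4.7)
The plan is to adapt the Bridgeland-stability MMP of Bayer-Macrì to the relative framed setting over the base $\A^n$ of the Lagrangian fibration $X_R^{[n]} \to \A^n$. First I would locate, inside a distinguished connected component of the space $\Stab$ of framed relative stability conditions on $D^b(\overline{X_R})$, a ``Gieseker chamber'' whose moduli space of $\mc O_{D_\infty}$-framed stable objects with chern character $v$ recovers $M^{tf}(v; \mc O_{D_\infty}) \simeq X_R^{[n(v)]}$. Semiprojectivity and quasiprojectivity of $M_{\underline{\sigma}}(v; \mc O_{D_\infty})/\A^n$ for $\underline{\sigma}$ in this component would follow from building the Bayer-Macrì determinantal line bundle relatively over the base of the integrable system and checking its relative ampleness, extending Theorem \ref{thm:main_quasiprojectivity} beyond the $A_{-1}$ case.

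Next I would establish that wall-crossing in this component produces only K-trivial birational models over $\A^n$. Because the central-fiber model $X_R^{[n]}$ is holomorphic symplectic with trivial relative canonical class, and because the contractions in Bridgeland MMP for holomorphic symplectic moduli spaces are either small or flopping in codimension at least two, each wall-crossed $M_{\underline{\sigma}}(v; \mc O_{D_\infty})/\A^n$ is again holomorphic symplectic and projective over $\A^n$. For the exhaustion half of the statement, the key tool is the relative Bayer-Macrì morphism from the closure of the Gieseker chamber into $\Nef(X_R^{[n]}/\A^n)_\R$: following \cite{Bayer_Macrì_2014_MMP} and its surface extensions in \cite{Kopper_2021,Arcara_Bertram_Lieblich_2013}, I would show the image covers the relative movable cone, so that every K-trivial projective relative birational model of $X_R^{[n]}/\A^n$ is realized as $M_{\underline{\sigma}}(v; \mc O_{D_\infty})$ for some $\underline{\sigma}$.

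For the wall/root bijection, each wall in the Gieseker component is cut out by a destabilizing numerical class $v' \in K_{num}(\overline{X_R})$ satisfying a Mukai-pairing condition against $v$. After quotienting by classes that act trivially on the moduli space, the destabilizing classes lie in a finite sublattice, and I would match the resulting wall equations against the identification of torsion classes in $K_{num}(X_R)$ with roots of $R^{ell}$ from \cite{Burban_Schiffmann_2013, Rota_2022}. This gives an injection from walls to a subset of roots; the reverse direction amounts to checking that each such root, when translated back into a Mukai-pairing equation, actually cuts out a codimension-one stratum of $\Stab$ along which a genuine destabilization occurs.

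The hardest step is expected to be the exhaustion claim: for Hilbert schemes of points on K3 surfaces the Bayer-Macrì map is known to surject onto the movable cone, but establishing the analogue in the relative framed setting for each $R$ requires showing that every extremal ray of the relative movable cone over $\A^n$ is reached by a stability wall, and hence producing sufficiently ``extreme'' Bridgeland stability conditions on the framed category on $\overline{X_R}$. Once this is in place, the compatibility with the elliptic root system is essentially a combinatorial comparison using the already available identifications of torsion classes with roots.
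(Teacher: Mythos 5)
Your proposal follows the same Bayer--Macr\`i template as the paper, but it closes the exhaustion step differently, and the difference matters. The paper never engages with the relative movable cone or its extremal rays: it restricts to a subspace $\Stab'(\overline{X_R})_v$ of geometric stability conditions in the ``Friedman chamber'' ($H^2 \gg 0$, phases of $(0,E,0)$ and of the root curves $(0,C_\alpha,k)$ kept above $\phi(v)$), observes that the Bayer--Macr\`i map surjects onto the \emph{entire} open half-space $N^1(X_R^{[n]}/\A^n)_{>0}$ of classes pairing positively with a curve in a generic fiber of the Lagrangian fibration, and then notes that the ample class of any K-trivial projective model over $\A^n$ necessarily lies in that half-space; the ample-to-ample birational map is then an isomorphism (the argument of Theorem~\ref{thm:def_equiv}). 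So the ``hardest step'' you identify --- reaching every extremal ray of the relative movable cone --- is precisely what the relative formulation is designed to avoid: relative to $\A^n$ the target cone is just a half-space, and surjectivity is read off from the explicit formula for $\ell_{\underline{\sigma}}$. Your route, modelled on the absolute K3 case, would require a genuinely harder cone computation that the paper does not perform. A second, smaller divergence: the paper puts stability conditions on $D^b_{Coh}(\overline{X_R})$ itself and recovers the framed moduli space as the fixed-determinant locus where the restriction to $D_\infty$ is $\mc F$ (using Lemma~\ref{lem:all_framable_are_stable} to identify framable sheaves with stable ones), rather than building a framed stability manifold.

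On the wall/root bijection your matching via the Burban--Schiffmann/Rota identification of torsion classes with roots is exactly the paper's mechanism, but two points need care. First, the destabilizing classes do not lie in a finite sublattice --- $R^{ell}$ is infinite and the walls are only locally finite; what the paper actually proves is that the phase conditions defining $\Stab'$ force any destabilizer to have rank $0$ and $c_1$ in $\Z\langle E, C_\alpha\rangle$, so the walls inject into $\{w_{\beta,v} \mid \beta \in R^{ell}\}$, cut down by the Mukai-pairing bound $\langle\beta,v\rangle \le \langle v,v\rangle/2$ from \cite[\S 10]{Bayer_Macrì_2014_MMP}. Second, you are right to flag the reverse inclusion (that each admissible root genuinely produces a destabilization); the paper's stated theorem only claims bijection with ``a set of roots,'' consistent with this subset description, so your proposed extra verification is not needed for the statement as written.
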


The dg-categories in Conjecture \ref{conj:main_conj_geo} are certain noncommutative analogues of smooth projective surfaces. For us, following \cite{ben2008perverse} it will be convenient to use a particular model of noncommutative surfaces introduced in \cite{vdBergh_2012} called noncommutative $\PP^1$ bundles. Given a smooth projective curve curve $C$ and a finite group $\Gamma \subset \Aut(C)$ let 
\[ HH_0([C/\Gamma]) = \Ext^1_{[C/\Gamma]\times [C/\Gamma]}(\Delta_*\mc T_{[C/\Gamma]}, \mc O_{\Delta})\] 
denote its $0$th Hochschild homology parametrizing bimodule extensions. We view this as an affine base scheme $S$. In the elliptic orbifold cases we find that we get dimensions from the following table: 
\begin{table}[h!]
  \centering
  \begin{tabular}{|l||lllll|}
  \hline
  $\Gamma$ &  $\Z/1\Z$ & $\Z/2\Z$ & $\Z/3\Z$ & $\Z/4\Z$ & $\Z/6\Z$ \\ \hline 
  $hh_0([E/\Gamma])$ & 2 & 6 & 8 & 9 & 10 \\ \hline
  \end{tabular}
  \end{table}

which is indicative of an isomorphism $HH_0([E/\Gamma]) \simeq \mf h^{ell}$. In the other cases, at least when $C/\Gamma \simeq \PP^1$ we have an identification 
\[ HH_0([C/\Gamma]) \simeq \widehat{\mf h_{KM}}\] 
where $\mf g_{KM}$ is the Kac-Moody algebra with star-shaped Dynkin diagram given by the parabolic structure associated to the curve, making the context of this paper a surface analogue of \cite{Burban_Schiffmann_2013}. 

In section \ref{sec:ncs} we introduce a family of dg-enhanced triangulated categories 
\[ D^b_{Coh}(\PP_{[C/\Gamma]_S}(\Xi))\] 
over $S$ where $\Xi$ is the universal bimodule extension and study the main properties of these dg-categories.  

In \cite{ben2008perverse,Ben-Zvi_Nevins_2007} moduli stacks of objects in fibers $D^b_{Coh}(\PP_{[C/\Gamma]}(\Xi_s))$ over closed points $s \in S$ in the case $\Gamma = \{ 1 \}$ were studied as a generalization of \eqref{eq:quiver}. We expect that in the orbifold cases \[ D^b_{Coh}(\PP_{[C/\Gamma]_S}(\Xi))\]  for $C = E$ are dg-categories which would satisfy Conjecture \ref{conj:main_conj_geo}. 

The base change of $D^b_{Coh}(\PP_{[C/\Gamma]}(\Xi))$ to closed points $s$ is simultaneously a generalization of the category of modules of the deformed preprojective algebra (see Proposition \ref{prop:preprojective_equivalence}) and the theory of twisted $D$-modules because when $\Gamma = \{ 1 \}$ these bimodules correspond to those quantizing the cotangent bundle to twisted differential operators $D(\lambda)$ for $\lambda \in \Pic(C) \otimes \C$ \cite{Beilinson_Bernstein_1993} so that as bimodules $\Xi_s \simeq D^{\le 1}(\lambda)$ the ring of twisted differential operators on $C$ of order $\le 1$. 

 We may thus think informally of the level 1 hyperplane $HH_{0}([C/\Gamma])_1 \subset HH_0([C/\Gamma])$ as consisting of bimodule extensions corresponding to $\Xi_s = D^{\le 1}(\lambda)$ for $\lambda \in \Pic(X_R)\otimes \C$. 

\subsection{Monodromy}

The other result in this paper is a description of the monodromy operators induced by the deformations in Conjecture \ref{conj:main_conj_geo}. In the conical \cite{Namikawa_2008,Namikawa_2010} and compact hyperk\"ahler cases \cite{Markman_2008,Markman_2011,Markman_2013} there is a reflection group $W$ and an representation 
\[ W \to \End(H^*(X)) \] 
induced by deformations $\mc X, \mc X'$ of $X$ over a curve $C$ and an isomorphism $\mc X_{C\backslash \{0\}} \simeq \mc X'_{C\backslash \{0\}}$.

In our case we have an action 
\begin{equation}\label{eq:iw_action_intro} IW^{ell}_{R} \to \GL(HH_0([E/\Gamma]))\end{equation}
were $IW^{ell}_R$ is a group containing the elliptic Weyl group, and in the $A_{-1}$ (Theorem \ref{thm:derived_reflection}) and conjecturally in the other cases (Conjecture \ref{conj:reflection}) these give rise to families of derived equivalences of $\PP_{[E/\Gamma]}(\Xi)$ covering \eqref{eq:iw_action_intro}. A subgroup of this group is the reflection group acting by monodromy on $H^*_T(M(v; \mc F)$. In particular using the algebra \eqref{eq:g_TE_brack} we give a formula for the generators of the monodromy action in Theorem \ref{thm:monodromy} which in the Nakajima basis for $H^*_T(T^*E^{[n]})$ takes the form
\begin{thm}\label{thm:intro_monodromy}
  The action of $\SL(2, \Z) \times \Z \times \Z/2\Z$ by monodromy on  $H^*_T(T^*E^{[n]})$ acts by 
\begin{equation}\label{eq:monodromy_intro}
  \prod_i \alpha_{-k_i}(\gamma_i) |\rangle   \mapsto   \prod_i(\pm 1)^{k_i + 1 }  w^{N, -Nk_i}_{g\cdot \gamma_i} |\rangle .
 \end{equation}
\end{thm}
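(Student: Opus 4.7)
The strategy is to realize each generator of $\SL(2,\Z)\times \Z \times \Z/2\Z$ as a derived autoequivalence in the family $D^b_{Coh}(\PP_{[E]}(\Xi))/\mf h^{ell}$, use the induced isomorphism on moduli of framed objects to compute the monodromy via parallel transport, and then read off the formula in the Nakajima basis via the geometric realization of the toroidal algebra from Theorem \ref{thm:main_repn}.

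First, I would invoke Theorem \ref{thm:derived_reflection} to lift each element $g \in \SL(2,\Z) \times \Z \times \Z/2\Z$ to a derived equivalence $\Phi_g$ of $\PP_{[E]}(\Xi)$ over a Zariski open $U \subset \mf h^{ell}$ covering the linear action of $g$ on $HH_0([E]) \simeq \mf h^{ell}$. Concretely, $\SL(2,\Z)$ is implemented by the relative Fourier-Mukai transform along $E$ composed with line-bundle twists, the $\Z/2\Z$-factor by pullback along $[-1] \colon E \to E$, and the $\Z$-factor by a shift of the framing data. Since $\Phi_g$ preserves the framing along $D_\infty$ up to a controlled twist, it descends to an isomorphism of moduli stacks $M^{tf}(v;\mc O_{D_\infty}) \xrightarrow{\sim} M^{tf}(g \cdot v;\mc O_{D_\infty})$ over $U$; the monodromy on $H^*_T(T^*E^{[n]})$ is then the specialization of this isomorphism composed with parallel transport from the generic to the central fiber.

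Next, using the realization in Theorem \ref{thm:main_repn} of $\rho(w^{a,b}_\gamma)$ as convolution with a cycle $Z_{\alpha, \gamma}$ for $\alpha = a[E] + b[\pt]$, I would show that $\Phi_g \times \Phi_g$ transports $Z_{\alpha, \gamma}$ to a cycle realizing $\rho(w^{g \cdot (a,b)}_{g \cdot \gamma})$, up to a sign. In particular the Nakajima operators $\alpha_{-k_i}(\gamma_i)$, which coincide with convolution against $Z_{k_i[\pt], \gamma_i}$, are conjugated by $\Phi_g$ into convolution with cycles of class $g \cdot (0,k_i)$, which in the coordinate convention of the theorem takes the form $(N, -Nk_i)$. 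Combined with the $\Phi_g$-invariance of the vacuum $|\rangle$ (which follows from the essentially unique framed rank-$0$ object in $\PP_{[E]}(\Xi)$), this yields the formula \eqref{eq:monodromy_intro}.

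The main obstacle will be pinning down the sign $(\pm 1)^{k_i + 1}$. Each application of the Fourier-Mukai kernel to a class supported at $[\pt]$ accumulates a sign coming from Serre duality on $E$, producing a factor of $(-1)^{k_i}$ per creation operator, and a global sign from the comparison of correspondence orientations (or from the $\Z/2\Z$-generator when present) supplies the additional $(-1)$. Verifying this requires careful bookkeeping against the precise definition of $Z_{\alpha,\gamma}$ and the normalization of the Fourier-Mukai kernel used to realize $\Phi_g$ on the vacuum line.
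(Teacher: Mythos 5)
There is a genuine gap, and it sits exactly where you flag it: the claim that conjugating $Z_{\alpha,\gamma}$ by $\Phi_g\times\Phi_g$ yields the cycle realizing $w^{g\cdot(a,b)}_{g\cdot\gamma}$ \emph{with the stated sign} is the entire content of the theorem, and your proposal asserts it rather than proving it. Your proposed mechanism for the sign (Serre duality applied to the Fourier--Mukai kernel, contributing $(-1)^{k_i}$ per creation operator) is not where the sign comes from: the factor $(\pm1)^{k_i+1}$ is the classical symmetric-function involution given by the monodromy of the Hilbert--Chow contraction, which the paper does not derive from kernels at all but simply cites (Okounkov--Pandharipande, Oberdieck). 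You also misidentify which geometric operation realizes which factor of the group. The $\Z/2\Z$ is the reflection through the Hilbert--Chow wall (the Gieseker--Uhlenbeck/derived-dual involution), not pullback along $[-1]\colon E\to E$; and the $\SL(2,\Z)$ acting on the labels $\gamma_i$ is the mapping class group of $E$, realized by parallel transport as $E$ varies in a family of elliptic curves --- it is not the autoequivalence group of a fixed $E$ acting by relative Fourier--Mukai transforms (that group acts on the slopes $(a,b)$, i.e.\ it is responsible for the $\Z$-factor sending $\alpha_{-k}$ to $w^{N,-Nk}$, which you instead attribute to ``a shift of the framing data'').

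The paper's argument for the slope-changing generator $s$ is also structurally different from yours and avoids the cycle-level computation entirely: since the monodromy $\rho_M$ intertwines the slope-$\infty$ Heisenberg subalgebra (the Nakajima operators $w^{0,n}_\gamma$) with the slope-$1$ Heisenberg subalgebra (the $w^{1,-k}_\gamma$) inside $\mf g_{T^*E}$, and each of these subalgebras generates the weight space $H^*_T(T^*E^{[n]})$ from a vacuum $e^{kE}|\rangle$, the intertwiner is forced to be \eqref{eq:rhos}. If you want to pursue your route, you would need to (i) prove the compatibility of the lifted equivalences with the specialization construction defining $Z_{\alpha,\gamma}$ (the paper only establishes the analogous statement \eqref{eq:z_alpha_is_flop} for wall-crossing flops, via Proposition \ref{prop:iso_cohomology_semiprojective}), and (ii) independently compute the sign, which your Serre-duality heuristic does not do.
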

In \eqref{eq:monodromy_intro}, $g \in \SL(2, \Z)$ acts by the mapping class group of $E$ on the cohomology of $T^*E$. 

Because this action contains a subgroup of an elliptic (or toroidal) Weyl group induced by the monodromy of equivariant deformations of holomorphic symplectic varieties, it serves as a toroidal analogue of Slodowy's approach to Springer theory. The real-root analogue of Theorem \ref{thm:intro_monodromy} which would follow from a number of the conjectures outlined in this paper give an monodromy interpretation of the Global Springer Theory \cite{Yun_2011}. 
\subsubsection{Acknowledgements}

A great number of insights from others were crucial in the development of this work, but in particular this project owes much to conversations with E. Rains, F. Sala, O. Schiffmann, and A. Okounkov. 

\section{Isotrivial Lagrangian fibrations}
Let $M$ be a quasiprojective holomorphic symplectic variety. 
A Lagrangian fibration is a proper map $\ell: M \to B$ whose generic fiber is a smooth Lagrangian subvariety. 
An isotrivial Lagrangian fibration $\ell:M \to B$ is one where two general fibers $\pi^{-1}(b_1)$ and $\pi^{-1}(b_2)$ are isomorphic. 

Let $\csth$ space $\A^d$ with positive weights. 

\begin{defn}
  An \emph{conical isotrivial Lagrangian fibration} of $X$ is an equivariant projective map $\pi: X \to \A^d$ which is an isotrivial Lagrangian fibration such that $\csth$ scales $\omega$ with weight $\ell > 0$. 
  \end{defn}

We will be especially interested in control over singular fibers of these varieties provided by singularities compatible with the fibration. 

  \begin{defn}
    An \emph{intermediate symplectic singularity} of $\pi: Y\to \A^d$ is symplectic variety $X^{2d}$ with a factorization $Y \xrightarrow{\pi_s} X \xrightarrow{\pi_X} \A^d$ of $\pi = \pi_X \circ \pi_s$ where $\pi_X$ is an equivariant projective birational map and $\pi_s$ is a conical Lagrangian fibration. 
    \end{defn}

\subsection{Hitchin fibrations}
A robust source of Lagrangian fibrations which require distinct fibers to be isomorphic is the Hitchin fibration 
\[ \pi_H: M_{Hit,C,G,r,d} \to B\] 
from a moduli space of stable Higgs bundles $ M_{Hit,C,G,r,d}$ of rank $r$ and degree $d$ on a projective curve $C$ to the Hitchin base $B$. There is a $\csth$ action scaling the Higgs field and acting on $B$ which forces $\pi_H^{-1}(\hbar x)\simeq \pi_H^{-1}(x)$ for $x\in B$.   
\subsubsection{Abelian variety}
We are interested in deforming the complex structure on holomorphic symplectic varieties admitting Lagrangian fibrations. The simplest case of a trivial equivariant Lagrangian fibration, which serves as a product factor in many more general cases. When the abelian variety is the Jacobian of a curve we have an identification of $T^*A$ with  the Higgs moduli space for $G = \GL(1)$ for any coprime rank and degree 
\[M_{Hit,C,G,r,d} \simeq T^*\Jac(C) \] 
and the Hitchin map is the projection onto the second factor of $T^*\Jac(C) \simeq \Jac(C) \times \A^{2g}$. In this case the deformation is essentially a case of the non-abelian Hodge correspondence \cite{Simpson_1994,simpson1996hodge} in the abelian case. The construction is a deformation of the one in \cite{rothstein1996sheaves} and the related construction in \cite{laumon96}. 

Let $\pi: T^*A \to \A^g$ be a trivial Lagrangian fibration where $A$ is a $g$-dimensional projective abelian variety. This map is equivariant with respect to $\csth$ which scales the cotangent direction with weight $\hbar$. 
We have canonical identifications $\mf a := \on{Lie}(A) = H^1(A, \mc O_A)$ and $T^*A = A \times \mf a^\ast$. 
Then we have $\Omega^1(\mf a^\ast) = \mf a$, $\Omega^1(T^*A) = \Omega^1(A) \oplus \Omega^1(\mf a^\ast) = \mf a^\ast \oplus \mf a $ and $\Omega^k(T^*A) = \oplus_{p + q = k} \wedge^p \mf a^\ast \otimes \wedge^q \mf a$. 

Consider the $g^2$-dimensional vector space, seen as an affine space, $\glas = \Ext^1(\mc O_A, \Omega^1_A)$ which is scaled uniformly with weight $\hbar$. Let $p: A\times\glas\to A, \mu: A\times \glas\to \glas $ be the projections. Let $\Xi \in \Coh_{\csth}(A \times \glas)$ be the universal extension, fitting into an exact sequence 
\[0 \to p^* \Omega^1_{A} \to \Xi \to  \mc O_{A\times \glas} \to 0 \] 
which induces an equivariant structure on $\Xi$. Then the $1$-dimensional quotient induces a canonical divisor $D_\infty \subset \PP(\Xi)$. Let $\mc X:= \PP(\Xi)\backslash D_\infty$ be the complement of the divisor which is the total space of an affine bundle over $A \times \glas$ whose fiber $\mu^{-1}(0) = X_0$ over 
$0 \in \glas$ is $T^*A$. The fiber over $\Id \in \glas$ is the main object considered in \cite{rothstein1996sheaves}. Let $\widehat{\glas}$ be the completion of $\glas$ at zero, and $\widehat{\mc X} = \mc X\times_{\glas}\widehat{\glas}$ the completion of the family. 

For a very general point $b$ of $\mf {gl}(\mf a^*)$ the fiber $\msc X_b := \mu^{-1}(b)$ is biholomorphic to $\C^{\ast \times 2g}$ but there are a potentially countable infinite number of hyperplanes $S_i \subset \mf{gl}(\mf a^*)$ where the complex structure of $\msc X_b$ differs when $b \in S_i$. In particular, $\msc X_b$ may contain Abelian subvarieties. The quantum geometric Langlands conjectures \cite{Gaitsgory_2016} in the abelian case are related to claims about how the Fourier-Mukai duality on the central fiber extends to dualities like those studied in \cite{laumon96,rothstein1996sheaves} over the family $\msc X$ or subfamilies thereof. 

\subsection{Surfaces}

The surfaces $X_R$ for $R$ from \eqref{eq:exc_series}
 have descriptions as moduli spaces of Higgs bundles, so that their 
 deformations are governed by the nonabelian Hodge correspondence. They form a subset of the H3 surfaces described in \cite{Boalch_2018}. 

In particular for any coprime $r,d$ we have 
\[ X_{A_{-1}} \simeq T^*\Jac(E) \simeq M_{Hit, E, \GL(1), r,d }\] while in the other cases the surface arises as a moduli space of meromorphic Higgs bundles for $G = \GL(r, \C)$ on $\PP^1$ with poles at specified points and specified behavior of the poles. In the $D_4$ and type $E$ cases, $X_{R}$ is a moduli space of parabolic higgs bundles with regular singularities at the branch points of the quotient map $E\to E/\Gamma$. The $A_0, A_1, A_2$ have poles of higher order. Specifically, $X_{A_2}$ is a moduli space of meromorphic Higgs bundles on $\PP^1$ with poles of order $3$ at $0$ and $1$ at $\infty$, while $X_{A_1}$ is a moduli space of Higgs bundles with a single pole of order $4$ at $0$, while $X_{A_0}$ has a single pole of order $4$ with nilpotent leading term. 

\subsubsection{Higher rank}

In the $D_4$ and type $E$ cases let $M_{R, n}$ denote a moduli space of parabolic Higgs bundles $(\mc E, \Phi)$ on a marked curve $(\PP^1, x_1,\ldots, x_k)$ with regular singularities and letting $A_i$ be the residue of $\Phi$ at $x_i$. In the $A_{-1}$ case we do the same with the curve $(E,0)$.  The residues $A_i$ are required to be compatible with flags at the marked points of dimensions given by the partitions in Table \ref{tbl:pole_orders}. 

\begin{table}[h!]
  \centering
  \begin{tabular}{|l|l|l|}
  \hline
  $R$ & $\rk$ & Pole structure  \\ \hline
  $A_{-1}$ & $n$ & $[n,1]$ \\
  $D_4$ & $2n$ & $[2n,n], [2n,n],[2n,n], [2n,n,1]$  \\
  $E_6$ & $3n$ &   $[3n,2n,n],[3n,2n,n], [3n,2n,n,1]$  \\
   $E_7$& $4n$ &   $[4n,2n], [4n,3n,2n,n],[4n,3n,2n,n,1]$  \\
   $E_8$ & $6n$ &  $[6n,3n],[6n,4n,2n],[6n,5n,4n,3n,2n,n,1]$  \\ \hline
  \end{tabular}
  \caption{\label{tbl:pole_orders} Table of pole structure of parabolic Higgs bundles in higher rank tame cases.}
  \end{table}

\begin{thm}\cite{Groechenig_2014}
For $R \in \{A_{-1}, D_4, E_6, E_7, E_8\}$ there is an isomorphism 
\[ M_{R,n} \simeq X_R^{[n]}\]  identifying the Hitchin map with the support map $X_R^{[n]} \to \A^n$. 
\end{thm}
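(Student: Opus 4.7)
The natural strategy is via the Beauville--Narasimhan--Ramanan spectral correspondence, using the triviality of $K_E$ and descent from the cyclic cover $E \to E/\Gamma$ when $R \neq A_{-1}$.

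First I would treat the $A_{-1}$ case directly. The Hitchin base $\bigoplus_{i=1}^n H^0(E, K_E^i) \simeq \A^n$ parametrizes spectral curves $\Sigma \subset T^\ast E = E \times \A^1$ cut out by $y^n + a_1 y^{n-1} + \cdots + a_n = 0$. Since $K_E \simeq \mc O_E$, the coefficients $a_i$ are constants and $\Sigma$ decomposes into $n$ horizontal sections $E \times \{y_i\}$ where the $y_i$ are the roots. A rank-one torsion-free sheaf on $\Sigma$ is an $n$-tuple of line bundles $(L_1, \ldots, L_n)$ on $E$, and under the degree rigidification fixed by the parabolic data, each $L_i$ corresponds to a unique point $p_i \in E$ via $\Jac^d(E) \simeq E$. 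Away from the diagonals, BNR thus identifies $M_{A_{-1},n}$ with $\Sym^n(T^\ast E)$; the role of the parabolic flag $[n,1]$ at $0$ is to supply the infinitesimal data at collisions needed to lift this to the Hilbert scheme $(T^\ast E)^{[n]}$. Concretely, degenerations of $\Sigma$ to non-reduced fibers $E \times \{y\}$ with multiplicity are governed by the residue compatible with the flag, which tracks the scheme-theoretic direction of coalescence of the $(p_i,y_i)$. The Hitchin map $a_i = e_i(y_1, \ldots, y_n)$ becomes the support map to $\A^n = \Sym^n \A^1$ tautologically.

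For the remaining types I would reduce to $A_{-1}$ by descent. The parabolic Higgs bundles in $M_{R,n}$ are equivalent to $\Gamma$-equivariant rank-$n$ Higgs bundles on $E$, where $\Gamma \subset \Aut(E)$ is the cyclic subgroup of order $|\Gamma| \in \{2,3,4,6\}$ corresponding to $R \in \{D_4, E_6, E_7, E_8\}$ with $E/\Gamma \simeq \PP^1$ branched at the $x_i$. The flag dimensions in Table \ref{tbl:pole_orders} at $x_i$ encode precisely the filtration by unions of eigenspaces of the stabilizer $\Gamma_i \subset \Gamma$ acting on the fiber of the equivariant bundle (so $[kn, (k-1)n, \ldots, n]$ when $|\Gamma_i| = k$), and the terminal ``$+1$" at a distinguished branch point plays the role of the degree-fixing rigidification. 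Applying the $A_{-1}$ construction $\Gamma$-equivariantly yields a $\Gamma$-invariant length-$|\Gamma| n$ configuration on $T^\ast E$, which descends under the McKay correspondence for the symplectic quotient singularities of $T^\ast E / \Gamma$ to a length-$n$ subscheme on the crepant resolution $X_R$, giving the isomorphism $M_{R,n} \simeq X_R^{[n]}$ compatible with the Hitchin and support fibrations over $\A^n$.

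The main obstacle is the coincidence locus in the $A_{-1}$ step: one must verify that the parabolic flag data produces exactly the scheme-theoretic enhancement landing in the Hilbert scheme rather than some other partial compactification of $\Sym^n(T^\ast E)$. This reduces to a local calculation at a single collision, identifying the deformation theory of a nilpotent residue preserving the flag $[n,1]$ with that of a length-$n$ subscheme of $T^\ast E$ at a fat point; both sides are smooth of dimension $2n$, and the identification can be made explicit using the quiver-variety descriptions on each side. A secondary subtlety in the orbifold cases is ensuring that the McKay correspondence is applied compatibly with the symplectic/cotangent direction, which requires the crepancy of $X_R \to T^\ast E/\Gamma$ and the fact that the resulting resolution carries the Hitchin--support fibration as a relative Hilbert scheme over $\A^n$.
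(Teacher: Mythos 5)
The paper does not prove this statement; it is quoted verbatim from \cite{Groechenig_2014}, so there is no internal argument to compare against. Judged on its own terms, your sketch is in the right general spirit (spectral correspondence on $E$ plus $\Gamma$-equivariant descent), but it has two genuine gaps. The first is the collision locus in the $A_{-1}$ step, which you correctly flag as the main obstacle but do not actually close. The BNR correspondence identifies rank-$n$ Higgs bundles on $E$ with \emph{pure one-dimensional} sheaves on spectral curves in $T^*E$; since $K_E \simeq \mc O_E$ these spectral curves degenerate to non-reduced unions of horizontal copies of $E$ (ribbons and worse), and the moduli of rank-one torsion-free sheaves on such non-reduced curves is not visibly the punctual Hilbert scheme of $T^*E$. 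Observing that both sides are smooth of dimension $2n$ and agree on a dense open set does not yield an isomorphism (two smooth birational symplectic varieties need not be isomorphic — the whole point of Theorem \ref{thm:vwalls_roots} is that $X_R^{[n]}$ has many distinct K-trivial birational models). The construction that actually works goes in the opposite direction: one exhibits $X_R$ as the $n=1$ moduli space carrying a universal (parabolic) Higgs sheaf, and sends a length-$n$ subscheme $Z \subset X_R$ to the pushforward of the universal family restricted to $Z\times C$, producing a rank-$n$ parabolic Higgs bundle directly; this tautological integration handles fat points automatically and makes the compatibility of the Hitchin and support maps over $\A^n$ immediate, with no separate analysis of coalescence.

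The second gap is the descent step. The equivalence between parabolic Higgs bundles on $(\PP^1, x_i)$ with the flag types of Table \ref{tbl:pole_orders} and $\Gamma$-equivariant Higgs bundles on $E$ is standard, but the passage from ``$\Gamma$-invariant length-$|\Gamma|n$ spectral data on $T^*E$'' to ``length-$n$ subschemes of $X_R$'' is not the classical McKay correspondence for $\C^2/\Gamma$: you need a global statement identifying $\on{Hilb}^n$ of the minimal resolution of $T^*E/\Gamma$ with a moduli space of $\Gamma$-clusters (a wreath-product/derived McKay statement together with a stability analysis), and you invoke it rather than prove it. Both gaps are fillable, but as written the proposal establishes only a birational identification away from the discriminant, not the isomorphism claimed.
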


Therefore the main theorems of the present text also provide birational geometry results for certain moduli spaces of Higgs bundles.

\section{Non-commutative surfaces} \label{sec:ncs}

We introduce families of noncommutative $\PP^1$ bundles over elliptic curves and elliptic orbifolds. Noncommutative $\PP^1$-bundles over arbitrary smooth varieties are defined in \cite{vdBergh_2012}, which initiated the study of noncommutative deformations of Hirzebruch surfaces. The moduli of noncommutative Hirzebruch surfaces is studied in \cite{MOU19arxiv}. A general reference on families of noncommutative surfaces is \cite{rains19birational_arxiv}. 

Let $S = \Spec(R)$ be, for now, a Noetherian affine scheme. Let $\Gamma$ be a finite group. Let $E$ be an elliptic curve with an action of $\Gamma$ over $S$ formed from base change from $\C$. Let $\rho: \Gamma\to \C^\ast$ be the generating character of $\Gamma^\vee$ such that $\mc T_E = \mc O_E \otimes \rho$. We will denote by $\msc{C}$ the Deligne-Mumford stack $[E/\Gamma]$.

\subsubsection{Sheaves and dg categories of orbifolds}
See \cite{Elagin_equivariant_arxiv,KrugSosna_2015,perry_equiv_hoch} for generalities on equivariant derived categories and dg-categories. 

More generally let $X$ be a $\Gamma$-scheme. We assume that $X$ is smooth and the action of $\Gamma$ is faithful. 
Let $\Coh^\Gamma(X)$ denote the $R$-linear abelian category of $\Gamma$-equivariant coherent sheaves which we identify with $\Coh(\msc X)$ where $\msc X = [X/\Gamma]$. Let $\on{Ind}: \Coh(X) \to \Coh^\Gamma(X)$ given by $\on{Ind}_1^\Gamma(\mc E) = \oplus_{g\in \Gamma} g^* \mc E$ denote inflation.  More generally there is an inflation function $\Ind_{\Gamma}^{\Gamma'}(-)$ for $\Gamma \subset \Gamma'$ and its right and left adjoint $\Res^{\Gamma'}_{\Gamma}(-)$ ignoring the linearization.  Recall that $\Hom_{\Coh(\msc X)}(A,B) = \Hom_{\Coh(X)}(A,B)^\Gamma$.   Let $\Gamma^2 = \Gamma_1 \times \Gamma_2$ act on the product $X\times X$. We define $\Gamma_{\Delta}\subset \Gamma^2$ to be the diagonal. This acts via conjugation when we view $X\times X$ as a space supporting operators $D_{\perf}(X_1)\to D_{\perf}(X_2)$. Given $\chi\in \Gamma^\vee$ let $\chi_\Delta \in \Gamma^\vee_\Delta$ denote the same character under the obvious isomorphism. 

If $\mc F \in \Coh^\Gamma(X)$ is a locally free sheaf then this produces a sheaf $\mc F_\Delta \in \Coh^{\Gamma_\Delta}(X\times X)$ supported on the diagonal. We let $\Delta^g$ denote the graph of the map $g: X\to X$ for $g\in \Gamma$. Then $\Delta \cap \Delta^g \simeq X_\Delta^g$. The group $\Gamma_{\Delta}$ preserves $\Delta^g$ so given $\mc F\in \Coh^\Gamma(X)$ we may define $\mc F_{\Delta^g}$ analogously. 

Let $(-)^\Gamma: \Coh^\Gamma(X) \to \Coh(X)$ denote invariants if the $\Gamma$ action on $X$ is trivial. This is involved in the calculation of the pushforwards, for example $\pi_{*}: \Coh^{\Gamma^2}(X_1\times X_2)\to \Coh^\Gamma(X_2)$ (when $X_1$ is proper) is the composition $\Coh^{\Gamma^2}(X_1\times X_2) \to \Coh^{\Gamma^2}(X_2) \xrightarrow{(-)^{\Gamma_1}}\Coh^{\Gamma_2}(X_2)$. 

\subsubsection{Tubular canonical algebras}

There is a description of coherent sheaves on $[E/\Gamma]$ using the derived equivalence with the category of representations of a certain quiver with relations \cite{Happel_Ringel_1986}. More generally there is an equivalence 
\[ D^b_{Coh}([C/\Gamma]) \simeq D^b(\on{mod}(\Lambda(\pmb p, \pmb \lambda))\] 
where $C/\Gamma \simeq \PP^1$ with branch points $\pmb p$ and positions encoded by $\pmb \lambda$ where $\Lambda(\pmb p, \pmb \lambda)$ is the canonical algebra. When $C = E$ these algebras are called tubular canonical algebras and posses additional derived autoequivalences related to the Fourier-Mukai transform.

\subsection{Sheaf bimodules over orbifolds} 
 We will write factors as $X_i, i = 1,2$ occasionally to avoid confusion, likewise for $\msc X_i$. A $\Gamma^2$-equivariant sheaf bimodule is a coherent sheaf $\mc E \in \Coh^{\Gamma^2}(X\times X) = \Coh(\msc X \times \msc X)$ whose scheme-theoretic support $W$ is finite over both factors. Let $\shbm^{\Gamma^2}(X,X) =: \shbm(\msc X, \msc X)$ denote the full subcategory of $\Coh^{\Gamma^2}(E\times E)$ of sheaf bimodules. 
Denote by $(-) \otimes_{\msc X_1} \mc E$ the functor 
\[\mc A \mapsto \pi_{2*}(\pi_1^* \mc A \otimes \mc E) \] 
which is defined at the level of abelian categories. In particular $\mc O_{\Delta_{\msc X}} = \on{Ind}(\mc O_{\Delta_X})$ is called the diagonal sheaf bimodule and $(-)\otimes_{\msc X_1}\mc O_{\Delta_{\msc X}} $ acts as the identity.  Given $\mc F \in \Coh^\Gamma(X)$ locally free we have and we have $\Delta_*\mc F = \Ind_{\Gamma_\Delta}^{\Gamma^2}(\mc F_\Delta) \in \shbm(\msc X, \msc X)$. 

Assume $\mc E \in \shbm(\msc X,\msc X)$ is locally free, i.e. its pushforward to $\msc X_1$ and $\msc X_2$ is locally free.  Denoting by
\[ \mc E^D := R\mc H om(\mc E, \omega_{\msc X\times \msc X})[1] = \mc Hom^1(\mc E, \omega_{\msc X\times \msc X})\] the Cohen-Macaulay dual of $\mc E$ and by
\begin{align}\label{eq:radj_bm}
\mc E^\ast &= \pi_2^* \omega_{\msc X_2}^{-1} \otimes_{\mc O_{\msc X_2 \times \msc X_1}} \mc E^D\\
\label{eq:ladj_bm}
{}^\ast \mc E &= \mc E^D\otimes_{\mc O_{\msc X_2 \times \msc X_1}} \pi_1^* {\omega_{\msc X_1}}^{-1}
\end{align}
 the left and right dual of $\mc E$, we have the adjoint properties
\begin{align*} \Hom_{\msc X_2}(\mc A \otimes_{\msc X_1} \mc E, \mc B) &= \Hom_{\msc X_1}(\mc A, \mc B\otimes_{\msc X_2} \mc E^\ast) \\
  \Hom_{\msc X_2}(\mc A , \mc B\otimes_{\msc X_1} \mc E) &= \Hom_{\msc X_1}(\mc A\otimes_{\msc X_2} {}^\ast\mc E, \mc B)
\end{align*}
which follow as in \cite{vdBergh_2012, MOU19arxiv}
on account of duality for Deligne-Mumford stacks \cite{nironi_grothendeick} (see also \cite[Appendix B]{bruzzo_sala_framed_on_stakcs}).

\subsubsection{Sheaf $\Z$-algebra}

Define the iterated adjoint $\mc E^{\ast i}$ to be $\mc E$ if $i = 0$, equal to ${\mc E^{\ast (i-1)}}^\ast$ if $i \ge 0$ and ${}^\ast({}^{\ast (i+1)}\mc E)$ if $i \le -1$. We have the double adjoint property \cite{vdBergh_2012} 
\begin{equation}\label{eq:double_adj}
  \ \mc E^{\ast\ast} = \pi_1^*\omega_{\msc X_1/S}^{-1} \otimes \mc E \otimes \pi_{2}^* \omega_{\msc X_2/S}.
\end{equation}
The adjunction gives a morphism $i_n: \mc O_{\Delta_{\msc X}} \to \mc E^{\ast n} \otimes_{\msc X}\mc E^{\ast(n+1)}$ whose image $\mc Q_n$ is invertible. 

Now as in \cite[\S 4.2]{vdBergh_2012} from a rank 2 locally free sheaf bimodule $\mc E \in \shbm(\msc C,\msc C)$ we obtain an ($R$-central) sheaf $\Z$-algebra $\mc A^{\mc E}$.  This is essentially just an category enriched over the $R$-linear monoidal category $\shbm(\msc C_S, \msc C_S)$ whose objects are indexed by $\Z$. The $\Z$-algebra is generated by $\mc A_{ii} = \mc O_{\Delta_\msc X}$ and $\mc A_{i, i+1} = \mc E^{\ast i}$ and the relations are given by an ideal generated in degree two by the line bundles $\mc Q_n$. The sheaf bimodule $\mc A_{m,n}$ is locally free of rank $n-m+1$ \cite[\S 7]{vdBergh_2012}.
\subsubsection{Non-commutative $\PP^1$ bundles}

We also obtain a category over $R$ of $\mc A$-modules $Gr(\mc A)$ consisting of formal direct sums $\oplus_{i \in \Z} M_i$ with $M_i \in \Qcoh(\msc C)$ together with maps $M_i \otimes_{\mc O_\msc X} \mc A_{ij} \to M_j$ satisfying appropriate axioms.  We write these modules as $M = (M_n)$. 
Write $M(\ell)$ for the module $(M_{n  + \ell})_{n \in \Z}$ for the sheaf $\Z$-algebra $\mc A(\ell)$ defined analogously. 
Then $\Tors(\mc A)$ is the subcategory of all direct limits of right-bounded objects. 
There are also categories $\gr(\mc A)$ and $\tors(\mc A)$ consisting of Noetherian objects. 
Then $\Qcoh(\PP_{\msc X}(\mc E)) $ and $\Coh(\PP_{\msc X}(\mc E))$ are defined to be the quotients $Gr(\mc A)/\Tors(\mc A)$ and $\gr(\mc A)/\tors(\mc A)$ respectively.
There are functors $\tau: Gr(\mc A) \to \Tors(\mc A)$ assigning the maximal torsion submodule, $\pi: Gr(\mc A) \to \Qcoh(\PP_{\msc X}(\mc E))$ the quotient map and its right adjoint $\omega: \Qcoh(\PP_{\msc X}(\mc E))$.

A non-commutative $\PP^1$-bundle over a global orbifold curve $\msc C_S$ is strongly noetherian, which follows from the same argument as \cite{vdBergh_2012} \cite{Chan_Nyman_2013}, see \cite[\S 6]{rains19birational_arxiv} since $\Qcoh(\msc C_S)$ is noetherian. 
There exists a $R$-linear bifunctor 
\begin{equation}
  \label{eq:rhom_NC}
  RHom_{S}(-,-): D^b_{Coh}((\PP_{\msc C}(\mc E)))^{op}\times D^b_{Coh}(\PP_{\msc C}(\mc E)) \to D^b_{Coh}(S).\end{equation}

\subsubsection{Universal family of bimodule extensions}

In what follows, we are only considering the case of $\mc E \in \shbm(\mathscr C, \mathscr C)$ when $\mc E$ is a bimodule extension of $\mc T_{\msc C}$ by $\mc O_{\msc C}$ coming from an element of 
\[ \Ext^1_{\msc C\times \msc C}(\Delta_* \mc T_{\msc C}, \mc O_{\Delta}) \simeq \Ext^1_{\msc C\times \msc C}(\mc O_{\Delta}, \Delta_* \mc T_{\msc C}\otimes \omega_{\msc C\times \msc C})^\vee = HH_0(\msc X). \]

We use the following lemma concerning the structure of the sheaf $\Z$ algebra for bimodule extensions. 
\begin{lem}
  When $\mc E$ is a bimodule extension of pushforwards of rank 1 locally free sheaves $\mc L_1 ,\mc L_2$. on the diagonal, $\mc A^{\mc E}_{m,n}$ is a filtered bimodule whose associated graded is $\mc A_{m,n}^{\mc L_1 \oplus \mc L_2}.$ 
\end{lem}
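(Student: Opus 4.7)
The strategy is to induct on $n-m$ after establishing that the short exact sequence $0 \to \Delta_* \mc L_1 \to \mc E \to \Delta_* \mc L_2 \to 0$ propagates coherently through every adjoint, tensor product, and quadratic relation entering the definition of the sheaf $\Z$-algebra $\mc A^{\mc E}$. A splitting $\mc E \simeq \Delta_*(\mc L_1 \oplus \mc L_2)$ will then simultaneously split every stage of the construction, so that the split-case algebra $\mc A^{\mc L_1 \oplus \mc L_2}$ is exactly the associated graded.

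First I would verify that each iterated adjoint $\mc E^{\ast i}$ sits in an induced short exact sequence
$$0 \to \Delta_* \mc L_1^{(i)} \to \mc E^{\ast i} \to \Delta_* \mc L_2^{(i)} \to 0,$$
where $\mc L_1^{(i)}, \mc L_2^{(i)}$ are line bundles on $\msc C$ determined from $\mc L_1, \mc L_2$ by the formulas \eqref{eq:radj_bm}, \eqref{eq:ladj_bm} together with the double-adjoint identity \eqref{eq:double_adj}. This reduces to the exactness of Cohen--Macaulay duality for the rank $2$ bimodule $\mc E$ with its length-$1$ filtration by bimodules supported on the diagonal, which in turn is controlled by $\Gamma$-equivariant Grothendieck duality on $\msc C \times \msc C$ as reviewed above.

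Second, I would propagate the two-step filtration through the tensor product. Since each $\mc E^{\ast i}$ is locally free as a sheaf bimodule, tensoring with it preserves short exact sequences, and the tensor product $\mc E^{\ast m} \otimes_\msc{C} \mc E^{\ast (m+1)} \otimes_\msc{C} \cdots \otimes_\msc{C} \mc E^{\ast (n-1)}$ carries a tensor product filtration whose associated graded is
$$\bigoplus_{\epsilon \in \{1,2\}^{n-m}} \Delta_* \mc L_{\epsilon_1}^{(m)} \otimes_\msc{C} \cdots \otimes_\msc{C} \Delta_* \mc L_{\epsilon_{n-m}}^{(n-1)}.$$
One identifies this with the corresponding tensor product of adjoints of $\Delta_*(\mc L_1 \oplus \mc L_2)$.

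The remaining and most delicate step is to show that the quadratic relations $\mc Q_n \subset \mc E^{\ast n} \otimes_\msc{C} \mc E^{\ast (n+1)}$ are strict for this filtration and have the split relation $\mc Q_n^{\mc L_1 \oplus \mc L_2}$ as image in the associated graded. This amounts to checking naturality of the adjunction unit $i_n: \mc O_{\Delta_\msc{C}} \to \mc E^{\ast n} \otimes_\msc{C} \mc E^{\ast (n+1)}$ with respect to the filtration $\Delta_* \mc L_1 \hookrightarrow \mc E \twoheadrightarrow \Delta_* \mc L_2$; deformation of $\mc E$ moves $i_n$ inside the filtered piece without changing its class modulo the next filtered step. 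Once this is in hand, quotienting the tensor algebra by the ideal generated by the $\mc Q_k$ yields the desired identification $\gr \mc A^{\mc E}_{m,n} = \mc A^{\mc L_1 \oplus \mc L_2}_{m,n}$, and a rank count against \cite[\S 7]{vdBergh_2012} confirms that the filtration is exhaustive and nondegenerate. The main obstacle is this third step: an explicit description of $i_n$ as a boundary map in the extension defining $\mc Q_n$ and the tracking of the resulting splitting modulo the filtration on an orbifold curve, where $\Gamma$-equivariance interacts nontrivially with the line bundle twists introduced by \eqref{eq:double_adj}.
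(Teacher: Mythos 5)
The paper does not actually prove this lemma: its ``proof'' is the single sentence ``Should follow from \cite[Thm.\ 7.1.2(2)]{vdBergh_2012},'' i.e.\ a deferral to Van den Bergh's structure theorem for the $\Z$-algebra of a bimodule extension. Your proposal is therefore not so much a different route as an attempt to reconstruct the argument behind the citation, and its architecture is the right one: filter each iterated adjoint $\mc E^{\ast i}$ by the induced two-step filtration (this is legitimate, since Cohen--Macaulay duality on $\msc C\times\msc C$ is exact on bimodules finite over both factors, so the sequence $0\to\Delta_*\mc L_1\to\mc E\to\Delta_*\mc L_2\to 0$ dualizes termwise), put the tensor-product filtration on $\mc A_{m,i}\otimes\cdots$, and compare associated gradeds. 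The closing rank count is also the efficient way to finish: once you have a surjection $\mc A^{\mc L_1\oplus\mc L_2}_{m,n}\twoheadrightarrow\gr\mc A^{\mc E}_{m,n}$ between sheaves of the same rank $n-m+1$ (both sides locally free of that rank by \cite[\S 7]{vdBergh_2012}, and rank being additive across the filtration), the kernel is torsion inside a locally free sheaf and hence zero, so you do not need full strictness of the ideal---only that the symbol of each generator $\mc Q_k$ is the split relation $\mc Q_k^{\mc L_1\oplus\mc L_2}$.

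That last point is exactly where your sketch stops being a proof. The sentence ``deformation of $\mc E$ moves $i_n$ inside the filtered piece without changing its class modulo the next filtered step'' is an assertion of the needed compatibility, not an argument for it: you must identify the filtration degree in which the unit $i_k:\mc O_{\Delta_{\msc C}}\to\mc E^{\ast k}\otimes_{\msc C}\mc E^{\ast(k+1)}$ lands (it should land in the ``cross-term'' piece $\Delta_*\mc L_1^{(k)}\otimes\Delta_*\mc L_2^{(k+1)}+\Delta_*\mc L_2^{(k)}\otimes\Delta_*\mc L_1^{(k+1)}$ of the tensor filtration) and check that its leading symbol is the unit of the adjunction for the split bimodule. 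This requires unwinding the adjunction $\Hom(\mc A\otimes\mc E,\mc B)=\Hom(\mc A,\mc B\otimes\mc E^\ast)$ on the two-step filtration, and it is the whole content of the lemma; you have correctly located the difficulty but not resolved it. If you want a self-contained proof rather than the paper's citation, that computation (or a direct appeal to Van den Bergh's Theorem 7.1.2(2), checking that his hypotheses persist in the $\Gamma$-equivariant setting, which they do since all constructions here are functorial for the $\Gamma^2$-action) is the missing step.
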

\begin{proof}
Should follow from \cite[Thm. 7.1.2(2)]{vdBergh_2012}. 
\end{proof}

We can compute the dimension of the space of bimodule extensions using the orbifold HKR isomorphism \cite{arinkin2019formality,baranovsky2003orbifold}
\[ HH_\ast([X/\Gamma]) =\left( \bigoplus_{g\in \Gamma}(HH_\ast(X^g)\right)^{\Gamma}.\] The relevant term for us is $\ast = 0$.

Concretely, we choose coset representatives $1\times g$ for $g\in \Gamma$ of the quotient $\Gamma^2/\Gamma_\Delta$. Then we have (omitting notation for the forgetful restriction functor)
\begin{align*} \Ext^1_{\msc C \times \msc C}(\Delta_* \mc T_{\msc C} , \mc O_\Delta) &= \Ext^1_{\msc C\times \msc C}(\Ind_{\Gamma_\Delta}^{\Gamma^2}\mc O_{\Delta}\otimes \rho_{\Delta}, \Ind_{\Gamma_\Delta}^{\Gamma^2} \mc O_{\Delta})\\
  &= \left(\bigoplus_{g\in\Gamma} \Ext^1(\mc O_{\Delta}\otimes \rho_\Delta, \mc O_{\Delta^{g}})\right)^{\Gamma_\Delta}\\
  &= \left( \Ext^1(\mc O_{\Delta}\otimes \rho_\Delta, \mc O_{\Delta})\right)^{\Gamma_{\Delta}}\oplus\left(\bigoplus_{g\in \Gamma}\mathbb{H}^0(\rho_{\Delta}^{-1} \otimes [\mc O_{\Delta^g} \to \mc O(\Delta)|_{\Delta^g}]) \right)^{\Gamma_\Delta}\\
  &= HH_0(E) \oplus\bigoplus_{g\in \Gamma\backslash \{ e\} }\mathbb{H}^0(\mc O_{E^g})
\end{align*}
Here we have used the quasiisomorphism $\mc O_{\Delta} \simeq [\mc O(-\Delta) \to \mc O]$ and that $\mc O(\Delta)|_{\Delta\cap \Delta^g} \simeq \mc N_{E^g/E}$. The first term in the resulting sum corresponds to inflations of bimodule extensions located at the diagonal and the second term consists of extensions at orbifold points. Notice that $\C^\ast_\hbar$ naturally scales $HH_0([E/\Gamma])$ uniformly with weight $\hbar$. 

Let $S = HH_0(\msc C)$ be seen as an affine scheme corresponding to the graded ring $R = \C[S]$. Then there is a universal family $\Xi \in \shbm(\msc C_S, \msc C_S)$ of rank 2 locally free sheaf bimodules corresponding to the universal extension.
We then obtain an $R$-linear dg-category $D(\Qcoh(\PP_{\msc C_S}(\Xi)))$. For concision we will replace the symbol $\PP_{\msc C_S}(\Xi)$ by the symbol $\msc S$. 

There is a map 
\begin{equation}\label{eq:tau_stupid} \tau: HH_0(\msc C) \to HH_0(\msc C) \end{equation}
which simply exchanges the two factors of $\msc C \times \msc C$ sending one bimodule extension to the other.

\subsubsection{Semiorthogonal decomposition}

We now consider functors which are analogues of $R\rho_{i*}: \mc F \mapsto R\pi_{*}(\mc F(i))$ for a commutative $\PP^1$ bundle $\pi: \PP(\mc V) \to X$. In the noncommutative case $M(i)$ is only in general a module on a different noncommutative space formed from $\mc A^{\mc E}(i)$. We use the modification of \cite{rains19birational_arxiv} which works better in families rather than the definition of e.g. \cite[\S 4]{Mori_2007}. 

Thus we first define $\rho_i^*: \Coh(\msc C_i) \to \Coh(\PP_{\msc C}(\mc E))$ for $i \in \{0,1\}$ to be $\rho_i^*(-) = \pi((\mc A_{id}\otimes_{\msc C_i} (-))_{d\in \Z})$. The right adjoint is the functor which is the composition of $\omega$ with $(M_d)_{d\in \Z} \mapsto M_i$, the composition of the two right adjoints of the functors defining $\rho_i^*$. 

Because our sheaf bimodules are all locally free $\rho^*_i$ is exact. The functor $\rho_{i*}$ is always left exact. 

\begin{prop}[\cite{ben2008perverse,rains19birational_arxiv}]\label{prop:sod_pp_univ}
There is an $R$-linear semiorthogonal decomposition 
\begin{equation}
  \label{eq:sod_pp_univ}
  D^b_{Coh}(\msc S) = \langle L\rho_1^* D^b_{Coh}(\msc C_S),L\rho_0^* D^b_{Coh}(\msc C_S)\rangle .
\end{equation}
\end{prop}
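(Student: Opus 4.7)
The plan is to adapt the proofs of Van den Bergh and of \cite{ben2008perverse,rains19birational_arxiv} for non-commutative $\PP^1$-bundles over schemes to the present setting of a global orbifold curve $\msc C = [E/\Gamma]$ in a family over $S = HH_0(\msc C)$. Since $\Xi$ is a locally free rank-two sheaf bimodule, every $\mc A_{ij}$ is locally free on $\msc C_S$, so each $\rho_i^*$ is exact and coincides with $L\rho_i^*$. The right adjoint $R\rho_{i*}$ is computed as the composition of $R\omega$ with extraction of the $i$-th graded component, reducing the required adjunction identities to manipulations on the sheaf $\Z$-algebra.

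The argument then breaks into three standard steps. For fully faithfulness of $L\rho_i^*$, the degree-$i$ component of $\rho_i^*\mc F$ is $\mc A_{ii}\otimes \mc F = \mc F$, and strong Noetherianity together with the fact that the saturation $\omega\pi$ only modifies components in degrees far from $i$ gives $R\rho_{i*}L\rho_i^*\simeq\id$. For semiorthogonality, the module underlying $\rho_1^*\mc F$ is supported in degrees $\ge 1$ since $\mc A_{1d} = 0$ for $d < 1$, and no saturation against the positively-graded $\Z$-algebra produces sections in degree $0$, yielding $R\rho_{0*}L\rho_1^* = 0$; by adjunction
\[
\Hom\bigl(L\rho_0^*\mc F_0,L\rho_1^*\mc F_1[k]\bigr)\simeq \Hom\bigl(\mc F_0,R\rho_{0*}L\rho_1^*\mc F_1[k]\bigr) = 0 ,
\]
which is the noncommutative orbifold analog of the vanishing $R\pi_*\mc O(-1) = 0$ for a commutative $\PP^1$-bundle associated to a rank-two bundle.

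Generation follows from a Beilinson-type resolution: the defining sub-line-bundles $\mc Q_n\hookrightarrow\mc E^{\ast n}\otimes_{\msc C_S}\mc E^{\ast(n+1)}$ and the units/counits of the two adjunctions assemble a functorial distinguished triangle exhibiting every $M\in D^b_{Coh}(\msc S)$ as an extension of an object in $L\rho_0^*D^b_{Coh}(\msc C_S)$ by one in $L\rho_1^*D^b_{Coh}(\msc C_S)$, so the two subcategories triangulated-generate the whole category. The principal obstacle is verifying compatibility of these computations both with the $\Gamma$-equivariant orbifold structure on $\msc C$ and with $R$-linearity over the universal base $S$; the former reduces to Grothendieck duality for Deligne--Mumford stacks as in \cite{nironi_grothendeick}, ensuring \eqref{eq:radj_bm} and \eqref{eq:ladj_bm} furnish genuine adjoints, while the latter follows from the $R$-flatness of each $\mc A_{ij}$ built into the universal construction of $\Xi$.
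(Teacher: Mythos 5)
The paper offers no proof of this proposition at all — it is stated as a citation to \cite{ben2008perverse,rains19birational_arxiv}, and the only material surrounding it in the text (the induced splitting of $K_0$ and the Euler-pairing computation via $R\rho_{1*}L\rho_0^{*}b = b\otimes_{\msc C}\mc A_{0,1}$) is consistent with the orientation of semiorthogonality you use. Your outline correctly reproduces the standard argument from those references (full faithfulness from $\mc A_{ii}=\mc O_{\Delta_{\msc X}}$ and the degree-$i$ saturation, semiorthogonality from the noncommutative analogue of $R\pi_*\mc O(-1)=0$, generation from the Euler/Koszul resolution built from the $\mc Q_n$), with the orbifold and $R$-linear adaptations resting on exactly the ingredients the paper has already set up (duality for DM stacks giving the adjoints, strong noetherianity of $\Qcoh(\msc C_S)$), so it is an acceptable proof of the cited result rather than a divergence from the paper.
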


Proposition \ref{prop:sod_pp_univ} induces an isomorphism 
\begin{equation}
  \label{eq:k_0_split} K_0(\msc S) \simeq K_0(\msc C_S) \oplus K_0(\msc C_S). \end{equation}

As calculated in \cite{Mori_2007} the Euler pairing id induced from the pairing on the $K_0(\msc C_S)$ factors of \eqref{eq:k_0_split}, semiorthogonality of \eqref{eq:sod_pp_univ} and that
\begin{align*}
   \chi_{\msc S} (L\rho_1^* a, L\rho_0^* b) &= \chi_{\msc C}(a, R\rho_{1*}L\rho_0^* b) \\
   &= \chi_{\msc C}(a, b \otimes_{\msc C}\mc A^{\mc E}_{0,1})\\
   &= \chi_{\msc C}(a,b\otimes_{\msc C} \mc E).
\end{align*}

Let $\Kos^{ab}_S(\mc E)$ denote the Grothendieck abelian category of triple 
$(a,b,\eta)$ with $a,b \in \Qcoh(\msc C_S)$ and $\eta: a\otimes_{\msc C_1}\mc E \to b$ a map. Likewise define $\on{kos}^{ab}_S(\mc E)$ using $\Coh(\msc C_S)$.  Let $\Kos_S(\mc E)$ denote the dg category of $\Kos^{ab}_S(\mc E)$ with cohomology objects in $\on{kos}^{ab}_S(\mc E)$. 
There is a natural $R$-linear quasiequivalence 
\begin{equation}\label{eq:kappa_kos_equiv}\kappa: \Kos_S(\mc E) \to D^b_{Coh}(\msc X_S)\end{equation}
between the category of Koszul data and the noncommutative surface \cite{ben2008perverse} \cite{ rains19birational_arxiv}. Elements of one of the categories $\Kos^{ab}_S(\mc E), \on{kos}^{ab,}_S(\mc E)$, or $\Kos_S(\mc E)$ will be called Koszul data. Under the decomposition \eqref{eq:k_0_split} the class of $\kappa((a,b,\eta))$ is equivalent to $([b] - [a\otimes \mc E])+ [a] $. 

At the level of Koszul data the Euler pairing is 
\[ \chi_{\msc X} (\kappa(a,b,\eta), \kappa(c,d,\mu)) = \chi_{\msc C}(a,c) + 
\chi_{\msc C}(b,d)- \chi_{\msc C}(a\otimes_{\msc C} \mc E, d).\] 
When there is an exact sequence $0\to \mc O_{\Delta} \to \mc E \to \Delta_* \mc T_{\msc C} \to 0$ we have 
\begin{equation}\label{eq:euler_char}
  \chi_{\msc X} (\kappa(a,b,\eta), \kappa(c,d,\mu)) = \chi(a,c) + 
  \chi(b,d)- \chi(a, d) - \chi(a\otimes \mc T, d).
\end{equation}

\subsubsection{Restriction to divisor at infinity} 

Since $\Xi$ fits into an exact sequence 
\[ 0 \to \mc O_\Delta\boxtimes \mc O_S \to \Xi \to \Delta_* \mc T_{\msc C}\boxtimes \mc O_S \to 0\] 
we have a canonical map $a \xrightarrow{\times 1} a\otimes_{\msc C}\Xi$. For $\mc E \in D^b_{Coh}(\msc X)$ with Koszul data $(a,b,\eta)$ we define 
\begin{equation}
  \label{eq:res_to_infty} \mc E|_{D_\infty} = \on{Cone}(a \xrightarrow{\eta \circ (\times 1)} b ) \in D^b_{Coh}(D_\infty) \end{equation}
which agrees with the usual notion when the $\PP^1$ bundle is commutative. 

This notion is different from the notion of derived restriction to the canonical divisor, given by  $\on{Cone}(\eta)$, studied in \cite{rains19birational_arxiv} but since for framed torsion-free sheaves $(\mc E, \phi)$  the restriction to the anticanonical divisor is determined by $\mc E|_{D_\infty}$ many of the results in \cite{rains19birational_arxiv} apply here as well.

\subsubsection{Serre duality} 
Serre duality for noncommutative $\PP^1$ bundles was demonstrated in \cite{Nyman_2005}.

Let $M \in Gr(\mc A^{\mc E})$ be any module. Using \eqref{eq:double_adj} there is a natural $\mc A^{\mc E}$-module structure on 
\[ M\otimes_{\msc S} \omega_{\msc S/S} := M(-2)\otimes_{\msc C} \omega_{\msc C/S} = (M_{i-2}\otimes_{\mc O_{\msc C}} \omega_{\msc C/S} )_{i\in \Z}\] 
owing to an equivalence $\mc A^{\mc E}(-2) \simeq \mc A^{\mc E}$ induced by the above. 
 Let $\Serre(-) = (-) \otimes_{\msc S}\omega_{\msc S/S}[2]$. 

\begin{prop}
There is a functorial equivalence 
\[ R\Hom_{S}(\mc F, \mc G) \simeq R\Hom_{S}(\mc G, \Serre(\mc F))^\vee \] 
\end{prop}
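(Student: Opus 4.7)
The plan is to adapt the argument of \cite{Nyman_2005} to this relative orbifold setting by reducing the claim to orbifold Grothendieck-Serre duality on $\msc C_S/S$, which holds by \cite{nironi_grothendeick}. The strategy is to verify that the endofunctor $\Serre$ defined in the statement is in fact a Serre functor on $D^b_{Coh}(\msc S)$ relative to $S$ by checking the duality on the pieces of the semiorthogonal decomposition \eqref{eq:sod_pp_univ}.

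First I would translate everything to Koszul data via the equivalence \eqref{eq:kappa_kos_equiv}. Any $\mc F \in D^b_{Coh}(\msc S)$ with Koszul data $(a,b,\eta)$ fits into a distinguished triangle $L\rho_1^* a \to L\rho_0^* b \to \mc F \to$, and similarly for $\mc G$. Since $R\Hom$, $\Serre$, and $(-)^\vee$ all preserve distinguished triangles, a diagram chase reduces the statement to the case where $\mc F = L\rho_i^* \mc F_i$ and $\mc G = L\rho_j^* \mc G_j$ for $\mc F_i, \mc G_j \in D^b_{Coh}(\msc C_S)$ and $i,j \in \{0,1\}$.

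For such pure pullbacks, the adjunction $L\rho_i^* \dashv R\rho_{i*}$ gives
\[ R\Hom_{\msc S}(L\rho_i^* \mc F_i, \mc G) \simeq R\Hom_{\msc C_S}(\mc F_i, R\rho_{i*}\mc G), \]
and relative Grothendieck-Serre duality for the proper smooth DM-stack morphism $\msc C_S \to S$ of relative dimension one rewrites this as
\[ R\Hom_{\msc C_S}(R\rho_{i*}\mc G,\, \mc F_i \otimes \omega_{\msc C_S/S}[1])^\vee. \]
Applying the adjunction in the reverse direction, it suffices to exhibit a functorial isomorphism $R\rho_{i*} \Serre(L\rho_i^* \mc F_i) \simeq \mc F_i \otimes \omega_{\msc C_S/S}[1]$. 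Using the double-adjoint formula \eqref{eq:double_adj} one obtains a $\Z$-algebra identification of $\mc A^{\mc E}(-2)$ with an appropriate $\omega_{\msc C/S}$-twist of $\mc A^{\mc E}$; combined with the projection formula and the defining formula $\rho_i^*(-) = \pi((\mc A^{\mc E}_{id} \otimes_{\msc C_i}(-))_{d \in \Z})$, this gives the required identification, which then must be extended to the mixed cases $i \ne j$ by naturality of the Koszul distinguished triangle.

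The main obstacle will be the bookkeeping of shifts and the careful identification of the Serre functor: one must verify that the abstract Serre functor forced by the semiorthogonal decomposition (combining orbifold duality on $\msc C_S$ with a mutation exchanging the two SOD components) matches the explicit formula $\Serre = (-)\otimes_{\msc S}\omega_{\msc S/S}[2]$. The shift $[2]$ arises as the sum of a $[1]$ from Serre duality on the one-dimensional base $\msc C_S$ and an extra $[1]$ contributed by the fiber of the noncommutative $\PP^1$-bundle, paralleling the commutative picture where $\omega_{\PP(\mc V)} = \pi^*\omega_C \otimes \mc O_{\PP}(-2)$. The orbifold features introduce no new substantive difficulty, since both Proposition \ref{prop:sod_pp_univ} and Grothendieck-Serre duality for smooth proper DM stacks have the same formal content as in the scheme case used by \cite{Nyman_2005}, and flat base change over the Noetherian affine $S = \Spec R$ propagates the result fiberwise.
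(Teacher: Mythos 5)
The paper offers no written proof of this proposition: it defers to \cite{Nyman_2005} for Serre duality on noncommutative $\PP^1$-bundles and merely constructs $\Serre$ from the $\Z$-algebra identification $\mc A^{\mc E}(-2)\simeq\mc A^{\mc E}$ supplied by the double-adjoint formula \eqref{eq:double_adj}. Your SOD-based strategy is therefore a genuinely different (and in principle legitimate) route --- it is the same dévissage the paper uses at the level of Euler characteristics when computing $\chi_{\msc S}(L\rho_1^*a,L\rho_0^*b)$ --- but as written it has a gap at the decisive step.

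The problem is the phrase ``applying the adjunction in the reverse direction.'' After adjunction and Grothendieck--Serre duality on $\msc C_S/S$ you have
\[
R\Hom_{\msc S}(L\rho_i^*\mc F_i,\mc G)\simeq R\Hom_{\msc C_S}\bigl(R\rho_{i*}\mc G,\ \mc F_i\otimes\omega_{\msc C_S/S}[1]\bigr)^\vee ,
\]
and to match this with $R\Hom_{\msc S}(\mc G,\Serre(L\rho_i^*\mc F_i))^\vee$ you need $\Serre(L\rho_i^*\mc F_i)\simeq\rho_i^!\bigl(\mc F_i\otimes\omega_{\msc C_S/S}[1]\bigr)$, where $\rho_i^!$ is the \emph{right} adjoint of $R\rho_{i*}$. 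The adjunction $L\rho_i^*\dashv R\rho_{i*}$ has the wrong handedness to produce this, and your substitute condition $R\rho_{i*}\Serre(L\rho_i^*\mc F_i)\simeq\mc F_i\otimes\omega_{\msc C_S/S}[1]$ does not determine $R\Hom_{\msc S}(\mc G,\Serre(L\rho_i^*\mc F_i))$ for general $\mc G$ (note that even in the commutative model $\Serre(\pi^*A)$ involves $\mc O(-2)$ and is not a pullback, so no pullback--pushforward adjunction applies directly). What is actually needed is relative duality for the projection $\rho_i$ itself, i.e.\ the noncommutative analogue of $\pi^!=L\pi^*(-)\otimes\omega_\pi[1]$, equivalently $R\rho_{i*}$ of the twist $\mc A^{\mc E}(-2)\otimes\omega_{\msc C/S}$ computed degree by degree from the $\Z$-algebra; this is precisely the content of Nyman's theorem that the paper cites and that your argument still presupposes rather than proves. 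A secondary, more routine point: to get a \emph{functorial} equivalence you must first construct the trace/pairing map globally and only then check it is an isomorphism on the SOD generators; isomorphisms established separately on generators do not glue by themselves.
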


\subsubsection{Verdier duality} \label{ssec:verdier}

A key feature of the duality in our setup is that the appropriate notion Verdier duality is a duality between different surfaces, defined by dual bimodules. General properties of a related duality are studied in \cite{Mori_Nyman_2021}. A related claim is that Verdier duality exchanges twisted $D$-modules for one twist with those for a different twist. 

Given a closed point $s$ let $\PP_{\msc C}(\Xi_s) = \msc S_s$ denote base change to $s$ and $\Xi_s$ the associated $(\mc O_{\msc C}, \mc O_{\msc C})$-bimodule. We consider an antiequivalence $\mathbb{D}: D^b_{Coh}(\PP(\msc C)(\Xi_{s}))^{op} \to D^b_{Coh}(\PP_{\msc C}(\Xi_{s'}))$. Is will be defined at the level of Koszul data. Accordingly we have 
\begin{equation}
  \label{eq:verdier} \mathbb{D}(a,b, \eta) = (\mathbb{D}_{\msc C}(b), \mathbb{D}_{\msc C}\mathbb(a), \eta') 
\end{equation}
where $\eta': \mathbb{D}_{\msc C}(b)\otimes_{\msc C_1} \Xi_{s'} \to \mathbb{D}_{\msc C}(a)$ is the image of $\phi$ under the chain of equivalences 
\begin{align*}
\Hom_{\msc C_2}(a\otimes_{\msc C_1} \Xi_s, b) &= \Hom_{\msc C_1}(a, b \otimes_{\msc C_2} \Xi_s^\ast)\\
&= \Hom_{\msc C_1}(\mathbb{D}(b \otimes_{\msc C_2} \Xi_s^\ast), \mathbb{D}(a))\\
&= \Hom_{\msc C_1} (\mathbb{D} (b)\otimes_{\msc C_2} {\Xi_{s'}}, \mathbb{D}(a))
\end{align*}
  We calculate now $\Xi_{s'}$ by showing 
  \begin{multline*}
     \mathbb{D}(b \otimes_{\msc C_2} \mc E^\ast) = R\mc Hom(\pi_{1*}(\pi_2^* b \otimes \mc E^\ast), \mc O_{\msc C}) = \pi_{1*} R\mc Hom(\pi_2^*b, R\mc Hom(\mc E^\ast, \pi_2^* \omega_{\msc C_2})[1]) \\
     = \mathbb{D}(b)\otimes_{\msc C_2} ( R\mc Hom(\mc E^\ast, \pi_2^* \omega_{\msc C_2})[1]) = \mathbb{D}(b)\otimes_{\msc C_2} \mc E.
  \end{multline*}
Thus $\Xi_{s'}$ is the $(\mc O_{\msc C_2}, \mc O_{\msc C_1})$-bimodule which is given by $(\mc O_{\msc C_1}, \mc O_{\msc C_2})$-bimodule $\mc E$ without taking an adjoint. Only when the bimodule extension in $HH_0(\msc C)$ is a rank 2 bundle on the diagonal does this give an identical $\PP^1$ bundle. For those bimodule extensions corresponding to untwisted $\mc D$-modules we get an isomorphic noncommutative surface but the isomorphism is somewhat nontrivial as the extension is scaled by $-1$. 

We denote by  $\tau : S\to S$ the reflection on $HH_0(\msc C)$ induced by this map. 
The above calculation demonstrates the following. 
\begin{prop}\label{prop:verdier_duality_reflection}
There is an antiequivalence  $\mathbb{D}: D^b_{Coh}(\msc S_S) \to D^b_{Coh}(\msc S_S) $ which becomes an $R$-linear antiequivalence after base change along the map $\tau: S\to S$. 
\end{prop}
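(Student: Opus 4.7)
The plan is to leverage the Koszul data equivalence $\kappa$ of \eqref{eq:kappa_kos_equiv} to reduce the problem to defining an antiequivalence on $\Kos_S(\Xi)$, where explicit formulas are available. On Koszul data the duality is defined by \eqref{eq:verdier}: send $(a,b,\eta)$ to $(\mathbb{D}_{\msc C}(b), \mathbb{D}_{\msc C}(a), \eta')$, where $\mathbb{D}_{\msc C} = R\mc Hom(-, \omega_{\msc C/S})[1]$ is Grothendieck--Verdier duality on the smooth Deligne--Mumford stack $\msc C$, which is an antiequivalence of $D^b_{Coh}(\msc C)$ by Nironi's work on duality for DM stacks.

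The first thing to verify is that $\eta'$ lives in the asserted $\Hom$-group and that the construction squares to the identity. The displayed chain of isomorphisms in the excerpt produces $\eta'$ naturally from $\eta$: apply the right adjunction \eqref{eq:radj_bm}, then the antiequivalence $\mathbb{D}_{\msc C}$, then the identity $\mathbb{D}_{\msc C}(b\otimes_{\msc C_2}\Xi_s^\ast) \simeq \mathbb{D}_{\msc C}(b)\otimes_{\msc C_2}\Xi_s$ computed in the excerpt. Each step is natural in all arguments, so the assignment extends to a dg-functor $\Kos_S(\Xi_s) \to \Kos_S(\Xi_{s'})^{op}$. Since $\mathbb{D}_{\msc C}$ is involutive and the tensor-Hom adjunctions are mutually inverse, squaring the construction reproduces the identity up to natural isomorphism.

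The key content is the identification of the bimodule on the dual side. The excerpt's computation
\[\mathbb{D}_{\msc C}(b\otimes_{\msc C_2}\mc E^\ast) \simeq \mathbb{D}_{\msc C}(b)\otimes_{\msc C_2}\mc E\]
uses Grothendieck duality for the proper projection $\pi_1$ together with the double adjoint formula \eqref{eq:double_adj} (which identifies $R\mc Hom(\mc E^\ast, \pi_2^\ast \omega_{\msc C_2})[1]$ with $\mc E$). Crucially, $\mc E$ appears on the right \emph{without} having taken its adjoint, so when viewed as a bimodule producing a noncommutative $\PP^1$-bundle on the target side, the roles of the two factors of $\msc C\times\msc C$ have been swapped. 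Under the identification $HH_0(\msc C) = \Ext^1_{\msc C\times\msc C}(\Delta_\ast \mc T_{\msc C}, \mc O_\Delta)$ this factor-swap is precisely the involution $\tau$ of \eqref{eq:tau_stupid}. Consequently $\mathbb{D}$ sends $\msc S_s$ to $\msc S_{\tau(s)}$, so the $R$-action on the source is intertwined with the $\tau^\ast$-twisted $R$-action on the target, and $\mathbb{D}$ becomes $R$-linear after base change along $\tau$.

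The main obstacle is bookkeeping: the shift $[1]$ in $\mathbb{D}_{\msc C}$, the sign in the induced map on extension classes under factor reversal (so that $\tau$ genuinely rescales extension classes by $-1$ as flagged in the excerpt for the untwisted $\mc D$-module case), and the equivariance data for the $\Gamma^2$-action on $X\times X$ all need to be tracked consistently so that they assemble into one honest base-change along $\tau$ rather than producing an extra twist. Globalizing from a closed fiber $s$ to the universal $\Xi$ over $S$ is then routine, since Grothendieck duality, bimodule tensor products, and the category $\Kos_S$ are all compatible with flat base change, and the tensor-Hom manipulations preserving $R$-linearity throughout.
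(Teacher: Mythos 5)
Your proposal is correct and follows essentially the same route as the paper: the paper also defines $\mathbb{D}$ on Koszul data via \eqref{eq:verdier}, produces $\eta'$ through the same chain of adjunction, curve-level duality, and the Grothendieck-duality computation $\mathbb{D}_{\msc C}(b\otimes_{\msc C_2}\Xi_s^\ast)\simeq\mathbb{D}_{\msc C}(b)\otimes_{\msc C_2}\Xi_s$, and then reads off that the resulting bimodule is $\Xi_s$ with the two factors of $\msc C\times\msc C$ exchanged, i.e.\ $\Xi_{\tau(s)}$. The only cosmetic difference is your normalization of $\mathbb{D}_{\msc C}$ (dualizing into $\omega_{\msc C/S}[1]$ rather than $\mc O_{\msc C}$ as in the paper's displayed computation), which does not affect the argument.
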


The map $\mathbb{D}: D^b_{Coh}(\msc S_S) \to D^b_{Coh}(\msc S_S)$ is (up to shift) the derived functor of the functor of global duality for $\Z$-algebras \cite[Prop. 5.4]{rains19birational_arxiv}. The underived functor is denoted 
\begin{equation}\label{eq:ad} M \to \on{\ad} M. \end{equation} 

\subsubsection{Mutations and Fourier-Mukai transforms} 

More generally we consider an autoequivalence equivalence $\Phi \in \Aut(D_{Doh}(\msc C))$ and its basechange to $\Aut(D_{Doh}(\msc C_S))$ denoted by the same letter. Then $\Phi$ induces an action on $HH_0(\msc C)$ \cite[\S 8]{caldararu_Mukai_I} which we denote $\tau_\Phi$. 

Viewing $\mc E \in \on{shbimod}(\msc C, \msc C)$ as a Fourier-Mukai kernel, we can form $\Ad_{\Phi} \mc E$ be precomposing with the kernel defining $\Phi^{-1}$ and post-composing with the one defining $\Phi$. When $\mc E$ is an extension of the dual of the Serre functor with the identity this action agrees with the one in \cite[\S 8]{caldararu_Mukai_I} on $HH_0(\msc C)$. 

The description of $D^b_{Coh}(\msc S_S)$ in terms of Koszul data naturally lifts derived equivalences from $\msc C$ to this surface via the formula 
\begin{equation} \label{eq:derived_on_glueing}\Phi(a,b, \eta) = (\Phi(a), \Phi(b), \eta' )
\end{equation}
where 
$\eta': \Phi(a) \otimes_{\msc C_1} \mc \Ad_{\Phi} \Xi  \to \Phi(b) $ is induced by the functorality of $\Phi$ and the isomorphism $\Phi(a) \otimes_{\msc C_1} \Ad_{\Phi}(\Xi) \simeq \Phi(a \otimes_{\msc C_1} \Xi)$. 
\subsection{Local structure}

Locally on $\msc C$ near a point with trivial stabilizer the bimodules correspond to those giving $\mc D$-modules or $\mc T\oplus \mc O$-modules.  
Near an orbifold point there is additional structure. Let $C = \A^1$ and $\msc C = [\A^1/\mu_k]$. Let $\chi$ be the character of $\mu_k$ corresponding to the generator $x$ of $\C[\A^1]$ and $\zeta$ a fixed generator of $\mu_k$. 

\subsubsection{}
See \cite{Burban_Schiffmann_2013} for generalities on the category of coherent sheaves on $[\A^1/\mu_k]$. In particular, 
the category of Torsion sheaves $\Tors_0$ of torsion sheaves supported at $0$. Then $s_i := \chi^i \otimes \Bbbk_0$, $i = 1, \ldots,  k$ are simple objects generating $\Tors_0$ as a Serre subcategory of $\Coh([\A^1/\mu_k])$.  Letting $s_{i + k} = s_i$, we have that $\ext^1(s_i, s_{i+1}) = 1$ with other $\Ext$ groups between the $s_j$ vanishing. The Rees construction (i.e. $H^0(-): \Coh(\msc C) \to \Coh([\mrm{pt}/\mu_k])$ keeping track of the action of $x$) gives an equivalence of categories 
$\Tors_0 \simeq \Rep(Q_k) $
where $Q_k$ is the cyclic quiver with $k$ nodes.  

\subsubsection{}
Pick $g \in \mu_k \backslash \{ 1 \}$ and 
let $C_{sing} = \Delta\cup \Delta^g$ denote the nodal curve, $C_1 = \Delta$ and $C_2 = \Delta^g$. 

There is an exact sequence 
\[ 0 \to \omega_{C_{sing}}^\vee \to \nu_* (\omega_{C_1}^\vee\oplus \omega_{C_2}^\vee)  \to \Bbbk_0 \to 0 \] 
such that the composition $\omega_{C_{sing}}^\vee \to \nu_* (\omega_{C_1}^\vee\oplus \omega_{C_2}^\vee)  \to  \nu_* \omega_{C_1}^\vee$ is surjective and gives an exact sequence 
\begin{equation} 
0 \to \mc O_{C_2} \to \omega_{C_{sing}}^\vee \to \nu_* \omega_{C_1}^\vee \to 0
\end{equation} 
and pushing forward to $\A^1\times \A^1$ is a summand in a bimodule extension in $HH_0(\msc C)$. 

Slightly more generally, noting that $\rho = \chi^{-1}$,  given 
\[a_g \in \Ext^1_{[\A^1\times \A^1/\Gamma_{\Delta}]}(\mc T_\Delta, \mc O_{\Delta^g})\simeq \Hom^0(\mc O, \mc O_{\Delta^g})\]
we have a bimodule extension 
\begin{equation}
  \label{eq:stacky_bimod_summand}
  0 \to  \mc O_{\Delta^g} \to \on{coker}\left( \mc O \xrightarrow{(\iota, a_g)} \mc O (\Delta) \oplus  \mc O_{\Delta^g}  \right) \to  \mc T_{\Delta} \to 0  \end{equation}
A general element of $HH_0(\msc C)$ is given by a tuple $\underline{a} = ( a_g )_{g\in \mu_k}$ with $a_1 \in HH_0(\A^1) \simeq \C$. There is an analogous sequence to \eqref{eq:stacky_bimod_summand} for $g = 1$ and hence an analogous perfect complex.  Tracing through the adjunction of the inflation functor, the bimodule  $\mc E_{\underline a}\in \on{shbimod}(\msc C, \msc C)$ corresponding to $\underline a \in HH_0(\msc C)$ is equivalent to the perfect complex 
\begin{equation}
\label{eq:equiv_bimod_perf}
\mc E_{\underline a} = \left[
  \begin{array}{c} 
    \bigoplus_{g\in \Gamma} \mc O \\
     \oplus \\ 
     \bigoplus_{g\in \Gamma}  \mc O(-\Delta^g)
    \end{array} \xrightarrow{
      \begin{pmatrix} 
        \iota & 0 \\
         M_{\underline a} &\iota  
        \end{pmatrix}}  \begin{array}{c}
            \bigoplus_{g\in \Gamma}  \mc O (\Delta^g)\\
             \oplus \\  
             \bigoplus_{g\in \Gamma} \mc O\end{array}  \right]
\end{equation}
where $M_{\underline a}\in \on{Mat}_{k\times k}(\C)$ is the matrix whose $(g', g)$ coefficient is $ a_{g'g^{-1}}$,  we have identified $a_g \in \C$ with a number and all natural inclusions are denoted $\iota$. 

\subsubsection{}
We calculate $t \otimes_{\msc C}\mc E_{\underline a}$ for some objects $t \in \Tors_0$. Given simple objects $s_i, s_{i+1}$ let $e_{i+1,i}$ denote the extension. Let $\pi_i : \C[\Gamma]\to V_{i}$ denote the projection onto the $\chi^i$ isotypic subspace $V_i$ of the regular representation spanned by $v_i \in \C[\Gamma]$ so that $v_{i + k} = v_i$. Let $\chi_1^i$ denote a character of $\Gamma_1$, likewise for $\Gamma_2$. 

Let $x,y$ be coordinates on the prequotient of $[\A^1\times \A^1/\Gamma_1\times \Gamma_2] = \msc C_x\times \msc C_y$ and let $t = \C[x]/x^{n+1}$ with its induced equivariant structure. We have 
\[ \pi_{2*}(\pi_1^*(t)\otimes \mc E_{\underline a}) = \left[
  \begin{array}{c} 
    \C[\Gamma]\otimes \mc O_{\msc C_y}\langle 1, x, \ldots, x^n\rangle  \\
     \oplus \\ 
     \bigoplus_{g\in \Gamma}  (x-gy)\mc O_{\msc C_y }\langle 1, x, \ldots, x^n\rangle
    \end{array} \to
      \begin{array}{c}
            \bigoplus_{g\in \Gamma}  \frac{1}{x-gy}\mc O_{\msc C_y}\langle 1, x, \ldots, x^n\rangle\\
             \oplus \\  
             \C[\Gamma]\otimes  \mc O_{\msc C_y} \langle 1, x, \ldots, x^n\rangle\end{array} \right]
\]
with $\Gamma_1$-invariant part  
\[t\otimes \mc E_{\underline a} = \left[
\begin{array}{c} 
\mc O_{\msc C_y}\langle v_r\otimes x^r\rangle \\
     \oplus \\ 
      \mc O_{\msc C_y }\langle f_r \rangle
    \end{array} \to
      \begin{array}{c}
            \mc O_{\msc C_y}\langle h_r \rangle\\
             \oplus \\  
             \mc O_{\msc C_y}\langle v_r\otimes x^r\rangle 
  \end{array} \right]
 \] 
with $h_r = ( \frac{\zeta^{i(1-r)}x^r}{x-\zeta^i y})_{i = 0}^{n}$ and $f_r = (\zeta^{(-r-1)i}x^r(x-\zeta^iy))_{i = 0}^{n}$ the top row maps $v_r \otimes x^r$ to $-yh_r + h_{r+1}$ where $h_{n+1} = 0$. Likewise in the bottom row $f_r$ maps to $v_{r+1}\otimes x^{r+1} - y v_r\otimes x^r$.

A basis for the quotient $q_2: \mc O_{\msc C_y}\langle v_r \otimes x^r \to Q\rangle $ of the second row is given by the image $t_a = q(y^av_0\otimes 1)$ of $y^a v_0 \otimes 1$ for $ a= 0, \ldots, n$ and for the quotient $q_1$ of the first row a basis is $y^a h_0$ for $a = 0, \ldots, n$. The $\mc O_{\msc C_y}$-module structure on the quotient $q: [-\to -] \to t\otimes \mc E_{\underline a}$ of this complex is thus determined by the fact that 
\begin{align} \nonumber q(yh_r) &= q(h_{r+1}- \iota(v_r\otimes x^r)) = q(h_{r+1}) + q(M_{\underline a} v_r \otimes x^r) \\
  &= q(h_{r+1}) + \on{Tr}_{V_r}(\sum_g a_g g\cdot-)q(v_r \otimes x^r)\\
  &= q(h_{r + 1}) +\chi^r(\sum a_g g) q(y^r v_0 \otimes 1).
\end{align}
Letting $A_r = \chi^r(\sum a_g g) = \on{Tr}_{V_r} (\sum_g a_g g\cdot -)$ in our basis $q(h_r), q(y^rv_0 \otimes 1)$ of $t\otimes \mc E_{\underline a}$ the action of $y$ is 
\[ y v = \left(
  \begin{array}{c|c}
  J_{n+1} & A\\
  \hline
  0 & J_{n+1}
\end{array} \right)v\] 
with $A = \on{diag}(A_0, \ldots, A_{n})$ and $J_k$ is a $k\times k$ Jordan block. 
An elementary linear algebra calculation gives the following and its corollaries. 
\begin{lem}
The extension $0 \to t \to t\otimes \mc E_{\underline a} \to t\otimes \mc T_{\msc C} \to 0$ splits if and only if $\on{Tr}(A) = 0$. 
\end{lem}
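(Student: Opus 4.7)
The plan is to reduce the splitting question to a concrete linear-algebra problem and solve it directly. From the matrix form of $y$ on $t\otimes \mc E_{\underline a}$ given just above, the span of $(q(h_r))_{r=0}^n$ is a $y$-stable submodule corresponding to $t \hookrightarrow t\otimes \mc E_{\underline a}$, and the images of $(q(y^r v_0 \otimes 1))_{r=0}^n$ form a set-theoretic lift of the quotient $t\otimes \mc T_\msc{C}$. Any splitting in $\Coh(\msc C_y)$ is therefore given by a $\Gamma$-equivariant change of basis
\[P = \begin{pmatrix} I & B \\ 0 & I \end{pmatrix}\]
for some $(n+1)\times (n+1)$ matrix $B$, such that $P^{-1} M P$ is block-diagonal.

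A direct computation gives
\[P^{-1} \begin{pmatrix} J_{n+1} & A \\ 0 & J_{n+1} \end{pmatrix} P = \begin{pmatrix} J_{n+1} & [J_{n+1}, B] + A \\ 0 & J_{n+1} \end{pmatrix},\]
reducing the problem to the matrix equation $[J_{n+1}, B] = -A$. Taking trace of both sides, the cyclicity of trace forces $\on{Tr}(A) = 0$; this is the necessity direction. Conversely, assuming $\on{Tr}(A) = 0$, I would exhibit the strictly lower-triangular matrix $B$ with entries $B_{r+1,r} = -\sum_{k=0}^{r} A_k$ for $r = 0,\ldots, n-1$ (and all other entries zero). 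A routine check using $(J_{n+1} B - B J_{n+1})_{ij} = B_{i+1, j} - B_{i, j-1}$ shows the off-diagonal entries vanish trivially (both terms are zero on that support), the subdiagonal recursion yields $-A_i$ on the $(i, i)$-entry for $i < n$ by telescoping, and the boundary identity at $(n, n)$---namely $-B_{n, n-1} = -A_n$---is exactly the assumption $\sum_{k=0}^n A_k = 0$.

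The one step requiring care is $\Gamma$-equivariance of $B$. From the relation $y\cdot q(h_r) = q(h_{r+1}) + A_r\, q(y^r v_0 \otimes 1)$ together with $y$ having $\Gamma$-weight $\chi$, the two vectors $q(h_{r+1})$ and $q(y^r v_0 \otimes 1)$ must carry the same $\Gamma$-weight. Thus matrix entries of $B$ are forced to vanish unless the row index $i$ and column index $r$ satisfy $i \equiv r + 1 \pmod k$; the subdiagonal construction above places nonzero entries precisely at positions $(r+1, r)$, so equivariance is automatic. This is not a real obstacle---the main content of the lemma is the trace identity---but it is the one step where the equivariant setting could in principle have forced a stronger condition than the trace vanishing, and so must be checked explicitly.
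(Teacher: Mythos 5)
Your proof is correct and is exactly the ``elementary linear algebra calculation'' the paper invokes without writing out: reducing the splitting to the solvability of $[J_{n+1},B]=-A$, getting necessity from the trace of a commutator, and sufficiency from the explicit telescoping subdiagonal $B$. The equivariance check at the end is a worthwhile addition that the paper leaves implicit.
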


\begin{cor} When $t = s_i$ we have 
\begin{equation}
  \label{eq:gen_simple_bimodact} s_i \otimes_{\msc C} \mc E_{\underline a} = \begin{cases}
    s_i\oplus s_{i-1} &  \text{ if } \chi^i(\sum_{g} a_g g) = 0 \\
e_{i,i-1} &\text{ otherwise. }  
  \end{cases}
\end{equation}
\end{cor}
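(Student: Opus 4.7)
The plan is to deduce the corollary as a direct specialization of the preceding lemma, applied to the simple torsion sheaves $s_i = \chi^i \otimes \Bbbk_0$. First, I would set $n = 0$ in the preceding computation, which corresponds to taking a $1$-dimensional torsion sheaf at the orbifold point. The sheaf treated explicitly in the calculation is $\C[x]/(x)$ equipped with the trivial character $\chi^0$, i.e.\ the simple object $s_0$. The simple object $s_i$ is obtained from this by twisting the equivariant structure by $\chi^i$, which shifts the weight on the single-basis-vector module so that the relevant trace in the matrix $A$ from the lemma becomes $\on{Tr}_{V_i}(\sum_g a_g g \cdot -) = \chi^i(\sum_g a_g g)$.

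Next I would identify the quotient of the resulting two-step extension. Since $\mc T_{\msc C} = \mc O_{\msc C} \otimes \rho$ and $\rho = \chi^{-1}$, we have $s_i \otimes_{\msc C} \mc T_{\msc C} = \chi^i \otimes \chi^{-1} \otimes \Bbbk_0 = s_{i-1}$. Combined with the short exact sequence $\ses{s_i}{s_i \otimes_{\msc C} \mc E_{\underline a}}{s_i \otimes_{\msc C} \mc T_{\msc C}}$ coming from the bimodule extension defining $\mc E_{\underline a}$, this places $s_i \otimes_{\msc C}\mc E_{\underline a}$ as an element of $\Ext^1(s_{i-1}, s_i)$. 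By the earlier discussion of $\Tors_0$, this $\Ext$-group is one-dimensional, so the extension is either split, yielding $s_i \oplus s_{i-1}$, or gives (up to isomorphism) the unique nonsplit extension $e_{i, i-1}$.

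Finally I would invoke the preceding lemma directly: the extension splits iff $\on{Tr}(A) = 0$, and for $n = 0$ with equivariant twist $\chi^i$ this trace is precisely $\chi^i(\sum_g a_g g)$. This yields the case split in the corollary. The main bookkeeping step is tracking the character twist carefully when specializing the explicit calculation from the $v_0 \otimes 1$ basis, but no substantive obstacle arises since the answer is determined by a single scalar.
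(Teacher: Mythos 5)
Your proposal is correct and follows essentially the route the paper intends: the corollary is the specialization of the preceding lemma to the length-one case $t = s_i$, with the character twist $\chi^i$ turning the trace condition $\on{Tr}(A)=0$ into $\chi^i(\sum_g a_g g)=0$, and the identification $s_i\otimes\mc T_{\msc C}=s_{i-1}$ (via $\rho=\chi^{-1}$) together with $\dim\Ext^1(s_{i-1},s_i)=1$ forcing the two alternatives $s_i\oplus s_{i-1}$ and $e_{i,i-1}$. The paper offers no further argument beyond citing the "elementary linear algebra calculation," so your bookkeeping of the twist is exactly the content needed.
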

Let $t \in \Tors_0$ be a simple sheaf whose global sections form some number of copies of the regular representation. Let $e$ be the nontrivial extension of $\rho\otimes t$ by $t$ induced by $\Hom^0(t,t)^\vee \simeq \Ext^1(\rho\otimes t, t)$. 
\begin{cor}
  The bimodule action on $t$ is given by 
\begin{equation}
  \label{eq:simple_reg_bimodact} t \otimes_{\msc C} \mc E_{\underline a} = \begin{cases}
    t\oplus \rho\otimes  t &  \text{ if } a_1 = 0 \\
e &\text{ otherwise. }  
  \end{cases}
\end{equation}
\end{cor}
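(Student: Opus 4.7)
The plan is to argue in parallel with the proof of the preceding corollary (the $m = 1$ case of simple $s_i$), now replacing the $1\times 1$ matrix $A = (A_i)$ by the $km \times km$ diagonal matrix $A = \on{diag}(A_0, \ldots, A_{km-1})$. First I reduce to the indecomposable model $t = \C[x]/x^{km}$, which is the unique (up to twist) indecomposable torsion sheaf at $0$ with $m$ copies of the regular representation as global sections; in the notation of the preceding calculation this takes $n+1 = km$. Since $\mc T_{\msc C} \simeq \mc O \otimes \rho$ we have $t \otimes \mc T_{\msc C} = \rho \otimes t$, so the sequence in question is $0 \to t \to t\otimes_{\msc C} \mc E_{\underline a} \to \rho \otimes t \to 0$.

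By the preceding lemma, this splits iff $\on{Tr}(A) = 0$. I will compute the trace via character orthogonality: because $\chi^k$ is trivial on $\mu_k$,
$$\on{Tr}(A) \;=\; \sum_{r=0}^{km-1}\sum_g a_g \chi^r(g) \;=\; \sum_g a_g \cdot m\sum_{s=0}^{k-1}\chi^s(g) \;=\; mk\,a_1,$$
so splitting holds precisely when $a_1 = 0$, which gives the first case. For the non-split case, I then use the explicit $y$-action $y u_r = u_{r+1}+A_r w_r$, $y w_r = w_{r+1}$ from the preceding computation (with $u_{km} = w_{km} = 0$); induction on $j$ yields $y^j u_0 = u_j + (A_0 + \cdots + A_{j-1}) w_{j-1}$, so in particular $y^{km}u_0 = \on{Tr}(A)\, w_{km-1}$. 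When $a_1 \neq 0$ this is nonzero, exhibiting $t \otimes \mc E_{\underline a}$ as the cyclic $\mc O$-module of length $2km$ generated by $u_0$ with relation $y^{2km} u_0 = 0$.

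It remains to identify this cyclic extension with $e$. The main obstacle is interpreting the Serre duality isomorphism $\Hom^0(t,t)^\vee \simeq \Ext^1(\rho \otimes t, t)$ concretely: since $\omega_{\msc C} = \rho^{-1}$ on $[\A^1/\mu_k]$, this is ordinary Serre duality, and $e$ corresponds under the iso to the trace (socle) functional on the local Frobenius algebra $\End(t) \cong \C[y^k]/y^{km}$. The cyclic extension produced above is the unique (up to isomorphism of extensions) non-split extension whose underlying $\mc O$-module is indecomposable, which matches this characterization of $e$; verifying the match reduces to tracking the Frobenius pairing on $\End(t)$ through Serre duality on the orbifold, where the top power $y^{km} u_0 = \on{Tr}(A) w_{km-1}$ computed above is precisely the invariant detecting the socle functional.
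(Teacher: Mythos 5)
Your reduction to the indecomposable model and the trace computation $\on{Tr}(A)=\sum_{r=0}^{km-1}\sum_g a_g\chi^r(g)=mk\,a_1$ via character orthogonality is correct and is exactly the (implicit) argument the paper intends: the corollary is asserted to follow from the preceding matrix computation and the splitting lemma, and your handling of the split case matches that.

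The non-split case, however, contains a concrete error. From your own recursion $y^ju_0=u_j+(A_0+\cdots+A_{j-1})w_{j-1}$ with $u_{km}=0$, one gets $y^{km}u_0=\on{Tr}(A)\,w_{km-1}$ and then $y^{km+1}u_0=\on{Tr}(A)\,w_{km}=0$. So the cyclic submodule generated by $u_0$ has length $km+1$, not $2km$, and $t\otimes_{\msc C}\mc E_{\underline a}$ is \emph{not} cyclic for $km\ge 2$: a rank count on $y=\bigl(\begin{smallmatrix}J&A\\0&J\end{smallmatrix}\bigr)$ shows its Jordan type is $[km+1,\,km-1]$, hence it is decomposable as an $\mc O$-module. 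This kills your proposed characterization of $e$ as "the unique non-split extension with indecomposable underlying $\mc O$-module" — indeed $e$ itself is decomposable as an $\mc O$-module (already for $k=2$, $m=1$ a direct check shows the unique non-split equivariant extension of $\rho\otimes t$ by $t$ has type $[3,1]$, not $[4]$; the uniserial module $\C[y]/y^{4}$ is an extension of $t$ by $t$, not of $\rho\otimes t$ by $t$). For $m=1$ the identification with $e$ is automatic since $\Ext^1(\rho\otimes t,t)\simeq\Hom(t,t)^\vee$ is one-dimensional, so there is a unique non-split extension; but for $m\ge 2$ this space is $m$-dimensional and one must show the class of $t\otimes\mc E_{\underline a}$ lies in the open $\Aut(t)\times\Aut(\rho\otimes t)$-orbit of generators of $\Hom(t,t)^\vee$ as an $\End(t)$-module (which is what pins down $e$ up to isomorphism). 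The Jordan type $[km+1,km-1]$ computed above is in fact the right invariant to use here — degenerate orbits produce smaller top Jordan blocks — but your argument as written does not establish this, so the final identification with $e$ remains a gap.
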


Thus under the Mukai correspondence equivalence between root spaces in $\widehat{\mf g}_{A_{k-1}} $ and finite dimensional virtual representations of $\mu_{k}$ identifying $\widehat{\mf h}$ with $HH_0([\A^1/\mu_k])$ the vanishing of the character $\chi^\alpha$ is the root hyperplane $\alpha^\perp$. 

\begin{rmk}\label{rmk:local_bimod_nontorsion}
  Because we are using the dg-enhanced category $D^b_{Coh}(\msc C)$ a minor modification of the above calculation also provides a calculation of $t\otimes_{\msc C}\mc E_{\underline{a}}$ when $t$ is not a torsion sheaf, namely  we have 
  \[H^0(t\otimes\mc E_{\underline a}) \simeq H^0(t) \oplus \rho \otimes H^0(t)\in \on{QCoh}([\mrm{pt}/\mu_k])\]
  and the $\mc O_{\msc C}$-module structure on $t\otimes_{\msc C}\mc E_{\underline a}$ has the localization $\widetilde{H^0(t)} \in \Coh(\msc C)$ of $H^0(t)$ as a submodule while for $\rho\otimes m \in \rho \otimes H^0(t)$ the action of $y$ is given by $\rho \otimes y m + \sum_g a_g g \epsilon (\rho \otimes m)$ where $\epsilon: \rho \otimes H^0(t) \to H^0(t)$ is the obvious (non-equivariant) identification. 
\end{rmk}

\subsubsection{Identification with preprojective algebra} 

We briefly mention the relationship between objects in $\Coh(\PP_{[\A^1_y/\mu_k]}(\mc E_{\underline a}))$ and finite dimensional representations of the deformed preprojective algebra $\Pi^\lambda$ for the cyclic quiver $Q_k$. Let $Q_k$ have cyclic orientation, let $V = \oplus_i Vi$ denote a $\Pi^\lambda$-representation and $\beta_{CW}$ and $\beta_{CCW}$ denote the action of the clockwise and counterclockwise arrows. Let $H \subset D_{\Coh}(\PP_{[\A^1/\mu_k]}(\mc E_{\underline a}))$ denote the subcategory whose Koszul data is of the form $(t,t,\eta)$ with trivial restriction to infinity with $H^0(t) \in \on{Rep}_\C(\mu_k)$ finite dimensional.   The identification is summarized in the following table: 

\begin{table}[h!]
  \begin{centering}
  \begin{tabular}{c|c}
  $V \in \on{Mod}-\Pi^\lambda$  & $\kappa((t,t, \eta)) \in  H$ \\\hline
  $\lambda_i$ & $\chi^i(a_g g)$ \\
  $V$ & $H^0(t)$ \\
  $\beta_{CW}$ & $y\cdot -$ \\
  $ \beta_{CCW}$ & $\beta \in \Hom(t\otimes \mc T, t)$
  \end{tabular}
  \caption{\label{tbl:preprojective}
Identification between representations of $\Pi^\lambda$ and certain modules over noncommutative $\PP^1$ bundles. Here $\beta \in  \Hom(t\otimes \mc T, t)$ is the map induced from $\eta$ by the splitting of $t\otimes \mc E_{\underline t}$. }
\end{centering}
  \end{table}

  Let $x \simeq [\mrm{pt} /\mu_k]$ denote a stacky point and $x\to [C/\Gamma]$ a closed embedding corresponding to a point $\tilde{x} \in C$. Let $z$ be a uniformizing parameter in a local ring $C'$ around $\tilde{x}$. There is an induced map $HH_0([C/\Gamma]) \to HH_0([C'/\mu_k])$. Given $\xi \in HH_0([C/\Gamma])$ let $\xi_x$ denote its restriction in $HH_0([C'/\mu_k])$. Notice that if $t\in \Coh([C'/\mu_k])$ is such that there exits $\eta$ with $\kappa((t,t, \eta)) \in D_{\Coh}(\PP^1_{[C'/\mu_k]}(\mc E_{\xi'}))$  has trivial restriction to $\infty$ then $\kappa((t,t, \eta))$ lies in the standard heart of $D_{\Coh}(\PP^1_{[C'/\mu_k]}(\mc E_{\xi'}))$.

  \[ \mc T_{x} \subset \Coh(\PP^1_{[C/\Gamma]}(\mc E_{\xi}))\] 
  denote the full subcategory of coherent objects supported over $x$ with trivial restriction to infinity. It is straightforward to check that the assignment of Table \ref{tbl:preprojective} is functorial.  We summarize the identification in the following proposition. Let $\on{Mod}_n-\Pi^{\lambda(\xi')}$ denote the full subcategory of modules for which the clockwise maps are nilpotent. Call such modules seminilpotent. 

  \begin{prop}\label{prop:preprojective_equivalence} Given $\xi \in HH_0([C/\Gamma])$ there exits $\lambda(\xi)$ such that 
there is an equivalence of categories $\mc T_{x} \simeq \on{Mod}_n-\Pi^{\lambda(\xi')}$ between torsion sheaves on $\Coh(\PP^1_{[C'/\mu_k]}(\mc E_{\xi'}))$ and finite dimensional seminilpotent representations of the preprojective algebra. 
  \end{prop}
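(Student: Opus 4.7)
The plan is to first reduce to the local étale model $C'=\A^1$, $\Gamma=\mu_k$: an object of $\mc T_x$ is supported at the single orbifold point $x$, and both sides of the proposed equivalence depend only on the restriction $\xi_x \in HH_0([C'/\mu_k])$. I then apply the Koszul data equivalence $\kappa$ of \eqref{eq:kappa_kos_equiv} to present $\mc F \in \mc T_x$ as $\kappa(a,b,\eta)$, and use \eqref{eq:res_to_infty} to translate ``trivial restriction to infinity'' into the condition that $\eta \circ (\times 1): a \to b$ is an isomorphism. Identifying $a$ with $b$ along this map, the Koszul data takes the normal form $(t,t,\eta)$ with $\eta \circ (\times 1) = \id_t$, which exhibits $\eta$ as a retraction of the canonical inclusion $t \hookrightarrow t \otimes_{\msc C} \mc E_{\xi_x}$ induced by $\mc O_\Delta \hookrightarrow \mc E_{\xi_x}$, hence as a splitting of $0 \to t \to t \otimes \mc E_{\xi_x} \to t \otimes \mc T_{\msc C} \to 0$.

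From the pair $(t,\eta)$ I extract the data of Table \ref{tbl:preprojective}: $V := H^0(t)$ decomposes as $V = \bigoplus_{i \in \Z/k} V_i$ into $\mu_k$-isotypic components, with $\beta_{CW}: V_i \to V_{i+1}$ given by multiplication by the coordinate $y$ and $\beta_{CCW}: V_{i+1} \to V_i$ given by the splitting projection $t \otimes \mc T_{\msc C} \to t$ obtained from $\eta$. Coherence of $t$ and its support at $0$ translate into finite-dimensionality of $V$ and nilpotency of $\beta_{CW}$, matching the seminilpotency condition. The heart of the argument is the verification of the deformed preprojective relation $[\beta_{CCW},\beta_{CW}]|_{V_i} = \lambda(\xi_x)_i \cdot \id_{V_i}$ with $\lambda(\xi_x)_i := \chi^i(\sum_g a_g g)$, which I carry out by direct calculation using the explicit matrix formula
\[ y \cdot v = \begin{pmatrix} J_{n+1} & A \\ 0 & J_{n+1} \end{pmatrix} v \]
for the $y$-action on $t \otimes_{\msc C} \mc E_{\underline{a}}$ established earlier in the section; the off-diagonal block $A = \on{diag}(A_0,\ldots,A_n)$ measures exactly the failure of $y$ to commute with the splitting induced by $\eta$, and restricts on each isotypic component $V_i$ to the scalar $\lambda(\xi_x)_i$.

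To complete the equivalence I construct a quasi-inverse: to a seminilpotent finite-dimensional $\Pi^{\lambda(\xi_x)}$-representation $(V,\beta_{CW},\beta_{CCW})$ I associate the torsion sheaf $t \in \Tors_0$ whose underlying $\mu_k$-module is $V$ and whose $y$-action is $\beta_{CW}$ (seminilpotency ensuring support at $0$), together with the map $\eta: t \otimes \mc E_{\xi_x} \to t$ built from $\id_t$ on the $t$-summand and $\beta_{CCW}$ on the $t \otimes \mc T_{\msc C}$-summand of the split extension. The deformed preprojective relation is exactly the compatibility required for $\eta$ to descend to an $\mc O_{\msc C_2}$-module map, again by the block-matrix calculation. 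Functoriality of both constructions and verification that they are mutually inverse are then formal.

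The main obstacle is the relation check in the middle paragraph: the bookkeeping between the block-matrix description of $t \otimes_{\msc C} \mc E_{\underline a}$, the indexing of vertices of the cyclic quiver $Q_k$ by characters of $\mu_k$, and the sign conventions in the identification of $\widehat{\mf h}$ with $HH_0([\A^1/\mu_k])$ alluded to at the end of the local structure subsection all need to align precisely. The remainder is formal, the statement being an orbifold analogue of the Berest--Wilson identification of Weyl-algebra modules with Koszul data for the noncommutative $\PP^1$-bundle whose bimodule is $D^{\leq 1}(\lambda)$ in the case $\Gamma = \{1\}$.
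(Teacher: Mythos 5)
Your proposal is correct and follows the same route the paper intends: the paper offers no formal proof beyond declaring the dictionary of Table \ref{tbl:preprojective} "straightforward to check," and that dictionary together with the block-matrix computation of the $y$-action on $t\otimes_{\msc C}\mc E_{\underline a}$ (whose off-diagonal block $A=\on{diag}(A_0,\ldots,A_n)$ with $A_i=\chi^i(\sum_g a_g g)$ supplies the deformed preprojective relation) is exactly what you spell out. Your additional care with the quasi-inverse and the translation of "trivial restriction to infinity" via \eqref{eq:res_to_infty} fills in steps the paper leaves implicit but introduces nothing beyond its framework.
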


\subsection{Equivalence with commutative surfaces} \label{ssec:equiv_w_comm}
Let $\xi \in HH_0(E)$ denote the bimodule extension perpendicular to commutative deformations of $E$, corresponding to D-modules. In the orbifold case we consider $\xi \in HH_0(\msc C)$ under the inclusion $HH_0(E) \hookrightarrow HH_0(\msc C)$. 
Let $HH_0(\msc C)_{com} = \xi^\perp \subset HH_0(\msc C)$ denote the hyperplane perpendicular to this direction. Let $S_{com} \subset S$ denote the corresponding subscheme. 

Let $\Gamma{\text -}Hilb/S_{com}$ denote the moduli functor parametrizing $\Gamma$-clusters, i.e. elements of $\Kos_S(\Xi)$ of the form $(a,a, \phi)$ where $a \in \Coh_{\Gamma}(E)$ is torsion, of length $|\Gamma|$ and $H^0(a) \simeq \C[\Gamma]$ as a $\Gamma$-representation such that $(a,a,\phi) \in \Coh(\PP_{\msc C}(\Xi))$ and is a quotient of $\mc O_{\PP_{\msc C}(\Xi)}$. Equivalently these are the equivariant analogues of \emph{point modules} from \cite{Nyman_2001} c.f. \cite[\S 4]{vdBergh_2012}. Namely a $\Gamma$-cluster module in  $\on{Gr}(\mc A^{\mc E})$ over a $G$-scheme $X$ is an $\mc A^{\mc E}$ module $(M_i)$ such that for $i \gg 0$ each $M_i \in \Coh_{\Gamma}(X/S)$ is finite over the base $S$ and pushes forward to  a rank $|\Gamma|$ locally free sheaf on $S$ which is isomorphic to $\C[\Gamma]\otimes \mc L_i$ where $\mc L_i$ is a line bundle on $S$.  

\begin{conj}\label{conj:commutative_hyperplane}
  The family of noncommutative surfaces $D^b_{Coh}(\msc S_{S_{com}})$ is equivalent to a family of commutative surfaces $D^b_{Coh}(\Gamma{\text -} Hilb/S_{com})$. 
\end{conj}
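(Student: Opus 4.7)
The plan is to combine an orbifold-family version of Van den Bergh's identification of split-bimodule noncommutative $\PP^1$-bundles with ordinary projectivizations together with the Bridgeland-King-Reid derived McKay correspondence applied to the resulting $\Gamma$-equivariant ruled surface.

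First I would show that over $S_{com}$ the universal bimodule $\Xi|_{S_{com}}$ is, locally on $S_{com}$, isomorphic to $\Delta_\ast(\mc L_1 \oplus \mc L_2)$ for $\Gamma$-equivariant line bundles $\mc L_1, \mc L_2$ on $E$ varying with the parameter. This uses the decomposition
\[
  HH_0(\msc C) \simeq HH_0(E) \oplus \bigoplus_{g \neq e} \mathbb{H}^0(\mc O_{E^g})
\]
computed in the excerpt together with two inputs: the identification of $\xi$ as the unique direction in $HH_0(E)$ producing a non-split extension $0 \to \mc O_{\Delta_\msc{C}} \to \mc E \to \Delta_\ast \mc T_\msc{C} \to 0$ (corresponding to $\Xi_\xi \simeq D^{\leq 1}$), and the local computation near orbifold points showing that the extension splits precisely when $\on{Tr}(A) = a_1$ vanishes. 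Once the bimodule is split, the sheaf $\Z$-algebra $\mc A^{\Xi_{\xi'}}$ becomes the $\Z$-algebra of the genuine graded algebra $\bigoplus_n \Sym^n(\mc L_1 \oplus \mc L_2)$, yielding a canonical equivalence $D^b_{Coh}(\msc S_{\xi'}) \simeq D^b_{Coh}(\msc Y_{\xi'})$ with $\msc Y_{\xi'} := \PP_{\msc C}(\mc L_1 \oplus \mc L_2) = [Y_{\xi'}/\Gamma]$ an honest $\PP^1$-bundle over the orbifold.

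Second I would apply a family derived McKay correspondence to the $\Gamma$-variety $Y_{\xi'}$, which is a smooth projective $\Gamma$-equivariant ruled surface over $E$. All $\Gamma$-stabilizers are cyclic, inherited from the orbifold points of $\msc C$, and the constraint $\xi' \in S_{com}$ forces the pair of characters acting on the tangent space at any fixed point to sum to zero (so $\Gamma \subset \SL$ on tangent spaces), giving the Bridgeland-King-Reid hypothesis. This yields $D^b_{Coh}(\msc Y_{\xi'}) \simeq D^b_{Coh}(\Gamma\text{-}Hilb(Y_{\xi'}))$ via Fourier-Mukai with the universal $\Gamma$-cluster, and one then matches $\Gamma\text{-}Hilb(Y_{\xi'})$ with the functor $\Gamma\text{-}Hilb/S_{com}$ defined in the excerpt: under the equivalence of Step 1, a $\Gamma$-cluster $(a,a,\phi)$ in Koszul form corresponds exactly to a finite length $\Gamma$-invariant subscheme of $Y_{\xi'}$ whose global sections carry the regular representation, and this correspondence is compatible with base change.

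The main obstacle is carrying out these steps uniformly over all of $S_{com}$. The line bundles $\mc L_1, \mc L_2$ are only determined up to a swap and up to a fiberwise $\Pic(\msc C)$-ambiguity, so one must work with a universal twisted $\PP^1$-bundle, possibly after passing to a finite cover of $S_{com}$, and show that $\Gamma\text{-}Hilb/S_{com}$ is smooth and flat over $S_{com}$ and that the fiberwise BKR kernels glue into a single flat Fourier-Mukai kernel on $\Gamma\text{-}Hilb/S_{com} \times_{S_{com}} \msc S_{S_{com}}$. Verifying this requires a case-by-case analysis at the orbifold points using the local perfect complex description \eqref{eq:equiv_bimod_perf} and the preprojective algebra identification from Proposition \ref{prop:preprojective_equivalence}; the delicate point is that on walls in $S_{com}$ where additional $\Pi^{\lambda(\xi')}$-representations become simple, the $\Gamma$-Hilbert scheme could a priori acquire extra components, which must be ruled out by a dimension count.
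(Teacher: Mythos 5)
First, a point of order: the paper does not prove this statement. It is left as Conjecture \ref{conj:commutative_hyperplane}, with only the remark that the $\Gamma=1$ case reduces to the known fact that a noncommutative $\PP^1$-bundle on a commutative (diagonal) bimodule recovers an ordinary $\PP^1$-bundle. So your proposal attempts something the paper does not do, and must be judged on its own terms.

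There is a genuine gap at Step 1, and it is fatal to the whole strategy. Passing to $S_{com}=\xi^\perp$ only removes the one-dimensional $D$-module direction inside the $HH_0(E)$ summand of $HH_0(\msc C)\simeq HH_0(E)\oplus\bigoplus_{g\neq e}\mathbb{H}^0(\mc O_{E^g})$; it does not touch the twisted-sector summands, whose corresponding extensions are supported on the graphs $\Delta^g$ rather than on the diagonal (see \eqref{eq:stacky_bimod_summand} and \eqref{eq:equiv_bimod_perf}). Hence for generic $\xi'\in S_{com}$ the bimodule $\Xi_{\xi'}$ is not of the form $\Delta_*(\mc L_1\oplus\mc L_2)$, nor even a pushforward from the diagonal. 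You are also misreading the splitting lemma: it says the induced module extension $0\to t\to t\otimes\mc E_{\underline a}\to t\otimes\mc T_{\msc C}\to 0$ splits for the regular-representation torsion sheaf $t$ when $\on{Tr}(A)=0$; it is not a statement that the bimodule itself splits. Indeed \eqref{eq:gen_simple_bimodact} shows that for the non-regular simples $s_i$ one gets the nonsplit extension $e_{i,i-1}$ whenever $\chi^i(\sum_{g\neq e}a_g\,g)\neq 0$, which holds generically on $S_{com}$, whereas a split bimodule $\Delta_*(\mc L_1\oplus\mc L_2)$ would act split on every $s_i$. So $\msc S_{\xi'}$ is genuinely not an orbifold $\PP^1$-bundle $[\PP(\mc L_1\oplus\mc L_2)/\Gamma]$ for generic $\xi'\in S_{com}$, and the Bridgeland--King--Reid input of your Step 2 is simply unavailable.

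There is also a parameter count that signals the problem independently: your construction would produce commutative surfaces varying only with the untwisted-sector parameters (the $HH_0(E)\cap S_{com}$ direction plus $\Pic$ twists), while the conjecture requires the commutative family to vary over all of $S_{com}$, with the twisted-sector directions changing the periods of the exceptional curves of the resolution --- this is the whole content of the identification $HH_0([E/\Gamma])\simeq\mf h^{ell}$. A viable route, consistent with the paper's definition of $\Gamma\text{-}Hilb/S_{com}$ as a moduli space of point-like objects on the noncommutative surface itself, is a genuinely noncommutative McKay correspondence: construct the universal $\Gamma$-cluster as a kernel on $\Gamma\text{-}Hilb\times_{S_{com}}\msc S_{S_{com}}$ directly, verify it induces an equivalence on the central fiber (where honest BKR applies to $[E\times\PP^1/\Gamma]$), and propagate the equivalence over $S_{com}$ by a deformation argument. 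Your closing paragraph correctly identifies some of the difficulties of working in families, but those refinements cannot rescue a first step that is false.
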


The above claim in the $\Gamma = 1$ case reduces to the fact that noncommutative $\PP^1$ bundles recover the category of sheaves on a $\PP^1$ bundle in the commutative case.

A collection of hyperplanes $ \{ H_i \}$ in a complex vector space $V$ is called \emph{strongly dense} if it is dense and if $\{ H_i \cap V' \}$ is strongly dense when restricted to proper subspace $V' \subset V$ when $\dim (V) > 2$.

\begin{prop}\label{prop:dense_hyperplanes} 
There is a countable strongly dense collection of hyperplanes $S_i \subset HH_0(E)$ such that the family $D_{coh}(\msc S_{S_i})$ is derived equivalent to a family of commutative surfaces. Assuming Conjecture \ref{conj:commutative_hyperplane} the same is true for $HH_0(\msc C)$. 
\end{prop}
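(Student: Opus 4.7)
The plan is to construct the countable collection $\{S_i\}$ as the orbit of the commutative hyperplane $S_{com}$ under the group of derived autoequivalences $\Aut(D^b_{Coh}(\msc C))$ acting on $HH_0(\msc C)$, and then verify strong density.

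First, I recall from the discussion around \eqref{eq:derived_on_glueing} and the assignment $\Phi \mapsto \tau_\Phi$ that any autoequivalence $\Phi$ of $D^b_{Coh}(\msc C)$ lifts to a derived equivalence between $D^b_{Coh}(\msc S_S)$ and the noncommutative surface over the base changed along $\tau_\Phi$. Restricting to $S_{com}$ and composing with the equivalence $D^b_{Coh}(\msc S_{S_{com}}) \simeq D^b_{Coh}(\Gamma{\text -} Hilb/S_{com})$ --- which holds unconditionally for $\Gamma = \{1\}$ since the noncommutative $\PP^1$-bundle reduces to the commutative projectivization of $\mc T_E \oplus \mc O_E$, and is Conjecture \ref{conj:commutative_hyperplane} in general --- I obtain, for each $\Phi$, a derived equivalence between $D^b_{Coh}(\msc S_{\tau_\Phi(S_{com})})$ and the pullback of the commutative family $\Gamma{\text -} Hilb/S_{com}$ along $\tau_\Phi$. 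Setting $S_i = \tau_{\Phi_i}(S_{com})$ for representatives $\Phi_i$ then yields a countable collection of hyperplanes along each of which the family is derived equivalent to a commutative surface family.

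For strong density in the elliptic-curve case $HH_0(E) \simeq \C^2$, the group $\Aut(D^b_{Coh}(E))$ contains a copy of $\SL(2,\Z)$ generated by the Fourier--Mukai transform with the Poincar\'e bundle and by tensoring with a degree-one line bundle, and the induced action on $HH_0(E)$ is the standard representation of $\SL(2,\Z)$ on $\C^2$. Since in dimension two the recursive definition of strong density terminates at mere density in the Grassmannian of hyperplanes $\PP(HH_0(E)^\ast) \simeq \PP^1$, and any $\SL(2,\Z)$-orbit in $\PP^1$ is Zariski dense, the conclusion follows. For the orbifold case, the autoequivalence group is enlarged by the Fourier--Mukai-type autoequivalences associated to the tubular canonical algebra description, giving rise to the elliptic Weyl group action on $\mf h^{ell} \simeq HH_0(\msc C)$. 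The irreducibility of this Weyl-group action away from a small center-fixing locus, together with the inductive structure of strong density, then gives the conclusion by descending to proper subspaces and invoking the inductive hypothesis.

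The main obstacle is the recursive step in the orbifold setting: verifying that for every proper subspace $V' \subset HH_0(\msc C)$, the restricted collection $\{ S_i \cap V' \}$ is itself strongly dense in $V'$. This requires controlling stabilizers of flags of subspaces under the elliptic (or affine) Weyl group action and refining the $\SL(2,\Z)$-density argument to act sufficiently transitively on each twisted summand of $HH_0(\msc C)$ indexed by $g \in \Gamma$. A technical subtlety is that the orbifold part of the statement is conditional on Conjecture \ref{conj:commutative_hyperplane}, and without that conjecture only the initial hyperplane $S_{com}$ itself is known unconditionally to give a commutative family, although the derived equivalences coming from $\Aut(D^b_{Coh}(\msc C))$ still propagate whatever commutative structure is established there to the rest of the orbit.
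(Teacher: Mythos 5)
Your proposal follows essentially the same route as the paper: the collection $\{S_i\}$ is the orbit of the commutative hyperplane under the action of $\Aut(D^b_{Coh}([E/\Gamma]))$ on $HH_0(\msc C)$, transported by the induced family of derived equivalences $D^b_{Coh}(\PP_{\msc C}(\Xi_S))/S \to D^b_{Coh}(\PP_{\msc C}(\Xi_{\Phi_{HH}(S)}))/\Phi_{HH}(S)$. The paper's proof is in fact terser than yours — it does not spell out the strong-density verification via the $\SL(2,\Z)$ (resp.\ elliptic Weyl group) action that you supply — so your additional detail is consistent with, and a reasonable elaboration of, the intended argument.
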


\begin{proof}
The action of $\Aut(D^b_{Coh}(E))$ or more generally of $\Phi \in \Aut(D^b_{Coh})([E/\Gamma])$ acts on $HH_0(\msc C)$ via $\Phi_{HH}$ and induces an family of derived equivalences 
\[ D^b_{Coh}(\PP_{\msc C}(\Xi_S))/S \xrightarrow{\Phi} D^b_{Coh}(\PP_{\msc C}(\Xi_{\Phi_{HH}(S)}))/\Phi_{HH}(S). \] 
Thus the result follows from the fact that the usual commutative hyperplane gives a family of commutative surfaces. 
\end{proof}

More generally we will see in \ref{prop:roots_are_hyperplanes} a representation theoretic interpretation of these hyperplanes, namely that we have a bijection 
\begin{equation}\label{eq:roots_HHhyperplanes} 
R_{A_{-1}}^{ell} \xrightarrow{\sim} \{ S_i \}
\end{equation} 
between roots in an elliptic root system and hyperplanes.

\subsubsection{Blwodown along commutative curve}

The blowdown $\pi: \overline{S_R} \to S_R$ of the central surface to the rational elliptic surface $S_R$ is expected to have an analogue over $HH_0(\msc C)$. 

\begin{conj}\label{conj:SOD_blowup}
The family $\msc S_S$ admits a strong $S$-linear semiorthogonal decomposition 
\[ D_{coh}(\msc S_S) = \langle \msc X_S, \mc A_S\rangle \] 
such that the fibers of $\mc A$ over closed points admit full strong exceptional collections of the same length. 
\end{conj}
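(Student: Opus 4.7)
The plan is to build $\mc A_S$ as the triangulated subcategory generated by a finite collection of exceptional objects supported fiberwise over the orbifold locus, and to define $\msc X_S$ as its left orthogonal, modeled on Orlov's blowup semiorthogonal decomposition. The two-term decomposition of Proposition \ref{prop:sod_pp_univ} will serve as the starting scaffold: the goal is to refine one of the two summands by extracting the exceptional data associated to the orbifold points of $\msc C$.

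First, I would construct the exceptional objects locally. For an orbifold point $x\in \msc C$ with stabilizer $\mu_{k_x}$, the local calculation culminating in Proposition \ref{prop:preprojective_equivalence} together with equations \eqref{eq:gen_simple_bimodact} and \eqref{eq:simple_reg_bimodact} identifies Koszul data of the form $(s_i,s_i,0)$ concentrated at $x$ with simple representations of the cyclic quiver $Q_{k_x}$. These give $k_x-1$ exceptional objects at $x$ that correspond, under the conjectural blowdown $\overline{S_R}\to S_R$, to line bundles on the exceptional curves. I would organize these across all orbifold points into a collection $\{E_\alpha\}$ and verify exceptionality over the base using the bifunctor \eqref{eq:rhom_NC}, the Koszul Euler pairing formula \eqref{eq:euler_char}, and strong noetherianity of the family $\msc S_S$.

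Second, I would show that $\Hom$-spaces between the $E_\alpha$ are concentrated in degree zero and form flat $R$-modules, so that the collection is strong and remains exceptional after arbitrary base change. Semicontinuity of fiberwise Ext together with the constancy of $\chi_{\msc S}(E_\alpha,E_\beta)$ from \eqref{eq:euler_char} reduces this to computing at a single convenient fiber, for which one can either use the commutative hyperplane of Proposition \ref{prop:dense_hyperplanes} (where the assertion reduces to Orlov's blowup formula applied to $\overline{S_R}\to S_R$) or the central fiber directly. I would then define $\msc X_S$ as the left orthogonal to the subcategory generated by $\{E_\alpha\}$ in $D^b_{Coh}(\msc S_S)$; semiorthogonality and generation follow from the mutation formalism once exceptionality and constant length are established.

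The main obstacle is showing that fiber-by-fiber the orthogonal $\msc X_S$ actually cuts out a family of noncommutative surfaces of the expected type, i.e.\ that the exceptional collection has the same length at every closed point of $S$ and that no collapse occurs on hyperplanes where the bimodule data becomes degenerate. The difficulty is twofold: the exceptional objects come from simples supported at orbifold points, and the splitting behavior in \eqref{eq:gen_simple_bimodact} depends on vanishing of characters $\chi^i(\sum_g a_g g)$, so a priori some exceptional objects could deform into non-exceptional extensions over root hyperplanes as in \eqref{eq:roots_HHhyperplanes}. Handling this likely requires showing that the appropriate derived extensions (rather than the abelian extensions) remain exceptional throughout the family, using the fact that the $\mc E_\alpha$ arise as perfect complexes of the shape \eqref{eq:equiv_bimod_perf} whose Ext groups are controlled by the fixed Euler form and are therefore constant in $S$.
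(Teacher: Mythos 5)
The statement you are trying to prove is stated in the paper as Conjecture \ref{conj:SOD_blowup} and is \emph{not} proved there; there is no argument in the text to compare yours against. So what you have written should be judged as a proposed strategy for an open assertion, and on those terms it has a genuine gap at exactly the point you flag as "the main obstacle," which you then defer rather than resolve.

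The gap is this: the objects you propose to generate $\mc A_S$ are simples (and their extensions) supported at orbifold points \emph{of the surface}, i.e.\ Koszul data of torsion type concentrated over a stacky point of $\msc C$. But the local analysis of the paper --- equation \eqref{eq:gen_simple_bimodact}, Proposition \ref{prop:preprojective_equivalence}, and Proposition \ref{prop:roots_are_hyperplanes} --- shows that the existence and splitting type of such objects is governed by the vanishing of the characters $\chi^i(\sum_g a_g g)$, i.e.\ by the root hyperplanes $S_\beta$ of \eqref{eq:roots_HHhyperplanes}. Torsion objects of a fixed real-root class exist only \emph{over} the corresponding hyperplane; at a generic point of $S$ the heart simply does not contain them, and the "appropriate derived extensions" you invoke to rescue the family are not constructed and are not supplied by the perfect-complex presentation \eqref{eq:equiv_bimod_perf} (that presentation describes the bimodule, not a flat family of exceptional objects on $\msc S_S$). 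Constancy of the Euler form does not help here: $\chi$ being constant is compatible with the object degenerating or disappearing. A more robust route, consistent with how the paper sets things up, is to source the exceptional objects from $D^b_{Coh}(\msc C_S)$ itself --- the simples $s_i$ at stacky points of the \emph{curve}, or equivalently the exceptional summands of the tilting object for the (tubular) canonical algebra $\Lambda(\pmb p,\pmb\lambda)$ --- and push them into $D^b_{Coh}(\msc S_S)$ via the $S$-linear functors $L\rho_i^*$ of Proposition \ref{prop:sod_pp_univ}. Those objects are defined uniformly over all of $S$, their mutual $\Hom$'s are computed on $\msc C$ and are manifestly constant in $s$, and the refinement of \eqref{eq:sod_pp_univ} they induce has constant length by construction. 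Finally, note that your fallback verification "at a convenient fiber" via Proposition \ref{prop:dense_hyperplanes} is itself conditional on Conjecture \ref{conj:commutative_hyperplane} in the orbifold case, so even that step is not unconditional.
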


\subsection{Sheaves with stable framing }

\subsubsection{Central surface}
Let $(\mc G, \phi : \mc G|_{D_\infty} \xrightarrow{\sim} \mc F)$ be a framed torsion-free sheaf on $E\times \PP^1$ with $\mc F \in \Coh(D_\infty)$ a stable locally free sheaf.  More generally we allow $\mc G$ to be any framed coherent sheaf, or a complex of sheaves which is quasiisomorphic to a coherent sheaf in a neighborhood of $D_\infty$. 
\begin{lem} \label{lem:generically_F} For all but finitely many closed fibers $E_a, a\in \PP^1$ we have $\mc G|_{E_a} \simeq \mc F$. 
\end{lem}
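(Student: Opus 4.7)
The plan is to use that, after restricting to a suitable neighborhood of $D_\infty$ in $\PP^1$, the restriction $\mc G|_{E_a}$ defines a flat family of stable vector bundles on $E$ which in turn produces a morphism to a coarse moduli space isomorphic to $E$ itself, and then to conclude via the absence of nonconstant morphisms $\PP^1\to E$.

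First I pass to an open neighborhood $U\subset \PP^1$ of $\infty$ on which $\mc G$ is represented by a genuine coherent sheaf, which is allowed by the hypothesis. Since the framing identifies $\mc G|_{D_\infty}\simeq \mc F$, a locally free sheaf on $E$, and $\mc G$ is torsion-free, the sheaf $\mc G$ is locally free in an open neighborhood of $D_\infty$. For $a$ in a dense open $U'\subset U$ the restriction $\mc G|_{E_a}$ is then torsion-free on the smooth elliptic curve $E$, hence locally free of rank $r:=\rk\mc F$ and degree $d:=\deg\mc F$, and $\mc G|_{E\times U'}$ is $U'$-flat. Openness of (semi)stability in flat families, together with the stability of the fiber at $\infty$, then supplies an open $V\subset U'$ containing $\infty$ over which every fiber $\mc G|_{E_a}$ is stable.

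Since $\mc F$ is stable on the elliptic curve $E$, Atiyah's classification forces $\gcd(r,d)=1$ and identifies the coarse moduli space $M_E(r,d)$ of stable sheaves of this numerical type with $\Pic^d(E)\simeq E$ via the determinant. The flat family $\mc G|_{E\times V}$ therefore induces a classifying morphism $\mu: V\to M_E(r,d)\simeq E$, concretely $\mu(a)=[\det \mc G|_{E_a}]$. Since $V$ is open in the smooth projective curve $\PP^1$ and $E$ is projective, $\mu$ extends to a morphism $\bar\mu: \PP^1\to E$, which must be constant because $\PP^1$ admits no nonconstant maps to an elliptic curve. Hence $\bar\mu\equiv \mu(\infty)=[\mc F]$, so $\mc G|_{E_a}\simeq \mc F$ for every $a\in V$, and $\PP^1\setminus V$ is a finite set. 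The only step requiring care is verifying the ingredients needed to produce $\mu$ (local freeness near $D_\infty$, flatness of the family on a dense open of $\PP^1$, and openness of stability); all three are standard once one works in a neighborhood where $\mc G$ is a coherent sheaf, and the geometric content of the argument is simply that $\PP^1$ has trivial Albanese.
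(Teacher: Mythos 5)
Your argument is correct and is essentially the paper's own proof: both use local freeness of $\mc G$ near $D_\infty$, the classifying map from a neighborhood of $\infty$ in $\PP^1$ to the moduli of stable sheaves on $E$ of type $\alpha(\mc F)$ (which is isomorphic to $E$), and the fact that any morphism from an open subset of $\PP^1$ to an elliptic curve is constant. Your version merely spells out the flatness and openness-of-stability details that the paper leaves implicit.
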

\begin{proof} The sheaf $\mc G$ is locally free in a neighborhood of $D_\infty$ \cite{Nevins_thesis}. 
We get a map $U \to \mc M_E$, from  some neighborhood $U$ of $\infty$ in $\PP^1$ to the stack of coherent sheaves on $E$ which maps an open $U'$ with $\{\infty\} \subset U'\subset U$ to $\mc M^{s}_{E, \alpha(\mc F)}$, which isomorphic to $E$ and every map $U'\to E$ is constant. 
\end{proof}

Let $a \in \PP^1$ be a closed point such that $\mc G|_{E_a}\not \simeq \mc F$. Let $\mc G_{a,k} = \mc G|_{E_{ka}}$ be the restriction of $\mc G$ to the $k$th formal neighborhood of $E_{ka}$ defined by the sequence 
\[0 \to \mc G(-kE_a) \to \mc G \to \iota_{E_{ka}*}\mc G|_{E_{ka}} \to 0.\] 
Let $n(a)$ be the smallest value so that $\mc G(-mE_a) / \mc G(-(m+1)E_a) \simeq \mc F$ for all $m \ge n(a)$. 

\begin{defn}
The \emph{inhomogeneities} of $(\mc G, \phi)$ are the sheaves $\{ \mc G_{a, n(a)}\mid a \in m(\mc G) \}$ where $m(\mc G)$ is the set of base of points of \emph{inhomogeneous fibers} $E_{n(a)a}$. 
\end{defn}

Notice in particular that $\sum_{a\in m(\mc G)} n(a)$ is not invariant under flat families of sheaves. 


\begin{rmk}Given a relative Fourier-Mukai transform $\Phi_\gamma$, $\gamma \in \SL(2, \Z)$ such that $\Phi_\gamma(\Coh) \cap \Coh \neq \{ 0 \}$, there is an algorithm to determine the sheaf $\mc F_a$ that occurs at the singular fibers 
such that $\mc O_E(-a)\in  \Phi(\Coh(E)) $. 
\end{rmk}

The following is a generalization of the sequence $\mc G \to \mc G^{\vee\vee} \to \mc G^{\vee\vee}/\mc G$ accommodates also Fourier-Mukai partners of length zero sheaves. Given a torsion sheaf $\mc Q \in \Coh_{tors} (E\times \PP^1)$ supported away from $D_\infty$ let 
\[\mc Q = \mc Q_n \twoheadrightarrow \mc Q_{n-1} \twoheadrightarrow \ldots \twoheadrightarrow Q_0 = 0 \] 
denote the successive quotients of its Harder-Narasimhan filtration.

\begin{prop}\label{prop:canon_inj}For $\mc G \in M^{tf}(v; \mc F)$
There is a canonical sequence of injections
\[ \mc G  = \mc G^0 \hookrightarrow \mc G^{1} \hookrightarrow  \mc G^{2} \hookrightarrow  \ldots \hookrightarrow  \mc G^{n} \] such that 
\begin{enumerate}[a)]
\item There is an isomorphism $\mc G^n \simeq \mc F\boxtimes \mc O_\PP(\ell)$ for some $\ell$ and $\mc G^n/\mc G$ is a torsion sheaf $\mc Q$. 
\item The quotient $\mc G^n / \mc G^k$ is isomorphic to the HN quotient $\mc Q_{n-k}$ of $\mc Q$. 
\end{enumerate}
\end{prop}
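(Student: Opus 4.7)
The plan has two stages. First, I would construct the top sheaf $\mc G^n$ as a framing-compatible embedding of $\mc G$ into $\mc F\boxtimes \mc O_\PP(\ell)$ for some integer $\ell$. Second, the intermediate $\mc G^k$ would be produced by pulling back the HN filtration of the torsion cokernel.

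For the first stage, I would extend $\mc F$ to the full surface by $\tilde{\mc F} := p_E^*\mc F$, so that $\tilde{\mc F}|_{E_a}\simeq \mc F$ for every $a\in \PP^1$, and consider
\[ \mc H := \mc Hom_{\mc O_{E\times \PP^1}}(\mc G, \tilde{\mc F}). \]
By Lemma \ref{lem:generically_F}, $\mc H|_{E_a} \simeq \mc Hom(\mc F, \mc F)$ for $a \in U := \PP^1 \setminus m(\mc G)$, and stability of $\mc F$ gives $H^0(\mc Hom(\mc F,\mc F)) = \C$, so that $\pi_*\mc H$ has generic rank one on $\PP^1$. In a neighborhood of $\infty$ where $\mc G$ is locally free, the framing $\phi$ trivializes $\pi_*\mc H$ as a line bundle. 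Writing the torsion-free quotient of $\pi_*\mc H$ as $\mc O_\PP(-\ell)$ for some $\ell \in \Z$, the framing-trivialization extends to a canonical global section of $\pi_*\mc H \otimes \mc O_\PP(\ell)$, which under the projection formula and the tensor-hom adjunction yields a canonical morphism $\mc G \to \tilde{\mc F}\otimes \pi^*\mc O(\ell) = \mc F\boxtimes \mc O_\PP(\ell)$.

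Set $\mc G^n := \mc F\boxtimes \mc O_\PP(\ell)$. On each generic fiber $E_a$ with $a\in U$ the induced map $\mc F \to \mc F$ is nonzero by construction and scalar by stability, hence an isomorphism; combined with torsion-freeness of $\mc G$ this forces $\mc G \hookrightarrow \mc G^n$ to be injective with torsion cokernel $\mc Q$ supported on the inhomogeneous fibers. Writing $\mc K_k := \ker(\mc Q \twoheadrightarrow \mc Q_k)$, so that $\mc K_0 = \mc Q$, $\mc K_n = 0$, and $k \mapsto \mc K_k$ is decreasing, define $\mc G^k$ to be the preimage of $\mc K_{n-k}$ under the projection $\mc G^n \twoheadrightarrow \mc Q$. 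Then $\mc G^0 = \mc G$, the $\mc G^k$ form an increasing chain of subsheaves of $\mc G^n$, and $\mc G^n/\mc G^k \simeq \mc Q/\mc K_{n-k} = \mc Q_{n-k}$, establishing both (a) and (b).

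The main obstacle is the canonicity claim. The integer $\ell$ is characterized as the minimal integer for which a nonzero framing-compatible morphism $\mc G \to \mc F\boxtimes \mc O_\PP(\ell)$ exists, and uniqueness of the morphism itself (not merely up to scalar) follows because $\phi$ pins down the scalar in a neighborhood of $D_\infty$. Once $\mc G \hookrightarrow \mc G^n$ is canonical, the remainder of the construction is canonical by the uniqueness of the HN filtration of $\mc Q$.
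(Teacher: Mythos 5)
Your construction is correct, but it is genuinely different from the one the paper gives. The paper's proof builds the filtration from the bottom up: it transports $\mc G$ through a chain of Fourier--Mukai partner surfaces $S_0,\ldots,S_n$, on each of which the transported object is torsion-free, and takes the double dual $\mc G\to\mc G^{\vee\vee}$ there; each step absorbs exactly one Harder--Narasimhan factor of $\mc Q$ (the factor whose slope becomes $\infty$, i.e.\ which becomes a length-zero sheaf, on that partner), which is why part b) comes out of the iteration and why the statement is billed as a generalization of $\mc G\to\mc G^{\vee\vee}\to\mc G^{\vee\vee}/\mc G$ "accommodating Fourier--Mukai partners of length zero sheaves." You instead construct the top object $\mc G^n$ in one shot via the relative evaluation map $\mc G\otimes\pi^*\pi_*\mc Hom(\mc G,\mc F\boxtimes\mc O_{\PP})\to \mc F\boxtimes\mc O_{\PP}$ and then interpolate by pulling back the HN filtration of the cokernel, so that b) holds by definition. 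Your route is more elementary (no derived equivalences) and makes the canonicity of $\mc G\hookrightarrow\mc G^n$ transparent -- indeed, since $\pi_*\mc Hom(\mc G,\tilde{\mc F})$ is already torsion-free on $\PP^1$, hence a line bundle, you do not even need to pass to its torsion-free quotient, and the evaluation map is canonical without choosing a trivialization. What the paper's route buys, and yours does not, is the identification of each individual step $\mc G^k\hookrightarrow\mc G^{k+1}$ as a saturation (double dual) on a specific derived-equivalent surface, which is the structure actually exploited later (e.g.\ in the analysis of the cycles $Z_{\underline h}$ indexed by HN types). For the literal statement of the proposition your argument suffices; the only details worth spelling out are cohomology-and-base-change for $\pi_*\mc Hom(\mc G,\tilde{\mc F})$ on the locus where $\mc G|_{E_a}\simeq\mc F$, and the observation that the generator of this line bundle near $\infty$ restricts to an isomorphism on $D_\infty$ (so that $\mc Q$ is supported away from the framing divisor).
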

\begin{proof}
  Apply the sequence $\mc G \to \mc G^{\vee\vee} \to \mc G^{\vee\vee}/\mc G$ on a sequence $S = S_0, S_1, \ldots, S_n$  of derived equivalence surfaces where $\Phi_{S_k \to S_{k+1}}(\mc G^{k+1})$ is torsion-free. 
\end{proof}

\subsection{Moduli stacks of complexes} 

Given a family $\mc E \in \shbm(\msc C_S, \msc C_S)$ of sheaf bimodules it follows exactly as in \cite{rains19birational_arxiv} that \cite{toen2007moduli} implies there exists a derived stack $\mc M_{\PP_{\msc C}(\mc E)/S}$ whose $T$-points consist of objects in $\on{perf}(\PP_{\msc C}(\mc E)_T)$. The tangent complex at a geometric point $[F]$ of $\mc M_{X/S}$ is $R\Hom(F,F)[1]$. Likewise there is an algebraic stack $\mc M^{ug}_{\PP_{\msc C}(\mc E)/S}$ locally of finite presentation whose values at $T/S$ an arbitrary (non-derived) scheme are universally glueable complexes. 

\subsubsection{Towards a GIT quotient presentation} \label{ssec:git_affine_fibrations}

When the framing sheaf is simple, the stack $M^{tf}(v; \mc F)$ is representable by a quasiprojective moduli space \cite{Nevins_thesis} in the $\Gamma = 1$ case.

Here we give a chart for the stack of objects in $D_{\Coh}\PP_{[E/\Gamma]_S}(\Xi)$ which we expect to cover the substack of stable objects.  

Let $\mc M_{>\mu(\mc F)}(\alpha) \subset \mc M_{\msc C}(\alpha)$ denote the finite type complete open substack of the moduli stack of elements in $\Coh(\msc C)$ where the slope of each HN factor is $> \mu(\mc F)$. We will write this as a as a stack quotient 
\[ \mc M_{> \mu(\mc F)}(\alpha) = [R_{> \mu(\mc F)}/\GL(\oplus_{i =1}^r V_i)]\] 
where $R_{> \mu(\mc F)}$ is a dense open subset of a Quot scheme $Quot(\oplus_{i = 1}^r \mc L_i \otimes V_i; \alpha)$ consisting of those quotients whose HN factors have the appropriate type and $\mc L_i$ are line bundle summands of a tilting object in the $\Gamma \neq 1$ case and some line bundle in the $\Gamma = 1$ case. We assume that $\mu(\mc L_i) \lneq \mu(\mc F)$ for all $i$. From now on denote $\GL(\oplus_{i = 1}^r V_i)$ by $G$, the object $\mc L_i \otimes V_i$ by $\mc V_i$  and $\oplus_{i =1 }^r \mc  V_i$ by $\mc V$. 

Consider the bundle $\mc H$ on $R_{> \mu(\mc F)}$ whose fiber is $\Hom(\mc F, \mc Q)$ over the point $[ \oplus_i \mc L_i \otimes V_i \xrightarrow{q} \mc Q]$ with its natural $G$-equivariant structure. Denote the total space of $\mc H$ by $R^{\leftarrow \mc F}$.  Denote points of $R^{\leftarrow \mc F}$ by $[\mc F \oplus \mc V \to \mc Q]$ which we will identify with a complex of sheaves with $\mc Q$ in degree 1. Now let $S_a = HH_{0a} \subset HH_0(\msc C) = S$ denote the 1 dimensional subspace consisting of deformations supported scheme theoretically on the diagonal in the $HH_0(E)$ summand of $HH_0(\msc C)$. This is naturally a direct summand on account of the splitting provided by the HKR isomorphism. Let $\Xi_a$ denote the pullback of $\Xi$ to $S_a$ be the universal bimodule extension along $HH_{0a}$. 
Let $ E$ denote the bundle over $R^{\leftarrow \mc F}_{S_a} = R^{\leftarrow \mc F}\times {S_a}$ whose fiber over $x = ([\mc F \oplus \mc V \to \mc Q], s)$ is $\Ext^1(\mc Q,[\mc F \oplus \mc V \to \mc Q]\otimes \Xi_s^* )$. Induced by the sequence of bimodules
 $0 \to \Delta_*\mc O_{\msc C_{S_a}} \hookrightarrow \Xi_{a} \to \mc \Delta_*T_{\msc C_{S_a} }$ we get an exact sequence
\[0 \to [\mc F \oplus \mc V \to \mc Q]\otimes \omega_{\msc C_1} \to [\mc F \oplus \mc V \to \mc Q]\otimes_{\msc C_2} \Xi_a^* \to  [\mc F\oplus \mc V \to \mc Q] \to 0 .\] 
On account of the vanishing $\Hom^0(\mc Q, [\mc F \oplus \mc V \to \mc Q]) = 0 $ the above induces an exact sequence of bundles whose fiber over $x$ is
\begin{multline}
  \label{eq:EZ_es} 0 \to \Ext^1(\mc Q,[\mc F \oplus \mc V \to \mc Q]\otimes \omega_{\msc C_1}  ) \to \Ext^1(\mc Q,[\mc F \oplus \mc V \to \mc Q]\otimes_{\msc C_2} \Xi_s^*  ) \\\xrightarrow{f} \Ext^1(\mc Q,[\mc F \oplus \mc V \to \mc Q] ) \to 0. \end{multline}

Letting $\iota$ denote the canonical inclusion of $\mc Q$ into the second term in the sequence in the target of $\Ext^1(\mc Q, [\mc F\oplus \mc V \to \mc Q])$ which is the restriction to this fiber of a section $\iota_{S_a}$. Using this we obtain an affine bundle $Z$ over $R^{\leftarrow \mc F}_{S_a}$ which is the affine subbbundle of $E$ defined by $f^{-1}(\iota_{S_a})$. This affine bundle $Z$ is a torsor for the bundle whose fiber over $x$ is the leftmost term in \eqref{eq:EZ_es}. Over the zero extension in $HH_{0a}$ the sequence \eqref{eq:EZ_es} splits and $Z_0$ is actually a vector bundle. We now define a function on $Z$. 

The map $q: \mc V \to \mc Q$ and the canonical map $[\mc F \oplus \mc V \to \mc Q] \to \mc V$ induce a diagram of affine spaces over $R_{S_a}^{\leftarrow \mc F}$ whose fiber over $x$ is 

\[
\begin{tikzcd}
  Z_s \arrow[rr] \arrow[d]                                                          & {} \arrow[r] & {\Ext^1(\mc V, \mc V\otimes \omega_{\msc C_1})} \arrow[d]   \\
  {\Ext^1(\mc Q, [\mc F\oplus V\to \mc Q]\otimes_{\msc C_2}  \Xi_s^*)} \arrow[d] \arrow[rr] & {} \arrow[r] & {\Ext^1(\mc V, \mc V\otimes_{\msc C_2}  \Xi_s^*)} \arrow[d] \\
  {\{\iota\} \in \Ext^1(\mc Q, [\mc F\oplus \mc V \to \mc Q])}                      &              & {0 \in \Ext^1(\mc V, \mc V)}                             
  \end{tikzcd}
\]
where the vertical maps denote fiber sequences of affine spaces. The upper right hand corner is the trivial bundle over $R_{S_a}^{\leftarrow \mc F}$ with fiber 
\[ \Ext^1(\mc V, \mc V\otimes \omega_{\msc C_1}) \simeq \mf{end}(\oplus_i V_i)^*  = \oplus_{i = 1}^r \mf{gl}(V_i)^*. \] 

We denote the map $Z\to \mf{end}(\oplus_i V_i)^*$ by $\mu_{b}$, and further denote the projection $R^{\leftrightarrow \mc F}_{S_a} \to S_a$ by $\mu_a$. Together we will denote the corresponding map by
\begin{equation}\label{eq:moment_to_HH} \mu := (\mu_a, \mu_b) : Z \to HH_0(\msc C) \subset  HH_{0a} \oplus  \mf{end}(\oplus_i V_i)^*
\end{equation}
where we have identified the complement of $HH_{0a}$ in $HH_0(\msc C)$ with the center of $\mf{end}(\oplus_i V_i)$. The map $\mu$ will play the role of a moment map for a Hamiltonian reduction. 

Let $\zeta = (\zeta_a, \zeta_b) \in HH_0(\msc C)$ denote a element of the target of $\mu$. Our expectations for this construction are expressed in the following conjecture. 

\begin{conj} The stack $[\mu^{-1}(\zeta)/ G]$ is a chart of $\mc M(v; \mc F)$ on $\PP_{[E/\Gamma](\Xi_\zeta)}$ which contains all $\mc F$-framed torsion-free sheaves. 
For generic values of $\zeta$ the action of $G$ on $\mu^{-1}(\zeta)$ is free. There is a $G$-equivariant line bundle on $\mu^{-1}(\zeta)$ for which GIT stability is equivalent to being torsion-free. 
\end{conj}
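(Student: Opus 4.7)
The plan is to treat the three claims separately. First I would construct a morphism of stacks $\Psi : [\mu^{-1}(\zeta)/G] \to \mc M(v;\mc F)_{\msc S_\zeta}$ and then establish that $\Psi$ is an open immersion whose image contains all framed torsion-free sheaves, that the $G$-action is generically free, and that there is a polarization witnessing torsion-freeness as GIT stability. To build $\Psi$, I interpret a point of $\mu^{-1}(\zeta)$ as Koszul data in the sense of \eqref{eq:kappa_kos_equiv}: from a quotient $q:\mc V\twoheadrightarrow \mc Q$ in $R_{>\mu(\mc F)}$, a framing $f:\mc F\to \mc Q$, and a lift $\epsilon \in \Ext^1(\mc Q, [\mc F\oplus \mc V \to \mc Q]\otimes_{\msc C_2}\Xi_{\zeta_a}^\ast)$ of the canonical section $\iota$ subject to $\mu_b = \zeta_b$, I extract Koszul data $(a,b,\eta)$ by setting $a := \mc V$, letting $b$ be the extension of $\mc Q$ by $\mc V$ determined by $f\circ \iota$, and assembling $\eta: a\otimes_{\msc C_1}\Xi_\zeta \to b$ from $\epsilon$ together with the central $\zeta_b$-contribution that records the component of $\Xi_\zeta$ transverse to $S_a$. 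Applying $\kappa$ yields an object of $D^b_{Coh}(\PP_{\msc C}(\Xi_\zeta))$, and \eqref{eq:res_to_infty} produces the framing identifying its restriction to $D_\infty$ with $\mc F$ via the $\mc F$-summand.

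For essential surjectivity onto the torsion-free locus, given $(\mc E, \phi)$ with Koszul data $(a,b,\eta)$ I would combine Lemma \ref{lem:generically_F} with a noncommutative orbifold analogue of Proposition \ref{prop:canon_inj} to produce a canonical injection $\mc E\hookrightarrow \mc F\oplus \mc V'$ whose cokernel is torsion supported away from $D_\infty$ with slope strictly greater than $\mu(\mc F)$; this identifies $a$ with $\mc V$ after choosing the tilting generator $\oplus \mc L_i$, realizes $\mc Q$ as the cokernel, and exhibits $f$ as the projection to the $\mc F$-summand. For generic freeness, an infinitesimal stabilizer of $(q, f, \epsilon)$ is an endomorphism of $\mc V$ commuting with $q$, $f$, and $\eta$; any such descends to an endomorphism of the associated object commuting with the framing, so stability of $\mc F$ forces the scalar piece to vanish, while for generic $\zeta_b$ the deformed-preprojective identification of Proposition \ref{prop:preprojective_equivalence} excludes nonscalar elements of $\End(\mc V)$ commuting with the bimodule action.

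For the stability claim I would take the $G$-equivariant line bundle given by a positive power of $\det(\mc V)^\vee \otimes \det(\mc Q)$ pulled back to $\mu^{-1}(\zeta)$. A King-style Hilbert--Mumford calculation reduces GIT semistability to an Euler-pairing inequality for proper framed sub-Koszul-data, and unpacking this via \eqref{eq:euler_char} identifies the destabilizing sub-Koszul-data precisely with torsion subsheaves of the associated object on $\PP_{\msc C}(\Xi_\zeta)$; since the framing already forces local freeness near $D_\infty$, this is exactly torsion-freeness. The main obstacle is the noncommutative orbifold analogue of Proposition \ref{prop:canon_inj} invoked in essential surjectivity: it requires iterating relative Fourier--Mukai transforms so that the intermediate objects remain torsion-free in the relevant perverse $t$-structures, and controlling in families the locus where this iteration behaves well. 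This is the step that currently restricts the statement to a conjecture; in the $A_{-1}$ case with $\Gamma = 1$ it is precisely where the results of Nevins cited earlier close the argument.
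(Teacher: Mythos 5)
The statement you are addressing is stated in the paper as a conjecture and is given no proof there, so there is nothing to compare your argument against; the relevant question is whether your proposal actually closes the gap, and it does not. You concede this yourself for the essential-surjectivity step: the noncommutative orbifold analogue of Proposition \ref{prop:canon_inj} (iterated Fourier--Mukai transforms keeping intermediate objects torsion-free in the relevant hearts, controlled in families) is precisely the kind of statement the paper only establishes for $\Gamma=1$ via the Bridgeland-stability machinery of Section \ref{sec:stability}, and without it you cannot show that every $\mc F$-framed torsion-free object admits a presentation by the chosen tilting bundles $\oplus_i \mc L_i\otimes V_i$ with quotient in $R_{>\mu(\mc F)}$. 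That presentation question is itself a boundedness statement for the noncommutative orbifold surface which you do not address.

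Two further steps are asserted rather than proved. For generic freeness, an infinitesimal stabilizer of $(q,f,\epsilon)$ in $G=\GL(\oplus_i V_i)$ only descends to an endomorphism of the associated framed object if the presentation is canonical, i.e.\ if $V_i\simeq \Hom(\mc L_i,\mc Q)$ or some analogous cohomological identification holds; otherwise the $G$-orbit is strictly larger than the isomorphism class and your argument that stability of $\mc F$ kills the stabilizer does not apply. Your appeal to Proposition \ref{prop:preprojective_equivalence} to exclude nonscalar stabilizers for generic $\zeta_b$ is a local statement at orbifold points and does not obviously control global endomorphisms of $\mc V$. For the GIT claim, King's criterion governs finite-dimensional quiver representations; here the base is an open locus of a Quot scheme with an affine bundle over it, so the Hilbert--Mumford analysis must be carried out à la Huybrechts--Lehn, and the identification of the unstable locus for $\det(\mc V)^\vee\otimes\det(\mc Q)$ with exactly the non-torsion-free locus (rather than with a Gieseker-type condition) is a substantive claim that in the commutative case requires something like Lemma \ref{lem:all_framable_are_stable} and has no established analogue over $\PP_{[E/\Gamma]}(\Xi_\zeta)$. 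Your outline is a sensible program, but each of these three steps is an open problem of roughly the same order as the conjecture itself.
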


\section{Weyl groups and derived equivalence in families}
\label{sec:gewg}
Let $G$ be a semisimple Lie group and $W$ its Weyl group. 
Slodowy's approach to the Grothendieck-Springer theory, see \cite[\S 3]{Chriss_Ginzburg_2010} for an exposition, constructs $W$ representations on fibers of the Springer map $T^*G/B \to \mc N$ using the deformation of this map provided by the Grothendieck-Springer map $\mc Y \to \mf g$. In our setting of moduli space of objects in families of derived categories of $\PP^1$ bundles over orbicurves, the Weyl group $W$ will be replaced with a stabilizer subgroup of group containing the elliptic Weyl group in the elliptic orbifold case, and in the general case of a weighted projective line, a stabilizer subgroup of a group containing the affinization of a star-shaped Weyl group as in index 2 subgroup. An extension of these groups accommodating shifts in the derived category will act on family covering an action of $W$ on the base $HH_0(\msc C)$.

\subsection{Reflection groups}

In the case of elliptic orbifolds, the relevance of elliptic root systems and elliptic weyl groups to stability conditions on elliptic orbifolds was described in \cite{Rota_2022} where walls in the stability space for the subcategory of torsion sheaves on an elliptic orbifold were related to roots in an elliptic root system.

\subsubsection{Elliptic root systems}

An elliptic root system in a rank $\ell$ semidefinite real or complex inner product space $(F, \langle -, -\rangle)$ of dimension $\ell + 2$ is defined by a distinguished set of real roots $R_{re}\subset F$ and imaginary roots $R_{im} \subset \on{rad}(\langle -, -\rangle)$ whose union $R = R_{re} \cup R_{im}$ is the root system. A choice of a line $G \subset \on{rad}(\langle -, -\rangle)$ is a marking of $R$ and this induces an affine root system $R_{aff} = R/(R\cap G)$ on the semidefinite space $F/G$ and finite root system $R_{fin} = R/R \cap \on{rad}(\langle -, -\rangle)$ on $F/\on{rad}(\langle -, -\rangle)$. We always assume that $G$ is the span of an imaginary root. The pair $(R,G)$ is a marked elliptic root system. 
\begin{defn}
  A \emph{fully marked elliptic root system} is a triple $(R, \delta_1, \delta_2)$ where $R$ is an elliptic root system and $\delta_1$ and $\delta_2$ form a basis of $R_{im}$. 
\end{defn}

As noted in \cite{dehority_toroidal_vosa}, the above definition of an elliptic root system differs slightly from the one in \cite{Saito_1985} mainly in how it handles the Cartan type $A_{-1}$ corresponding to the empty root system. Then $F \simeq \Bbbk^2$ and $R\simeq \Z^2$ is a lattice in $F$ of imaginary roots. 

Given a finite dimensional root system in the doubly extended Deligne exceptional series 
\begin{equation}
\label{eq:deligne_series}
R = R_{fin} \in \{ A_{-1}, A_0, A_1, A_2, G_2, D_4, F_4, E_6, E_7, E_8 \} 
\end{equation}
let 
\[ R^{ell} = \{ \beta + m \delta_1 + n\delta_2 \mid \beta \in R_{fin} \sqcup\{ 0 \} ,~  m,n \in \Z \} \subset F_{fin} \oplus \Bbbk \delta_1 \oplus \Bbbk \delta_2 \] 
denote the corresponding elliptic root system. 

\subsubsection{Elliptic Weyl groups} 

Given an elliptic root system $R^{ell} \subset F$ the Weyl group of $R^{ell}$ is the subgroup 
\[ W_R^{ell} \subset \on{Aut}(F, \langle - , -\rangle )\]  generated by reflections $w_{\beta}$ through $\beta \in R^{ell}_{re}$. 

The choice of marking $G$ determines a splitting 
\begin{equation}\label{eq:elliptic_weyl_split} 0 \to  \Z[R_{fin}] \to W_R^{ell} \to W_{R}^{aff} \to 0 \end{equation}
 via the induced action on $R_{aff}$ where $W_R^{aff}$ is the affine Weyl group associated to $R$. Recall also that $W_{R}^{aff}$ itself fits into a similar exact sequence
\begin{equation}\label{eq:affine_weyl_split} 0 \to \Z[R_{fin}] \to W_{R}^{aff} \to W_R \to 0. \end{equation}

\subsubsection{Affinized star-shaped root systems} 

More generally, consider a root system $R_{KM} \subset \mf h$ where $\mf h \subset \mf g_{KM}$ is a Cartan subalgebra on a Kac-Moody algebra with star-shaped Dynkin diagram. An affinized start-shaped root system consists of the set 
\[ R^{aff}_{KM} = \{ \beta + m \delta \mid \beta \in R_{KM} \sqcup \{ 0 \}, ~m\in \Z \} \subset \mf h \oplus \Bbbk \delta \] 
where the pairing on $\mf h \oplus \Bbbk \delta $  is induced from that on $\mf h$. The Weyl group  $W_{R_{KM}}^{aff} \subset \Aut(\mf h \oplus \Bbbk \delta )$ is the set of reflections through real roots and it likewise fits into an exact sequence 
\[ 0 \to \Z[R_{KM}] \to W_{R_{KM}}^{aff} \to W_{R_{KM}} \to 0 .\]

\subsubsection{Baer sum}

Let $\Gamma \subset \Aut(C)$ denote a group so that the Dynkin diagram associated to $\msc C = [C/\Gamma]$ has cartan type $R_{KM}$. Let $R^{aff}_{KM} \subset F$ denote the corresponding affinized star-shaped root system, agreeing with an elliptic root system in the elliptic orbifold cases.  

Given $\xi_1, \xi_2 \in HH_0(\msc C)$ their sum is defined using the group structure on $HH_0(\msc C) \simeq \Ext^1(\mc O_\Delta, \Delta_*\mc T_{\msc C})$ which is the Baer sum of extensions. If $\mc E_{\xi}\in \on{shbimod}(\msc C, \msc C)$ is the bimodule corresponding to the extension $\xi$ define $\mc E_{\psi, \xi}$ to correspond to 
$\delta(+) \in \Ext^1(\mc O_\Delta, \Delta_*\mc T_{\msc C}^{\oplus 2})$ where $\delta: \Hom^0((\mc O_\Delta^{\oplus 2} ,\mc O_{\Delta}) \to \Ext^1(\Delta_*\mc T_{\msc C}^{\oplus 2}), \mc O_\Delta)$  is the connecting map from the exact sequence 
\[ 0 \to \mc O_\Delta^{\oplus 2} \to \mc E_{\psi} \oplus \mc E_{\xi}  \to \Delta_*\mc T_{\msc C}^{\oplus 2} \to 0 \] 
and $+\in \Hom^0((\mc O_\Delta^{\oplus 2} ,\mc O_{\Delta})$ is the addition.  Then we have a canonical maps
\[  \mc E_{\psi + \xi} \xrightarrow{B_+} \mc E_{\psi, \xi} \xleftarrow{B_{\oplus}} \mc E_{\psi} \oplus \mc E_{\xi}\]
which for any $a \in D^b_{Coh}(\msc C)$ induce maps 
\begin{equation}
  a\otimes_{\msc C} \mc E_{\psi + \xi} \xrightarrow{1 \otimes B_+} a\otimes_{\msc C}\mc E_{\psi, \xi}, ~~~  a\otimes_{\msc C} ( \mc E_{\psi}\oplus \mc E_{\xi})  \xrightarrow{1 \otimes B_{\oplus} } a\otimes_{\msc C}\mc E_{\psi, \xi}.
\end{equation}
Because $\ker(b_{\oplus}) \simeq \mc O_{\Delta}$ under the inclusion $\mc O_{\Delta} \xrightarrow{(1,-1)} \mc O_{\Delta}^{\oplus 2} \to \mc E_{\psi} \oplus \mc E_{\xi}$ and map $\eta = (\eta_1, \eta_2): a\otimes_{\msc C}(\mc E_{\psi}\oplus\mc E_{\xi}) \to b$ such that the induced maps $a \to a\otimes_{ \msc C}\mc E_{\psi} \xrightarrow{\eta_1} b$ and $a \to a\otimes_{ \msc C}\mc E_{\xi} \xrightarrow{\eta_2} b$ agree factors through $\overline{\eta}: a\otimes_{\msc C}\mc E_{\psi, \xi} \to b$ and thus induces the Koszul data 
\begin{equation}\label{eq:baer_koszul_data}
(a,b, \overline{\eta}\circ (1\otimes b_+ )) \in \Kos(\mc E_{\psi + \xi}).
\end{equation}

\subsubsection{Root hyperplanes} 

We now complete the identification between roots and hyperplanes in $HH_0$. 

Generalizing Section \ref{ssec:equiv_w_comm}, given a class $\beta \in K_0([C/\Gamma])$ there is a subspace $S_\beta$ defined by 
\begin{equation}\label{eq:sbeta} S_\beta = \{ s \in HH_0(\msc C) \mid 0 \to b \to b\otimes \mc E_s \to b \otimes \mc T_{\msc C} \to 0 \text{ splits if } [b] = \beta \}. \end{equation}

Let $R_{\msc C}^{re} \subset K_0(\msc C)$ denote the set of exceptional objects. These span $R_{\msc C} = K_0(\msc C)$ unless $\Gamma = 1$ in which case we assume $C = E$ and make this identification the definition. 

\begin{prop}\label{prop:roots_are_hyperplanes}
  There is an isomorphism 
  \[ HH_0([C/\Gamma]) \simeq F^\vee, ~~~ R_{\msc C} \simeq R^{aff}_{KM}\]
  such that the root hyperplanes $\beta^\perp$ correspond to hyperplanes $S_\beta$. 
  \end{prop}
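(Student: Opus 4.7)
The plan is to build two independent isomorphisms (one on each side) and then check that the hyperplane-assignments match.

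First, I would construct the isomorphism $HH_0([C/\Gamma]) \simeq F^\vee$ using the orbifold HKR decomposition already displayed in the excerpt,
\[ HH_0(\msc C) \;=\; HH_0(C)^{\Gamma}\;\oplus\;\Bigl(\bigoplus_{g \in \Gamma\setminus\{e\}} \mathbb H^0(\mc O_{C^g})\Bigr)^{\Gamma_\Delta}. \]
The first summand has dimension $2$ when $C=E$ (providing the two imaginary-root directions $\delta_1,\delta_2$ of the elliptic system) and dimension $1$ when $C/\Gamma \simeq \PP^1$ (providing the single affine imaginary-root direction $\delta$). Each orbifold point of order $p_i$ contributes a $(p_i-1)$-dimensional summand; this matches the length of the corresponding leg of the star-shaped Dynkin diagram, so the dimension count agrees with $\dim F = \dim \widehat{\mf h_{KM}}$ (respectively $\dim \mf h^{ell}$). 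The isomorphism is canonical once one fixes the splitting of $HH_0(C)^\Gamma$ into $\delta_1$ and $\delta_2$ directions (i.e.\ a full marking).

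Second, I would identify $R_{\msc C} \simeq R^{aff}_{KM}$. For $\Gamma\neq 1$ with $C/\Gamma=\PP^1$, the derived equivalence $D^b_{Coh}(\msc C) \simeq D^b(\on{mod}\,\Lambda(\pmb p,\pmb\lambda))$ recalled in the excerpt reduces this to the Crawley–Boevey/Schiffmann identification of classes of exceptional modules over tubular canonical algebras with real roots of the affinized star-shaped root system. For $C = E$ and $\Gamma\neq 1$ this is the elliptic refinement in \cite{Rota_2022}; for $C=E,\Gamma=1$ the identification $K_0(E)\simeq R_{A_{-1}}^{ell}$ is the definition. In all cases the Euler form on $K_0(\msc C)$ matches the root-lattice pairing under these identifications.

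Third, I would match the hyperplanes. Given $\beta \in R_{\msc C}^{re}$ represented by an exceptional $b \in D^b_{Coh}(\msc C)$, tensoring the universal sequence $0 \to \mc O_\Delta \to \mc E_s \to \Delta_* \mc T_{\msc C} \to 0$ with $b$ on the first factor defines a $\C$-linear map
\[ \partial_b\colon HH_0(\msc C) \;=\; \Ext^1_{\msc C\times\msc C}(\Delta_*\mc T_{\msc C},\mc O_\Delta) \;\longrightarrow\; \Ext^1_{\msc C}(b\otimes \mc T_{\msc C},\,b), \]
and by construction $S_\beta = \ker \partial_b$. Since $b$ is exceptional, Serre duality gives
\[ \Ext^1(b\otimes \mc T_{\msc C},b) \;\simeq\; \Hom(b,\, b\otimes \omega_{\msc C}\otimes \mc T_{\msc C}^{\vee})^{\vee} \;=\; \End(b)^{\vee} \;=\; \C, \]
using $\omega_{\msc C}\otimes \mc T_{\msc C}^\vee \simeq \mc O_{\msc C}$. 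So either $S_\beta = HH_0(\msc C)$ or $S_\beta$ is a hyperplane; the local computations at orbifold points (the corollary on $s_i\otimes \mc E_{\underline a}$ and its non-torsion extension in Remark \ref{rmk:local_bimod_nontorsion}) exhibit explicit $s$ with $\partial_b(s)\neq 0$, excluding the degenerate case.

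The main obstacle is the last step: verifying that the linear form defining $S_\beta$ agrees with pairing against $\beta$ in the root-system inner product. The strategy is case-by-case along the HKR decomposition. For the "finite" directions at an orbifold point of order $p_i$, the explicit formula $A_r = \chi^r(\sum_g a_g g)$ in the corollaries of Section 3 identifies $S_\beta \cap HH_{0,x_i}$ with the reflection hyperplane of the corresponding root in the finite sub-system at that point, via the identification of Table \ref{tbl:preprojective} with preprojective-algebra parameters. For the $HH_0(C)^\Gamma$ directions (which correspond under Beilinson-Bernstein to twists by $\Pic(C)\otimes\C$), one computes that $\partial_b$ acts via the degree (or more generally the Mukai pairing) of $[b]$ against the twist, so that the $\delta$-coefficient of the linear form cutting out $S_\beta$ matches the $\delta$-coefficient of $\beta$ in its affinization or elliptification. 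Combined with Step 2, this upgrades the pointwise match into the desired equality $S_\beta = \beta^{\perp}$.
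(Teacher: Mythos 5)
Your proposal is correct and follows essentially the same route as the paper: both arguments reduce the hyperplane condition to the one-dimensionality of $\Ext^1(b\otimes\mc T_{\msc C},b)\simeq\Hom(b,b)^\vee$ for $b$ exceptional, identify the resulting linear functional with $\langle\beta,-\rangle$, and invoke \cite{Burban_Schiffmann_2013} (via the canonical-algebra description) for the identification $R_{\msc C}\simeq R^{aff}_{KM}$. The one substantive point you assert rather than prove is that $s\mapsto[\,b\otimes_{\msc C}\mc E_s\,]$ is additive in the bimodule extension class — equivalently that $S_\beta$ is a linear subspace — which is not automatic because $b\otimes_{\msc C}(-)=\pi_{2*}(\pi_1^*b\otimes -)$ must be checked to respect Baer sums; the paper devotes the Baer-sum construction culminating in \eqref{eq:baer_koszul_data} precisely to this verification, and your case-by-case matching along the HKR decomposition is additional (harmless) detail beyond what the paper records.
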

  \begin{proof}
    First we need to show that $S_\beta$ is a linear subspace. Clearly this is invariant under scaling extension classes. Given $\psi, \xi \in S_\beta$ let $\eta_1: a\otimes \mc E_\psi \to a$ and $\eta_2: a\otimes \mc E_\chi \to a$ denote the projections onto the first terms induced by the splitting for $[a] = \beta$. Then the Koszul data \eqref{eq:baer_koszul_data} corresponds to a split extension. Thus $S_\beta$ is a linear subspace. 

    For real roots we have $\dim\Ext^1(b\otimes \mc T_{\msc C}, b) = \dim \Hom(b,b) = 1$. In general by pairing with $1 \in \Hom(b,b)$ we get a map 
    \[ \Ext^1(b\otimes \mc T_{\msc C},b) \simeq \Hom(b,b)^\vee \to \C \] 
    and composing with the induced map $HH_0(\msc C) \to \Ext^1(b\otimes \mc T_{\msc C},b)$ gives the map $\langle \beta, -\rangle : HH_0(\msc C) \to \C$ which is generically non-zero. That the corresponding root system is the affinization of the Kac-Moody root system is one of the results of \cite{Burban_Schiffmann_2013}. 
  \end{proof}

\subsubsection{Imaginary reflections, Fourier transforms and Weyl groups} 

By definition for an elliptic root system $R^{ell}$ the elliptic Weyl group $W_{R}^{ell}$ preserves $\on{rad}(\langle-,-\rangle)$ pointwise. Geometrically in the elliptic orbifold cases 
under the identification of Proposition \ref{prop:roots_are_hyperplanes} and $R_{KM}^{aff} \simeq R^{ell}$, 
when we have an action 
\[ W_R^{ell} \to \GL(HH_0([E/\Gamma])) \] 
the elliptic Weyl group will only be a subgroup of those automorphisms induced by families of derived equivalences. 

Because of this in the elliptic orbifold or $A_{-1}$ cases we let 
\begin{equation}\label{eq:GW_def} 
IW_R^{ell} \subset GL(HH_0[E/\Gamma])
\end{equation} 
denote the group generated by $W_R^{ell}$ and $\Aut(D^b_{Coh}([E/\Gamma]))$, where by convention the latter contains anti-autoequivalences. Because the group of autoequivalences surjects onto $\Aut(\on{rad}(\chi_{[E/\Gamma]}(-,-))) \simeq \GL(2, \Z)$ \cite{Burban_Schiffmann_2013} we have an exact sequence 
\[ 0 \to W_R^{ell} \to IW_R^{ell}\to \GL(2, \Z) \to 0.  \] 

when the elliptic root system is seen as the root system of a toroidal algebra $\mf g_{tor} \supset \Hom(\C^{\ast 2}, \mf g)$ the imaginary reflections correspond to automorphisms of the underlying torus $\C^{\ast 2}$. 

In particular, there are derived equivalences in $\Aut(D^b_{Coh}([E/\Gamma]))$ whose induced adjoint action on $GW^{ell}_R$ exchange the two left hand terms isomorphic to $\Z[R_{fin}]$ in \eqref{eq:elliptic_weyl_split} and \eqref{eq:affine_weyl_split} which are both normal subgroups of $W_R^{ell}$. 

A reflection group lying between $IW_R^{ell}$ and $W_R^{ell}$ with applications to monodromy is $W_{R,G}^{ell}$ which depends on a marking $G$. Define 
\begin{equation}\label{eq:monodromy_weyl_ell} W_{R,G}^{ell} := \{ w \in IW_R^{ell} \mid w G = G \}  \end{equation}
so that since the stabilizer of a line in a lattice is $\Z \rtimes \Z/2\Z \subset \GL(2, \Z)$ we have an exact sequence 
\[ 0 \to W_R^{ell} \to W_{R,G}^{ell} \to \Z\rtimes \Z/2\Z \to 0 \] 
and letting $W_R^{aff, \pm}\subset \Aut(\widehat{\mf h})$ denote the group generated by $W_R^{aff, \pm}$ and the reflection through the imaginary root hyperplane, we have an exact sequence 
\begin{equation}\label{eq:affine_translation_ell_seq}
0 \to \Z[R_{aff}] \to W_{R,G}^{ell} \to W_R^{aff, \pm} \to 0 
\end{equation}
analogous to \eqref{eq:elliptic_weyl_split}. When $R = A_{-1}$ we have $W_{A_{-1}}^{ell} = \{ 1 \}, IW_{A_{-1}}^{ell} = \GL(2, \Z)$ and $W^{ell}_{A_{-1},G} = \Z \rtimes \Z/2\Z$.

\subsection{Reflection functors} 

Intending to produce analogues of the geometric reflections in \cite{Nakajima_2003} using the derived category we study the analogues of the functors for the deformed preprojective algebra \cite[\S 3]{Crawley-Boevey_Kimura_2022} generalizing those on the preprojective algebra from \cite{Buan_Iyama_Reiten_Scott_2009,Sekiya_Yamaura_2013, Baumann_Kamnitzer_2012,Baumann_Kamnitzer_Tingley_2014}. 

\subsubsection{$A_{-1}$ case}

We provide proofs of the reflection functors in the non-orbifold case $R = A_{-1}$. 

\begin{thm}\label{thm:derived_reflection}
There is a group $G$ action on $D_{\Coh}(\PP_{E_S}(\Xi))$ and an action $\rho: G \to \End(HH_0(E))$ such that $g$  becomes $S$-linear after base change along $\rho(g)$. The map $\rho$ factors through the action of $\GL(2, \Z)$ on $HH_0(E)$. 
\end{thm}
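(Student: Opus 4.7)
The plan is to build $G$ as the group generated by the covariant derived autoequivalences of $D^b_{Coh}(E)$ together with the contravariant Verdier duality furnished by Proposition \ref{prop:verdier_duality_reflection}. For any covariant $\Phi \in \Aut(D^b_{Coh}(E))$, the formula \eqref{eq:derived_on_glueing} produces an exact functor $\tilde\Phi$ on Koszul data $(a,b,\eta) \mapsto (\Phi(a),\Phi(b),\eta')$, where $\eta'$ is constructed from the canonical isomorphism $\Phi(a) \otimes_{\msc C_1} \Ad_\Phi(\Xi) \simeq \Phi(a \otimes_{\msc C_1} \Xi)$. The key preliminary identification to check is that the pullback $\tau_\Phi^*\, \Xi$ of the universal bimodule extension along $\tau_\Phi: HH_0(E) \to HH_0(E)$ agrees with $\Ad_\Phi(\Xi)$; this follows from the universal property of $\Xi$ combined with Caldararu's compatibility of $\Phi$ with the HKR isomorphism, which says that $\Phi$ acts on $\Ext^1(\mc O_\Delta, \Delta_*\mc T_E)$ precisely by $\tau_\Phi$. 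With this identification, $\tilde\Phi$ becomes an equivalence $D^b_{Coh}(\msc S_S) \to D^b_{Coh}(\msc S_S)$ which covers $\tau_\Phi$ on the base and is $S$-linear after base change along $\tau_\Phi$. For the contravariant generator one takes $\mathbb{D}$ from \eqref{eq:verdier}, covering the involution $\tau$ of \eqref{eq:tau_stupid}.

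To promote this assignment to a genuine group action, I would verify functoriality at the level of Koszul data: the isomorphism $\Phi\Psi(a) \otimes \Ad_{\Phi\Psi}(\Xi) \simeq \Phi\Psi(a \otimes \Xi)$ coincides with the composition of the isomorphisms used in defining $\tilde\Phi$ and $\tilde\Psi$, so $\widetilde{\Phi \circ \Psi} = \tilde\Phi \circ \tilde\Psi$. Composition with $\mathbb{D}$ is handled by the symmetric argument involving \eqref{eq:radj_bm}, \eqref{eq:ladj_bm} and the double adjoint property \eqref{eq:double_adj}. The strict associativity only needs to be coherent up to quasi-isomorphism since we work in the dg-enhancement; a tracking of these natural isomorphisms completes the group axioms. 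On Hochschild homology, $\tau$ is a functorial action, so the induced map $\rho: G \to \End(HH_0(E))$ is a homomorphism.

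It remains to show that $\rho$ factors through $\GL(2,\Z)$. Under the HKR isomorphism $HH_0(E) \simeq H^0(E, \mc O_E) \oplus H^1(E,\mc T_E) \simeq \C^2$, and the action of $\Aut(D^b_{Coh}(E))$ on $HH_0(E)$ is intertwined with its action on the Mukai lattice $H^*(E, \Z) \simeq \Z^2$ by Caldararu's theorem; by the classification of derived autoequivalences of elliptic curves (Orlov--Polishchuk) the resulting homomorphism $\Aut(D^b_{Coh}(E)) \to \GL(H^*(E,\Z))$ has image exactly $\SL(2,\Z)$, and extending by $\mathbb{D}$ enlarges this image to $\GL(2,\Z)$. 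The shifts, translations by points and tensor products with degree zero line bundles all lie in the kernel of $\rho$, which is consistent with $HH_0$ being a more coarse invariant than the full autoequivalence group.

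The main obstacle I anticipate is not the algebra of reflections itself but the bookkeeping around the phrase \emph{``$S$-linear after base change along $\rho(g)$''}: one is really comparing two different families of dg-categories over $HH_0(E)$, namely $\msc S_S$ and $\tau_\Phi^* \msc S_S$, and asserting that $\tilde\Phi$ is a morphism of families covering $\tau_\Phi$. Making this precise in the dg/derived setting — ensuring naturality in $S$ and compatibility with the universal bimodule on base-change — is where the technical effort concentrates, while the identification of the image with $\GL(2,\Z)$ and the group-theoretic structure will be essentially formal once the lift is constructed.
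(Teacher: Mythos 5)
Your proposal is correct and follows essentially the same route as the paper: the paper's (much terser) proof also takes $G$ to be generated by $\Aut(D^b_{Coh}(E))$ acting on Koszul data via \eqref{eq:derived_on_glueing} together with the Verdier duality of Proposition \ref{prop:verdier_duality_reflection}, and deduces the factorization through $\GL(2,\Z)$ from the $\SL(2,\Z)$-image of the autoequivalence action on $HH_0(E)$ extended by the reflection through the commutative hyperplane. Your added detail on the identification $\Ad_\Phi(\Xi)\simeq\tau_\Phi^*\Xi$ and the coherence of the lift is exactly the bookkeeping the paper leaves implicit.
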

\begin{proof} 
The group $G$ is generated by Verdier duality from Section \ref{ssec:verdier} and the group of autoequivalences $\Aut(D^b_{Coh}(E))$ of the underlying curve which acts by \eqref{eq:derived_on_glueing}. Because Verdier duality acts via reflection through the commutative hyperplane and the action of derived equivalences is equivalent to the action of $\Aut(D^b_{Coh}(E))$ on $HH_0(E)$ we have that $G$ factors through $\GL(2, \Z) = IW^{ell}_{A_{-1}}$.  
\end{proof} 

\subsubsection{Other cases} 

In the other cases we were unable to find a nice formula for the deformed reflection functors corresponding to real roots. We expect the following 

\begin{conj} \label{conj:reflection}
  There is a group $G$ action on $D_{\Coh}(\PP_{[E/\Gamma]_S]}(\Xi))$ and an action $\rho: G \to \End(HH_0([E/\Gamma]))$ such that $g$  becomes $S$-linear after base change along $\rho(g)$. The map $\rho$ factors through the action of $IW^{ell}_R$ on $HH_0(E)$. 
\end{conj}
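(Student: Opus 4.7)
The plan is to decompose $IW_R^{ell}$ into three pieces and construct functors realizing each piece separately, then check the group relations among them. First, the imaginary reflections and the quotient $\GL(2,\Z)$ are produced exactly as in Theorem \ref{thm:derived_reflection}: the Verdier duality construction of \S\ref{ssec:verdier}, in the form of Proposition \ref{prop:verdier_duality_reflection}, gives one imaginary reflection, and the group $\Aut(D^b_{Coh}([E/\Gamma]))$ of derived autoequivalences of the tubular canonical algebra acts on $HH_0([E/\Gamma])$ via the $\tau_\Phi$ of \eqref{eq:derived_on_glueing} and on the family via the formula in \eqref{eq:derived_on_glueing}. The latter gives the full $\GL(2,\Z)$ acting on the two-dimensional imaginary lattice, as in the non-orbifold case, using \cite{Burban_Schiffmann_2013} for the surjection onto $\GL(2,\Z)$. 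Combined with Verdier duality this yields the subgroup of $IW_R^{ell}$ mapping onto $\GL(2,\Z)$ plus the generator of $W_{R,G}^{ell}/W_R^{aff}$.

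Next I would construct the real root reflections. Real roots of $R^{ell}$ correspond under Proposition \ref{prop:roots_are_hyperplanes} to hyperplanes $S_\beta \subset HH_0([E/\Gamma])$ cut out by exceptional classes $\beta \in R^{re}_{\msc C}$. For a simple real root $\alpha_i$ supported at an orbifold point $x$, use Proposition \ref{prop:preprojective_equivalence} to identify torsion modules over $\PP_{[E/\Gamma]}(\Xi_\xi)$ supported over $x$ with seminilpotent modules over the deformed preprojective algebra $\Pi^{\lambda(\xi)}$. On $\Pi^{\lambda}$ the classical reflection functors of \cite{Crawley-Boevey_Kimura_2022} (and their predecessors) produce, for each node, a derived equivalence $\Pi^{\lambda}\text{-mod} \simeq \Pi^{s_i \cdot \lambda}\text{-mod}$ covering the reflection $\lambda \mapsto s_i \cdot \lambda$. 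The plan is to globalize these local functors to an autoequivalence of the family $D^b_{Coh}(\msc S_S)$ by working at the level of Koszul data: build the reflection as a mutation-type functor $(a,b,\eta) \mapsto (a',b',\eta')$ where away from the orbifold point $x$ nothing happens, while locally at $x$ one performs the Crawley-Boevey reflection using Remark \ref{rmk:local_bimod_nontorsion} to extend beyond torsion to arbitrary Koszul data.

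For the remaining simple roots $\delta_1 - \theta$ and $\theta$ (longest roots of affine type, not supported at a single orbifold point), I would use a different construction: realize these reflections as compositions of a Fourier--Mukai partner transformation with a spherical twist against an appropriate exceptional object in $D^b_{Coh}(\msc S_S)$, lifted from the exceptional collection on $D^b_{Coh}([E/\Gamma])$ via \eqref{eq:derived_on_glueing}. The action on $HH_0$ is then computed by the general principle that autoequivalences act on $HH_\ast$ by their induced action on Hochschild homology, using \cite{caldararu_Mukai_I}, and can be checked to equal the reflection $s_\beta$ by verifying the action on a basis -- this reduces to the rank-one calculation of Proposition \ref{prop:roots_are_hyperplanes} combined with the explicit bimodule computations \eqref{eq:gen_simple_bimodact}--\eqref{eq:simple_reg_bimodact}.

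The main obstacle I expect is verifying the braid relations and finite-order relations of $IW_R^{ell}$ among the constructed functors in the derived category, as opposed to merely on Hochschild homology. The lifts of the reflections are unique only up to shift and tensoring by line bundles pulled back from $S$, so one must either work with a $\Z$-extension and read off the quotient, or pin down normalizations using the restriction to $D_\infty$ and compatibility with Serre duality. For the braid relations between two simple root reflections whose roots are supported at distinct orbifold points, commutativity is essentially automatic since the local reflection functors act on disjoint local charts; the nontrivial relations are those involving the affine and imaginary root reflections, which I would check by reducing to the $\Gamma = 1$ case of Theorem \ref{thm:derived_reflection} after an appropriate Fourier--Mukai twist, using that $W_R^{ell}$ is generated by $W_R^{aff}$ and the Baer sum translations of \eqref{eq:baer_koszul_data} which already commute with the noncommutative structure by construction.
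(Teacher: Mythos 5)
The statement you are proving is stated in the paper as Conjecture \ref{conj:reflection}, not as a theorem: the paper offers no proof, and in fact explicitly says ``in the other cases we were unable to find a nice formula for the deformed reflection functors corresponding to real roots.'' The only ingredients the paper records are exactly the ones you reproduce in your first paragraph (imaginary reflections via Verdier duality and the lift \eqref{eq:derived_on_glueing} of $\Aut(D^b_{Coh}([E/\Gamma]))$, as in Theorem \ref{thm:derived_reflection}) together with the remark that \emph{on} the hyperplane $S_\alpha$ the real reflection is realized by a spherical twist. So your proposal cannot be ``the same as the paper's proof''; it is an attempt at an open problem, and it should be judged on whether the real-root construction actually goes through.

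It does not, as written. The reflection $s_\alpha$ fixes $S_\alpha$ pointwise and moves every $s\notin S_\alpha$, so the functor you need must be an equivalence $D^b_{Coh}(\PP_{[E/\Gamma]}(\Xi_s))\to D^b_{Coh}(\PP_{[E/\Gamma]}(\Xi_{s_\alpha(s)}))$ between \emph{different} noncommutative surfaces for generic $s$; the spherical twist only exists where the spherical object of class $\alpha$ exists, i.e.\ over $S_\alpha$ itself. Your substitute is the Crawley-Boevey--Kimura reflection functor transported through Proposition \ref{prop:preprojective_equivalence}, but that equivalence only identifies the subcategory $\mc T_x$ of objects supported over the orbifold point with trivial restriction to infinity with seminilpotent $\Pi^{\lambda(\xi)}$-modules. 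The step ``globalize to a mutation-type functor $(a,b,\eta)\mapsto(a',b',\eta')$ where away from $x$ nothing happens'' is precisely the missing construction: you never say what $(a',b')$ are for Koszul data not supported at $x$ (e.g.\ for the structure sheaf or for framed torsion-free objects, which are the objects the conjecture is needed for), nor why the local preprojective reflection is compatible with the $\mc A^{\Xi_s}$-module structure away from $x$, nor why the resulting assignment is essentially surjective onto $D^b_{Coh}(\PP(\Xi_{s_\alpha(s)}))$. Remark \ref{rmk:local_bimod_nontorsion} computes $t\otimes\mc E_{\underline a}$ for non-torsion $t$ but does not define a reflection on such objects. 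Since this globalization is exactly what the authors state they could not do, asserting it as a plan leaves the conjecture unproved; the remaining paragraphs (affine roots via spherical twists of lifted exceptional objects, braid relations) inherit the same gap because they presuppose the simple-root functors exist as family-level equivalences.
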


The exact same formulas as in Theorem \ref{thm:derived_reflection} provide for the imaginary reflections and when restricting to the hyperplane $S_\alpha \subset S$ where there are torsion sheaves of class $\alpha$ for $\alpha \in R^{ell}_{re}$ the reflection is given by the spherical twist functor. 

More generally, for the real roots we expect an action of a group extending the affinized star-shaped weyl group to act on $D_{\Coh}(\PP_{[C/\Gamma]}(\Xi))$ for $C$ of higher genus. 

\section{Bridgeland stability conditions}
\label{sec:stability}
While the construction of \ref{ssec:git_affine_fibrations} allows for a construction of moduli stacks regardless of the framing sheaf $\mc F$, when $\mc F$ is simple and of trivial monodromy we have an additional description on $M^{tf}(v, \mc F)$ and its singular models together with their deformations using Bridgeland stability conditions.over the central commutative surface as well as in families over $HH_0(\msc C)$. Precisely speaking we will only prove the main claims for $\Gamma = \{ 1 \}$. 

\subsection{Central surface}

The central surface is $\overline{\msc X_R} = [E\times \PP^1/ \Gamma]$ and derived equivalent to a smooth projective surface $\overline X_R$ and thus the usual construction of stability conditions applies in this situation. We will be interested in a sequence of subspaces 
\[\cdots \subset U_n \subset \cdots  \subset U_1\subset U \subset \Stab(\overline{\msc X_R}) \] 
of the space of geometric stability conditions which give rise to relatively ample line bundles in $N^1(X_R^{[n]}/\A^n)$.

\subsection{Deformed surfaces}

For the deformed surfaces $\PP_{E}(\Xi_s)$ in the $R = A_{-1}$ case for $s \neq 0$ we study ingredients necessary for stability conditions on their derived categories. First, in \ref{ssec:ns_family}-\ref{ssec:amp_fam} we study divisors for general $R$ and in families.

\subsubsection{Neron-Severi lattice}\label{ssec:ns_family}

Recall from \cite[\S 7]{rains19birational_arxiv} that the rank of $\mc E \in K(\msc X)$ is defined to be the difference $\rk(\mc E) = r_2 - r_1$ of the class of $\mc E = \mc E_1 + \mc E_2$ under the decomposition of Proposition \ref{prop:sod_pp_univ}. The numerical Grothendieck group is $K_{num}(\msc X) = K_0(\msc X)/ \Pic^0(\msc C) \oplus \Pic^0(\msc C)$, the identity component being trivial unless $\Gamma = \{1\}$. The N\'eron-Severi lattice is $\ker(\rk(-))/ \Z[\mrm{pt}]$ and the first chern class $c_1(\mc E)$ is defined to be the equivalence class of $[\mc E]-\rk(\mc E)[\mc O_{\msc X}]$ in $\NS(\msc X)$. The intersection pairing $\langle-,-\rangle$ on $\NS(\msc X)$ is equal to $-\chi(-,-)$ which is well defined. An effective divisor is $c_1(\mc E)$ for a 1-dimensional sheaf $\mc E$. An ample divisor is one which pairs positively with all effective divisors. We make likewise define the \emph{uniformly numerical relative Grothendieck group} $\mc N(D^b_{Coh}(\msc X_S)/S)$ exactly as in \cite[\S 21.5]{Bayer_Lahoz_Macrì_Nuer_Perry_Stellari_2021}. 

It follows that $\msc X_S$ has constant Picard rank over $S$, which satisfies 
\[\rk \NS(\msc X_s) = \rk(\mc N(D^b_{Coh}(\msc X_S)/S)) -2.\] Since we are concerned only with objects in the derived category framed trivially at $D_\infty$ all relevant sheaves are expected to come from the first factor in the semiorthogonal decomposition of Conjecture \ref{conj:SOD_blowup} which would also have constant Picard rank. Likewise we are primarily interested in a particular limit, inspired by the F-theory constructions of string theory, of vanishing volume of the elliptic fiber in the central surface and its analogues in other fibers.

\subsubsection{Ample cone} \label{ssec:amp_fam}
We want to understand relatively ample divisors $\omega$ over $S$.  
In the $\Gamma = \{ 1 \}$ case the cone of effective divisors is generated by $[E_S]:= c_1(\mc O_{D_\infty\times S})$ and $[P_S]:= c_1(\mc F_S)$ where $\mc O_{D_\infty \times S}$ is such that its Koszul data is 
\[ \kappa^{-1}(\mc O_{D_\infty \times S}) ) = (\mc O_{E \times S}, \mc O_{E \times S}, \eta)\]
and $\mc F_S$ has Koszul data 
\[ \kappa^{-1}(\mc F_S) = (0, \Bbbk_e, 0) \] 
where $\eta$ is the quotient by the inclusion $\mc O_{E \times S}\to \mc O_{E \times S}\otimes_{E\times S} \Xi$.We have $\langle E_S, P_S \rangle = 1$ and $\langle E_S, E_S \rangle, \langle P_S, P_S \rangle =0$ exactly as in the central fiber and the ample cone for a closed point $s \in S$ of $\PP_E(\Xi_s)$ is the positive cone of $\NS(\bbP_{E}(\Xi_s))_\Q = {\RN{2}_{1,1}}_\Q$. We thus can identify the uniformly numerical relative Grothendieck group as 
\[ \mc N(\PP_{E_S}(\Xi)/S) \simeq {\RN{2}_{1,1}}\oplus {\RN{2}_{1,1}}. \] 

\subsubsection{Other surfaces} \label{eq:ns_lattice_other_types}

For $R \neq A_{-1}$ we have an identification 
\[ K_{num}(\msc S_R) = K_{num}([E/\Gamma]) \oplus K_{num}([E/\Gamma]) \] 
and the semiorthogonal decomposition predicted by Conjecture \ref{conj:SOD_blowup} should give identifications 
\[ \NS(\msc X_R) \simeq \RN{1}_{1,9}\] 
matching the result on the central fiber, $\NS(S) \simeq \RN{1}_{1,9}$ for an rational elliptic surface, and the uniformly numerical relative Grothendieck group is 
\[ \mc N(\msc X_R) \simeq \RN{2}_{1,1} \oplus \RN{1}_{1,9}. \]

In particular these results hold for $R= D_{4}$ where no noncommutative blowdown is necessary. 

\subsection{Harder-Narasimhan structures} 

Here we specialize to $R = A_{-1}$ and describe Harder-Narasimhan filtrations for noncommutative ruled surfaces and semistable reduction giving rise to HN-structures over curves. 

\subsubsection{Harder-Narasimhan filtrations over points} 

Pick a numerically constant relatively ample $H \in \NS(\msc X_S)$ and another element $B\in \NS(\msc X_S)$. This defines as usual for each $s \in S$ 
\[ \on{ch}_s: K_{num}(\msc X_s) \to \Q^{\rk K_{num}(\msc X_s)}\]
and its global analogue $\on{ch}: \mc N(\msc X_S) \to \Q^{\rk \mc N(\msc X_S)}$.  Letting $\on{ch}^B(-) = e^{-B}\on{ch}(-)$ for $B  \in \NS(\msc X_S)_\Q$ numerically constant we define 
\[ v^B(-) = (\int_{\msc X_s} \omega^2 \on{ch}_0^B(-), \int_{\msc X_s} \omega \on{ch}_1^B(-), \int_{\msc X_s} \on{ch}_2^B(-))\]
where the integrals denote the natural pairing on the Grothendieck group.

First we construct a maximal destabilizing subobject for any  $\xi \in S$.  Let $\msc X = \msc X_{\xi}$. 

Let $\mu_{H,B}(-) = v_1^B(-)/v_0^B(-)$ and omit $B$ if $B = 0$. Let $\nu_{H,B}(-) = v_2^B(-)/v_0^B(-)$. Let $s \in S$ be a point. We define Gieseker stability, slope stability and Matsuki-Wentworth stability exactly as in the case on the commutative surfaces following the theory developed in \cite{rains2019noncommutative,rains19birational_arxiv} of the usual results except for potentially boundedness results for families of Gieseker semistable sheaves.

\begin{defn}
A sheaf $\mc F \in \Coh(\msc X)$ is $(H,B)$ with $r> 0$ is twisted (semi)-stable if for every $0 \neq \mc G \subset \mc F$ we have $\mu_{H,B}(\mc G) \lneq \mu_{H,B}(\mc F)$ or ($\mu_{H,B}(\mc G) = \mu_{H,B}(\mc F)$ and  $\nu_{H,B}(\mc G) (\le) \lneq \nu_{H,B}(\mc F)$. )
\end{defn}

\begin{prop}
Every $\mc F \in \Coh(\msc X)$ has a maximal destabilizing subobject $\mc F' \subset \mc F$. 
\end{prop}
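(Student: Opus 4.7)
The plan is to follow the classical argument of Harder--Narasimhan for coherent sheaves on a surface, and to reduce all the noncommutative aspects to results already known for the category $\Coh(\msc X)$ from the references \cite{rains2019noncommutative,rains19birational_arxiv}. Concretely, I want to show (i) the set $\{\mu_{H,B}(\mc G) \mid 0 \ne \mc G \subset \mc F\}$ is bounded above; (ii) the supremum is attained on a nonempty collection of subobjects; (iii) a unique maximal element of this collection exists and is the maximal destabilizing subobject.

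First, I would reduce to the case that $\mc F$ is torsion-free in the appropriate sense: its maximal torsion subsheaf (which exists since $\Coh(\msc X)$ is noetherian as recorded after equation \eqref{eq:rhom_NC}) is itself destabilizing if it has positive rank zero part, so we can peel off torsion by a short induction. For the torsion-free part, the key boundedness input is the existence of a uniform upper bound on $\mu_{H,B}$ of subsheaves; this is the analogue of Grothendieck's lemma, and on noncommutative $\PP^1$-bundles over a curve it is precisely the content of the boundedness results of \cite{rains2019noncommutative,rains19birational_arxiv}, applied to the Koszul data description under the equivalence $\kappa$ of \eqref{eq:kappa_kos_equiv}. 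Having bounded the slopes, the Euler pairing formula \eqref{eq:euler_char} shows that the possible values of $(v_0^B,v_1^B)$ for subobjects with $\mu_{H,B}(\mc G) \ge c$ are discrete, so a supremum is attained.

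Second, I would use noetherianity to produce an actual subobject realizing the supremum slope: take any increasing chain of subobjects $\mc G_1 \subset \mc G_2 \subset \cdots$ with $\mu_{H,B}(\mc G_i)$ approaching the supremum; by noetherianity this chain stabilizes, and the limit achieves the supremum. Within the class of subobjects of $\mc F$ attaining the maximal slope, I would then repeat the argument for $\nu_{H,B}$, again invoking boundedness and the fact that $\nu_{H,B}$ takes values in a discrete subset of $\Q$ once $v_0^B$ and $v_1^B$ are fixed.

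Third, I would check uniqueness of the maximal destabilizing subobject: if $\mc G_1, \mc G_2 \subset \mc F$ both attain the maximal $(\mu_{H,B},\nu_{H,B})$ pair in lexicographic order, then the image of $\mc G_1 \oplus \mc G_2 \to \mc F$ has slope and $\nu$ at least those of each summand (by the seesaw property of these linear functionals, together with the short exact sequence computing them in $K$-theory), hence equals them; iterating produces a unique maximal such subobject. The main obstacle is really step (i), the boundedness: the slope bound on arbitrary subobjects in $\Coh(\msc X)$ requires the noncommutative analogue of a standard commutative input, but this is exactly what Rains establishes for noncommutative ruled surfaces, so the proof reduces to citing those results and checking that the bounding argument works uniformly in the Koszul data.
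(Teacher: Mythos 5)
Your proof is a valid-looking strategy but it is a genuinely different, and heavier, route than the one the paper takes. The paper's proof is a single sentence: the argument of \cite[Lemma 1.3.5]{huybrechts_lehn_2010} carries over verbatim to modules over the sheaf $\Z$-algebra. That argument needs only two inputs: noetherianity of the abelian category (recorded in the paper after \eqref{eq:rhom_NC}) and the formal seesaw/additivity properties of the ordering by $(\mu_{H,B},\nu_{H,B})$ on short exact sequences. In particular it produces the maximal destabilizing subobject by a Zorn-type maximality argument on the poset of subobjects ordered by inclusion together with comparison of reduced invariants, and it \emph{never needs an a priori upper bound on the slopes of subobjects}. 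Your route, by contrast, is the other classical proof (bound the slopes, show the supremum is attained, then prove uniqueness via seesaw), and its entire weight rests on step (i), the noncommutative Grothendieck lemma.

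That is where I would push back. You assert that the uniform upper bound on $\mu_{H,B}$ of subobjects "is precisely the content of" the boundedness results of \cite{rains2019noncommutative,rains19birational_arxiv}, but the results the paper actually invokes from those references are boundedness of Quot schemes and flattening stratifications, which is not literally the statement you need; deducing the slope bound (e.g.\ by embedding the torsion-free part into a sum of twists $\mc A(n)$ of the structure module and bounding slopes of submodules of those) is real work that you have left undischarged. Separately, your step (ii) is garbled as written: subobjects whose slopes approach the supremum need not form an increasing chain, so noetherianity does not directly apply; what you actually want is that discreteness of $(v_0^B,v_1^B)$ plus the upper bound makes the set of slopes above any threshold finite, so the supremum is attained without any chain argument. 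Both issues are repairable, but the paper's choice of the Huybrechts--Lehn Lemma 1.3.5 argument is precisely designed to avoid them, which is why its proof is one line.
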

\begin{proof}
The proof given in \cite[Lemma 1.3.5]{huybrechts_lehn_2010} carries through without change to the setting of modules over the $\Z$-algebra $\mc A^{\mc E_\xi}$. 
\end{proof} 

It follows that there exist for any $\mc F \in \Coh(\msc X)$ a filtration 

\[ 0 = \on{HN}_0(\mc F)\subset \on{HN}_1(\mc F) \subset \cdots\subset \on{HN}_n (\mc F) = \mc F\]
such that $\on{HN}_i(\mc F)/\on{HN}_{i-1}(\mc F)$ is $(H,B)$-twisted semistable.  

\subsubsection{Semistable reduciton} 

Crucial to the study of stability in families is an analogue of semistable reduction.

Recall \cite{rains19birational_arxiv} that a family $\mc F \in \Coh(\PP^1_{\msc X\times R}(\Xi))$ is called $R$-flat if 
\[ \mc F\otimes_R^{L}\Bbbk(s) \in \Coh(\PP^1_{\msc X\times R}(\Xi))\] 
for any point $s \in \Spec(R)$ and that $R$-flat sheaves are perfect. 

Recall also that there is an inclusion 
\[ M \to \ad \ad M \] 
for the duality functor $\ad$ from \eqref{eq:ad}. A sheaf $\mc F$ is called reflexive if $\mathbb{D}(\mc F)$ is a sheaf, and this implies that $\ad \ad \mc F = \mc F$. 

Let $(R, (\pi))$ be a discrete valuation ring with residue field $k$ and fraction field $K$. 
\begin{prop}\label{prop:ss_red}
Let $\mc F \in \Coh(\PP^1_{\msc X\times R}(\Xi))$ be $R$-flat such that $\mc F_{K}$ is $(H,B)$-semistable. Then there exits an $\mc G \subset \mc F$ such that $\mc G_K = \mc F_K$ and $\mc G_K$ is $(H,B)$-semistable.
\end{prop}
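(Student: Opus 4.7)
The plan is to carry out Langton's elementary-modification argument \cite{huybrechts_lehn_2010} in the $\Z$-algebra setting; I read the conclusion as the statement that $\mc G_k$ is $(H,B)$-semistable, since semistability of $\mc G_K = \mc F_K$ is already in the hypothesis. If $\mc F_k$ is already semistable, take $\mc G = \mc F$. Otherwise, the preceding proposition supplies a maximal destabilizing subobject $\mc B_0 \subset \mc F_k$, and we form the elementary modification
\[ \mc F^{(1)} := \ker\bigl(\mc F \twoheadrightarrow \mc F_k \twoheadrightarrow \mc F_k/\mc B_0\bigr), \]
with the first surjection being reduction modulo the uniformizer $\pi$. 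Since $\mc F^{(1)} \subset \mc F$ is $\pi$-torsion-free it is again $R$-flat, $\mc F^{(1)}_K = \mc F_K$, and a short diagram chase at the level of $\mc A^{\Xi}$-modules produces the exact sequence
\[ 0 \to \mc F_k/\mc B_0 \to \mc F^{(1)}_k \to \mc B_0 \to 0 \]
on the special fiber, exchanging the roles of sub and quotient of the two-step Harder-Narasimhan filtration.

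Iterating yields a descending chain $\mc F = \mc F^{(0)} \supset \mc F^{(1)} \supset \cdots$ of $R$-flat sheaves all sharing generic fiber $\mc F_K$. Writing $\mc B_n \subset \mc F^{(n)}_k$ for the maximal destabilizer, the swap above gives the slope inequality $\mu_{H,B}(\mc B_{n+1}) \le \mu_{H,B}(\mc B_n)$, while the semistability of $\mc F_K$ forces the uniform lower bound $\mu_{H,B}(\mc B_n) \ge \mu_{H,B}(\mc F_K)$. Hence the slopes stabilize at some $\mu_\infty$; refining by the reduced Hilbert polynomial in the usual Langton fashion, the full Harder-Narasimhan numerical data eventually stabilizes. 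If termination fails, one constructs a limit subsheaf $\mc B_\infty \subset \mc F_K$ by transporting each $\mc B_n$ to the generic fiber through the inclusion $\mc F^{(n)} \hookrightarrow \mc F$ and extracting a coherent intersection; by construction $\mu_{H,B}(\mc B_\infty) = \mu_\infty > \mu_{H,B}(\mc F_K)$, contradicting the generic semistability hypothesis.

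The hard part is producing the coherent limit $\mc B_\infty$, which requires a boundedness statement ensuring that the family $\{\mc B_n\}$ lies in a bounded subset of the relevant Quot scheme of $\mc F_K$; classically this is Grothendieck's lemma on subsheaves of a fixed sheaf. In the $\Z$-algebra setting the preceding lemma gives that $\mc A^{\Xi}_{m,n}$ is filtered with associated graded $\mc A^{\mc L_1 \oplus \mc L_2}_{m,n}$, so the required boundedness on $\mc A^{\Xi}$-modules reduces to the corresponding boundedness statement on the commutative $\PP^1$-bundle $\PP_{\msc C}(\mc L_1 \oplus \mc L_2)$, where Grothendieck's lemma applies directly. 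Combined with the strong Noetherianness of $\mc A^{\Xi}$ invoked in Section \ref{sec:ncs}, this supplies the missing input and forces termination of the chain, yielding the desired $\mc G$.
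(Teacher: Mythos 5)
Your proposal is correct and takes essentially the same route as the paper, whose proof simply observes that Langton's elementary-modification argument from \cite[\S 2.B]{huybrechts_lehn_2010} carries through in the Noetherian abelian category $\Coh(\PP^1_{\msc X\times R}(\Xi))$ (the paper leans on Rains' double-dual construction and Noetherianness for the supporting inputs where you lean on the filtration lemma to reduce boundedness to the commutative $\PP^1$-bundle). Your reading of the conclusion as semistability of the special fiber $\mc G_k$ is the intended one.
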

\begin{proof}
  The usual proof of Langton's theorem \cite[\S 2.B]{huybrechts_lehn_2010} carries through with the notions of flatness above and Rains' construction of the double dual  for noncommutative ruled surfaces on account of the fact that $\Coh(\PP^1_{\msc X\times R}(\Xi))$ is a Noetherian abelian category. 
\end{proof} 

\subsection{Construction of stability conditions}

\subsubsection{Tilt stability over points} 
We restrict to $R = A_{-1}$. 

Tilt stability for $\PP_E(\Xi_s)$ is a mild generalization from commutative surfaces.

\begin{prop}\label{prop:torsion_pair_tilt}
The pair of subcategories 
\begin{align*} \mc T^{H,B}_s &= \langle  E \in \Coh(\PP_E(\Xi_s)) \text{ semistable with } \mu_H(E) > \beta\rangle \\
 \mc F^{H,B}_s &= \langle  E \in \Coh(\PP_E(\Xi_s)) \text{ semistable with } \mu_H(E) \le \beta \rangle
\end{align*}
for a torsion pair. 
\end{prop}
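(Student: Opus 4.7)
The plan is to verify the two defining axioms of a torsion pair: (i) $\Hom(\mc T, \mc F) = 0$ for $\mc T \in \mc T^{H,B}_s$ and $\mc F \in \mc F^{H,B}_s$, and (ii) every $E \in \Coh(\PP_E(\Xi_s))$ sits in a short exact sequence $0 \to T \to E \to F \to 0$ with $T \in \mc T^{H,B}_s$ and $F \in \mc F^{H,B}_s$. Both axioms reduce, via the extension-closure built into the definition of $\mc T^{H,B}_s$ and $\mc F^{H,B}_s$, to statements about the $\mu_H$-semistable sheaves used to generate them, together with the existence of Harder-Narasimhan filtrations established in the preceding proposition.

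For (i), I would first prove the vanishing of Hom between a $\mu_H$-semistable $T_0$ with $\mu_H(T_0) > \beta$ and a $\mu_H$-semistable $F_0$ with $\mu_H(F_0) \le \beta$: the image of any nonzero map $T_0 \to F_0$ would be both a nonzero quotient of $T_0$ (forcing $\mu_H(\on{im}) \ge \mu_H(T_0) > \beta$) and a nonzero subobject of $F_0$ (forcing $\mu_H(\on{im}) \le \mu_H(F_0) \le \beta$), contradiction. This uses only that $\mu_H$-semistability for sheaves on $\msc X_s = \PP_E(\Xi_s)$ behaves exactly as on a commutative surface, which is granted by the Harder-Narasimhan theory of Rains already invoked. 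From this pointwise vanishing, a standard induction on the length of extension presentations — using the long exact sequence of $\Hom$ attached to $0 \to A \to B \to C \to 0$ and the fact that $\mc T^{H,B}_s$ and $\mc F^{H,B}_s$ are by definition closed under extensions — yields $\Hom(T, F) = 0$ for arbitrary $T \in \mc T^{H,B}_s$ and $F \in \mc F^{H,B}_s$.

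For (ii), given $E \in \Coh(\msc X_s)$ I would invoke the Harder-Narasimhan filtration with respect to $\mu_{H,B}$ established just above,
\[ 0 = \on{HN}_0(E)\subset \on{HN}_1(E) \subset \cdots \subset \on{HN}_n(E) = E, \]
whose successive quotients $\on{HN}_i(E)/\on{HN}_{i-1}(E)$ are $\mu_{H,B}$-semistable with strictly decreasing slopes $\mu_1 > \mu_2 > \cdots > \mu_n$. Let $k$ be the largest index with $\mu_k > \beta$ (set $k = 0$ if no such index exists, $k = n$ if all slopes exceed $\beta$), and define $T := \on{HN}_k(E)$ and $F := E/\on{HN}_k(E)$. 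By construction $T$ is an iterated extension of semistable sheaves of slope $> \beta$, so $T \in \mc T^{H,B}_s$, while $F$ is an iterated extension of semistable sheaves of slope $\le \beta$, so $F \in \mc F^{H,B}_s$, giving the required decomposition $0 \to T \to E \to F \to 0$.

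The only nonroutine ingredient is that all the standard semistability bookkeeping — slopes of subobjects and quotients of semistable sheaves, stability of extensions, the HN filtration — continues to work in the noncommutative setting $\Coh(\PP_E(\Xi_s))$, and this is exactly what the previous proposition (based on Rains) secures. The one small point worth checking carefully is that the slope cutoff in the HN filtration is well-defined, i.e., that when equality $\mu_k = \beta$ occurs we keep the corresponding factor on the $\mc F^{H,B}_s$ side, consistent with the convention $\mu_H \le \beta$ in the definition of $\mc F^{H,B}_s$; this is enforced automatically by choosing $k$ to be the largest index with $\mu_k > \beta$. No boundedness of semistable families is needed for the torsion-pair statement itself.
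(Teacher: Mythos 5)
Your proof is correct and is exactly the standard argument the paper is gesturing at: its own proof consists of the single sentence that the claim is an immediate consequence of the existence and uniqueness of Harder--Narasimhan filtrations, and your write-up simply spells out the slope-truncation of the HN filtration and the Hom-vanishing between semistables of separated slopes. No difference in approach, just more detail.
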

\begin{proof}
This is an immediate consequence of the existence and uniqueness of Harder-Narasimhan filtrations for objects in $\PP_E(\Xi_s)$
\end{proof}

Let $\mc B_{H,B,s} = \langle \mc F^{H,B}_s[1], \mc T^{H,B}_s\rangle$ denote the tilted heart with respect to the torsion pair from Proposition \ref{prop:torsion_pair_tilt}. 

A closely related torsion pair has been studied in the context of noncommutative $\PP^1$ bundles related to the usual perverse heart.  Recall from \cite{ben2008perverse} that for each closed $s \in S$ we have a torsion pair $\Coh(\bbP_E(\Xi_s)) = \langle \Coh_0(\bbP_E(\Xi_s)), \Coh_{>0}(\bbP_E(\Xi_s))\rangle$ induced by the canonical inclusion of the maximal subobject in $\Coh(\bbP_E(\Xi_s))$ of linear growth. This is clearly well defined over $S$. Let $\mc A_{0,s} =  \langle \Coh_0(\bbP_E(\Xi_s))[1], \Coh_{>0}(\bbP_E(\Xi_s))\rangle$ denote the abelian category formed by tilting with respect to this torsion pair. 

While the tilted hearts associated to the torsion pairs of Proposition \ref{prop:torsion_pair_tilt} are suited to the study of stability conditions, all of the objects in the semistable moduli spaces will live in a category $\mc A_{0,s}$ or its image under a derived equivalence. While universal openness of lying in $\mc B_{H,B,s}$ will follow from \cite[Prop. 20.8]{Bayer_Lahoz_Macrì_Nuer_Perry_Stellari_2021}, universal openness of flatness for the hearts $\mc A_{0,s}$ can be shown by a standard argument. 

\subsubsection{}

Recall the notion of a stability condition in the relative setting introduced in \cite{Bayer_Lahoz_Macrì_Nuer_Perry_Stellari_2021}. We still restrict to $R = A_{-1}$. Let $H$ and $B \in \NS(\overline X_S)$ be numerically constant with $H$ ample. 

Let $Z_{H,B,s}(-) = \int_{\overline X_s} e^{iH}\on{ch}_s^B(-)$ be a stability function. 

\begin{thm}\label{thm:stability_family}
The collection $\underline{\sigma}_{H,B} = (Z_{H,B,s}, \mc B_{H,B,s})_{s \in S}$ is a stability condition on $D^b_{Coh}(\PP_{E_S}(\Xi))$ over $S$ with respect to $\RN{2}_{1,1} \oplus \RN{2}_{1,1}$. 
\end{thm}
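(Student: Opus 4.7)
The plan is to first verify that $(Z_{H,B,s}, \mc B_{H,B,s})$ is a stability condition on each closed fiber $D^b_{Coh}(\PP_E(\Xi_s))$, and then promote this to a relative stability condition by checking the axioms of \cite{Bayer_Lahoz_Macrì_Nuer_Perry_Stellari_2021}. For the fiberwise statement I would follow Bridgeland's original surface argument. Positivity $Z_{H,B,s}(\mc B_{H,B,s}\setminus 0)\subset \bar{\mathbb H}$ reduces to a case analysis on torsion and torsion-free pieces, using Proposition \ref{prop:torsion_pair_tilt} and the fact that on the classes in $\mc T^{H,B}_s$ and $\mc F^{H,B}_s[1]$ the imaginary and real parts of $Z_{H,B,s}$ have the sign dictated by the tilt. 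The existence of Harder--Narasimhan filtrations in $\mc B_{H,B,s}$ follows from Noetherianity of $\on{Coh}(\PP_E(\Xi_s))$ (so $\mc B_{H,B,s}$ is Noetherian after tilting), plus the standard argument reducing HN existence to finiteness of the image of $\Im Z_{H,B,s}$, all of which go through verbatim since $\PP_E(\Xi_s)$ is strongly Noetherian in the sense of \cite{rains19birational_arxiv}.

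The main technical obstacle is the support property. On a commutative surface this comes from the Bogomolov--Gieseker inequality $\Delta(E)\ge 0$ for tilt-semistable objects. In the present noncommutative setting the analogous inequality for twisted $\mu_{H,B}$-semistable sheaves on $\PP_E(\Xi_s)$ is established in \cite{rains19birational_arxiv} (who uses the numerical Grothendieck group computed in \S\ref{ssec:ns_family}). Granting this, one verifies the support property with respect to the quadratic form $\Delta^B$ on $\mathbb{II}_{1,1}\oplus \mathbb{II}_{1,1}$ on the nose as in Bridgeland, since the intersection pairing on $\NS(\PP_E(\Xi_s))$ is of the same signature as in the commutative case and the numerical data $v^B$ factor through the relative uniformly numerical Grothendieck group $\mc N(\msc X_S/S)$ described above. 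Thus for each $s$ the pair $(Z_{H,B,s}, \mc B_{H,B,s})$ is a Bridgeland stability condition.

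To upgrade to a relative stability condition $\underline{\sigma}_{H,B}$ over $S$ I would invoke the criterion of \cite[Thm.~21.24 \& Prop.~20.8]{Bayer_Lahoz_Macrì_Nuer_Perry_Stellari_2021}. Numerical constancy of $H$ and $B$ implies that the central charges $Z_{H,B,s}$ glue into a single group homomorphism $Z_{H,B}:\mc N(\msc X_S/S)\to \C$, and the support property holds uniformly (the quadratic form is integral and independent of $s$). Universal openness of $\mc B_{H,B,s}$ in families is exactly the output of \cite[Prop.~20.8]{Bayer_Lahoz_Macrì_Nuer_Perry_Stellari_2021}, and flatness of the slicing in families uses the Noetherianity of $\on{Coh}(\PP_{E_T}(\Xi_T))$ for any $T\to S$ (which follows from the strongly Noetherian property already cited). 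The one input still needed is openness of $\mu_H$-semistability in the relative setting, which follows from semistable reduction: Proposition \ref{prop:ss_red} produces, for any DVR $R$ mapping to $S$, a semistable degeneration of any flat family of semistable objects, and together with the Noetherian induction used in \cite[\S 2.B]{huybrechts_lehn_2010} this gives universal openness of tilt-semistability.

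The potential sticking point is making the support property \emph{uniform} across $S$, i.e.\ verifying that a single quadratic form $Q$ on $\mc N(\msc X_S/S)$ controls stability on every fiber. Because $H,B$ are numerically constant and the Bogomolov--Gieseker inequality in \cite{rains19birational_arxiv} only uses the numerical data, the discriminant $\Delta^B$ is the same function on $\mc N(\msc X_S/S)\simeq \mathbb{II}_{1,1}\oplus \mathbb{II}_{1,1}$ for every $s$; this is what forces the lattice in the statement. Once this is in place, combining the fiberwise stability conditions with universal openness and semistable reduction assembles them into a relative stability condition and the theorem follows.
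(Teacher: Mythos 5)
Your outline of the relative/families half of the argument (universally locally constant central charges from numerical constancy of $H,B$; universal openness via \cite[Prop.~20.8]{Bayer_Lahoz_Macrì_Nuer_Perry_Stellari_2021} plus boundedness of Quot schemes; integration of HN structures over Dedekind schemes using semistable reduction, Proposition \ref{prop:ss_red}, and the flattening stratification) matches the paper's steps 2)--4) closely, and the reduction of the fiberwise statement to a Bogomolov--Gieseker inequality for $\mu_{H,B}$-semistable torsion-free sheaves (via \cite{Macrì_Schmidt_2017}) is also exactly how the paper proceeds.

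The genuine gap is at that reduction's input: you assert that the Bogomolov inequality for $\mu_{H,B}$-semistable torsion-free sheaves on the deformed surfaces $\PP_E(\Xi_s)$ ``is established in \cite{rains19birational_arxiv}'' and treat it as a citation. The paper does not take this for granted; proving $\Delta(\mc E)\ge 0$ on the noncommutative fibers is the actual content of step 1) and is where the specific geometry of the family enters. The paper's argument is a specialization to the central commutative fiber: the line $S'=\C s\subset HH_0(E)$ is $\csth$-stable, the stack $\mc M_{H,B}(\on{ch},\PP_E(\Xi_{S'}))/S'$ of $\mu_{H,B}$-semistable torsion-free sheaves is \emph{proper} over $S'$ (this properness is what is actually imported from \cite{rains2019noncommutative}), and scaling the extension class gives a map $\C^\ast\to\mc M_{H,B}$ through $[\mc E]$ which by properness extends over $0$, producing a $\mu_{H,B}$-semistable torsion-free sheaf with the same Chern character on the commutative surface $\overline X_0$, where the inequality is classical. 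The same properness argument is then reused to get the support property for $s\neq 0$; your claim that the support property follows ``on the nose as in Bridgeland'' from a uniform quadratic form presupposes the inequality you have not yet established on the deformed fibers. Without either a genuine reference for a noncommutative Bogomolov inequality or a degeneration argument of this kind, the fiberwise existence of the stability conditions --- and hence the whole theorem --- is not proved.
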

\begin{proof}
  We check a number of requirements. 

1) The individual stability conditions are Bridgeland stability conditions. 

We have to show that $Z_{H,B,s}$ is a stability function on $\mc B_{H,B,s}$ satisfying the Harder-Narasimhan property \cite{Bridgeland_2007}. This is known to follow from the Bogomolov inequality for $\mu_{H, B}$-semistable torsion-free sheaves \cite{Macrì_Schmidt_2017}. Letting 
\[ \Delta(\mc E) = \on{ch}_{s,1}^B(\mc E)^2 - 2\on{ch}_{s,0}^B(\mc E)\cdot \on{ch}_{s,2}^B(\mc E) \] for $\mc E \in \Coh(\overline{X}_s)$ we need to show that $\Delta(\mc E) \ge 0$ for $\mc E $ which is $\mu_{H,B}$-semistable torsion-free. Let $S' = \C s \subset HH_0(E)$ denote the line in $S$ given by scaling the extension class. The stack $\mc M_{H,B}(\on{ch}, \PP_E(\Xi_{S'}))/S'$ of $\mu_{H,B}$-semistable torsion-free sheaves is proper over $S'$ \cite{rains2019noncommutative}. Given $\mc E \in \Coh(\overline{X}_s) $ which is $\mu_{H,B}$-semistable torsion-free we get a map $\C^\ast \to \mc M_{H,B}(\on{ch}, \PP_E(\Xi_{S'}))/S'$ sending $1$ to $\mc E$ using the equivalence induced by scaling the extension in $HH_0(E)$ which by properness extends to $f: \C \to\mc M_{H,B}(\on{ch}, \PP_E(\Xi_{S'}))/S'$ which gives a $\mu_{H,B}$-semistable torsion free sheaf with the same chern character $\mc E_0  \in \Coh(\overline{X}_0)$ where the Bogomolov inequality is known. It follows in the case of rational $H$ and $B$ that $\mc B_{H,B,s}$ with stability function $Z_{H,B,s}$ satisfies the Harder-Narasimhan property. The support property for $s \neq 0$ follows by the same properness argument as above. The rest of the standard proof that $\sigma_{H,B,s}$ is a stability condition for general $H$ and $B$ using Bridgeland's Deformation Theorem of stability conditions follow s through as usual. 

2) The collection $\underline{\sigma}_{H,B}$ universally locally has constant central charges. 

This trivially follows from construction. 

3) The collection $\underline{\sigma}_{H,B}$ universally satisfies openness of geometric stability. 

The argument that applies in the case of commutative surfaces \cite[Prop. 25.3]{Bayer_Lahoz_Macrì_Nuer_Perry_Stellari_2021} carries through exactly onces the boundedness of Quot schemes is verified which is known for these noncommutative surfaces \cite[Thm. 10.37]{rains2019noncommutative}, since we have semistable reduction which is Proposition \ref{prop:ss_red} and the flattening stratifications for sheaves in \cite{rains19birational_arxiv} it follows that destabilizing sequences extend over specialization. 

4) The collection $\underline{\sigma}_{H,B}$ integrates to a HN structure over any Dedekind scheme $C\to S$ essentially of finite type. 

The argument in \cite[p. 115]{Bayer_Lahoz_Macrì_Nuer_Perry_Stellari_2021} using \cite[Thm. 18.7]{Bayer_Lahoz_Macrì_Nuer_Perry_Stellari_2021} carries through using semistable reduction and the universal openness of flatness of the tilted hearts defined be Proposition \ref{prop:torsion_pair_tilt}, which in turn follows from the flattening stratification \cite{rains19birational_arxiv}, using \cite[Proposition 25.1]{Bayer_Lahoz_Macrì_Nuer_Perry_Stellari_2021} and the openness of reflexivity which follows from semicontinuity.   
\end{proof}

\subsection{Moduli spaces} 

Having established the existence of relative stability conditions the existence of algebraic spaces serving as good moduli spaces follows from the usual theory. Using \cite{toen2007moduli}, the derived stack $\mc M_{\msc X/B}$ of objects in $\on{Perf}(\msc X/B)$ has subfunctor $\mc M^{ug}_{\msc X/B}$ of universally glueable objects which is represented by an algebraic space locally of finite type. Over Noetherian test schemes, these will coincide with objects with coherent cohomology.  Then the results of \cite{Bayer_Lahoz_Macrì_Nuer_Perry_Stellari_2021} produce open substacks of semistable objects.  

\begin{thm}[{{\cite[Thm 21.14]{Bayer_Lahoz_Macrì_Nuer_Perry_Stellari_2021}}}]\label{thm:cite_moduli_spaces}
  For any $v \in \Lambda$
\begin{enumerate} 
\item The moduli stack of semistable objects $\mc M_{\underline{\sigma}_{H,B}}(v)$ is an algebraic stack of finite type over $S$.
\item The moduli stack $\mc M_{\underline{\sigma}_{H,B}}(v)$ admits a good moduli space $M_{\underline{\sigma}_{H,B}}(v)$.
\item If $\mc M_{\underline{\sigma}_{H,B}}(v) = \mc M^{st}_{\underline{\sigma}_{H,B}}(v)$ then $\mc M_{\underline{\sigma}_{H,B}}(v)$ is a $\Bbb{G}_m$-gerbe over $M_{\underline{\sigma}_{H,B}}(v)$. 
\end{enumerate}
\end{thm}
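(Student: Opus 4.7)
The plan is to apply \cite[Thm.~21.14]{Bayer_Lahoz_Macrì_Nuer_Perry_Stellari_2021} directly: the preceding developments in Section \ref{sec:stability} were arranged precisely so that the hypotheses of that theorem hold verbatim in the noncommutative setting. So the proof consists of (a) pointing to the ambient derived/algebraic moduli stack, (b) recalling that the HN-structure hypothesis has been verified, and (c) checking the single remaining nontrivial input, namely boundedness of the semistable locus for a fixed numerical class $v$.

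First I would note that by Toën--Vaquié \cite{toen2007moduli} applied to the family of dg-categories $D^b_{Coh}(\PP_{E_S}(\Xi))/S$ (as discussed at the start of the subsection on moduli stacks of complexes and referenced again just before the theorem), there is a derived stack $\mc M_{\msc X/S}$ and its non-derived algebraic stack $\mc M^{ug}_{\msc X/S}$ of universally glueable perfect complexes, locally of finite presentation over $S$. The class $v$ cuts out an open and closed substack $\mc M^{ug}_{\msc X/S}(v)$. I would then invoke Theorem \ref{thm:stability_family}, which established that $\underline{\sigma}_{H,B}$ is a stability condition over $S$ on the relevant numerical lattice, and which in its proof verified the four conditions (constancy of central charges, openness of geometric stability, integration of the HN structure over Dedekind bases, and the support property) that are the running hypotheses of \cite[\S 21]{Bayer_Lahoz_Macrì_Nuer_Perry_Stellari_2021}.

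Next I would verify the one remaining hypothesis of \cite[Thm.~21.14]{Bayer_Lahoz_Macrì_Nuer_Perry_Stellari_2021} that is not explicitly part of the definition of a stability condition in families: universal boundedness of the $\underline{\sigma}_{H,B}$-semistable locus $\mc M_{\underline{\sigma}_{H,B}}(v) \subset \mc M^{ug}_{\msc X/S}(v)$ for fixed class $v$. The strategy is the same one used in step 4 of the proof of Theorem \ref{thm:stability_family}: use Bridgeland's tilt together with the torsion pair of Proposition \ref{prop:torsion_pair_tilt} to reduce boundedness to boundedness of $\mu_{H,B}$-semistable torsion-free sheaves and of shifted pieces, which in turn follows from Rains' boundedness for Quot schemes on noncommutative ruled surfaces \cite[Thm.~10.37]{rains2019noncommutative} combined with the Bogomolov inequality proved (in the central fiber and hence by the $\C^\ast$-degeneration argument in all fibers) during the proof of Theorem \ref{thm:stability_family}. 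Universal boundedness then follows because, by the semistable reduction result Proposition \ref{prop:ss_red} and the flattening stratification of \cite{rains19birational_arxiv}, the bound is uniform in $s\in S$.

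Once these inputs are in place, parts (1), (2), and (3) of the theorem are purely formal consequences of \cite[Thm.~21.14]{Bayer_Lahoz_Macrì_Nuer_Perry_Stellari_2021}: (1) is openness plus boundedness; (2) is the application of Alper--Halpern-Leistner--Heinloth existence of good moduli spaces, whose S-completeness and $\Theta$-reductivity hypotheses are verified in BLMNPS in this abstract relative setting; and (3) is immediate from the fact that stable objects have automorphism groups equal to $\mathbb{G}_m$ acting by scaling. The main obstacle I anticipate is purely the boundedness step, because everything else is either definitional or a direct citation; concretely, the delicate point is to ensure the $\C^\ast$-equivariant degeneration to the central fiber $\mc X_0 = T^\ast E$ used to transport Bogomolov's inequality preserves $(H,B)$-semistability of torsion-free objects in a way compatible with numerical class $v$, which is where Proposition \ref{prop:ss_red} and flattening are essential.
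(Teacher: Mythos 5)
Your proposal matches the paper's treatment: the theorem is stated as a direct citation of \cite[Thm.~21.14]{Bayer_Lahoz_Macrì_Nuer_Perry_Stellari_2021}, with the only inputs being the Toën--Vaquié moduli stack of universally glueable complexes and the relative stability condition from Theorem \ref{thm:stability_family}, exactly as you describe. Your additional discussion of boundedness is a reasonable elaboration of hypotheses the paper handles implicitly (via Rains' Quot-scheme boundedness inside the proof of Theorem \ref{thm:stability_family}), but it does not change the route.
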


\subsubsection{Line bundle}
An analogue of the Bayer-Macr\`i Positivity lemma requires a construction of a determinant line bundle. While a natural tensor product structure on $D^b_{Coh}(\PP_{E_S}(\Xi))$ would be necessary for the usual push-pull construction we avoid this by using the bimodule structure to define the replacement for the pushforward and the tensor product simultaneously, using the construction of noncommutative $\PP^1$ bundles over arbitrary base and the internal Hom functor \eqref{eq:rhom_NC}. 

Let $T$ be a test scheme or stack and let $\mc U \in D^b_{Coh}(\PP_{T\times E_S}(\mc O_T \boxtimes \Xi))$ denote the universal family. Given 
$F \in K_{num}(\PP_{E_S}(\Xi))$ we can produce by base change $F_T \in K_{num}(\PP_{T\times E_S}(\mc O_T \boxtimes \Xi))$ and then we define 
\[ \lambda_{\mc U}(F) := \det(R Hom_{T_S}(F_T, \mc U)) \in N^1(T_S/S)\] 
to be the determinant line bundle associated to $F$. 

Fixing $v$, use the $\widetilde{\GL^+(2, \R)}$ action to rotate a stability condition $\underline(\sigma) = (Z_s, \mc B_s)_{s\in S}$ so that $Z_s(v) = -1$ for some (and hence all) $s \in S$. There exits $F_Z \in K_{num}(\PP_{E_S}(\Xi))_\R$ so that $\im Z_s(-) = \chi(F_{Z_s}, -)$. The construction avoiding a taking a tensor product with the universal bundle now carries through. 

\begin{thm}[{{\cite{Bayer_Lahoz_Macrì_Nuer_Perry_Stellari_2021,Bayer_Macrì_2014}}}]
The formula 
\[ \ell_{\underline \sigma} = [\lambda_{\mc U}(F_Z)] \in N^1(\mc M_{\underline \sigma}(v)/S) \] 
defines a positive numerical relative Cartier divisor on $\mc M_{\underline \sigma}(v)$. Furthermore, $\ell_{\underline{\sigma}}$ descends to a divisor $L_{\underline{\sigma}}$ on $M_{\underline{\sigma}}(v)$. 
\end{thm}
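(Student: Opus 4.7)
The plan is to adapt the Bayer--Macrì positivity machinery from \cite{Bayer_Macrì_2014,Bayer_Lahoz_Macrì_Nuer_Perry_Stellari_2021} to the relative noncommutative setting, using the internal Hom functor \eqref{eq:rhom_NC} as a substitute for the push-pull formula of the commutative case. First I would verify that $\lambda_{\mc U}(F_Z)$ actually defines a class in $N^1(\mc M_{\underline\sigma}(v)/S)$: given a proper flat curve $\iota: C \hookrightarrow \mc M_{\underline\sigma}(v)$ over a point $s \in S$, the pulled-back universal family $\mc U_C$ lies in $D^b_{Coh}(\PP_{C\times E}(\mc O_C\boxtimes \Xi_s))$ and so $R\Hom_{C}(F_{Z,C}, \mc U_C) \in D^b_{Coh}(C)$ by \eqref{eq:rhom_NC}, whose determinant is a well-defined line bundle on $C$.

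The main computation is the Bayer--Macrì positivity identity
\[
\ell_{\underline\sigma}\cdot C \;=\; \Im\!\Big(\!-\frac{Z_s(\Phi_{\mc U_C}(\mc O_C))}{Z_s(v)}\Big),
\]
which I would establish by the same formal manipulation as in the commutative case: $\chi(F_{Z_s}, -) = \Im Z_s(-)$, the normalization $Z_s(v) = -1$, and the base-change compatibility of $\lambda_{\mc U}$. The key content is then showing the right-hand side is nonnegative. Because every closed point of $C$ parametrizes a $\underline\sigma$-semistable object of class $v$ in the tilted heart $\mc B_{H,B,s}$, and because $\Phi_{\mc U_C}(\mc O_C)$ fits into a distinguished triangle whose terms are built from the fiberwise objects (by a spectral sequence argument using the SOD of Proposition \ref{prop:sod_pp_univ}), the semistability forces $\Im(-Z_s(\Phi_{\mc U_C}(\mc O_C))/Z_s(v)) \ge 0$. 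Strict positivity on curves not contracted by the map to the good moduli space follows because in that case some fiberwise object is not S-equivalent to the rest, producing a destabilizing subobject whose imaginary part of the central charge is strictly positive.

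The hardest step is the extension of this argument over varying $s$, where we need to rule out that a family $\mc U_C$ could become unstable or fail to lie in the tilted heart as we move in $S$. For this I would combine three inputs already established in the paper: Theorem \ref{thm:stability_family} providing the HN structure over Dedekind schemes, the semistable reduction Proposition \ref{prop:ss_red} guaranteeing that any destabilizer extends over specialization, and the universal openness of flatness for the perverse hearts $\mc A_{0,s}$. Together these reduce the positivity to a pointwise statement on the central fiber $s=0$, where $\PP_{E}(\Xi_0) \simeq E\times \PP^1$ and the classical Bayer--Macrì positivity lemma applies verbatim.

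Finally, to descend $\ell_{\underline\sigma}$ to $L_{\underline\sigma}$ on the good moduli space $M_{\underline\sigma}(v) \to \mc M_{\underline\sigma}(v)$, I would invoke the descent criterion of Kempf/Drezet--Narasimhan in the form used in \cite[\S 21]{Bayer_Lahoz_Macrì_Nuer_Perry_Stellari_2021}: a line bundle on an Artin stack with a good moduli space descends if and only if the stabilizer of every closed point acts trivially on its fiber. Closed points of $\mc M_{\underline\sigma}(v)$ are polystable objects $\bigoplus_i E_i^{\oplus m_i}$, and the diagonal $\Bbb G_m$ acts on $\lambda_{\mc U}(F_Z)$ with weight $\chi(F_Z, \bigoplus E_i^{\oplus m_i}) = \Im Z_s(v) = 0$ after the $\widetilde{\GL^+(2,\R)}$-rotation. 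Equal-weight summands similarly pair to zero by the same central-charge identity, so the descent criterion is satisfied and $L_{\underline\sigma}$ exists on $M_{\underline\sigma}(v)$; its positivity descends from that of $\ell_{\underline\sigma}$ since every curve in $M_{\underline\sigma}(v)$ lifts up to finite cover to $\mc M_{\underline\sigma}(v)$.
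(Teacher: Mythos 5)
The paper does not actually prove this statement: it is imported from \cite{Bayer_Lahoz_Macrì_Nuer_Perry_Stellari_2021,Bayer_Macrì_2014}, and the only content the paper adds is the observation, in the paragraph preceding the theorem, that the determinant line bundle $\lambda_{\mc U}(F)=\det(R\Hom_{T_S}(F_T,\mc U))$ can be built from the internal Hom \eqref{eq:rhom_NC} without a tensor product structure on $D^b_{Coh}(\PP_{E_S}(\Xi))$, after which ``the construction carries through.'' Your proposal supplies the argument behind that citation, and your first, second and fourth steps (well-definedness via \eqref{eq:rhom_NC}, the central-charge identity $\ell_{\underline\sigma}\cdot C=\Im\bigl(-Z_s(\Phi_{\mc U_C}(\mc O_C))/Z_s(v)\bigr)$, and descent via triviality of the stabilizer weights at polystable points) are exactly the mechanism of the relative positivity lemma in \cite[\S 21]{Bayer_Lahoz_Macrì_Nuer_Perry_Stellari_2021}.

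Your third step, however, misidentifies what has to be proven and would fail as written. Since $S\simeq HH_0(E)$ is affine, any proper curve $C\to\mc M_{\underline\sigma}(v)$ maps to a single closed point $s\in S$ (as you yourself note in your first step), so positivity in $N^1(\mc M_{\underline\sigma}(v)/S)$ is a purely fiberwise assertion and there is no ``extension over varying $s$'' to address. More seriously, one cannot ``reduce the positivity to a pointwise statement on the central fiber'': a curve sitting in the fiber over $s\neq 0$ cannot be specialized to $s=0$ --- semistable reduction (Proposition \ref{prop:ss_red}) and openness of stability move objects along paths in $S$, not curves inside a fixed fiber --- and for $s\neq 0$ the surface $\PP_E(\Xi_s)$ is genuinely noncommutative, so the ``classical'' lemma does not apply verbatim there. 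The correct route, and the one the cited references take, is to run the positivity argument directly at each fixed $s$, using the stability function $Z_{H,B,s}$, the tilted heart $\mc B_{H,B,s}$, and the Harder--Narasimhan property supplied by Theorem \ref{thm:stability_family}; the argument is formal and never uses commutativity of the underlying surface. With that replacement your proof is sound.
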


\subsection{Stability and quasiprojectivity} 

The good moduli spaces $M_{\underline{\sigma}}(v)/S$ for specific choices of $\underline{\sigma}$ will be shown to be projective compactifications of deformations of $M^{tf}(v; \mc F)$, exhibiting the latter as quasiprojective over $S$.

\subsubsection{Determinant map}

Let $J = \on{Jac}_E\times S \simeq E_S$ be the Jacobian of $D_\infty/S$. 

There is a map 
\[ \on{det}_S: M_{\underline{\sigma}}(v)/S \to J/S \] 
closely related to the one referred to in \cite{rains19birational_arxiv} as the determinant-of-restriction map 
which sends $\mc E$ to $\on{det}(E|_{D_\infty})$. Furthermore we can exhibit $M_{\underline{\sigma}}(v)/S$ as a $J$-torsor since there is an action 
\[ J\times_S M_{\underline{\sigma}}(v) \to M_{\underline{\sigma}}(v)/S \]
given by "tensoring by a line bundle" $L \in \Pic^0(E)$, or more precisely the map 
\[ (a,b, \eta) \mapsto (a\otimes L, b\otimes L, \Ad_{-\otimes L} \eta)\] 
at the level of Koszul data. Picking $\chi \in \on{Jac}(E)$ a closed point and letting $\chi_S = \chi \times S$ let 
\[M^\chi_{\underline\sigma}(v) = \on{det}_S^{-1}(\chi_S)\] 
denote the moduli spaces with fixed determinant. Define the moduli stack $\mc M^\chi_{\underline{\sigma}_0}(v)$ analogously.

Restricting to the central fiber with stability condition $\underline{\sigma}_0$, consider the subspace
\[ M_{\underline{\sigma}_0}(v)_{\mc F} := \{ \mc E \in M_{\underline{\sigma}_0}^\chi(v) \mid \mc E|_{D_\infty} \simeq \mc F \} \]
where $\mc F \in \Coh(E)$ satisfies $\det(\mc F ) = \chi$ and $c_1(\mc F)$ is the $P$ component of the $c_1$ component of $v$. 

This is well defined because this property is preserved under S-equivalence and it is a dense open subset because the closed point $[\mc F]$ is a dense open subset of the stack of coherent sheaves on $E$ with fixed determinant.

We will largely be focused on the subspace of $M_{\underline{\sigma}}^\chi(v)$ naturally deforming $M_{\underline{\sigma}_0}(v)_{\mc F}$ namely the subfunctor whose closed points are the set 
\begin{equation}\label{eq:main_family}
   M_{\underline{\sigma}}(v)_{\mc F} := \{ \mc E \in M^\chi_{\underline{\sigma}}(v) \mid \mc E|_{D_\infty} \simeq \mc F \}. 
\end{equation}

\subsubsection{Symplectic structure}

For generic $\underline{\sigma}$ we see that $M_{\underline{\sigma}}^\chi(v)_{\mc F}$ is a family of objects in the standard heart of $D^b_{Coh}(\PP_{E_S}(\Xi))$. For any $\mc E \in M_{\underline{\sigma}}^\chi(v)_{\mc F}$ we define 
$\mc E(-{D_\infty})$ as the cone of the map 
\[ \mc E(-D_\infty) = \on{Cone}(\mc E \xrightarrow{\on{res}} \iota_* \mc F_S)[-1]\]
where $\on{res}$ and its target are defined at the level of Koszul data by 
\[ (a,b, \eta) \xrightarrow{\on{res}} (0, \mc E|_{D_\infty}, 0) \] 

where $\mc E|_{D_\infty}$ is defined in \eqref{eq:res_to_infty} and the map $b \to \mc E|_{D_\infty}$ is the canonical one. Define $\mc E(-kD_\infty)$ inductively as $\mc E(-(k-1)D_\infty)(-D_\infty)$. We have an equality 
\[ \mc E(-2D_\infty)[2] = \Bbb{S}(\mc E).\]

Furthermore via the exact triangle 
\[ \to \mc E(-D_\infty) \to \mc E \to \iota_* \mc F_S \to  \]
we have a long exact sequence 
\begin{alignat*}{2}
\cdots \to& \Ext^0(\mc E , \mc E) \to& \Ext^0(\mc F, \mc F)\to \\
\Ext^1(\mc E, \mc E(-D_\infty)) \to& \Ext^1(\mc E , \mc E) \to& \Ext^1(\mc F, \mc F)\to 
\end{alignat*}
the middle line of which is exact and is seen to be the fiber over a point $\mc E \in \mc M_{\underline{\sigma}}(v)$ of the tangent sequence 
\[0 \to  \mc T_{M_{\underline{\sigma}}^\chi(v)_{\mc F}/\mc M_{{\underline{\sigma}}}(v)}\to \mc T_{\mc M_{\underline{\sigma}}(v)} \to \on{det}^* \mc T_{\on{Jac}(E)}\to 0 \] 
induced by the fiber sequence $M_{\underline{\sigma}}^\chi(v)_{\mc F} \to \mc M_{\underline{\sigma}}(v) \to \on{Jac}(E)$. In particular we have 
\[ T_{[\mc E]} M_{\underline{\sigma}}^\chi (v)_{\mc F} \simeq \Ext^1(\mc E, \mc E(-D_{\infty})). \] 

Since we have an embedding 
\[ \mc M_{\underline{\sigma}}(v) \to \mc Spl_{\msc X}\] 
to the stack of simple sheaves given by an action of a derived equivalence sending stable objects to sheaves followed by the classifying map, and the latter admits a Poisson structure \cite[\S 11]{rains19birational_arxiv} we see that the nondegenerate pairing given on fibers by
\[ \Ext^1(\mc E, \mc E(-D_\infty))\otimes \Ext^1(\mc E, \mc E(-D_\infty)) \to \Ext^2(\mc E, \mc E(-2D_\infty)) \to \Bbbk\] 
satisfies the Jacobi identity and gives a symplectic structure on $M^\chi_{\underline{\sigma}}(v)_{\mc F}$.

\subsubsection{Stability conditions under duality} 

Given a stability condition $\sigma$ on the homotopy category of a dg category $\mc D$ and a derived equivalence $\phi: \mc D \to \mc D'$ there is an induced stability condition $\phi_* \sigma$ on the homotopy category of $\mc D'$. In particular the derived equivalences from Section \ref{sec:gewg} act on the stability conditions constructed in this section. 

Liu, Lo and Martinez and Lo and Martinez have analyzed the action of derived equivalences on geometric stability conditions on Weierstrass elliptic surfaces \cite{Liu_Lo_Martinez_2019,Lo_Martinez_2022} and many of the results were extended to non-Weierstrass surfaces in \cite{Yoshioka_2022}. In particular $E\times \PP^1$ is Weierstrass and the results apply. Many of the results consider geometric stability conditions $\sigma_{H,B}$ when the polarization $H$ lies in the Friedman chamber applicable to the F-theory limit. The explicit formula for the corresponding stability condition is given by \cite[Eq. 5.0.3]{Lo_Martinez_2022} where in op. cit.  $e = P^2 = 0$ which we only need as the following inexplicit consequence. 

\begin{prop}[\cite{Lo_Martinez_2022,lo2021weight, Lo_Martinez_2022}]
After potentially increasing $b$, the space $U$ is preserved under the action of $\widetilde{\SL(2, \Z)} \subset \Aut(D^b_{Coh}(\PP_{E_S}(\Xi)))$ of derived equivalences. 
\end{prop}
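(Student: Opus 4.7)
The plan is to reduce the preservation claim to a numerical statement on the central fiber $s = 0$, where $\PP_{E}(\Xi_0) \simeq E \times \PP^1$ is a commutative Weierstrass elliptic surface, and then invoke the explicit description of the $\widetilde{\SL(2, \Z)}$-action given by Lo--Martinez. By Section \ref{ssec:amp_fam} the uniformly numerical relative Grothendieck group $\mc N(\PP_{E_S}(\Xi)/S)$ is constant over $S$, and by Theorem \ref{thm:stability_family} the central charges $Z_{H,B,s}$ depend only on these numerical invariants. An element $\gamma \in \widetilde{\SL(2, \Z)}$ acts on the family $D^b_{Coh}(\PP_{E_S}(\Xi))$ by Theorem \ref{thm:derived_reflection}, becoming $S$-linear after base change along $\rho(\gamma): S \to S$. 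Consequently it transforms each numerical central charge by a fixed $\Z$-linear automorphism $\Phi_\gamma^*$ of $\mc N(\PP_{E_S}(\Xi)/S)$, so that $\Phi_{\gamma,*}\underline{\sigma}_{H,B}$ is again a family of stability conditions of the form $\underline{\sigma}_{H',B'}$ with $(H',B')$ determined only by $(H,B)$ and $\gamma$.

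Second, membership in $U$ is the condition that the determinant class $\ell_{\underline{\sigma}}$ associated to $\underline{\sigma}$ by the Bayer--Macrì construction descends to a relatively ample class in $N^1(X_R^{[n]}/\A^n)_\R$. Since this is a numerical condition on the central charge and the central charges of $\underline{\sigma}_{H,B}$ are independent of $s$, it suffices to verify it at $s = 0$. At the central fiber, I would apply the explicit formula \cite[Eq.\ 5.0.3]{Lo_Martinez_2022} for the image of a geometric stability condition $\sigma_{H,B,0}$ on the Weierstrass surface $E \times \PP^1$ under a Fourier--Mukai transform $\Phi_\gamma$, $\gamma \in \SL(2,\Z)$. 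This formula shows that $\Phi_{\gamma,*}\sigma_{H,B,0} = \sigma_{H',B',0}$ with $(H',B')$ again in the Friedman chamber (small elliptic-fiber volume, large $b = H \cdot B$) provided that $b$ exceeds a threshold depending on $\gamma$. The Friedman chamber is the relevant region because the stability conditions there correspond, via Bayer--Macrì positivity, to classes in the relative ample cone of $X_R^{[n]}/\A^n$.

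The main obstacle is upgrading this element-by-element preservation to the preservation of $U$ by the entire infinite group $\widetilde{\SL(2, \Z)}$. The resolution, implicit in \cite{Liu_Lo_Martinez_2019, lo2021weight}, is that the Friedman chamber is itself $\SL(2,\Z)$-stable in $(H,B)$-space after rescaling: once $b$ is large enough that $(H,B)$ lies deep inside the chamber, the explicit transformation of central charges induced by any $\gamma$ preserves this largeness condition (the transformation being essentially an action of $\SL(2,\Z)$ on the pair $(P, E) \in \NS$ fixed by the Friedman chamber condition). Combined with the numerical constancy over $S$ this finishes the argument; the $\widetilde{}$-cover plays no role beyond tracking shifts, which preserve geometricity automatically.
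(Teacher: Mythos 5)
The paper offers no proof of this proposition beyond the citation to Lo--Martinez, describing it as an ``inexplicit consequence'' of their explicit formula for the transformed stability condition on the Weierstrass surface $E\times\PP^1$; your argument is a faithful unpacking of exactly that route (reduce to the central fiber via constancy of the numerical invariants over $S$, apply \cite[Eq.~5.0.3]{Lo_Martinez_2022}, and observe that the Friedman-chamber condition is stable under the induced action once $b$ is large). This matches the paper's intended approach, with the families step being a reasonable filling-in of detail the paper leaves implicit.
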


\subsubsection{Stability compatible with Lagrangian fibration}
\label{ssec:stability_lfib}
Choose $v$ such that there is a birational model for $M^\chi_{\underline \sigma_0}(v)$, a smooth moduli space of complexes on the central surface, which admits a fibration $\ell :M^\chi_{\underline \sigma_0}(v) \to \overline B$ to a half-dimensional base $\overline B \simeq \PP^n$ such that the map is equivariant with respect to $\csth$ fixing $0 \in \overline B$ with positive normal weights. Let $B \simeq \A^n$ denote the attracting locus of $0$. This will in particular be satisfied when $v$ is the image under derived equivalence of $v' = (1,c_1, d)$ corresponding to rank one torsion free sheaves in the Gieseker chamber. These will be the only examples studied here so we let $\gamma\cdot (1,c_1, d) = v'$ and $n = -d+c_1^2/2$. 

Letting $H = P + bE$ and $B = cP  +d E$, let  $ \Stab'(D^b_{Coh}(\PP_{E_S}(\Xi)))_v$ denote the subspace of space of stability conditions produced by Theorem \ref{thm:stability_family} defined by 
\begin{multline*} \Stab'(D^b_{Coh}(\PP_{E_S}(\Xi)))_v = \\
  \{ g \cdot \sigma_{H,B} \in \Stab(D^b_{Coh}(\PP_{E_S}(\Xi))) \mid  H^2 \gg 0, \frac{2dc -n}{c^2 + 1} < b, g \in \widetilde{\GL(2, \R)} \}  .\end{multline*}

Consider the map 
\begin{align}
  \Stab'(D^b_{Coh}(\PP_{E_S}(\Xi))) &\to N^1(\mc M_{\underline{\sigma}}(v)/S) \to N^1(M_{\underline \sigma_0}(v)_{\mc F})\label{eq:map_half}\\
  \underline{\sigma} &\mapsto L_{\underline{\sigma}} \mapsto L_{\underline{\sigma}}|_{M_{\underline{\sigma}_0}(v)_{\mc F}}\nonumber
\end{align}

Recall that $\phi(-): \mc N(\PP_{E_S}(\Xi))$ is the phase function of the stability condition. 
\begin{prop}\label{prop:all_walls_are_GU}
Every wall in $\Stab'(D^b_{Coh}(PP_{E_S}(\Xi)))_v$ inducing a contraction of $M^\chi_{\underline\sigma_0}(v)_{\mc F}$ is a wall where $\phi(v) = \phi((0,rE, s))$. 
\end{prop}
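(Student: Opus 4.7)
The plan is to combine Bayer-Macr\`i positivity with the specific form of the polarization imposed by remaining in $\Stab'(D^b_{Coh}(\PP_{E_S}(\Xi)))_v$, i.e., the Friedman chamber condition $\frac{2dc-n}{c^2+1}<b$. Attached to any wall $W$ is a primitive class $w \in \mc N(\PP_{E_S}(\Xi))$ with $\phi_\sigma(v)=\phi_\sigma(w)$ for $\sigma\in W$, and at generic points of $W$ we get strictly semistable objects $E$ of class $v$ fitting in destabilizing sequences $0\to A\to E\to C\to 0$ with $[A],[C]$ numerically proportional to $w$ and $v-w$. By Theorem~\ref{thm:cite_moduli_spaces} and the analogue of the Bayer-Macr\`i positivity lemma adapted to our noncommutative setup, the map $\underline\sigma\mapsto L_{\underline\sigma}$ sends walls in $\Stab'$ to walls in the nef cone of $M^\chi_{\underline\sigma_0}(v)_{\mc F}$, with the wall giving a genuine contraction iff extensions $\Ext^1(C,A)$ sweep out a positive-dimensional locus in the moduli space.

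First I would reduce to the case where $v$ itself is of the form $(1,c_1,d)$ by applying the derived equivalence $\gamma^{-1}$ from Section~\ref{ssec:stability_lfib}; this is harmless since derived equivalences act on both $\Stab'(-)_v$ and on the moduli space compatibly with the Lagrangian fibration, and they preserve the form of the wall class $(0,rE,s)$ up to replacing $E$ with its image. Under this reduction $M^\chi_{\underline\sigma_0}(v)_{\mc F}$ contains an open locus isomorphic to the Hilbert scheme of points on the central commutative elliptic surface, and $\ell$ is identified with the relative Hilbert-Chow / support morphism to $\overline{B}\simeq \PP^n = |nE|$.

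The key step is to show that at such a wall the destabilizing class $w$ has rank zero and vertical first Chern class. For the rank, observe that because $\rk(v)=1$ a destabilizing sub $A\hookrightarrow E$ must have $\rk(A)\in\{0,1\}$; taking $w$ to be the rank-$0$ summand (either $[A]$ or $[C]$) and using the Friedman condition $\frac{2dc-n}{c^2+1}<b$ to bound the numerical possibilities for $\mu_{H,B}$-destabilizers. To force $c_1(w)=rE$ I would use the standard Bayer-Macr\`i duality between the wall class $w$ and the numerical class of curves contracted on the moduli side: the contracted curves, parameterizing $\Ext^1(C,A)$, are swept out by deformations of destabilizing sequences within a single fiber of $\ell$. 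Under the identification $\mathrm{Pic}^0(\ell$-fiber$)\subset T_{[E]}M^\chi_{\underline\sigma_0}(v)_{\mc F}$ coming from the isotrivial Lagrangian fibration structure, these curves correspond numerically to multiples of a fiber class of the elliptic fibration of $\overline{X_R}$, so vertical curves are contracted precisely when $\langle w,(0,P,0)\rangle =0$, which by \eqref{eq:euler_char} becomes $c_1(w)\cdot P = 0$, i.e., $c_1(w)=rE$.

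The hardest part will be the last step — establishing the precise duality between the rank-$0$ wall class $w$ on the stability side and the numerical class of the contracted curves on the moduli side, in a way that survives passage to the noncommutative deformations. On the commutative central fiber this is standard Bayer-Macr\`i theory, but in the noncommutative setting I would have to invoke the relative analogue developed through the determinant line bundle $\lambda_{\mc U}(F_Z)$ and check that the pairing used to characterize vertical curve classes agrees with the Euler-pairing characterization of the wall class. Once that is set up, the conclusion $w=(0,rE,s)$ with $\phi_\sigma(v)=\phi_\sigma(w)$ is automatic, and the proposition follows.
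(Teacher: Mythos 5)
Your overall architecture (reduce to $v=(1,0,1-n)$ by derived equivalence, then constrain the destabilizing class $w$) matches the paper's, but both of your substantive steps have problems. For the rank, your claim that ``because $\rk(v)=1$ a destabilizing sub $A\hookrightarrow E$ must have $\rk(A)\in\{0,1\}$'' is false in this setting: the relevant heart is the tilted heart $\mc B_{H,B,s}$, whose objects are two-term complexes, and subobjects there can have arbitrary (including negative) rank --- think of the $\mc F^{H,B}_s[1]$ part. The mechanism the paper actually uses is the large-volume condition $H^2\gg 0$ built into the definition of $\Stab'(D^b_{Coh}(\PP_{E_S}(\Xi)))_v$, which forces any wall for a rank-one class $v$ to have a rank-zero destabilizing class; the Friedman inequality $\frac{2dc-n}{c^2+1}<b$ is not what does this work.

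For the vertical-$c_1$ step, you replace a one-line numerical argument with an uncompleted duality argument, and you concede as much (``I would have to invoke\dots and check\dots''). That deferral is exactly where the content of the proposition lives, so as written the proof does not close. The paper's route is elementary: with $H=P+bE$ and $b\gg 0$, write $c_1(w)=c_{1,P}P+c_{1,E}E$ and observe directly from the central charge $Z_{H,B}$ that if $c_{1,P}\neq 0$ the phase of $w$ is forced away from that of $v=(1,0,1-n)$ throughout the region, so no wall $\phi(v)=\phi(w)$ can occur there. No appeal to the Bayer--Macr\`i pairing between wall classes and contracted curve classes is needed (and setting that up in the noncommutative family is substantially harder than the statement being proved). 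I would rewrite the second half as a direct phase computation in $\RN{2}_{1,1}\oplus\RN{2}_{1,1}$ rather than routing through the positivity lemma.
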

\begin{proof}
 This set of Mukai vectors is invariant under derived equivalences and so is this subspace of stability conditions whose class $H$ lies in the Friedman chamber \cite{Lo_Martinez_2022} so we may as well assume that $v = (1,0,1-n)$. Let $w = (r, c_1, d)$ be the class of a destabilizing object. Because of the condition $H^2 \gg 0$ we must have $r = 0$ and the form of $H$ also implies that $b \gg 0$ which implies that if $c_1 = c_{1,P}P + c_{1, E} E$ we must have that $c_{1,P} = 0$ or it would be impossible for the phases to overlap, from which the result follows. 
\end{proof}

A $v$-wall in a subspace of $\Stab(\mc D)$ is a wall which induces a contraction of moduli spaces $M_{\underline{\sigma}}^\chi(v)_{\mc F}$. Suppose a stability condition $\underline{\sigma} \in \Stab'(D^b_{Coh}(PP_{E_S}(\Xi)))$ induces a contraction of $M_{\underline{\sigma}}^\chi(v)_{\mc F}$. Given a destabilizing triangle 
\[ A \to \mc E \to B \to \] 
in a fiber over $s \in HH_0(E)$ for stability condition $\underline{\sigma}'$ on a wall and with $\mc E \in M_{\underline{\sigma}}(v)_{\mc F}$ we get a destabilizing sequence for all $s' \in \C^\ast s$ and by openness of stability a destabilizing sequence for an object in the central fiber. This implies that the set of $v$-walls is a subset of those for the central surface and is therefore a subset of 
\[ \mc W = \{ w_{r,s} \mid (r,s) \in \Z^2, r \ge 0, (r,s) \neq (1,0), s > 0 \text{ if } a = 0 \} \] 
where 
\[ w_{r,s} = \{ g\cdot \sigma_{H,B} \mid \phi(v) = \phi((0,rE, s)) \}.\] 
Up to the action of $g \in \widetilde{\GL(2, \R)}$ we can identify the wall equation as
\begin{equation}\label{eq:wall_rs}
w_{r,s} = \{ \sigma_{H,B} \mid 2cdr - br - nr = sd + bcs - rbc^2 \}. 
\end{equation}

Now let $v$ be any vector which is the image of $v$ under derived (anti-)autoequivalence. 

\begin{thm}\label{thm:vwalls_roots}
The $v$-walls in $\Stab'(D^b_{Coh}(\PP_{E_S}(\Xi)))$ 
are in bijection with roots 
$\beta \in R_{A_{-1}}^{ell}$  under the bijection 
 $K_{num}(E) \simeq R_{A_{-1}}^{ell}$ and the inclusion $K_{num}(E) \to \mc N(\PP_{E_S}(\Xi))$ such that
  $\langle \beta, v\rangle \le \langle v, v\rangle/2$. 
\end{thm}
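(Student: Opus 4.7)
My plan is to combine the wall reduction already done in Proposition \ref{prop:all_walls_are_GU} with the arithmetic identification of $K_{num}(E)$ with $R^{ell}_{A_{-1}}$ and the existence theory for destabilizing objects inherited from the central fiber. First I would invoke Proposition \ref{prop:all_walls_are_GU} to reduce to walls of the form $w_{r,s}$ where $\phi(v) = \phi((0,rE,s))$; the Mukai vectors $(0,rE,s)$ are precisely the image of the natural inclusion $K_{num}(E) \hookrightarrow \mc N(\PP_{E_S}(\Xi))$ induced by the semiorthogonal decomposition of Proposition \ref{prop:sod_pp_univ}, sending a class of rank $r$ and degree $s$ on $E$ to the class $(0,rE,s)$ living in the fiber part of the Mukai lattice. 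Under the identification \eqref{eq:roots_HHhyperplanes} this is the same data as an imaginary root of $R^{ell}_{A_{-1}}=\Z^2$.

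Next I would analyze the inequality $\langle \beta,v\rangle \le \langle v,v\rangle/2$. Any numerical wall records a destabilizing triangle $A \to \mc E \to B$ with classes $\beta$ and $v-\beta$ whose Mukai pairings with $v$ satisfy $\langle \beta,v\rangle + \langle v-\beta,v\rangle = \langle v,v\rangle$, so exactly one of the two is $\le \langle v,v\rangle/2$ (with equality in the single symmetric case $v=2\beta$). The inequality therefore selects a canonical representative of each wall, matching the sign-and-normalization conventions defining the set $\mc W$ of walls from \eqref{eq:wall_rs}: the constraint $r\ge 0$ together with $s>0$ when $r=0$ picks out a fundamental domain for the involution $\beta \leftrightarrow v-\beta$, and the exclusion $(r,s)\ne (1,0)$ removes the case $\beta = v$, which is the trivial wall. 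Combined with the inclusion $K_{num}(E)\hookrightarrow \mc N(\PP_{E_S}(\Xi))$ this gives an injective map from $v$-walls to the claimed subset of $R^{ell}_{A_{-1}}$.

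The last step is surjectivity: every root $\beta\in R^{ell}_{A_{-1}}$ satisfying the inequality actually gives rise to a nonempty $v$-wall, meaning some object of class $\beta$ destabilizes some object of class $v$ \emph{and} this destabilization induces a birational contraction of $M_{\underline{\sigma}_0}^{\chi}(v)_{\mc F}$. For the central fiber I would use the classical theory of Bridgeland stability on Weierstrass elliptic surfaces together with the Friedman-chamber analysis from \cite{Lo_Martinez_2022,Liu_Lo_Martinez_2019} to exhibit explicit destabilizing objects (typically coming from ideal sheaves of fibers or their Fourier--Mukai partners). For the deformed family over $HH_0(E)$ I would propagate these destabilizations using the properness argument already used in the proof of Theorem \ref{thm:stability_family} (scaling the extension class and applying semistable reduction Proposition \ref{prop:ss_red}) to guarantee that an actual destabilizing subobject exists above $s\ne 0$ whenever one exists at $s=0$.

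The main obstacle is in the last step, specifically in showing that a numerical wall truly induces a birational contraction rather than remaining an artefact of the lattice. Here I would invoke the determinant line bundle $L_{\underline{\sigma}}$ and the Bayer--Macr\`i positivity package assembled in the preceding subsection: a class $\beta$ satisfying $\langle \beta,v\rangle \le \langle v,v\rangle/2$ with $\beta$ not proportional to $v$ produces a non-proportional phase vector on the wall, whence $L_{\underline{\sigma}}$ restricts to a numerically nontrivial but non-ample divisor on $M_{\underline{\sigma}_0}^{\chi}(v)_{\mc F}$, and this forces a nontrivial contraction. The delicate point is handling the $r=0$ boundary (fiber-class destabilizers), where the destabilizing subobjects are torsion and the positivity lemma must be applied to the tilted heart rather than the standard one; this is exactly where the Friedman-chamber hypothesis on $H$ built into $\Stab'$ is used.
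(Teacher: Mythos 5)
Your overall route --- reduce to the fiber classes $(0,rE,s)$ via Proposition \ref{prop:all_walls_are_GU}, identify these with $K_{num}(E)\simeq R^{ell}_{A_{-1}}=\Z^2$ under the semiorthogonal decomposition, and then appeal to the Bayer--Macr\`i wall machinery for the pairing restriction --- is the same as the paper's, which after reducing to $v=(1,0,-n)$ by a derived anti-autoequivalence simply cites \cite[\S 8]{Bayer_Macrì_2014_MMP} for the inequality. However, your second paragraph contains a genuine error about what the inequality $\langle\beta,v\rangle\le\langle v,v\rangle/2$ is doing. It is not a normalization selecting a fundamental domain for the involution $\beta\leftrightarrow v-\beta$: that involution does not even act on the set of destabilizing classes under consideration, since $\beta=(0,rE,s)$ has rank $0$ while $v-\beta$ has rank $1$ and is never of the form $(0,r'E,s')$. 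The normalization conventions in $\mc W$ ($r\ge 0$, $s>0$ when $r=0$) account for the harmless involution $\beta\leftrightarrow-\beta$, which negates $\langle\beta,v\rangle$ and so lets you assume the pairing is nonnegative; the \emph{upper} bound $\langle\beta,v\rangle\le\langle v,v\rangle/2$ is a substantive restriction that excludes infinitely many numerical walls $w_{r,s}$ as pseudo-walls along which no contraction of $M^{\chi}_{\underline{\sigma}_0}(v)_{\mc F}$ occurs. If it were merely a choice of representative, the theorem would assert that every $w_{r,s}$ is an actual $v$-wall, which is false.

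Because of this misreading, your proposal never actually establishes the ``only if'' direction --- that a class with $\langle\beta,v\rangle>\langle v,v\rangle/2$ induces no contraction --- which is precisely the content the paper delegates to the totally-semistable-wall analysis of \cite[\S 8]{Bayer_Macrì_2014_MMP}. Relatedly, in your final paragraph the claim that a numerically nontrivial non-ample restriction of $L_{\underline{\sigma}}$ ``forces a nontrivial contraction'' is too quick: nefness without amplitude only yields a contraction if one exhibits curves of S-equivalent, non-isomorphic stable objects contracted by $L_{\underline{\sigma}}$, and producing those curves (via extensions governed by $\langle\beta,v-\beta\rangle$) is exactly where the inequality enters. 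Your surjectivity argument via propagation from the central fiber (openness of stability, scaling the extension class, Proposition \ref{prop:ss_red}) is consistent with the specialization argument the paper uses to show $v$-walls form a subset of the central-fiber walls, and is fine; the missing piece is the correct source and both directions of the numerical criterion.
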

\begin{proof}
  By employing a derived anti-autoequivalence we can assume that $v = (1,0,-n)$ as above. 
  The only thing left to prove after the above analysis is the restriction on the pairings of the Mukai vectors which follows from the analysis of \cite[\S 8]{Bayer_Macrì_2014_MMP}. 
\end{proof}

Let $U\subset \Stab'(D^b_{Coh}(\PP_{E_S}(\Xi)))$ denote the subspace of the source for which $L_{\underline{\sigma}}|_{M_{\underline{\sigma}_0}(v)_{\mc F}}$ is NEF on a moduli space which admits a fibration to a half-dimensional base $B \subset \PP^n$ agreeing with the one on the original moduli space on a dense open set. 

This produces a map 
\begin{align}\label{eq:Utorel_amp}
U \to N^1(M^\chi_{\underline\sigma_0}(v))' &\to N^1(\ell^{-1}(B)/ B)\\
\nonumber \underline{\sigma} \mapsto L^B_{\underline{\sigma}}
\end{align}
which is the composition of the inclusion into $ \Stab'(D^b_{Coh}(\PP_{E_S}(\Xi)))$, the map \eqref{eq:map_half} onto its image $N^1(M^\chi_{\underline\sigma_0}(v))' \subset N^1(M^\chi_{\underline\sigma_0}(v))$ and the restriction to $\ell^{-1}(B)$. 

Since we know that for $\mc F$ of rank 1 and $\underline{\sigma}_0$ in the Gieseker chamber we have $M_{\underline{\sigma}_0}(v; \mc F) \simeq T^*E^{[n]}$ and we know that $N^1(T^*E^{[n]})$ is spanned by the two divisors which are the elements $|1_{P}, 1^{n-1}\rangle$ and $|2_{\mrm{pt}}, 1^{n-2}\rangle $ created by the relevant Nakajima operators, we can determine a class in $N^1(M_{\underline{\sigma}_0}(v; \mc F))$ or one of its $K$-equivalent birational models by pairing with elements of 
\begin{equation}
  \label{eq:n1_basis_real2} N_1(M_{\underline{\sigma}_0}(v)_{\mc F}) = \R C_1 \oplus \R C_2 
\end{equation} generated by curves $C_1$ and $C_2$ where $C_1$ is an exceptional fiber of the Hilbert-Chow map and $C_2 \in N_1(T^*E^{[n]})$ is represented by a curve $C_2 \simeq E$ with $n-1$ points fixed and one point tracing out a fiber of $T^*E \to \A^1$. These pairings can be done in $\RN{2}_{1,1} \oplus \RN{2}_{1,1}$ which is the target of the Mukai vector in Theorem \ref{thm:stability_family} where under the Mukai homomorphism $C_1 = \langle ((n,1,0,0), -\rangle$ and $C_2 = ((0,0,0,1))$ where $(a,b,c,d)$ corresponds to $a\pmb{1} + c P + d E + b\mrm{pt} \in H^*(E\times \PP^1)$.  

Restricting for simplicity to the case of $v = (1,0,-n)$ We also have the explicit form of the class of the Bayer-Macr\`i line bundle \cite{Bayer_Macrì_2014} as 

\[ \ell_{\underline{\sigma}_0} = (r_{\underline{\sigma}},  s_{\underline{\sigma}} ,c_{\underline{\sigma}},) \] 
where 
\begin{align}
r_{\underline{\sigma}} &= B\cdot H\nonumber  \\
c_{\underline{\sigma}} &= -B\cdot H B + (-n + (B^2-H^2)/2)H \label{eq:bm_exactly}\\
s_{\underline{\sigma}} &= -n B\cdot H.\nonumber
\end{align}

Let $N^1(T^*E^{[n]}/\A^n)_{> 0} = \{ D \in  N^1(T^*E^{[n]}/\A^n)\mid D\cdot C_2 > 0 \} $ denote the half space which is positive on a curve in a generic fiber. 

\begin{prop}
The map $U \to N^1(T^*E^{[n]}/\A^n)$ factors through a surjection 
$U \to N^1(T^*E^{[n]}/\A^n)_{> 0}$. 
\end{prop}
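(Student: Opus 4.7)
The plan is to prove two statements in turn: that for every $\underline{\sigma}\in U$ the class $L^B_{\underline{\sigma}}$ pairs strictly positively with $C_2$, and that every class in $N^1(T^*E^{[n]}/\A^n)_{>0}$ arises in this way.

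For strict positivity on $C_2$, the Bayer-Macrì positivity lemma in the relative form supplied by Theorem \ref{thm:cite_moduli_spaces} implies that $L_{\underline\sigma}$ is nef on the relevant birational model and that $L_{\underline\sigma}\cdot C = 0$ precisely when the objects generically parameterized by $C$ are $\underline\sigma$-S-equivalent. Under the Gieseker-chamber identification $M_{\underline\sigma_0}(v)_{\mc F}\simeq T^*E^{[n]}$ the curve $C_2\simeq E$ parameterizes the family of ideal sheaves $\mc I_{Z_1\cup \{p\}}$, where $Z_1\subset T^*E$ is a fixed length-$(n-1)$ subscheme and $p$ sweeps a fiber of the projection $T^*E\to \A^1$. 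These are pairwise non-isomorphic stable sheaves, hence not S-equivalent; since this non-triviality is preserved under the birational transformations induced by varying $\underline{\sigma}$ across walls in $U$, one obtains $L^B_{\underline\sigma}\cdot C_2 > 0$ throughout $U$.

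For surjectivity, I would exploit the explicit formula \eqref{eq:bm_exactly} after normalizing by the $\widetilde{\GL^+(2,\R)}$-action on stability conditions, which rescales $\ell_{\underline{\sigma}}$ by a positive factor and reduces the essential parameters to the three reals $b,c,d$ appearing in $H = P + bE$ and $B = cP + dE$. Pairing $\ell_{\underline\sigma}$ with the Mukai vectors associated to $C_1$ and $C_2$ as identified in the paragraph preceding \eqref{eq:bm_exactly} yields a polynomial map $\Phi:(b,c,d)\mapsto (\alpha,\beta)\in \R^2$. Within the constraints $H^2\gg 0$ and $(c^2+1)b > 2dc - n$ defining $\Stab'$, a Jacobian calculation shows that the projective image of $\Phi$ sweeps out the full open half-plane $\{\beta > 0\}$, which upon dualizing against $C_1,C_2$ is exactly the target $N^1(T^*E^{[n]}/\A^n)_{>0}$.

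The main obstacle is bookkeeping: as $\underline{\sigma}$ varies across $v$-walls in $U$ the birational model associated to $\underline\sigma$ changes, and the relationship between $L_{\underline\sigma}$ and its image class in $N^1(T^*E^{[n]}/\A^n)$ of the original moduli space $M_{\underline{\sigma}_0}(v)_{\mc F}$ is mediated by the attendant birational transformation. Theorem \ref{thm:vwalls_roots} identifies the $v$-walls with a subset of roots of $R_{A_{-1}}^{ell}$, and together with the noncommutative analogue of the Bayer-Macrì MMP \cite{Bayer_Macrì_2014_MMP} these chambers correspond to the NEF cones of the various $K$-equivalent birational models of $M_{\underline{\sigma}_0}(v)_{\mc F}$. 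The essential step is checking that these NEF cones together tile $N^1(T^*E^{[n]}/\A^n)_{>0}$, which follows from the walls-from-roots correspondence combined with the explicit ample cone computation for the Friedman chamber carried out above.
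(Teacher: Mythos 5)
Your surjectivity step coincides with the paper's: both arguments rest on the explicit formula \eqref{eq:bm_exactly} for the Bayer--Macr\`i class and the observation that, after normalizing by the $\widetilde{\GL^+(2,\R)}$-action, the pairings with the curves $C_1, C_2$ of \eqref{eq:n1_basis_real2} sweep out the entire open half-space. Your first paragraph (positivity on $C_2$ via non-S-equivalence of the ideal sheaves it parameterizes) is a reasonable way to see that the image lands in $N^1(T^*E^{[n]}/\A^n)_{>0}$, and is consistent with the paper's (implicit) treatment of that point.

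The gap is in your final paragraph. Recall that $U$ is defined not merely as the locus where $L_{\underline\sigma}$ is nef on some birational model, but where it is nef on a model that \emph{admits a fibration to the half-dimensional base $B\subset\PP^n$ agreeing with the original one on a dense open set}. Your tiling argument, even granting that the nef cones of the $K$-equivalent models cover the half-space, only produces for each class a model on which that class is nef; it does not show that the model carries the required Lagrangian fibration to $\A^n$. This is precisely the point the paper's proof spends its second half on: it takes the coordinates $s_1,\dots,s_n$ of the support map $T^*E^{[n]}\to\A^n$, notes that they are a priori only rational functions on the other birational models, and shows they extend to regular functions by computing their poles in the analytic category, using that $T^*E^{[n]}$ embeds as an open subset of $X^{[n]}$ for an isotrivial elliptic K3 surface $X$, where the corresponding statement for the birational models is known. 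Without some version of this extension argument (or another mechanism guaranteeing that the isotropic class defining the fibration stays semiample on every model), your proof establishes surjectivity of $\Stab'\to N^1(T^*E^{[n]}/\A^n)_{>0}$ but not of $U\to N^1(T^*E^{[n]}/\A^n)_{>0}$.
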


\begin{proof}
That $\Stab'((D^b_{Coh}(PP_{E_S}(\Xi))))$ surjects onto $ N^1(T^*E^{[n]}/\A^n)_{> 0}$ via the same map as \eqref{eq:Utorel_amp} follows from the explicit form of the numerical class of the line bundle given in \eqref{eq:bm_exactly} and that pairings with the curves in \eqref{eq:n1_basis_real2} can attain any value. It remains to show that all of these elements give birational moduli spaces with  fibrations. Letting $s_1, \ldots, s_n$ denote the coordinates of the map $T^*E^{[n]} \to \A^n$ we know that these give rational functions on $M_{\underline{\sigma}_0}(v)_{\mc F}$. Their poles may be calculated in the analytic category where we know that $T^*E^{[n]}$ is an open subset of $X^{[n]}$ where $X$ is an isotrivial K3 surface and the corresponding birational moduli spaces are such that the coordinates $s_i$ extend to regular functions. 
\end{proof}

\subsubsection{Horosphere in positive cone}\label{ssec:horosphere} 

For $n > 1$ the space $ N^1(T^*E^{[n]}/\A^n)_{> 0}$ is a half-space in a 2d real vector space $ N^1(T^*E^{[n]}/\A^n)$ the walls correspond to $C^\perp$ for 
\[  C \in \{ m C_E + n C_{\pt} \mid 1\le 1 \le n \} \] 
where $C_{\pt}$ is the primitive curve contracted by the Hilbert-Chow map and $C_E$ is a curve in a generic fiber. 

\begin{figure}[h!]
  \centering
  \includegraphics[width=0.6\textwidth]{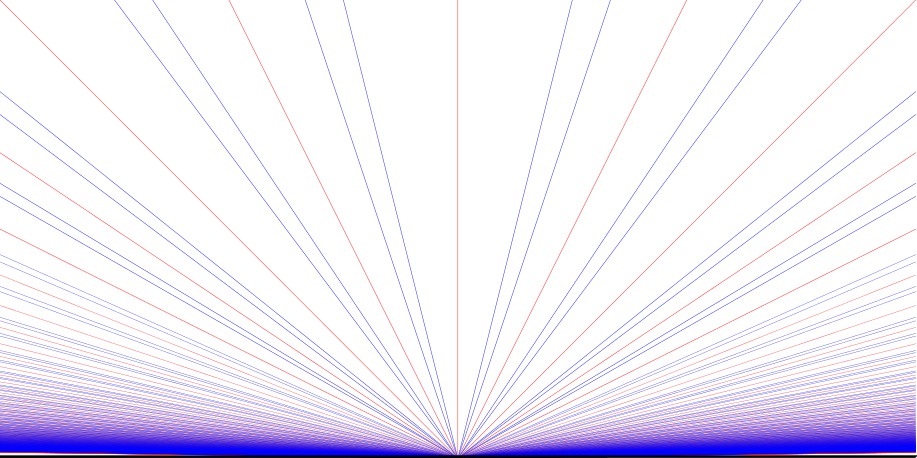}
  \caption{Wall and chamber structure on $ N^1(T^*E^{[n]}/\A^n)_{> 0}$ for $n = 4$. }
  \label{fig:walls_a04}
\end{figure}

Let $S$ denote an elliptic K3 surface with $\NS(S) = \Z\langle E, \sigma \rangle $ where $\sigma$ is a section so the intersection pairing in 
\[ \begin{pmatrix} 0 & 1 \\ 1 & -2 \end{pmatrix} \] 
in this basis. 

Let $n > 1$ and consider
\[ \Pos_1 = \{ H \in  \Pos(S^{[n]}) \mid H^2 = 1\} \] 
denote the level 1 (hyperbolic) hyperplane in the positive cone $\Pos(S^{[n]}) \subset N^1(S^{[n]}) \simeq N^1(S) \oplus \R \delta$. 

Let $\mathbb{H}^k$ denote hyperbolic space with compactification $\overline{\mathbb{H}^k}$ by adjoining the boundary of the Poincar\'e disk model. Given $L \in \overline{\mathbb{H}^k}$ let $Hor_L$ denote the horosphere at $L$ which is naturally endowed with a $k-1$-dimensional Euclidean metric. 

The induced wall-and-chamber structure on the affine hyperplane 
\[ H_1 = \{ D \in N^1(T^*E^{[n]}/\A^n)_{> 0} \mid D\cdot C_E = 1 \}  \]
is equivalent to the wall-and-chamber structure on a horosphere $Hor_L$ where $L \in  \overline{Pos}_1$ corresponds to the isotropic vector in $\overline{\on{Pos}(S^{[n]})}$ corresponding to the Lagrangian fibration $S^{[n]} \to \PP^n$. 
This is shown in Figure \ref{fig:horosphere} in an example. 

The positive cone for $S^{[n]}$ contains many more walls than those compatible with the Lagrangian fibration, corresponding to singular birational models which do not admit compatible maps to $\PP^n$. 

\begin{figure}[h!]
  \centering
  \includegraphics[width=0.9\textwidth]{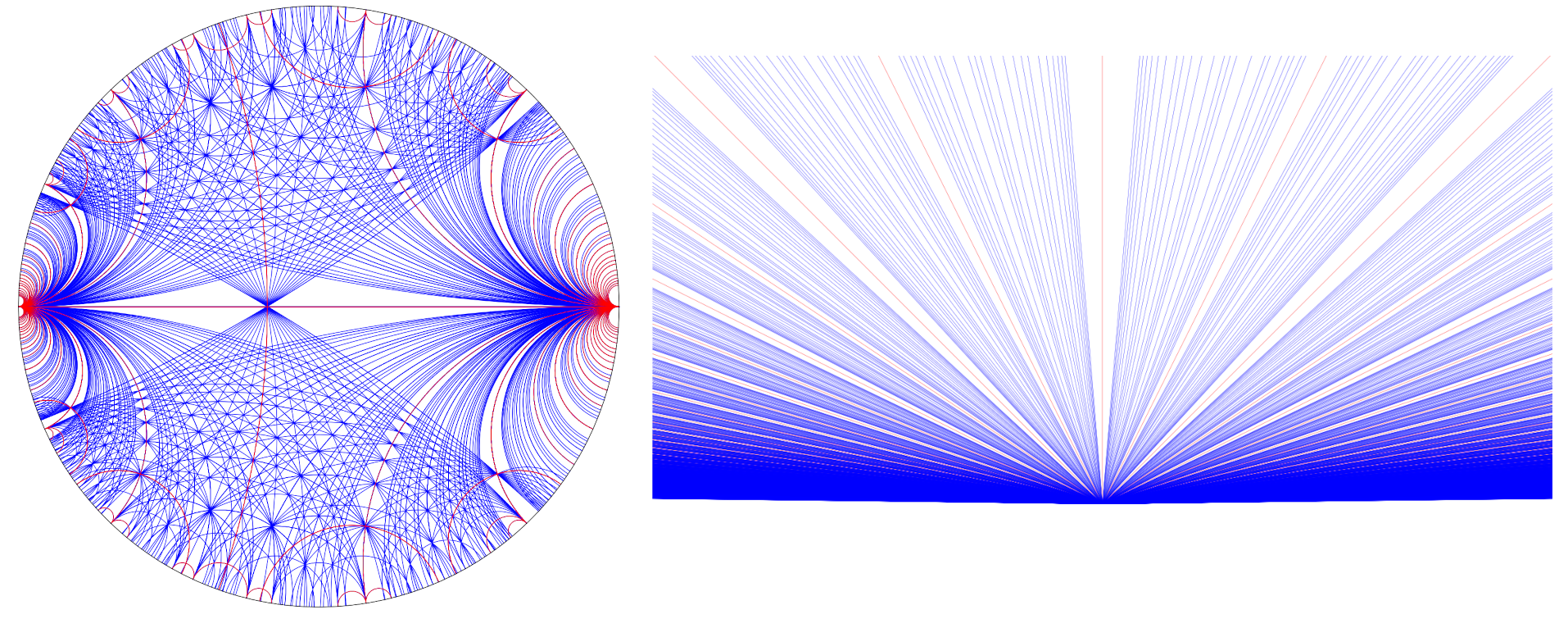}
  \caption{Wall and chamber structure on $ N^1(T^*E^{[n]}/\A^n)_{> 0}$ for $n = 12$ and analogous wall-and-chamber structure on horosphere on the positive cone detailed in \cite{DeHority_2020}.  }
  \label{fig:horosphere}
\end{figure}

\subsubsection{Analysis of stability}
Assume $v$ is the image under derived equivalence of $v' = (1,c_1, d)$ and we are in the situation of \ref{ssec:stability_lfib}. 
Let $\underline{\sigma} \in U$ be generic. Let $\mc F\in \Coh(D_\infty)$ denote a sheaf with the right chern character to serve as $\mc E|_{D_\infty}$ for $E \in M_{\underline\sigma}(v)$ such that $\det(\mc F) = \chi$. 

Let $\mf P$ denote the generic fiber of the $\PP^1$ base of $E\times \PP^1$. 
\begin{prop}\label{prop:generically_the_framing}
Given $\mc E \in M^\chi_{\underline\sigma_0}(v)$ there is an isomorphism
\[ \mc E|_{E\times \mf P} \simeq \mc F \boxtimes \mc O_{\mf P}. \]
\end{prop}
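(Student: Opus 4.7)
The approach is to combine a classifying-map argument of the sort used in Lemma~\ref{lem:generically_F} with the classical fact that a stable sheaf of coprime rank and degree on $E$ is determined up to isomorphism by its determinant. After applying the appropriate derived (anti)autoequivalence we reduce to the case $v = (1, c_1, d)$ with $\underline\sigma_0$ in the Gieseker chamber and $H$ in the Friedman chamber, so that $\mc E$ is a $\mu_{H,B}$-semistable torsion-free sheaf on $\overline{X_{A_{-1}}} = E \times \PP^1$ whose restriction to $D_\infty$ has determinant $\chi$.

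First I would show that on a dense open $U \subset \PP^1$ the restriction $\mc E|_{E_a}$ is a torsion-free sheaf on $E_a \simeq E$ of Chern character $v(\mc F)$ and $\mu$-semistable with respect to the induced polarization. The first two properties follow from the flattening stratification of $\mc E$ over $\PP^1$, while $\mu$-semistability of the generic restriction is the standard restriction theorem for $\mu$-semistable torsion-free sheaves on an elliptic surface whose polarization lies in the Friedman chamber. Coprimality of the rank and degree of $v(\mc F)$ promotes semistability to stability, yielding a classifying morphism $f\colon U \to M^s_{E, v(\mc F)}$. Next, the line bundle $\det(\mc E) \in \Pic(E \times \PP^1)$ induces a morphism $\PP^1 \to \Pic^d(E) \simeq E$ which must be constant since every morphism from $\PP^1$ to an abelian variety is constant; hence $\det(\mc E|_{E_a}) = \det(\mc E|_{D_\infty}) = \chi$ for every $a$. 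Since the determinant map $M^s_{E, v(\mc F)} \to \Pic^d(E) \simeq E$ is itself an isomorphism (the classical moduli description of stable bundles on an elliptic curve in coprime rank and degree), its fibre over $\chi$ is the single reduced point $[\mc F]$. Therefore $f$ factors through $[\mc F]$ and $\mc E|_{E_a} \simeq \mc F$ for all $a \in U$. Passing to the generic point of $\PP^1$ gives $\mc E|_{E \times \mf P} \simeq \mc F \boxtimes \mc O_{\mf P}$.

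The only step requiring genuine care is the generic semistability of $\mc E|_{E_a}$: without it, the classifying map does not land in the representable moduli space $M^s_{E, v(\mc F)}$ and the rigidity argument cannot be invoked. This is precisely the role of the Friedman-chamber hypothesis built into the definition of $U \subset \Stab'$, and is the step that would need to be re-engineered if one wished to extend the statement outside the chamber. The remainder is structural, relying only on rigidity of morphisms $\PP^1 \to E$ and on the classical identification of $M^s_{E, v(\mc F)}$ with $\Pic^d(E) \simeq E$ via the determinant.
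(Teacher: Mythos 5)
Your proof is correct, and its core mechanism is the same as the paper's: in the Gieseker/Friedman chamber the restriction to the generic fibre is semistable (the paper phrases this as ``a destabilizing sequence at the generic point would destabilize the fibre over $\infty$''), coprimality upgrades this to stability, and then rigidity of the determinant map $\PP^1\to\Pic^d(E)\simeq E$ together with the identification of the moduli space of stable bundles on $E$ of coprime rank and degree with $\Pic^d(E)$ forces the classifying map to be constant — this last part is exactly the classifying-map argument the paper compresses into Lemma~\ref{lem:generically_F}. The one genuine divergence is how the non-Gieseker chambers are handled. You conjugate by a relative derived (anti)autoequivalence to reduce to the Gieseker chamber; this is legitimate, but it silently invokes the content of Proposition~\ref{prop:deq_all_moduli_are_framed} (proved later in the paper, though independently of the present proposition) plus the fact that relative Fourier--Mukai transforms act fibrewise and hence commute with restriction to $E\times\mf P$ and carry $\mc F\boxtimes\mc O_{\mf P}$ to $\mc F'\boxtimes\mc O_{\mf P}$ — worth stating explicitly. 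The paper instead stays in the given chamber and observes that the modifications from crossing the walls of Proposition~\ref{prop:all_walls_are_GU} involve destabilizing objects of class $(0,rE,s)$, supported on finitely many fibres, so the generic fibre restriction is unchanged from the Gieseker case. Both routes work; the paper's avoids forward references, while yours makes explicit the restriction-theorem and determinant-rigidity steps the paper leaves implicit.
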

\begin{proof}
  This is true in the Gieseker chamber where a destabilizing sequence on the generic point would destabilize any fiber including the fiber over $\infty$. For the remaining stability chambers, it follows from the fact that the birational modifications obtained by crossing the walls from Proposition \ref{prop:all_walls_are_GU} do not modify the sheaf at the generic fiber. 
\end{proof}

Now consider a fixed closed $s \in HH_{0, comm}(E)$ corresponding to a commutative surface $X$ isomorphic to $\overline{E^\natural}$. Let $\omega = N E + P$ for some $N \gg 0$ be a polarization. 

\begin{lem}\label{lem:all_framable_are_stable}
Any $\mc F$-framable torsion free sheaf $\mc E \in \Coh(X)$ is $\omega$-stable for large enough $N$. 
\end{lem}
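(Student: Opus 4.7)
The plan is to argue by contradiction, exploiting the fact that $\omega = NE + P$ with $N \gg 0$ is an F-theory-limit polarization so that slope stability is controlled, to leading order in $N$, by restriction to a generic fiber of $X \to \PP^1$. Suppose $\mc E$ is $\mc F$-framable, torsion-free, of fixed Chern character $v$, but not $\omega$-stable. Then there is a saturated subsheaf $\mc G \subset \mc E$ (so that $\mc E/\mc G$ is torsion-free) with $0 < r_G := \rk \mc G < r_E := \rk \mc E$ and $\mu_\omega(\mc G) \ge \mu_\omega(\mc E)$.

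First I would expand the slopes. Writing $d_G := E\cdot c_1(\mc G)$ and $e_G := P\cdot c_1(\mc G)$ (and similarly for $\mc E$), one has $\omega \cdot c_1(\mc G) = N d_G + e_G$; in the non-$A_{-1}$ cases, any contribution from exceptional classes in $c_1(\mc G)$ is orthogonal to $E$ and so appears only in $e_G$, which is $O(1)$ as $N\to\infty$. The destabilizing inequality $\mu_\omega(\mc G)\ge \mu_\omega(\mc E)$ then implies, in leading order in $N$, that $d_G/r_G \ge d_E/r_E$.

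Next I would use Lemma \ref{lem:generically_F} (and the version of it valid on $X \simeq \overline{E^\natural}$): on all but finitely many fibers $E_a$ of the elliptic fibration $X\to \PP^1$, $\mc E|_{E_a}\simeq \mc F$. Saturation of $\mc G$ makes $\mc E/\mc G$ torsion-free, hence locally free on generic $E_a$, so by the Tor long exact sequence $\mc G|_{E_a}$ embeds as a subsheaf of $\mc E|_{E_a}\simeq \mc F$. Since $\mc F$ is stable of coprime rank and degree, any proper subsheaf has strictly smaller slope: $\deg(\mc G|_{E_a})/\rk(\mc G|_{E_a}) < \deg(\mc F)/\rk(\mc F)$. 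Translating through $\deg(\mc G|_{E_a}) = c_1(\mc G)\cdot E = d_G$ (and similarly $d_E = \deg\mc F$) yields $d_G/r_G < d_E/r_E$ strictly, contradicting the leading-order inequality once $N$ is large enough for the $O(1)$ terms $e_G/r_G - e_E/r_E$ to be dominated.

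The main obstacle is uniformity: making the threshold $N_0$ depend only on $v$ and not on $\mc E$ or $\mc G$. This reduces to bounding $|e_G|$ over all possible destabilizing subsheaves. Standard Grothendieck boundedness for subsheaves of bounded rank inside a sheaf of fixed Chern character, together with the fact that there are only finitely many classes $c_1(\mc G)$ satisfying $0 < r_G < r_E$ and the leading-order equality $d_G/r_G = d_E/r_E$ in $\NS(X)$ (once the torsion-free class is fixed), produces a finite list of candidate destabilizers and hence a uniform $N_0$. The strict inequality in the previous step, which crucially uses the coprimality of rank and degree of $\mc F$, is what makes the leading $N$-term sharp enough to override any such bounded subleading correction; if $\mc F$ were only semistable the argument would fail in exactly that limit.
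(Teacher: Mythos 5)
Your proof is correct and follows essentially the same route as the paper: both arguments saturate the destabilizing subsheaf so that it restricts to a genuine subsheaf of $\mc F$ on a fiber (you use a generic fiber via Lemma \ref{lem:generically_F}, the paper uses the double-dual modification near $D_\infty$), and both then invoke stability of $\mc F$ to get the strict inequality on fiber degrees $d_G/r_G < d_E/r_E$ that dominates the slope comparison for $N \gg 0$. Your additional remarks on the uniformity of $N_0$ go beyond what the paper records but are consistent with it.
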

\begin{proof}
Given an exact sequence $0 \to \mc E_1 \to \mc E_3 \to \mc E_2 \to 0$  with $\mc E = \mc E_3$. Let $c_1(\mc E_i) = c_{P,i}P + c_{E,i}E$, likewise let $r_i, d_i$ be the rank and degree of $\mc E_i$. Because $\mc E$ is $\mc F$-framable and so locally free in a neighborhood of $D_\infty$ the sequence $\mc E_1 \to \mc E_1^{\vee \vee} \to \mc E_3^{\vee \vee}$ implies that there is a new sequence $0 \to \mc E_1'\to \mc E_3 \to \mc E_2'\to 0$ which is destabilizing if the original one is where $\mc E_1' \to \mc E_3$ is a bundle map in a neighborhood of $D_\infty$.  Because $\mc F$ is stable we have 
\[ \frac{c_{P,1}}{r_1} < \frac{c_{P,3}}{r_3}\] which implies stability of $\mc E$ for large enough $N$ which is the inequality \[\frac{N c_{P,1} + c_{E,1}}{r_3} < \frac{N c_{P,3} + c_{E,3}}{r_3} . \]
\end{proof}

Let $\underline{\sigma}\in U$ be generic.
\begin{prop}\label{prop:deq_all_moduli_are_framed}
There exists $\Phi \in \Aut(D^b_{Coh}(E\times \PP^1))$ such that 
$\mc E \mapsto (\Phi(\mc E), \phi)$ gives an isomorphism 
\[ M^\chi_{\underline{\sigma}_0}(v)_{\mc F} \xrightarrow{\sim} M^{tf}(\Phi_*v; \mc F)\] 
where $\phi : \Phi(\mc E)|_{D_\infty}$ is an isomorphism induced by a trivialization of the gerbe $\mc M^\chi_{\underline{\sigma}_0}(v) \to M_{\underline{\sigma}_0}(v)$. 
\end{prop}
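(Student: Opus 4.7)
The plan is to construct $\Phi$ as a derived (anti)autoequivalence of $D^b_{Coh}(E\times \PP^1)$ whose pushforward carries the given generic stability condition $\underline{\sigma}_0$ into the Gieseker chamber, and then identify objects there with framed torsion-free sheaves using the stability and framing compatibility results already established.

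First, I would use the $\widetilde{\SL(2,\Z)}$-action on $U$ established in the previous proposition (following Lo--Martinez) together with the explicit wall equation \eqref{eq:wall_rs} and the chamber analysis of \ref{ssec:stability_lfib}. Since $U$ is preserved by this group action and the Gieseker chamber sits in $U$, for a generic $\underline{\sigma}_0 \in U$ there is $\Phi \in \Aut(D^b_{Coh}(E\times \PP^1))$ such that $\Phi_*\underline{\sigma}_0 = \sigma_{H,B}$ lies in the Gieseker chamber (i.e.\ with $H = NE + P$, $N \gg 0$), so that $\Phi_*\underline{\sigma}_0$-stable objects of class $\Phi_* v$ on the central fiber are exactly $\omega$-Gieseker-stable torsion-free sheaves. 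I would choose $\Phi$ among the candidates so that its action at $D_\infty$ is the identity on $\mc F$: this is possible because the subgroup of autoequivalences commuting with restriction to $D_\infty$ is large enough (it contains the relative FM transforms along the $E$-fibers of $E \times \PP^1 \to \PP^1$ which act trivially at $\infty$, together with tensoring by line bundles pulled back from $\PP^1$), and these already generate the action that moves stability conditions between chambers.

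Second, I would push $\mc E \mapsto \Phi(\mc E)$ through the moduli stacks. Since $\Phi$ is an autoequivalence, it induces an isomorphism $\mc M^\chi_{\underline{\sigma}_0}(v) \xrightarrow{\sim} \mc M^\chi_{\Phi_*\underline{\sigma}_0}(\Phi_* v)$ on the stacks of semistable objects, hence on their good moduli spaces by Theorem \ref{thm:cite_moduli_spaces}. Restricting to the subfunctor $M^\chi_{\underline{\sigma}_0}(v)_{\mc F}$ cut out by $\mc E|_{D_\infty} \simeq \mc F$, the image lies inside the locus of Gieseker-stable sheaves on $E \times \PP^1$ whose restriction to $D_\infty$ is isomorphic to $\mc F$, by the compatibility of $\Phi$ with restriction at $\infty$.

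Third, I would establish that this image is precisely $M^{tf}(\Phi_*v;\mc F)$. One direction uses Lemma \ref{lem:all_framable_are_stable}: every $\mc F$-framable torsion-free sheaf is $\omega$-stable for $N \gg 0$, so the forgetful map $M^{tf}(\Phi_*v;\mc F) \to \mc M^\chi_{\Phi_*\underline{\sigma}_0}(\Phi_* v)$ lands in the stable locus and is an injection because the framing is unique up to scalar. The other direction uses Proposition \ref{prop:generically_the_framing}, which implies that on the generic fiber $\mf P$ of the base $\PP^1$ we have $\Phi(\mc E)|_{E \times \mf P} \simeq \mc F \boxtimes \mc O_{\mf P}$; combined with $\Phi(\mc E)|_{D_\infty} \simeq \mc F$ (from the fixed-determinant condition and the choice of $\Phi$), this forces $\Phi(\mc E)$ to be torsion-free near $D_\infty$ and to carry the required framing data. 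The framing isomorphism $\phi$ itself is obtained by trivializing the $\Bbb{G}_m$-gerbe $\mc M^\chi_{\underline{\sigma}_0}(v)\to M^\chi_{\underline{\sigma}_0}(v)$ on the open substack $M^\chi_{\underline{\sigma}_0}(v)_{\mc F}$, which is possible because the stabilizer of an $\mc F$-framed stable object is trivial, so a universal family exists and induces a canonical trivialization at $D_\infty$.

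The main obstacle will be step one: constructing $\Phi$ so that simultaneously (i) $\Phi_*\underline{\sigma}_0$ sits in the Gieseker chamber and (ii) the induced action on $D^b_{Coh}(D_\infty)$ fixes $\mc F$ up to canonical isomorphism. For (i), one must verify that the $\widetilde{\SL(2,\Z)}$-orbit of the Gieseker chamber inside $U$ covers (up to the $\widetilde{\GL^+(2,\R)}$-action and wall-crossing) every generic $\underline{\sigma}\in U$; this reduces to the wall classification in Theorem \ref{thm:vwalls_roots} together with the fact that all $v$-walls arise from torsion subobjects with $r > 0$ and $c_{1,P} = 0$ (Proposition \ref{prop:all_walls_are_GU}), which are permuted by the relative Fourier--Mukai group. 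For (ii), the trivial action at $D_\infty$ follows from choosing $\Phi$ in the subgroup of autoequivalences generated by those relative transforms along the elliptic fibration together with twists pulled back from $\PP^1$, since these act trivially on $D^b_{Coh}(D_\infty) = D^b_{Coh}(E)$ when appropriately normalized.
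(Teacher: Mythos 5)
Your overall strategy coincides with the paper's: use the wall classification (Proposition \ref{prop:all_walls_are_GU}, Theorem \ref{thm:vwalls_roots}) to find a derived (anti)equivalence $\Phi$ carrying $\underline{\sigma}_0$ into the Gieseker chamber (equivalently, identifying the adjacent wall with the Gieseker--Uhlenbeck wall), then invoke Lemma \ref{lem:all_framable_are_stable} to identify $\mc F$-framable Gieseker-stable sheaves with all $\mc F$-framable torsion-free sheaves, and finally obtain $\phi$ from the gerbe trivialization. Steps two through four of your write-up are essentially the paper's proof.

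However, your condition (ii) in step one — choosing $\Phi$ so that its induced action on $D^b_{Coh}(D_\infty)$ fixes $\mc F$ — is both unachievable as you describe it and unnecessary. The relative Fourier--Mukai transforms along the elliptic fibration $E\times\PP^1\to\PP^1$ do \emph{not} act trivially on the fiber $D_\infty = E\times\{\infty\}$: they act on it by the corresponding autoequivalence of $D^b_{Coh}(E)$, which is precisely why they move stability conditions between chambers (they act through $\widetilde{\SL(2,\Z)}$ on $K_{num}(E)$, and any element acting trivially there induces no wall-crossing). The elements that genuinely fix $D^b_{Coh}(D_\infty)$, such as twists by $\pi^*\mc O_{\PP^1}(k)$, cannot cross the walls classified in Theorem \ref{thm:vwalls_roots}. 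The correct resolution — and the one implicit in the paper — is that the framing sheaf is transported along with everything else: $\Phi(\mc E)|_{D_\infty}\simeq \Phi_E(\mc F)$, which is again a stable sheaf of coprime rank and degree, and this is exactly why Theorem \ref{thm:def_equiv} is stated for arbitrary such framing sheaves rather than only for $\mc O_{D_\infty}$. Separately, note that the paper allows derived \emph{anti}-autoequivalences (objects identified with derived duals of Gieseker-stable sheaves) for some chambers; restricting to the $\widetilde{\SL(2,\Z)}$-orbit of the Gieseker chamber alone does not cover all of $U$.
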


\begin{proof}
Because of the wall and chamber structure on $U$ described by Theorem \ref{thm:vwalls_roots} we know that the chamber $\underline{\sigma}$ lies in is adjacent to a wall where the phase of $v$ overlaps with that of $(0, rE, s)$. Choosing a derived equivalence under which sheaves of class $(0, rE, s)$ are sent to torsion sheaves identifies this wall with the Gieseker-Uhlenbeck wall \cite{Bayer_Macrì_2014_MMP,Tajakka_2023} and stable objects are thus identified with either Gieseker stable sheaves in the Friedman chamber or with derived duals of such. But Lemma \ref{lem:all_framable_are_stable} implies that the stack of $\mc F$-framable Gieseker stable sheaves is isomorphic to the stack of all $\mc F$-framable torsion-free sheaves and the trivialization of the gerbe is equivalent to a choice of framing isomorphism. 
\end{proof}

\subsubsection{Quasiprojectivity }

We now finalize the main quasiprojectivity statement for moduli spaces of framed Bridgeland stable objects for stable framing sheaf $\mc F$. Each of the closed fibers of the family in Theorem \ref{thm:main_quasiprojectivity} is either 1) a moduli space of Bridgeland stable objects on a commutative surface under a derived equivalence or 2) a moduli space of $\mc F$-framed torsion-free sheaves over a generic $s \in HH_0(E)$ in which case a derived equivalence $\Phi \in \Aut(D^b_{Coh}(E))$ sending $\mc F$ to $ \Phi(\mc F) = \mc O_{E}$ induces $\Phi_S: D^b_{Coh}(\PP_{E}(\Xi_s)) \xrightarrow{\sim} D^b_{Coh}(\PP_{E}(\Xi_{\Phi_* s}))$ identifies $M_{\underline{\sigma}_s}(v; \mc F)$ with an open subset of the Hilbert scheme of points on $\PP_{E}(\Xi_{\Phi_* s})$ which is known to be projective \cite{rains19birational_arxiv}. Therefore the main interest is the families statement, that the moduli space is actually quasiprojective over $S$. 

This should be compared to the situation for deformations of smooth moduli spaces of sheaves on a K3 surface, where generic deformations are not projective and the twistor lines are not even K\"ahler.

\begin{thm}\label{thm:main_quasiprojectivity} 
For any $\underline{\sigma} \in U$ the family 
\[ M_{\underline{\sigma}}^\chi(v)_{\mc F}/S \] is quasiprojective over $S$ and semiprojective. 
\end{thm}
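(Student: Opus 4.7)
The strategy is to bootstrap from Theorem \ref{thm:cite_moduli_spaces}, restrict to fixed determinant and to the $\mc F$-framed open locus, and then establish relative ampleness of the Bayer--Macr\`i line bundle $L_{\underline\sigma}$ fiberwise, combining the identifications made in Proposition \ref{prop:deq_all_moduli_are_framed} with the known projectivity of classical targets. Semiprojectivity will then follow from the existence of a conical $\csth$-action compatible with everything in sight.

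First, by Theorem \ref{thm:cite_moduli_spaces} the good moduli space $M_{\underline\sigma}(v)/S$ exists as an algebraic space of finite type over $S$. Pulling back along $\chi_S \hookrightarrow J$ under $\det_S$ produces $M_{\underline\sigma}^\chi(v)/S$, still of finite type over $S$. The subfunctor $M_{\underline\sigma}^\chi(v)_{\mc F}$ cut out by the condition $\mc E|_{D_\infty}\simeq \mc F$ is an open subspace: indeed, since $\mc F$ is stable (hence simple), the locus of stable sheaves on $D_\infty \simeq E$ with the prescribed numerical invariants is a $\mathbb G_m$-gerbe over an open substack of the moduli stack of coherent sheaves, and Proposition \ref{prop:generically_the_framing} together with universal openness of geometric stability shows that this cuts out an open subspace of $M_{\underline\sigma}^\chi(v)$.

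Next, I will show that $L_{\underline\sigma}$ is $S$-relatively ample on $M_{\underline\sigma}^\chi(v)_{\mc F}$. By the fiberwise criterion for (relative) ampleness combined with cohomology and base change, it suffices to verify ampleness on each geometric fiber $M_{\underline\sigma_s}^\chi(v)_{\mc F}$ uniformly. Over the central point $s=0$, Proposition \ref{prop:deq_all_moduli_are_framed} identifies $M_{\underline\sigma_0}^\chi(v)_{\mc F}$ via a derived autoequivalence with the moduli space $M^{tf}(\Phi_*v;\mc F)$ of $\mc F$-framed torsion-free sheaves on $\overline{X_{A_{-1}}}$, which is a quasiprojective open subset of a Gieseker moduli space; by choice of $U$ and the Bayer--Macr\`i positivity lemma in the form proved in \cite{Bayer_Macrì_2014_MMP}, $L_{\underline\sigma_0}$ restricts to an ample class on this target, as already reflected in the wall analysis of Theorem \ref{thm:vwalls_roots}. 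Over a general $s \in S$, base change along a suitable $\Phi \in \Aut(D^b_{Coh}(E))$ identifies $M_{\underline\sigma_s}^\chi(v)_{\mc F}$ with an open subset of a Hilbert scheme on $\PP_E(\Xi_{\Phi_* s})$, which is projective by \cite{rains19birational_arxiv}, and on which $L_{\underline\sigma_s}$ is again ample by the same construction applied in families. The main obstacle I anticipate is showing that this fiberwise ampleness is uniform enough to give $S$-relative ampleness: I would handle this by invoking the fact that the chambers in $U$ are open and that the function assigning to $\underline\sigma$ its Bayer--Macr\`i class is continuous, combined with the flattening stratification and semistable reduction (Proposition \ref{prop:ss_red}) to propagate ampleness over one point of $S$ to a Zariski neighborhood.

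Finally, for semiprojectivity, use the $\csth$-action which simultaneously scales the bimodule extensions in $HH_0(E)=S$ with weight $\hbar$ and scales the Lagrangian fiber direction on the central surface. This lifts to an action on $D^b_{Coh}(\PP_{E_S}(\Xi))$ and hence on $M_{\underline\sigma}^\chi(v)_{\mc F}$. The fixed locus is contained in the central fiber (since $\csth$ acts with positive weights on $S$) and inside that fiber is identified with the $\csth$-fixed locus in $M^{tf}(\Phi_*v;\mc F)\simeq X_{A_{-1}}^{[n]}$; on $T^*E^{[n]}$ the cotangent-scaling fixed locus is $E^{[n]}$, which is proper. All $t\to 0$ limits exist because the conical structure contracts $S$ to $0$ and the Lagrangian fibration $M^{tf}(\Phi_*v;\mc F)\to \A^n$ has positive weights on the base; any stable object specializes to a stable object in the central fiber by Proposition \ref{prop:ss_red} and properness of the Hitchin-type support map. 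Combining with the quasiprojectivity over $S$ established above yields semiprojectivity in the sense of Hausel--Rodriguez-Villegas.
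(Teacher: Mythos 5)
Your outline matches the paper's in several places (ampleness of $L_{\underline\sigma_0}$ on the central fiber via Propositions \ref{prop:deq_all_moduli_are_framed} and \ref{prop:all_walls_are_GU}, and the conical $\csth$-action for semiprojectivity), but the step you yourself flag as the "main obstacle" — getting from the central fiber to relative ampleness over all of $S$ — is exactly where your argument has a genuine gap, and the fix you propose does not close it. You try to verify ampleness of $L_{\underline\sigma_s}$ on each geometric fiber separately, but for $s\neq 0$ you only invoke projectivity of the Hilbert scheme of $\PP_E(\Xi_{\Phi_*s})$: that identifies the \emph{framed open locus} with an open subset of a projective space, and says nothing about which line bundle is ample there, nor about the full good moduli space $M^\chi_{\underline\sigma_s}(v)$ on which $L_{\underline\sigma_s}$ a priori is only nef. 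Your fallback (continuity of the Bayer--Macr\`i class, flattening stratification, semistable reduction) at best propagates ampleness from $0$ to a Zariski neighborhood of $0$ in $S$, which is not all of $S$.

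The paper's mechanism is different and is the essential point of the theorem: it works with the proper family $M^\chi_{\underline\sigma}(v)/S$ and uses the $\csth$-equivariance of everything over $S=HH_0(E)$, on which $\csth$ acts with positive weight. The locus $A\subset S$ where $R\pi_*L_{\underline\sigma}$ fails to be acyclic is closed, $\csth$-invariant, and (by ampleness on the central fiber plus semicontinuity) disjoint from $0$; since every point of $S$ flows to $0$, such a locus is empty. Hence $R\pi_*L_{\underline\sigma}$ is a trivial bundle on the affine base $S$, its trivializing sections are shown base-point free by the identical equivariance argument, and projectivity over $S$ follows; quasiprojectivity of the framed open locus is then immediate. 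You already use this "closed invariant set avoiding $0$ is empty" principle in your semiprojectivity paragraph, so the missing move is to apply it to the non-acyclic locus and the base locus of $L_{\underline\sigma}$ rather than trying to check ampleness fiber by fiber. Your semiprojectivity argument is in the same spirit as the paper's (the paper instead analyzes orbit closures directly via Koszul data, showing the $\csth$-flow scales the extension class to zero so limits exist in the proper family), and I would accept it modulo the caveat that Proposition \ref{prop:ss_red} produces a semistable limit whose image in the good moduli space is what you actually need.
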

\begin{proof}
  We will do this via an analysis of the nef line bundle $L_{\underline{\sigma}}$ on $M_{\underline{\sigma}}^\chi(v)$. We know that $L_{\underline{\sigma}}|_{{M_{\underline{\sigma}_0}}^\chi(v)}$ is ample because by Propositions \ref{prop:deq_all_moduli_are_framed} and \ref{prop:all_walls_are_GU} each stability condition lies in a Gieseker chamber or on a Gieseker-Uhlenbeck wall. That the $J$-torsors over the compactifications of the former are projective is classical and the latter have been shown to be projective as well\cite{Tajakka_2023}.
  
  By the constancy of the Euler characteristic and by semicontinuity we know that the higher cohomology sheaves of $R\pi_{*}L_{\underline \sigma}$ are supported on a closed subvariety $A\subset S$ that is disjoint from $0 \in S$ and via the isomorphisms giving the $\csth$ action on $S$ we know that $A$ is $\csth$-invariant. Thus all fibers of $L_{\underline \sigma}$ are acyclic and since $S \simeq HH_0(E)$ is an affine space we have $R\pi_{*}L_{\underline \sigma}$ is a trivial bundle over $S$ with trivializing sections $s_1, \ldots, s_N$. 

  By the same argument, the locus in $S$ where the sections $s_1, \ldots, s_n$ of $L_{\underline \sigma}$ admit basepoints is also empty. The result follows. 

  To show that $M_{\underline{\sigma}}^\chi(v)_{\mc F}$ is semiprojective we note that $M_{\underline{\sigma}}^\chi(v)$ is proper over $S$. Let $X = \overline{\csth\cdot \mc E}$ be the orbit closure of $\mc E \in M_{\underline{\sigma}}^\chi(v)$. We can assume that $\mc E$ does not lie over $0 \in S$.  Also note that the Koszul data of any $\mc E \in M_{\underline{\sigma}}^\chi(v)$ after an equivalence of the surface corresponding to tensoring with a line bundle (i.e. under the semiorthogonal decomposition \cite[4.16]{rains19birational_arxiv} and the isomorphism with $\PP_{E}(\Xi))$) the Koszul data of $\mc E \in M_{\underline{\sigma}}^\chi(v)$ can be written as 
  \[ \mc E = \kappa( Q[-1], \on{Cone}(\mc F \to Q)[-1], \eta)\]
  where the group action scales the extension to zero in 
  \[ \mc Q \to \mc Q\otimes \Xi_{\hbar  s} \to \mc Q\otimes \mc T_{E} \to \] 
  and thus preserves the composition $\mc Q \to \mc Q\otimes \Xi_{\hbar  s} \xrightarrow{\eta} \on{Cone}((\mc F \to Q)[-1])$. 

\end{proof}

\section{Deformed contractions} 

Let $\underline{\sigma}^w \in U$ denote a point on a wall in $U$ and let $ \underline{\sigma}$ be a point in a neighboring chamber. Thus for any $s \in S$ we have $\underline{\sigma}_s$-semistable implies $\underline{\sigma}^w_s$ semistable. 

This gives map of $S$-varieties
\begin{equation}\label{eq:simultaneous_resolution}
   \pi_w : M^\chi_{\underline{\sigma}}(v)_{\mc F} \to M^\chi_{\underline{\sigma}^w}(v)_{\mc F}
\end{equation}
given by the composition 
\[ M^\chi_{\underline{\sigma}}(v)_{\mc F} \to \mc M^\chi_{\underline{\sigma}^w}(v)_{\mc F} \to M^\chi_{\underline{\sigma}^w}(v)_{\mc F}.\]

\subsubsection{Regular locus}

If $\sigma$ lies in the Gieseker chamber and $w$ is the wall corresponding to the root $\alpha = (0,1)$ under Theorem \ref{thm:vwalls_roots} then on the central fiber this map agrees by construction of the Uhlenbeck moduli space using Bridgeland moduli spaces \cite{Bayer_Macrì_2014_MMP,Tajakka_2023} so that there is a bijection between points of $M^\chi_{\underline{\sigma}_w}(v)_{\mc F}$ and those of an open locus of the Uhlenbeck compactification of the moduli space of instantons on $E\times \PP^1$ with fixed determinant. 

More generally we have the following: if $w_\beta$ is the wall corresponding to $\beta \in R_{A_{-1}}^E$ then denote $\pi_{w_\beta}$ by $\pi_\beta$. We also have a bijection \eqref{eq:roots_HHhyperplanes} between roots in $R_{A_{-1}}^E$ and hyperplanes $S_\beta$ in $HH_0(E)$. These identifications are compatible. 

\begin{prop} 
The map $\pi_\beta$ is birational over $S$ and an isomorphism away from the hyperplane $S_\beta \subset S$. 
\end{prop}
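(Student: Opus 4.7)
Both source and target of $\pi_\beta$ are proper over $S$ (by Theorem~\ref{thm:main_quasiprojectivity} and Theorem~\ref{thm:cite_moduli_spaces}), and a proper morphism over a Noetherian base which is an isomorphism on every closed fiber over an open subscheme $U \subset S$ is an isomorphism over $U$. Hence it suffices to show that for $s \notin S_\beta$ the induced map on the fiber over $s$ is an isomorphism; birationality over $S$ will then follow at once because $S_\beta$ is a proper linear subspace of $S$.

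Suppose for contradiction that over some $s \notin S_\beta$ there exists a $\underline{\sigma}$-stable object $\mc E$ in the fiber that becomes strictly $\underline{\sigma}^w$-semistable. Then $\mc E$ admits a Jordan--H\"older decomposition at the wall whose factors include some $A$ whose Mukai vector is a positive rational multiple of $\beta$, by Proposition~\ref{prop:all_walls_are_GU} and Theorem~\ref{thm:vwalls_roots}. Using the derived equivalences from Proposition~\ref{prop:deq_all_moduli_are_framed} together with Theorem~\ref{thm:derived_reflection}, we reduce to the case where $\beta$ is represented by a Gieseker-semistable torsion sheaf $b$ supported on a fiber of the elliptic fibration, and $A$ is represented on $\PP_E(\Xi_s)$ by such a sheaf via its Koszul data.

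The existence of a Koszul datum of the form $(b,b,\eta)$ extending $b$ to an object of $D^b_{Coh}(\PP_E(\Xi_s))$ whose restriction to the commutative curve recovers $b$ forces the bimodule extension $0 \to b \to b \otimes_{\msc C} \Xi_s \to b \otimes \mc T_{\msc C} \to 0$ to split, so that $\eta$ serves as the projection onto the first factor. By the defining formula \eqref{eq:sbeta}, such a splitting occurs precisely when $s \in S_\beta$, contradicting our choice of $s$. This completes the fiberwise argument and hence the theorem.

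The principal technical hurdle is the reduction to a pure Koszul datum $(b,b,\eta)$ with $[b]=\beta$, rather than a mixed datum $(a,b,\eta)$ with the class constraint $[b] - [a \otimes \mc E] = \beta$ coming from the decomposition \eqref{eq:k_0_split}; this reduction is accomplished using the behavior of the derived equivalences of Section~\ref{sec:gewg} on the semiorthogonal decomposition of Proposition~\ref{prop:sod_pp_univ} and the local computation of bimodule actions on torsion sheaves encoded in Remark~\ref{rmk:local_bimod_nontorsion}, which together allow any destabilizing JH factor at the wall $w_\beta$ to be rotated into the standard heart as a torsion sheaf on the elliptic fiber.
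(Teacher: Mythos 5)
Your proof takes essentially the same route as the paper's: both arguments come down to the observation that a destabilizing triangle $A \to \mc E \to \mc E'$ at the wall, with $\mc E$ and $\mc E'$ both $\mc F$-framable, forces $A|_{D_\infty} = 0$, and that an object of class $\beta$ with vanishing restriction to $D_\infty$ exists only when the bimodule extension splits, i.e.\ only for $s \in S_\beta$ by the defining formula \eqref{eq:sbeta}. One imprecision to correct: the splitting is forced because $A|_{D_\infty} = \on{Cone}\bigl(b \xrightarrow{\eta\circ(\times 1)} b\bigr)$ from \eqref{eq:res_to_infty} must vanish, so that $\eta$ is a retraction of $b \to b\otimes\Xi_s$ --- not because the restriction ``recovers $b$,'' which as literally stated would yield no splitting. (For birationality the paper additionally notes that over $S_\beta$ the map remains an isomorphism on the regular locus of the Gieseker--Uhlenbeck contraction, whereas you deduce it from density of $S\setminus S_\beta$; either suffices for birationality over $S$.)
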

\begin{proof}
  That $\pi_{\beta}$ is regular away from $S_{\alpha}$ follows from the fact that fibers of $\pi_{\beta}$ lie in S-equivalences classes of $\mc F$-framable objects therefore a destabilizing triangle of $\mc E \in M_{\underline{\sigma}}(v)_{\mc F}$ must be of the form $A\to \mc E \to \mc E' \to $ where $\mc E$ and $\mc E'$ are both $\mc F$-framable and $A$ is of class $(0, rE, s)$ corresponding to the root $\beta$. But this implies that $A|_{D_\infty} = 0$ and this is only possible for $s \in S_{\beta}$. 

That $\pi_{\beta}$ is birational over $S$ follows from the fact that over $S_\alpha$ the map is an isomorphism on the regular locus of the Gieseker-Uhlenbeck map. 
\end{proof}

\subsection{Deformation and birational equivalence}

\subsubsection{Cohomology of equivariant semiprojective morphism}

Recall from \cite{Hausel_Sturmfels_2002} that a quasiprojective variety $X$ is called \emph{semiprojective} if it admits a $\C^\ast_\lambda$-action such that $\lim_{\lambda\to 0} \lambda\cdot x$ exists for all $x \in X$. Some examples are smooth Nakajima quiver varieties and moduli spaces of semistable Higgs bundles. 

Let $X$ be semiprojective, let $\C^\ast_\lambda$ act on $\A^1$ with fixed point $0$ with weight $1$ and let $\mc X\to \A^1$ be a quasiprojective $\C^\ast_\lambda$-equivariant morphism with $\mc X_0 \simeq X$. 

It would be nice to know that any two fibers are diffeomorphic by having a semiprojective version of Ehreshmann's lemma; such a result is known in the situation of nonabelian Hodge theory \cite{simpson1990nonabelian}, identifying the family $\mc X$ with a twistor family. However we at least know \cite[Appendix B]{HAUSEL_LETELLIER_RODRIGUEZ-VILLEGAS_2011} that for $c \in \A^1$ a closed point the inclusion $\mc X_c \hookrightarrow \mc X$ induces an isomorphism 
\begin{equation} \label{eq:hausel_cohomology}
H^*(\mc X) \xrightarrow{\sim} H^*(\mc X_c) ~~ (\simeq H^*(X))
\end{equation}
compatible with the mixed Hodge structures on $\mc X$ and all $\mc X_c$ because they are all pure. 

\begin{defn}
Two semiprojective varieties $X,X'$ are said to be $\csth$-equivariantly deformation equivalent if there exits smooth families $\mc X, \mc X'$ over $\A^1$ admitting group actions $\csth$ on $\mc X$ and $\mc X'$ equivariant with respect to the natural action of $\A^1$ and a equivariant isomorphism $\mc X_{\C^\ast} \simeq \mc X'_{\C^\ast}$. 
\end{defn}

In particular, if $X$ and $X'$ are semiprojective and $\csth$-equivariantly deformation equivalent then they have isomorphic cohomology. Let $\Delta_{\C^\ast}$ denote the graph of the isomorphism $\mc X_{\C^\ast} \simeq \mc X'_{\C^\ast}$. As usual we have the following:

\begin{prop}\label{prop:iso_cohomology_semiprojective}
The isomorphism $H^*(X)\simeq X^*(X')$ induced by \eqref{eq:hausel_cohomology} is given by convolution with 
\[[\overline \Delta_{\C^\ast}]\Big|_{X\times X'}. \]
\end{prop}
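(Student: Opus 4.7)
The plan is to set up a joint family over $\A^1$ containing both $\mc X$ and $\mc X'$ together with the correspondence, and then to argue that the cycle $[\overline{\Delta_{\C^\ast}}]$ specializes compatibly with the Hausel isomorphism \eqref{eq:hausel_cohomology} and intertwines the convolution operations.

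First I will form the fiber product $\mc Y := \mc X \times_{\A^1} \mc X'$ equipped with its diagonal $\csth$-action. This is again semiprojective: for $(x,x') \in \mc Y$, both limits $\lim_{\lambda \to 0} \lambda \cdot x$ and $\lim_{\lambda \to 0} \lambda \cdot x'$ exist by semiprojectivity of $\mc X$ and $\mc X'$, so $\lim_{\lambda\to 0} \lambda \cdot (x,x') \in \mc Y_0 = X \times X'$. The correspondence $Z := \overline{\Delta_{\C^\ast}} \subset \mc Y$ is $\csth$-stable, its fiber $Z_c$ for $c \neq 0$ is the graph of the equivariant isomorphism $\phi_c : \mc X_c \xrightarrow{\sim} \mc X'_c$, and its central fiber is the cycle $Z_0 = \overline{\Delta_{\C^\ast}}|_{X\times X'}$ in the statement.

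Next I will apply \eqref{eq:hausel_cohomology} simultaneously to $\mc X$, $\mc X'$, and $\mc Y$. For every $c \in \A^1$, the restriction $r_c^*: H^*(\mc Y) \xrightarrow{\sim} H^*(\mc Y_c)$ is an isomorphism, and by base change for cycle classes one has $r_c^*[Z] = [Z_c]$. Hence $[Z_0]$ and $[Z_c]$ correspond under the composition $r_c^* \circ (r_0^*)^{-1}: H^*(\mc Y_0) \xrightarrow{\sim} H^*(\mc Y_c)$, which via the K\"unneth decomposition of $H^*(\mc Y_c) = H^*(\mc X_c) \otimes H^*(\mc X'_c)$ matches the product of the two individual Hausel isomorphisms for the factors. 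The third step is to verify base change for convolution: for each $c$ the diagram
\begin{equation*}
\begin{tikzcd}
H^*(X) \arrow[r, "{\star [Z_0]}"] \arrow[d, "\sim"'] & H^*(X') \arrow[d, "\sim"] \\
H^*(\mc X_c) \arrow[r, "{\star [Z_c]}"'] & H^*(\mc X'_c)
\end{tikzcd}
\end{equation*}
commutes, where the vertical arrows are the isomorphisms from \eqref{eq:hausel_cohomology}. This follows from compatibility of proper pushforward and pullback with base change along the Cartesian squares formed by fiber inclusions. For $c \neq 0$, convolution by $[Z_c] = [\mathrm{graph}(\phi_c)]$ is just $\phi_c^*$ (up to orientation), which is by construction the equivariant isomorphism used in the definition of the identification between $H^*(X)$ and $H^*(X')$. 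The commutative diagram therefore identifies convolution by $[Z_0]$ on the top row with the isomorphism of \eqref{eq:hausel_cohomology} on the bottom row.

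The main obstacle will be justifying base change for convolution in the semiprojective, non-proper setting, since the projections from $X \times X'$ are not themselves proper. The plan to handle this is to pass to $\csth$-equivariant Borel--Moore classes and to use the attracting cell decomposition associated to the $\csth$-action, so that push/pull along the semiprojective maps is controlled and specialization of cycles to the central fiber commutes with convolution, completing the identification.
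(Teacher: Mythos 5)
Your proposal is correct and takes essentially the same route as the paper: the paper's proof is a one-sentence appeal to Huybrechts' graph-closure/specialization argument for birational symplectic manifolds, noting that it only uses the restriction isomorphism \eqref{eq:hausel_cohomology} rather than a diffeomorphism between fibers, and your write-up simply spells out that argument (family over $\A^1$, closure of the graph, compatibility of specialization with convolution). The one point you flag as an obstacle --- properness of the supports needed to define convolution on the non-compact fibers --- is real but is left implicit in the paper; your suggested fix via equivariant Borel--Moore classes and the attracting-set structure of the $\csth$-action is the standard and appropriate way to handle it.
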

\begin{proof}
The usual argument \cite{Huybrechts_1999} doesn't use the diffeomorphism between fibers directly only the analogue of \eqref{eq:hausel_cohomology}. 
\end{proof}

\subsubsection{Monodromy reflections}

\subsubsection{}

Let $\mc F \in \Coh(D_\infty)$ be stable framing sheaf with coprime rank $r>0$ and degree $d$. 
\begin{thm}\label{thm:def_equiv}
  All $K$-equivalent smooth projective birational models of $M^{tf}(v; \mc F)$ over $B$ are of the form $M^\chi_{\underline\sigma_0}(v)_{\mc F}$. Any two are $\csth$-equivariantly deformation equivalent. 
\end{thm}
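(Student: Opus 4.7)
The proof will proceed in two stages corresponding to the two assertions. For the first, the strategy is to invoke the Bayer--Macrì machinery on the central commutative fiber combined with the surjection \eqref{eq:Utorel_amp}. Concretely, by Proposition \ref{prop:deq_all_moduli_are_framed} every $M^\chi_{\underline\sigma_0}(v)_{\mc F}$ for generic $\underline\sigma \in U$ is identified through a derived autoequivalence with a moduli space $M^{tf}(\Phi_*v;\mc F)$ on the central commutative surface. On that commutative side, the analysis of \cite{Bayer_Macrì_2014_MMP} (adapted to the relative setting over $B$ via \S\ref{ssec:stability_lfib}) shows that every $K$-equivalent smooth projective birational model of $M^{tf}(v;\mc F)/B$ is realized as $M^\chi_{\underline\sigma_0}(v)_{\mc F}$ for some $\underline\sigma$ in some chamber of $U$: the chambers are precisely the components of the complement of the $v$-walls classified by Theorem \ref{thm:vwalls_roots}, and the map $U \to N^1(M^{tf}(v;\mc F)/B)_{>0}$ surjects onto the relative positive cone, its chamber structure matching the one induced by the Mori decomposition along \S\ref{ssec:horosphere}.

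For the second stage, the plan is to use the families $M^\chi_{\underline\sigma}(v)_{\mc F}/S$ from Theorem \ref{thm:main_quasiprojectivity} as the deformation realizing the equivalence. Fix two smooth projective birational models $M^\chi_{\underline\sigma_0}(v)_{\mc F}$ and $M^\chi_{\underline{\sigma}'_0}(v)_{\mc F}$ coming from adjacent chambers of $U$ separated by a wall $w_\beta$ associated to a root $\beta \in R^{ell}_{A_{-1}}$. Both extend to quasiprojective semiprojective families over $S = HH_0(E)$, with $\csth$-action scaling extension classes uniformly. The restriction of these two families to the complement of the hyperplane $S_\beta \subset S$ are canonically isomorphic: by the argument in the proof of the proposition regarding the regular locus of $\pi_\beta$, any destabilizing triangle in a fiber over $s \notin S_\beta$ would produce an object of class $\beta$ with trivial restriction to $D_\infty$, forcing $s \in S_\beta$, a contradiction. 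Restricting both families to a $\csth$-invariant line $\ell \subset S$ meeting $S_\beta$ only at the origin yields the desired $\csth$-equivariant deformation equivalence along $\A^1 \cong \ell$. The general case of two arbitrary smooth birational models follows by chaining together such wall-crossings using the path-connectedness of the movable chambers in $U$.

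The main obstacle is verifying step two, specifically the canonical identification of the two families over $S\setminus S_\beta$. Although the fibers agree set-theoretically after passing through the $S$-linear derived equivalences of Section \ref{sec:gewg}, obtaining a genuinely $\csth$-equivariant isomorphism of families requires combining: (i) the fact that $\underline\sigma$-stability and $\underline{\sigma}'$-stability coincide in the relevant class at non-special $s$ (because crossing $w_\beta$ only affects objects admitting a $\beta$-subobject, which do not exist off $S_\beta$); (ii) openness of stability over $S$ from Theorem \ref{thm:stability_family}(3); and (iii) the fact that both families restrict over each $s\not\in S_\beta$ to moduli on $\PP_{E}(\Xi_s)$ that, after the derived equivalence trivializing $\mc F$ to $\mc O_E$ used in Proposition \ref{prop:deq_all_moduli_are_framed}, agree with the open locus of the Hilbert scheme $\PP_E(\Xi_s)^{[n]}$. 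The $\csth$-action is inherited from the natural scaling on $S$ and commutes with each step since all constructions are canonical. Once these three points are assembled, Proposition \ref{prop:iso_cohomology_semiprojective} provides the resulting cohomological comparison used in the monodromy analysis of Section \ref{sec:gewg}.
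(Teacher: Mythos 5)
Your proposal matches the paper's argument: part one is exactly the Bayer--Macrì mechanism via the surjection \eqref{eq:Utorel_amp} onto $N^1(M^{tf}(v;\mc F)/B)_{\R,>0}$ (the paper phrases it as pulling back the ample class of a given model $M'$ and noting a birational map sending ample to ample is an isomorphism), and part two is the same deformation over a line in $HH_0(E)$. The only cosmetic difference is that the paper takes a single generic line avoiding all wall hyperplanes away from the origin and compares any two models directly, whereas you cross one wall $w_\beta$ at a time and chain, which additionally leans on transitivity of the equivalence; choosing the line generic makes the chaining unnecessary.
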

\begin{proof}
Given and $M'$ birational to $M^{tf}(v; \mc F)$ over $B$, let $A \in N^1(M^{tf}(v; \mc F)/B)$ be the image of its ample class. The ample class must pair positively with a curve in the generic fiber of $M^{tf}(v; \mc F)\to B$ and so lie in the half-space $N^1(M^{tf}(v; \mc F)/B)_{\R, >0}$. The image of the map \eqref{eq:Utorel_amp} is exactly this half space. Composing with the birational transformation $M^{tf}(v; \mc F) \dasharrow M^\chi_{\underline\sigma_0}(v)_{\mc F}$ gives a birational transformation sending an ample class to an ample class which is an isomorphism. 

Because any two $M^\chi_{\underline\sigma_0}(v)_{\mc F}$ and $M^\chi_{\underline\sigma'_0}(v)_{\mc F}$ are $\csth$-equivariantly deformation equivalent by choosing a generic line $L\subset HH_0(E)$ the result follows.

\end{proof} 

\section{Other types} 
\subsection{Orbifold cases} 

Given $s \in HH_0([E/\Gamma])$ and  $H,B \in \NS(\msc X_R)_\Q$ be numerically constant divisor classes, where we denote their base change to any $s$ by the same letter.  Let 
\begin{equation}\label{eq:cc}  Z_{H,B,s}(-) : = \int_{\PP^1_{[E/\Gamma]}(\Xi_s)} e^{iH} \on{ch}^B_s(-) \end{equation} 
denote a stability function. Relative stability conditions whose central charge is of this form such that the structure sheaves of all points are stable are referred to as \emph{geometric relative stability conditions}. 

Fix $n > 0$. Let $N^1(X_R^{[n]}/B)_{\R, >0}$ denote the subspace of relative classes pairing positively with a curve in a generic fiber. There is an identification 
\[X_R^{[n]} \simeq M^{tf}(v; \mc O_{D_\infty}) \] 
between the Hilbert scheme and the space of framed torsion-free sheaves of chern character $v = (1,0,-n)$ on $\overline{X_R}$ framed at $D_\infty$ by $\mc O_{D_\infty}$. Given $\underline{\sigma} \in \Stab(\PP_{[E/\Gamma]}(\Xi)/S)$ the same Theorem \ref{thm:cite_moduli_spaces} produces relative moduli spaces $M_{\underline{\sigma}}(v)$ of semistable objects in $D_{\Coh}(\PP_{[E/\Gamma]}(\Xi))$ for chern characters $v$.  For  $\mc F \in \Coh(D_\infty)$ simple,   let $M_{\underline{\sigma}}(v; \mc F)$ denote the subspace of $M_{\underline{\sigma}}(v; \mc F)$ consisting of $\mc F$-framable objects. Simplicity of $\mc F$ implies that the trivialization of the gerbe is equivalent to fixing a framing $\mc E|_{D_\infty} \simeq \mc F$. 

\begin{conj}\label{conj:exist_stability_conditions} 
  There exists a nonempty subspace $U \subset \Stab(\PP_{[E/\Gamma]}(\Xi)/S)$ of geometric relative stability conditions and a surjective map $\ell : U \to N^1(X_R^{[n]}/B)_{\R, >0}$ such that 
  \begin{itemize} 
    \item[1)] For any $\underline{\sigma}, \underline{\sigma}' \in U$ the families $M_{\underline{\sigma}}(v; \mc O_{D_\infty})$ and $M_{\underline{\sigma}}(v; \mc O_{D_\infty})$ are birational over $S$. 
    \item[2)] There is a bijection between the set of walls in $ N^1(X_R^{[n]}/B)_{\R, >0}$ inducing contractions, the set of walls in $HH_0([E/\Gamma])$ where the contraction is not regular, and a subset of roots of $R^{ell}$. 
   \end{itemize} 
\end{conj}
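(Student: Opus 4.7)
The plan is to follow the structure of the proof for the $A_{-1}$ case (Theorems \ref{thm:stability_family}, \ref{thm:vwalls_roots}, \ref{thm:main_quasiprojectivity}, and \ref{thm:def_equiv}) and adapt each step to the orbifold setting $\msc C = [E/\Gamma]$. First, I would construct the family of stability conditions. The torsion pair of Proposition \ref{prop:torsion_pair_tilt} generalizes verbatim to $\PP_{[E/\Gamma]}(\Xi_s)$ since Harder-Narasimhan filtrations work in any Noetherian abelian subcategory of $\Coh(\PP_{\msc C}(\mc E))$, and the tilted heart $\mc B_{H,B,s}$ together with $Z_{H,B,s}$ from \eqref{eq:cc} is the natural candidate. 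The critical missing ingredient is a Bogomolov-type inequality in the orbifold noncommutative setting; following the proof of Theorem \ref{thm:stability_family}, I would obtain it by specializing along the one-parameter family $\C s \subset HH_0(\msc C)$ to the commutative hyperplane $HH_0(\msc C)_{com}$ where Conjecture \ref{conj:commutative_hyperplane} would identify the special fiber with a commutative orbifold surface on which Bogomolov is known. Properness of $\mu$-semistable moduli over this line, together with the universal flatness results for $\Coh(\PP_{\msc C}(\mc E))$, would then deliver the support property and the HN property.

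Second, I would identify the cone $N^1(X_R^{[n]}/B)_{\R, >0}$ explicitly using the expected semiorthogonal decomposition of Conjecture \ref{conj:SOD_blowup} and the resulting identification $\mc N(\msc X_R) \simeq \mathrm{II}_{1,1} \oplus \mathrm{I}_{1,9}$ from \S\ref{eq:ns_lattice_other_types}. After restricting to stability conditions whose polarization $H$ lies in the Friedman chamber (so that the analogue of Proposition \ref{prop:all_walls_are_GU} applies), I would define $U$ to be those $\underline{\sigma}$ for which $L_{\underline{\sigma}}|_{M^\chi_{\underline{\sigma}_0}(v)_{\mc F}}$ is NEF on a model compatible with the Lagrangian fibration. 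The Bayer-Macrì formula for $L_{\underline{\sigma}}$ then provides the surjection $\ell: U \to N^1(X_R^{[n]}/B)_{\R, >0}$ exactly as in the proof at the end of \S\ref{ssec:stability_lfib}, with the Mukai-type intersection numbers computed using the pairing coming from \eqref{eq:euler_char} applied to $[E/\Gamma]$-Koszul data.

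Third, for the bijection statement, by semistable reduction (the orbifold analogue of Proposition \ref{prop:ss_red}) the set of $v$-walls in $U$ is contained in the set of walls in the central fiber. These commutative-surface walls are known to correspond to classes $\beta$ with $\langle \beta, v \rangle \le \langle v,v\rangle/2$ lying in an isotropic direction compatible with the Lagrangian fibration, i.e.\ to a subset of $R^{ell}$ using $K_{num}([E/\Gamma]) \simeq R_{\msc C}$ from Proposition \ref{prop:roots_are_hyperplanes}. On the $HH_0([E/\Gamma])$ side, the argument for Proposition \ref{prop:dense_hyperplanes} and the analysis after \eqref{eq:roots_HHhyperplanes} already identifies the hyperplanes $S_\beta$ where torsion sheaves of class $\beta$ exist with the roots, and the proof in the $A_{-1}$ case that $\pi_\beta$ is an isomorphism off $S_\beta$ carries over verbatim once one has a destabilizing triangle whose middle term is $\mc O_{D_\infty}$-framable, forcing $A|_{D_\infty} = 0$. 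Birationality of the $M_{\underline{\sigma}}(v;\mc O_{D_\infty})$ over $S$ then follows from the birationality of adjacent chambers, wall-by-wall, in the central fiber together with flatness over $S$.

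The main obstacles I anticipate are: (i) establishing the Bogomolov inequality for noncommutative orbifold surfaces, since the specialization trick depends on Conjecture \ref{conj:commutative_hyperplane}; (ii) producing the semiorthogonal decomposition of Conjecture \ref{conj:SOD_blowup}, without which $\NS$ and the class of $F_Z$ in the Bayer-Macrì line bundle are only partially pinned down; (iii) the orbifold analogue of Lemma \ref{lem:all_framable_are_stable} and Proposition \ref{prop:deq_all_moduli_are_framed}, which used the stability of $\mc F$ on $D_\infty$ together with derived (anti-)equivalences that in the non-$A_{-1}$ case depend on Conjecture \ref{conj:reflection}. If those three conjectural inputs are granted, the rest of the proof is a largely mechanical adaptation of the $A_{-1}$ arguments, but each is substantial and explains why the assertion has to remain conjectural in the present work.
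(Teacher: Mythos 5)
The statement you are addressing is a conjecture in the paper, and the paper itself proves only the part ``that doesn't rely on families of stability conditions,'' namely the central-fiber wall/root bijection and birational-models statement (the second Theorem \ref{thm:vwalls_roots} and Theorem \ref{thm:other_types_birational}); your outline follows exactly the same template — transplant the $A_{-1}$ arguments of Theorems \ref{thm:stability_family}, \ref{thm:vwalls_roots}, and \ref{thm:def_equiv} to $[E/\Gamma]$ — that the paper envisions. You also correctly isolate the same conjectural inputs (Conjectures \ref{conj:commutative_hyperplane}, \ref{conj:SOD_blowup}, \ref{conj:reflection}, plus the orbifold Bogomolov inequality) that the paper leaves open, which is precisely why the statement remains a conjecture there.
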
 

In particular, assuming Conjecture \ref{conj:exist_stability_conditions} the analogue of Theorem \ref{thm:def_equiv} will hold in this setting. 

\subsection{Wild cases} 

In the other types, $A_0, A_1, A_2$ we expect an analogous relationship but the corresponding categories are instead more general noncommutative surfaces and don't admit descriptions in terms of noncommutative $\PP^1$ bundles over orbifolds. In particular, we expect for $R = A_0, A_1, A_2$ that there are dg-categories $D_R$ over a base $S \simeq R^{ell}\otimes \C$ (where $A_0^{ell} \simeq \Z^2$) such that Conjecture \ref{conj:exist_stability_conditions} holds with $D^b_{Coh}(\PP_{[E/\Gamma]}(\Xi))$ replaced with $D_R$ and the moduli spaces are the symplectic leaves of \cite{rains2019noncommutative}. The base $S$ should be the positive weight subspace of the tangent space to $[\overline{X_R}]$ in the moduli space of noncommutative surfaces. 

\subsection{Birational geometry of central fiber} 

We prove the part of Conjecture \ref{conj:exist_stability_conditions} that doesn't rely on families of stability conditions. 

In particular, there exits geometric stability conditions $\sigma_{H,B} \in \Stab^\dagger(\overline{X_R})$ with central charge given by \eqref{eq:cc} \cite{Arcara_Bertram_Lieblich_2013}. In particular all skyscraper sheaves $\Bbbk(x)$ are $\sigma_{H,B}$-stable of the same phase. 

\subsubsection{Basis for N\'eron-Severi lattice}

The Neron-Severi lattice of the surfaces $\overline{X_R}$ are determined by the fact that they are rational elliptic surfaces as in Section \ref{eq:ns_lattice_other_types}. 

We choose a basis 
\[ \Theta, E, C_\alpha ,~~~\alpha\in \Delta_{R} \sqcup \Delta_{R'} \] 
of $NS(\overline{X_R}) = \NS(\overline{X_{R'}})$ where $\Theta$ is a fixed section of the elliptic fibration structure on $\overline{X_R}$ and we have identified simple roots of $R$ and $R'$ with curves avoiding $\Theta$ in $E_0$ and $E_\infty$ respectively. Let $D_\alpha$ denote the class corresponding to the fundamental coweight of $C_\alpha$ in the root system $R+R'$ so that $\langle C_\alpha, D_\beta\rangle = \delta_{\alpha, \beta}$ and $\langle D_\alpha, E\rangle = \langle D_\alpha, \Theta\rangle = 0$. 

There is an induced basis 
\begin{equation}\label{eq:basis_n1} D_{\pt}, ~\lambda_{{\mc U}}(\Theta)|_{X_R}, ~\lambda_{\mc U}(D_\alpha)|_{X_R}, ~~ \alpha \in \Delta_R \end{equation} 
for $N^1(X_R^{[n]}/\A^{n})$ where $2D_{\pt} $ is the class of the exceptional divisor of the Hilbert-Chow map.

Given a chern character $\on{ch} = (1, 0, -n)$ corresponding to a Hilbert scheme, let $H = D_H + \Theta + bE$ and $B = D_B + c\Theta + dE$ with $D_H$ and $D_B$ in the span of the $D_\alpha$. Consider the subspace 
\begin{multline*} 
   \Stab'(\overline{X_R}))_v  =
   \{ g\cdot \sigma_{H,B} \mid H^2 \gg 0, g\in \widetilde{\GL(2, \R)},\\
    \phi((0, E, 0)) > \phi(v), \phi((0, C_{\alpha}, k)) > \phi(v) ~~~
    \text{ for } \alpha \in R' \text{ and } |k|< N \} 
\end{multline*} 
where $N \in \Z$ is some large enough number depending on $v$ and $\phi(-) = \on{Im}(Z_{H,B}(-))/ \on{Re}(Z_{H,B}(-))$ is the phase function. We will omit $v$ when it can be inferred. 

\subsubsection{Wall and chamber structure} 

A $v$-wall in $\Stab'(\overline{X_R})$ is a wall inducing a birational contraction of $M_{\underline{\sigma}}(v; \mc O_{D_\infty})$. 

There is an injection 
\begin{equation} 
R^{ell} \to K_{num}(\overline{X_R})
\label{eq:roots_to_k_class}
\end{equation}
given by 
\[ \alpha + m \delta_{\pt} + n \delta_E \mapsto (0, C_\alpha + n [E] , m) \] 
where $C_\alpha$ is the root curve corresponding to $\alpha \in R_{fin}$. 

\begin{thm}\label{thm:vwalls_roots}
  The $v$-walls in $\Stab'(\overline{X_R})$ 
  are in bijection with roots 
  $\beta \in R^{ell}$  under the inclusion \eqref{eq:roots_to_k_class} such that
    $\langle \beta, v\rangle \le \langle v, v\rangle/2$. 
  \end{thm}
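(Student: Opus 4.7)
The plan is to mirror the argument used for the analogous statement in the $R = A_{-1}$ case, with modifications to accommodate the higher-rank N\'eron-Severi lattice $\NS(\overline{X_R})$ of a rational elliptic surface. The core point is that a $v$-wall in $\Stab'(\overline{X_R})$ produces a numerical destabilizing class $w = [A] \in K_{num}(\overline{X_R})$ whose phase agrees with that of $v$ along the wall, and the question to resolve is which classes $w$ can arise and how they correspond to roots via \eqref{eq:roots_to_k_class}.

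First I would show that $\operatorname{rk}(w) = 0$: the condition $H^2 \gg 0$ built into $\Stab'(\overline{X_R})$ forces any destabilizing class of positive rank to have phase bounded away from $\phi(v)$ in the limit, as in the proof of Proposition \ref{prop:all_walls_are_GU}. Writing the $c_1$ component of $w$ in the basis \eqref{eq:basis_n1} as $c_1(w) = eE + f\Theta + \sum_{\alpha \in \Delta_R} c_\alpha C_\alpha + \sum_{\alpha' \in \Delta_{R'}} c_{\alpha'} C_{\alpha'}$, the same Friedman-chamber argument used for $A_{-1}$ (large $b$) rules out $f \neq 0$, and the extra phase constraints $\phi((0,E,0)) > \phi(v)$ and $\phi((0, C_{\alpha'}, k)) > \phi(v)$ for $\alpha' \in R'$ and $|k| < N$ that were explicitly inserted into the definition of $\Stab'(\overline{X_R})$ force $c_{\alpha'} = 0$ for all $\alpha' \in \Delta_{R'}$. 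This leaves $c_1(w) = eE + \sum_{\alpha \in \Delta_R} c_\alpha C_\alpha$ which, combined with the rank zero condition and the existence of a non-trivial $\chi$-component, is precisely the image of \eqref{eq:roots_to_k_class} applied to an element $\beta = \sum_\alpha c_\alpha \alpha + e\, \delta_E + s\, \delta_{\mathrm{pt}} \in R^{ell}$ for the appropriate $s$.

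Next I would establish the pairing constraint. Given $w$ corresponding to $\beta \in R^{ell}$, the inequality $\langle \beta, v \rangle \le \langle v, v\rangle/2$ follows from the Bogomolov-type wall analysis of \cite[Section 8]{Bayer_Macrì_2014_MMP}: for a short exact sequence $0 \to A \to \mc E \to B \to 0$ in the tilted heart realizing the wall, the relation $[B] = [\mc E] - [A]$ together with $\langle [B], [B] \rangle \ge 0$ yields $\langle [A], [\mc E]\rangle \le \langle [\mc E], [\mc E] \rangle /2$. Conversely, any root satisfying this inequality and lying in the image of \eqref{eq:roots_to_k_class} is realized by an actual destabilizing sequence via the standard construction of an object with the prescribed class, giving the bijection.

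The main obstacle I anticipate is the quantitative analysis ruling out destabilizing components in the $R'$-direction of $\NS(\overline{X_R})$. In the $A_{-1}$ case the N\'eron-Severi lattice had rank $2$ and only the fiber and section directions had to be accounted for; for general $R$ the additional root curves supported in the fiber at infinity introduce many more numerical candidates for destabilizers, and one must verify that the constant $N$ appearing in the definition of $\Stab'(\overline{X_R})$ genuinely suffices to exclude all of them as a function of $v$. This reduces to a finite combinatorial check using the generalized form of the wall equation \eqref{eq:wall_rs}, but the bookkeeping is considerably more intricate than in type $A_{-1}$, particularly because one must confirm that the walls are indeed semicircular in the appropriate slice of stability space and that no spurious phase coincidences arise from the $D_{\alpha'}$-directions.
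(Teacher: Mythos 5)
Your proposal follows essentially the same route as the paper's (much terser) proof: the phase conditions built into the definition of $\Stab'(\overline{X_R})_v$ together with $H^2 \gg 0$ force destabilizing classes to have rank $0$ and $c_1$ in the span of $E$ and the $C_\alpha$, $\alpha \in \Delta_R$, identifying the walls with a subset of $\{w_{\beta,v} \mid \beta \in R^{ell}\}$ via \eqref{eq:roots_to_k_class}, and the pairing restriction is delegated to Bayer--Macr\`i (the paper cites \S 10 rather than \S 8). Your added caveats about verifying that the constant $N$ suffices and that every admissible root is genuinely realized are fair points the paper leaves implicit, but they do not change the argument.
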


  \begin{proof}
By the conditions on the phases of objects for stability conditions in 
$\Stab'(\overline{X_R})_v$ 
a the walls are a subset of those where the phase of $v$ overlaps with an objects 
$a \in D^b_{Coh}(\overline{X_R})$
 of rank $0$ with 
 $c_1(a) \in \Z\langle E, C_{\alpha}\rangle_{\alpha\in R_{fin}}$ 
 and so are a subset of 
\[ \mc W  = \{ w_{\beta,v} \mid \beta \in R^{ell}\} \] 
where we use the inclusion \eqref{eq:roots_to_k_class} and $w_{v_1, v_2}$ is the subspace of $\Stab'(\overline{X_R})$ where $\phi(v_1) = \phi(v_2)$. The restriction on $\langle \beta, v\rangle$ is provided by \cite[\S 10]{Bayer_Macrì_2014_MMP}. 
\end{proof}

\begin{figure}[h!]
  \centering
    \includegraphics[width=.8\textwidth]{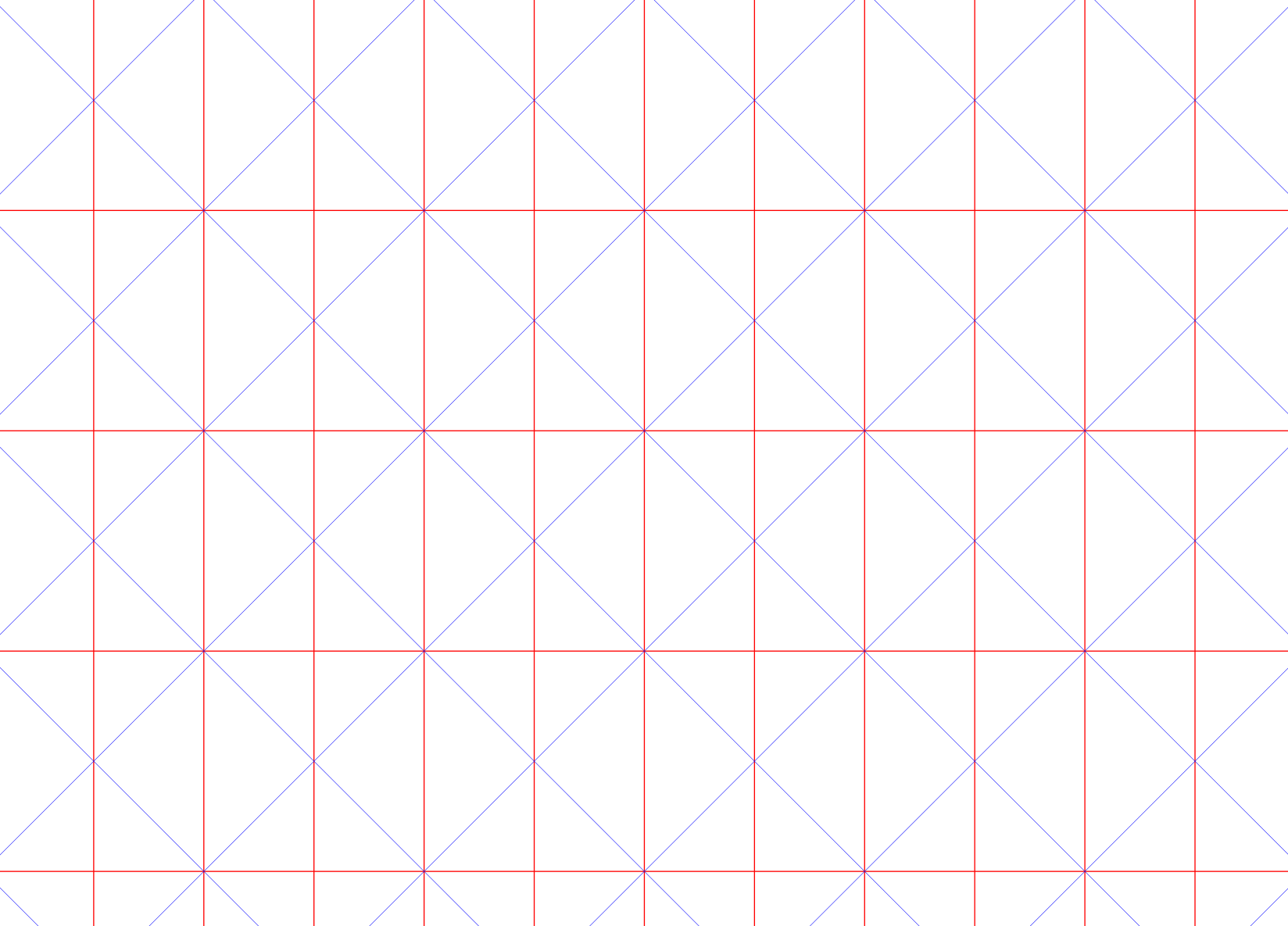}
    \caption{
    \label{fig:a12} Wall and chamber structure on level 1 hyperplane in $ N^1(X_R^{[n]}/\A^n)_{> 0}$ for $  n = 2$ points for surface $X_R$ with  $R = A_1$.}
 \end{figure} 
\begin{figure}[h!]
    \includegraphics[width=1\textwidth]{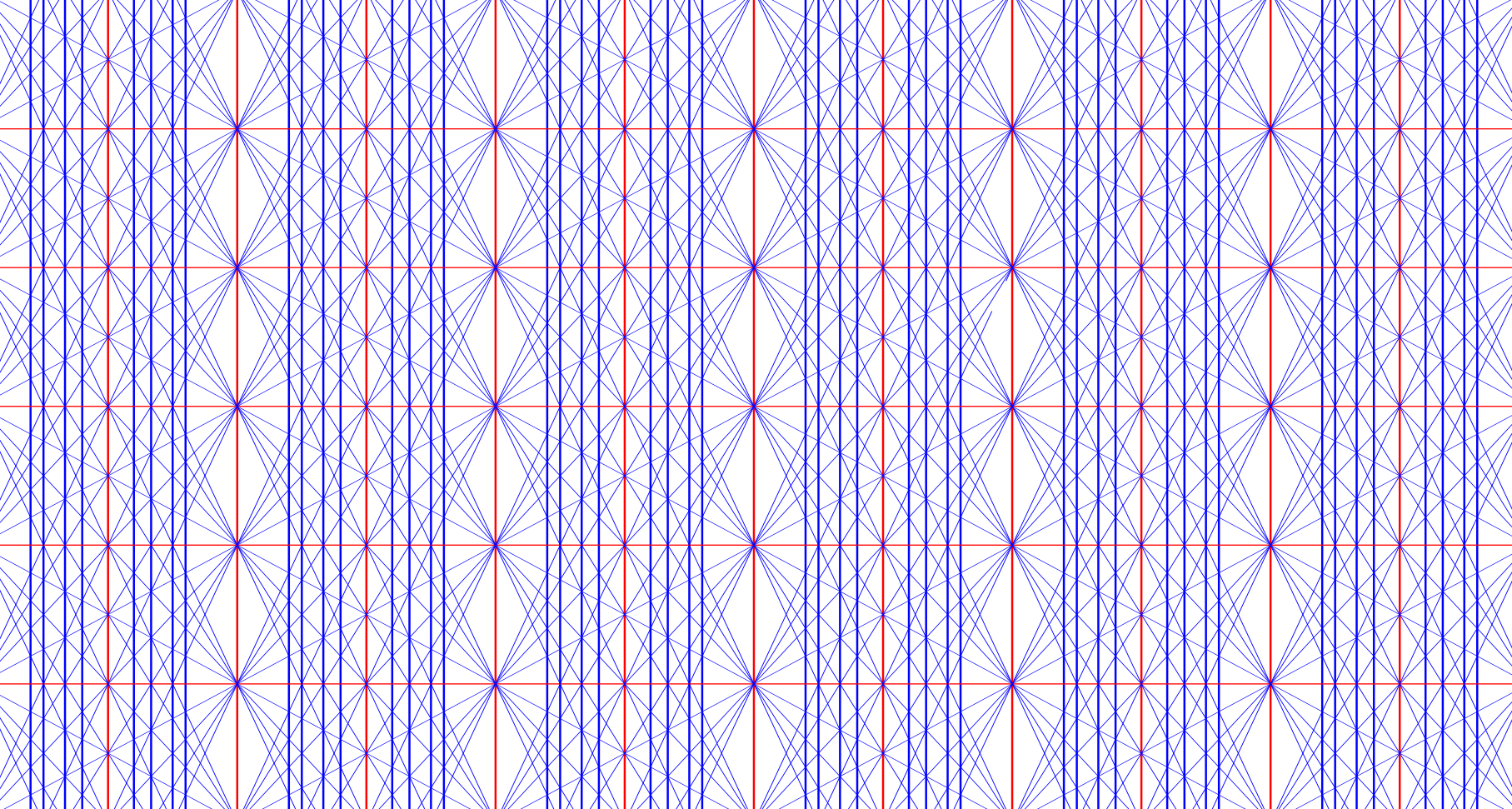}
  \caption{Wall and chamber structure on level 1 hyperplane in $ N^1(X_R^{[n]}/\A^n)_{> 0}$ for $  n = 5$ points for surface $X_R$ with  $R = A_1$.\label{fig:a15}  }
\end{figure}

Letting $N^1(\overline{X_R})' \subset N^1(\overline{X_R})'$ denote the image of $\Stab'(\overline{X_R})$ under the Bayer-Macr\`i map, the composition 
\[ \Stab'(\overline{X_R}) \to N^1(\overline{X_R})' \to N^1(X_R^{[n]}/\A^{n})\] 
factors through a surjection onto 
\[ N^1(X_R^{[n]}/\A^{n})_{>0} := \{ D \in N^1(X_R^{[n]}/\A^{n})_\R \mid \langle D, C_E \rangle > 0 \] 

we obtain by the same argument as Theorem \ref{thm:def_equiv} the following analogue of the main results of \cite{Bayer_Macrì_2014_MMP}. 

\begin{thm}\label{thm:other_types_birational}
All $K$-equivalent smooth projective birational models of $X_R^{[n]}$ over $\A^n$ are of the form $M_{\sigma}(v; \mc O_{D_\infty})$ for some $\sigma \in \Stab'(\overline{X_R})$. 
\end{thm}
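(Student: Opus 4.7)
The plan is to mirror the argument used for Theorem \ref{thm:def_equiv}, substituting the wall-and-chamber analysis of Theorem \ref{thm:vwalls_roots} (for general $R$) and the surjectivity of the Bayer-Macr\`i map onto $N^1(X_R^{[n]}/\A^n)_{>0}$ for their $A_{-1}$ analogues. First, I would fix a smooth $K$-equivalent projective birational model $M'/\A^n$ of $X_R^{[n]}/\A^n$ and choose a relatively ample class $A \in N^1(M'/\A^n)$. Transporting $A$ across the birational map $M' \dashrightarrow X_R^{[n]}$ gives a class in $N^1(X_R^{[n]}/\A^n)$; since any relatively ample class pairs positively with a curve in a generic fiber of the support map and since $K$-equivalent birational transformations preserve such pairings on curves, the transported class lies in $N^1(X_R^{[n]}/\A^n)_{>0}$.

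Next, I would invoke the explicit formula \eqref{eq:bm_exactly} for the Bayer-Macr\`i class $\ell_{\underline{\sigma}} = \lambda_{\mc U}(F_Z)$ to verify that the composition $\Stab'(\overline{X_R}) \to N^1(\overline{X_R})' \to N^1(X_R^{[n]}/\A^n)$ is surjective onto $N^1(X_R^{[n]}/\A^n)_{>0}$. The basis \eqref{eq:basis_n1} decomposes into the horizontal direction $D_{\pt}$ controlled by the $\on{ch}_2^B$ contribution, the $\lambda_{\mc U}(\Theta)|_{X_R}$ direction controlled by the coefficient of $\Theta$ in $H$ and $B$, and the $\lambda_{\mc U}(D_\alpha)|_{X_R}$ directions controlled by $D_H, D_B$; each of these can be independently tuned while remaining inside $\Stab'(\overline{X_R})$ (in particular keeping $H^2 \gg 0$ and the requisite phase inequalities), so surjectivity onto the half-space $N^1(X_R^{[n]}/\A^n)_{>0}$ follows.

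Now, given the transported ample class of $M'$, pick $\sigma \in \Stab'(\overline{X_R})$ lying in a chamber whose Bayer-Macr\`i image equals this class; by Theorem \ref{thm:vwalls_roots} the chamber structure on $\Stab'(\overline{X_R})$ pulls back from that on $N^1(X_R^{[n]}/\A^n)_{>0}$ (both being cut out by the elliptic roots), so $\sigma$ may be chosen away from any wall. The moduli space $M_\sigma(v; \mc O_{D_\infty})$ is then a smooth projective relative birational model of $X_R^{[n]}/\A^n$ (using Theorem \ref{thm:cite_moduli_spaces} together with the Gieseker/Gieseker-Uhlenbeck identifications as in Proposition \ref{prop:deq_all_moduli_are_framed}, adapted to $\overline{X_R}$ via the derived equivalences from Section \ref{sec:gewg}). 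By Bayer-Macr\`i positivity, $L_\sigma$ is ample on $M_\sigma(v; \mc O_{D_\infty})$.

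Finally, composing the birational map $M_\sigma(v; \mc O_{D_\infty}) \dashrightarrow M'$ sends $L_\sigma$ to an ample class on $M'$; since a birational map between smooth projective varieties of the same dimension pulling back an ample class to an ample class is an isomorphism, this gives $M' \simeq M_\sigma(v; \mc O_{D_\infty})$. The main obstacle I expect is in making the Gieseker/Uhlenbeck identification work uniformly for $R \neq A_{-1}$: one needs an analogue of Lemma \ref{lem:all_framable_are_stable} and Proposition \ref{prop:deq_all_moduli_are_framed} on $\overline{X_R}$ to guarantee that the stability chamber adjacent to the chosen wall can be identified with a Gieseker chamber (after a derived autoequivalence of $\overline{X_R}$), so that one actually obtains a projective birational model rather than only a proper algebraic space; this uses the Burban-Schiffmann description of $\Aut(D^b_{Coh}(\overline{X_R}))$ together with the projectivity of Gieseker moduli on rational elliptic surfaces.
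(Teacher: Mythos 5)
Your proposal follows essentially the same route as the paper: the paper simply asserts the surjectivity of the composition $\Stab'(\overline{X_R}) \to N^1(\overline{X_R})' \to N^1(X_R^{[n]}/\A^n)_{>0}$ and then declares the result to follow "by the same argument as Theorem \ref{thm:def_equiv}," which is precisely the transport-the-ample-class / ample-to-ample-implies-isomorphism argument you spell out. Your additional remarks on verifying surjectivity via the basis \eqref{eq:basis_n1} and on the need for analogues of Lemma \ref{lem:all_framable_are_stable} and Proposition \ref{prop:deq_all_moduli_are_framed} on $\overline{X_R}$ are reasonable elaborations of details the paper leaves implicit.
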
 

Note that the analysis here is disjoint from that in \cite{Kopper_2021} because the rational elliptic surfaces $\overline{X_R}$ are non-generic and admit $(-2)$-curves, and we are only concerned with the structure relative to the base $\A^n$.

\begin{rmk}
  There exist isotrivial K3 surfaces with fibers of the corresponding Kodaira types \cite{Sawon_2014}. The wall and chamber structures on the level 1 hyperplane in $ N^1(X_R^{[n]}/\A^{n})_{>0}$ exactly coincides with that on an affine subspace of a horocycle on $N^1(S^{[n]})$ for a choice of K3 surface $S$ by analysis of Section \ref{ssec:horosphere}. 
  \end{rmk} 
\section{Correspondence operators} 

Analogues of correspondences defining Nakajima operators are constructed by specializing Nakajima operators under the relative derived equivalences provided by Theorem \ref{thm:derived_reflection}. The result will be conjectural outside type $A_{-1}$. 

Let $\alpha$ be an imaginary root of $\mf g_R$ which is not a multiple of $\delta_E$ and let $B = \{b \xi \mid b \in \C \} \subset S_\alpha \subset S$ be a generic non-generic line, $B^\circ = B\backslash \{ 0 \}$. There exists a correspondence 
\[ Z_\alpha^\circ  \subset M^{tf}(v; \mc O_{D_\infty}) \times_{B^\circ} M^{tf}(v+\alpha; \mc O_{D_\infty}) \times_{B^\circ} X_{B^\circ}\]
where we denote $M^{tf}(v; \mc O_{D_\infty})$ and its basechange to $B^\circ$ by the same letter. Here $X_{B^\circ}$ is the family of commutative non-stacky surfaces under Conjecture \ref{conj:commutative_hyperplane} when $R\neq A_{-1}$ or under Theorem \ref{thm:derived_reflection}. 

This $Z_\alpha^\circ$ is exactly the correspondence defining Nakajima-Baranovsky operators under the derived equivalence. We then use the specialization construction \cite[\S 2.6.30]{Chriss_Ginzburg_2010} to produce cycles $Z_\alpha \in H_{T}(M^{tf}(v; \mc O_{D_\infty}) \times M^{tf}(v+\alpha; \mc O_{D_\infty}) \times \overline{X_R})$. Denote by the same letter its restriction to $X_R$ defined by 
\[ Z_\alpha = \lim_{b \to 0 } Z_\alpha^\circ \] 
where $b$ is the coordinate on $B$. 

Given $\gamma \in H^*_{T}(X_R) \simeq \mf g_{R, \alpha}$ we produce operators 
\[ Z_{\alpha, \gamma} = \pi_{12*}(Z_\alpha \cap \pi_3^*\gamma)\in H^*_{T}(M^{tf}(v; \mc O_{D_\infty}) \times M^{tf}(v + \alpha; \mc O_{D_\infty})\] 
as usual. 

If $\alpha = \beta + a\delta_E + b\delta_{\pt}$ is a real root and $B^\circ$ is a generic line in the hyperplane $S_\alpha$ we let 
\[ Z_\alpha^\circ \subset M^{tf}( v ;\mc O_{D_\infty})\times_{B^\circ} M^{tf}( v+\alpha ;\mc O_{D_\infty})\] 
denote the component of the Steinberg correspondence corresponding to the generator $e$ of the $\mf{sl}_2$ action on 
\[ \bigoplus_{k}  H^*(M^{tf}( v + k \alpha;\mc O_{D_\infty}))\] 
induced by the compatible system of stratified Mukai flopping contractions $\pi_{w_\alpha}$ associated to the wall $w_\alpha$. 

Likewise define $Z_\alpha = \lim_{b\to 0}Z_\alpha^\circ$. Let $\alpha$ be a real or primitive imaginary root of $\mf g_{R}$ and let $\mf g_{\Z \alpha}$ denote the subalgebra spanned by multiples of this root, so $\mf g_{\Z \alpha} \simeq \sl_{2}$ or $\mf g_{\Z, \alpha} \simeq \Heis_{H^*_T(X_R)}$. 

\begin{prop}
The cycles $Z_{k \alpha ,\gamma}$, $k \in \Z\backslash \{ 0\}$   (or $Z_{k \alpha}$  $k = \pm 1$ for real roots) generate a representation of $\mf g_{\Z, \alpha}$ under convolution on 
\[ \bigoplus_k M^{tf}(v + k[\alpha]; \mc O_{D_\infty})\] 
assuming Conjecture \ref{conj:commutative_hyperplane} outside of type $A_{-1}$ in the imaginary case. 
\end{prop}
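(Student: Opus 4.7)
The plan is to establish the relations first on the generic fiber where everything is controlled by the commutative case, and then to transfer them to the central fiber by compatibility of convolution with specialization.

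First, I would work over $B^\circ \subset S_\alpha$. By Theorem \ref{thm:derived_reflection} in type $A_{-1}$ (or Conjecture \ref{conj:commutative_hyperplane} in the other types), the family $D^b_{Coh}(\PP_{[E/\Gamma]}(\Xi))$ restricted to $B^\circ$ is derived equivalent to a family of commutative surfaces $X_{B^\circ}$, and this equivalence intertwines $\mc O_{D_\infty}$-framed torsion-free moduli with the commutative analogues (Hilbert schemes or moduli of framed sheaves depending on $v$). Under this identification, the restricted correspondence $Z_\alpha^\circ$ is precisely the Nakajima-Baranovsky correspondence: for a primitive imaginary root $\alpha$ scaled by $k$, this is the nested Hilbert scheme locus supporting the degree-$k$ Heisenberg creation operator acting by cup product with $\gamma$; for a real root, this is the exceptional component of the Steinberg-type correspondence cut out by the fibers of the flopping contraction $\pi_{w_\alpha}$ of \eqref{eq:simultaneous_resolution}.

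On the generic fiber the required relations are then known by classical input. In the imaginary-root case this is the Grojnowski-Nakajima-Baranovsky computation applied to the commutative surface $X_b$ (using that $H^*_T(X_R) \simeq H^*_T(X_b)$ via the semiprojective deformation isomorphism of Proposition \ref{prop:iso_cohomology_semiprojective}, so the Heisenberg bracket \eqref{eq:g_TE_brack} restricted to $\Z\alpha$ matches the cup product structure). In the real-root case the $\sl_2$-relations for $Z_\alpha, Z_{-\alpha}$ follow from Nakajima's analysis of correspondences coming from a small flopping contraction with one-dimensional fibers, applied fiberwise to $\pi_{w_\alpha}$ over $B^\circ$: the three generators are $e = [Z_\alpha^\circ]$, $f = [Z_{-\alpha}^\circ] = [{}^t Z_\alpha^\circ]$, and $h$ the induced weight, and the commutator $[e,f]$ is computed by intersecting on the relative fiber product, where it reduces to a self-intersection calculation on the exceptional locus.

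The second step is specialization. Because $B$ is an equivariant affine line with $\csth$ scaling the extension class, everything in sight is flat over $B$, so the limit $Z_\alpha = \lim_{b\to 0} Z_\alpha^\circ$ is well-defined as a cycle by the standard specialization map $\on{sp}: H_*^T((-)_{B^\circ}) \to H_*^T((-)_0)$ of \cite[\S 2.6.30]{Chriss_Ginzburg_2010}. Specialization is a ring homomorphism for convolution on the Borel-Moore homology of smooth families of correspondences, so the relations
\[ [Z_{\alpha_1, \gamma_1}^\circ, Z_{\alpha_2,\gamma_2}^\circ] = -(\text{pairing})\, Z_{\alpha_1+\alpha_2, \gamma_1\star\gamma_2}^\circ + \text{central term} \]
and the analogous $\sl_2$-relations pass to $b=0$ term by term.

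The main obstacle is verifying the hypotheses that make specialization a convolution ring homomorphism: the relevant maps $M^{tf}(v;\mc O_{D_\infty})_B \to B$ are semiprojective but generally not proper, so one must either work equivariantly (using that all $\csth$-weights on limits exist by Theorem \ref{thm:main_quasiprojectivity}) or restrict to the component of the correspondence with proper support over the diagonal. A secondary subtlety is that on the central fiber $M^{tf}(v;\mc O_{D_\infty})_0$ the cycle $Z_\alpha$ may acquire extra components supported on the locus where the $\mc F$-framing degenerates; these extra components lie in a fiber over $0$ of a proper family and so do not contribute to convolution with the cycles of interest, which is what must be checked in detail. Outside type $A_{-1}$ the argument is conditional precisely on Conjecture \ref{conj:commutative_hyperplane}, which supplies the commutative model needed for step one.
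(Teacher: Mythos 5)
Your proposal is correct and follows essentially the same route as the paper: establish the relations over $B^\circ$ by identifying the cycles with Nakajima--Baranovsky (resp.\ Steinberg/flop) correspondences on the commutative model supplied by the derived equivalence, then transfer to the central fiber via the compatibility of convolution with specialization from \cite[Prop.\ 2.7.33]{Chriss_Ginzburg_2010}. The paper's proof is terser and does not dwell on the properness and extra-component caveats you raise, but the core argument is the same.
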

\begin{proof}
This follows from the convolution commutes with specialization theorem \cite[Prop. 2.7.33]{Chriss_Ginzburg_2010} which says that 
\[\lim_{b \to 0}  Z_1\circ \lim_{b\to 0} Z_2    = \lim_{b \to 0}\left( Z_1 \circ_{B^\circ} Z_2 \right)\]
where $\circ_{B^\circ}$ denotes relative convolution over $B^\circ$. We first do the isotropic case.  It follows that the cycles define representations of the required algebras because $M^{tf}(v; \mc O_{D_\infty})_{B^0}$ is isomorphic to $M^{tf}_{X_b}(\Phi_*v, \Phi(\mc O_{D_\infty}))\times B^\circ$ where $b \in B$ is any non-zero point,  for a relative derived (anti)-equivalence $\Phi$ sending the class $[\alpha]$ to $[\pt]$ and under this isomorphism the cycles $Z_{k\alpha}^\circ$ and $Z_k \times B^\circ$ where the $Z_k$ are Nakajima-Baranovsky correspondences satisfying the relations of $\Heis_{H^*(X_R)}$. The proof in the real root case is identical. 
\end{proof}

\begin{conj}\label{conj:main_repn}
The assignment 
\begin{align*}
   w^{a,b}_\gamma &\mapsto \bigsqcup_{v} Z_{a \delta_{E} + b\delta_{pt}, \gamma}\\
   x^{a,b}_\alpha & \mapsto  \bigsqcup_{v} Z_{\alpha + a \delta_{E} + b\delta_{pt}} , \alpha \in R\backslash \{ 0 \}
\end{align*}
defines a representation of $\mf g_{R}\otimes \C[\hbar]$ on $\bigoplus_v H^*_{T}(M^{tf}(v; \mc O_{D_\infty}))$ after a change of basis of the imaginary root spaces. 
\end{conj}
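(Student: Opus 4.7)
\textbf{Proof plan for Conjecture \ref{conj:main_repn}.}

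The plan is to bootstrap from the single-root statement (the preceding proposition) to the full algebra via the specialization principle and derived equivalences. Since $\mathfrak{g}_R$ is generated as a Lie algebra by its root vectors together with a small set of imaginary generators, it suffices to verify the commutation relations \eqref{eq:g_TE_brack} (and the analogous mixed relations involving real root vectors $x^{a,b}_\alpha$) pairwise, together with the Serre relations among simple real roots. First I would fix a pair of roots $(\alpha,\beta)$ and isolate a two-parameter family $B_{\alpha,\beta} \subset HH_0([E/\Gamma])$ transverse to both hyperplanes $S_\alpha$ and $S_\beta$; over the generic locus $B_{\alpha,\beta}^\circ$ both correspondences $Z_\alpha^\circ$ and $Z_\beta^\circ$ are defined simultaneously and can be convolved relative to the base.

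The central reduction is the following: for any root $\alpha$, a relative derived (anti)equivalence from Theorem \ref{thm:derived_reflection} (and conjecturally from Conjecture \ref{conj:reflection} in the orbifold cases) transports $\alpha$ to a standard class on a commutative surface, under which $Z_\alpha^\circ$ becomes a classical Nakajima--Baranovsky correspondence on a family of smooth projective surfaces. The commutation relations among such correspondences are known: the Heisenberg relations for point classes by \cite{dehority_toroidal_vosa} in the $A_{-1}$ case and the Heisenberg/toroidal relations for imaginary roots after picking derived equivalences pairwise. The next step is to apply the convolution-commutes-with-specialization theorem \cite[Prop.~2.7.33]{Chriss_Ginzburg_2010} along the line $B \to 0$ to transfer those relations from $B^\circ$ to the central fiber, yielding the pairwise brackets on $\bigoplus_v H^*_T(M^{tf}(v;\mathcal{O}_{D_\infty}))$. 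For Serre relations among real simple roots, I would verify them by restricting to root subsystems of rank $\le 2$ and reducing again to the commutative case via a derived equivalence identifying the corresponding two-wall configuration with the analogous configuration on an elliptic K3 surface (using the isotrivial K3 models from \cite{Sawon_2014}), where the $\mathfrak{sl}_2 \times \mathfrak{sl}_2$ or rank-2 affine Lie algebra action is standard from Nakajima's theory.

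The "change of basis of imaginary root spaces" mentioned in the statement is forced by the fact that specializations along different generic lines $B \subset S_\alpha$ produce cycles whose identification with elements of $\mathfrak{g}_{R,\alpha}$ depends on a choice of trivialization; I would fix this once and for all by normalizing the identification to agree on the $\delta_E$ direction (where the answer is dictated by the classical Nakajima operators on a generic commutative fiber) and propagating via the $IW^{ell}_R$-action of Theorem \ref{thm:derived_reflection} and Conjecture \ref{conj:reflection}, so that the structure constants in \eqref{eq:g_TE_brack} are preserved.

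The hardest step will be the mixed real-imaginary bracket $[Z_{\alpha}, Z_{a\delta_E + b\delta_{\pt},\gamma}]$ when $\alpha$ is a real root and the imaginary cycle is specialized from a different hyperplane: no single derived equivalence turns both operators into classical correspondences on the same commutative surface, so the argument must either (a) factor through an intermediate deformation where both cycles restrict to compatible classical operators, or (b) be reduced to a boundary analysis using the stratified Mukai flop $\pi_{w_\alpha}$ combined with the explicit form of $Z_{\alpha,\gamma}$ on the exceptional locus. This is also the step where dependence on Conjecture \ref{conj:commutative_hyperplane} and Conjecture \ref{conj:reflection} is essential outside type $A_{-1}$, and where the change-of-basis ambiguity in the imaginary root spaces is fixed by demanding compatibility with the real root action.
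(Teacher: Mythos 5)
Your plan diverges from the paper's actual argument at the decisive step, and the divergence is where the gap lies. The paper only proves this conjecture for $R=A_{-1}$ (Theorem \ref{thm:main_repn}), and it does \emph{not} verify the brackets $[w^{a,b}_\gamma, w^{c,d}_\eta]$ pairwise. Instead it (i) decomposes each specialization cycle into irreducible Lagrangian components, $Z_\alpha=\sum_{\underline h} a_{\underline h} Z_{\underline h}$ (Propositions \ref{prop:cycle_ZH_irred_lagrangian} and \ref{prop:cycle_decomposition}), (ii) computes \emph{only} the commutators with the Nakajima/Heisenberg correspondences $\mf Z_{(0,n)}$ by a support-and-dimension argument on these components (Propositions \ref{prop:one_summand_part} and \ref{prop:heis_commutators}), pinning down the action on the fundamental class of $E$ via the flop correspondences and \cite{DeHority_2020} (Proposition \ref{prop:mid_dim_is_vertex}), and then (iii) invokes the reconstruction theorem \cite[Thm.\ 4.65]{dehority_toroidal_vosa}: fields on a Fock space with these prescribed Heisenberg commutators are forced to be the explicit $N=2$ superfields, which are already known to close under the relations of $\mf g_{T^*E}$. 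This is precisely how the paper evades the obstruction you correctly identify at the end of your proposal --- that for two roots lying on different hyperplanes of $HH_0$ there is no single derived equivalence (nor, in general, a single commutative deformation) rendering both cycles classical simultaneously.

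Your proposal flags this obstruction but does not resolve it: options (a) and (b) in your last paragraph are left as alternatives rather than carried out, and your two-parameter family $B_{\alpha,\beta}$ does not by itself produce a locus over which both correspondences become Nakajima--Baranovsky operators on a common commutative surface. So as written the pairwise-verification strategy stalls exactly on the general bracket $[w^{a,b}_\gamma, w^{c,d}_\eta]$ with $(a,b)$ and $(c,d)$ non-proportional, which is most of the content of \eqref{eq:g_TE_brack}. The missing idea is the vertex-algebraic uniqueness principle: it suffices to control commutation with a single Heisenberg subalgebra that generates each weight space from a vacuum vector, and that computation \emph{can} be done geometrically on the central fiber. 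Your discussion of Serre relations and real roots is reasonable as a roadmap for the other types, but note that in the only case the paper actually proves ($A_{-1}$, the empty finite root system) there are no real root generators $x^{a,b}_\alpha$ at all; everything outside $A_{-1}$ remains conjectural in the paper as well, conditional on Conjectures \ref{conj:commutative_hyperplane} and \ref{conj:reflection}, consistent with your framing.
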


We will prove in Theorem \ref{thm:main_repn} below this theorem for $R = A_{-1}$.

\subsection{Relation with Nakajima operators}

Fix an identification $Pic^0(T^*E) \simeq Jac(E)  = E$ and refer to all of these spaces as $J$. We identify here the operators giving the $w^{a,b}_\gamma$ generators of $\mf g_{A_{-1}}$.

Let $\mc K\in \Coh(S)_{\alpha}$ denote a torsion sheaf of class $\alpha$ which is generated by the image of a map $\sigma: \mc E \twoheadrightarrow \mc K \to 0 $ under the action of $\mc O_S$ where $\mc E\in M^{tf}(v; \mc O_\infty)$ is an $\mc O_\infty$-framed torsion-free sheaf. 

\begin{lem}
  The Jordan-H\"older factors of $\mc K$ must all have rank $\le 1$. 
\end{lem}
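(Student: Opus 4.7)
The plan is to reduce to a local DVR computation at the generic point of the support curve of each Jordan-H\"older factor. Since Theorem \ref{thm:main_repn} concerns $R = A_{-1}$ with Hilbert-scheme targets, I will assume that $v$ is a rank-one chern character so that $\mc E$ is a rank-$1$ torsion-free sheaf on the surface $S$ (either the central fiber $\overline{X_{A_{-1}}}$ or a noncommutative deformation $\PP_E(\Xi_s)$), indicating where the noncommutative setting requires care.

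First, for any integral curve $C \subset S$, the stalk $\mc E_{\eta_C}$ at the generic point $\eta_C$ of $C$ is a free module of rank one over the DVR $\mc O_{S,\eta_C}$, since $\mc E$ is torsion-free of rank one on a regular two-dimensional scheme. In the noncommutative case $S = \PP_E(\Xi_s)$, the analogous statement is phrased in terms of the Koszul data $(a,b,\eta)$ from Proposition \ref{prop:sod_pp_univ}: away from the codimension-$\ge 1$ loci where the bimodule extension fails to split (identified with the hyperplanes $S_\beta$ via Proposition \ref{prop:roots_are_hyperplanes}), localization at $\eta_C$ produces an essentially commutative DVR stalk and the same freeness of rank one holds.

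Next, the surjection $\sigma : \mc E \twoheadrightarrow \mc K$ localizes to $\mc E_{\eta_C} \twoheadrightarrow \mc K_{\eta_C}$, exhibiting $\mc K_{\eta_C}$ as a cyclic torsion module over the DVR. For each Jordan-H\"older factor $\mc F_i$ of $\mc K$, lifting its position in the JH filtration back through $\sigma$ expresses $(\mc F_i)_{\eta_{C_i}}$ as a subquotient of the cyclic module $\mc K_{\eta_{C_i}}$, where $C_i$ is the support of $\mc F_i$. Since every subquotient of a cyclic torsion module over a DVR is itself cyclic of generic rank at most one, the generic rank of $\mc F_i$ on its support is $\le 1$, as required.

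The main point that requires care is matching the notion of ``rank'' in the statement with the generic-stalk rank computed above. In the commutative central fiber this identification is standard, but in the noncommutative fibers it must be justified using the rank function via the semiorthogonal decomposition from \S 4.3, which is additive in short exact sequences and agrees with generic-stalk rank on the locus where $\msc S_s$ is generically commutative along $C_i$; since this locus is dense in $S$ and in each curve $C_i$, the bound passes to the full sheaf $\mc F_i$ by openness of stability and the constancy of class $[\mc F_i]$ across deformations.
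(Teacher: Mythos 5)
Your overall strategy---localize at the generic point of each fiber $E_a$, observe that $\mc E_{\eta}$ is free of rank one over the DVR $\mc O_{S,\eta}$ so that $\mc K_{\eta}$ is cyclic, and then bound the Jordan--H\"older factors---is a reasonable fleshing-out of the paper's one-line argument, which instead exhibits $\mc K$ as a quotient of $\iota_*\mc O_{E_Y}$ for a thickened fiber $E_Y$; that surjection is just the global form of the same cyclicity statement. The digression into the noncommutative fibers is unnecessary: the lemma lives entirely on the central commutative surface, where $\mc K$ is an honest quotient of a rank-one torsion-free sheaf.

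However, the key step ``every subquotient of a cyclic torsion module over a DVR is cyclic of generic rank at most one, hence the rank of $\mc F_i$ is $\le 1$'' has a genuine gap. The rank appearing in the statement (the $r_i$ in the HN type $(r_i, r_id_i)\in K_{num}(E)$) is the multiplicity of the fiber in the support cycle, i.e.\ the \emph{length} of the stalk at the generic point, not the minimal number of generators. A cyclic module $R/t^2$ has length $2$; correspondingly $\mc O_{2E_a}$ is a cyclic quotient of $\mc E$ at the generic point of $E_a$ yet has rank $2$ along $E_a$, and your argument as written would ``prove'' its rank is $\le 1$. What rescues the conclusion for a Jordan--H\"older factor $\mc F_i$ is its stability, which you never invoke: the fiber $E_a$ is cut out near the support by the global function $t=\pi^*(z-a)$, and multiplication by $t$ realizes $t\mc F_i$ simultaneously as a proper subsheaf and as a quotient of $\mc F_i$, which is impossible for a stable sheaf unless $t\mc F_i=0$. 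Hence $\mc F_i$ is scheme-theoretically supported on the reduced fiber, its stalk at $\eta$ is a vector space over the residue field, and only then does cyclicity (inherited as a subquotient of $\mc K_\eta$) force that dimension---and hence the rank---to be at most one. Without this step the distinction the lemma is actually drawing (iterated extensions of line bundles on $E_a$ are allowed; stable higher-rank bundles on $E_a$ are not) is lost, and your closing appeal to a ``standard'' identification of the two notions of rank glosses over exactly the point at issue.
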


\begin{proof}
  We must have that $\mc K$ admits a surjective map from $\iota_* \mc O_{E_Y}$ where $Y  \subset \PP^1$ is a subscheme and $E_Y$ the fiber over $Y$. But this in particular implies the restriction on Jordan-H\"older factors. 
\end{proof} 

Thus we can write the Harder-Narasimhan type of torsion sheaf avoiding $D_\infty $ which arises as the quotient of an ideal sheaf in the form 
\[ \underline{h} := (r_1, r2d_1; r_2, r_2d_2; \ldots; r_a, r_ad_a; 0, k_1; \ldots; 0, k_b)\] 
denote as a sequence of chern classes of Harder-Narasimhan factors considered as elements of $K_{num}(E)$ so that $\alpha = \sum_i (r_i, r_i d_i) + \sum_j (0, k_j)$. Given $a$ whose Harder-Narasimhan filtration has this form write 
\[ \on{HN}(a) = \underline{h}. \] 

Let $n$ be the imprimitivity of $\alpha$ so that $\alpha= n\alpha_0$ with $\alpha$ primitive. Suppose $\alpha \neq (0, k)$. Let $\mf Z_{\underline h} \in  H_{mid, T}(M^{tf}(v; \mc O_\infty) \times M^{tf}(v + \alpha; \mc O_\infty)\times J \times \A^1)$ denote the cycle defined by 
\begin{equation}\label{eq:corr_Zh}
\begin{split} 
  \mf Z_{\underline h} &= n \cdot [Z_{\underline h}]\\ 
 Z_{\underline h} &:= \on{closure}(\{(\mc E_1, \mc E_2, \chi, a) \mid \\
 \mc E_1 \subset \mc E_2, &\on{ssup}(\mc E_2/\mc E_1) = E_a, \det(\mc E_2/\mc E_1) = \chi, \on{HN}(\mc E_2/\mc E_1) = \underline{h}  \}) \end{split} 
\end{equation}
which is an analogue of the correspondence defining Nakajima operators. When $\alpha = (0,n)$ we define $\mf Z_{(0,n)} $ the same way but require $\on{ssup}(\mc E_2/\mc E_1)$ to be a single point in the definition of $Z_{(0,n)}$.  

\begin{prop}\label{prop:cycle_ZH_irred_lagrangian}
  The cycle $ Z_{\underline h}$ is an irreducible Lagrangian cycle in $M^{tf}(v; \mc O_\infty) \times M^{tf}(v + \alpha; \mc O_\infty) \times T^*J$ where the last two factors have reversed symplectic form. 
\end{prop}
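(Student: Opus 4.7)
The plan is to reduce via the canonical Harder-Narasimhan filtration to the case of a quotient $\mc E_2/\mc E_1$ with a single HN factor, and then reduce the locally-free semistable case to the torsion case already handled by Baranovsky via a relative derived equivalence from Theorem \ref{thm:derived_reflection}.

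First I would parameterize $Z_{\underline h}$ by the flag correspondence $\tilde Z_{\underline h}$ classifying chains $\mc E_1 = \mc F_0 \subset \mc F_1 \subset \cdots \subset \mc F_s = \mc E_2$ with $\mc F_i/\mc F_{i-1}$ the $i$th canonical HN piece of $\mc E_2/\mc E_1$, together with $(\chi,a)$. Because HN filtrations are canonical, $\tilde Z_{\underline h} \to Z_{\underline h}$ is birational, so one may replace $Z_{\underline h}$ by $\tilde Z_{\underline h}$ throughout. Projecting $\tilde Z_{\underline h}$ onto $M^{tf}(v; \mc O_\infty) \times T^*J$ and iterating on $s$, one sees that $\tilde Z_{\underline h}$ is an iterated fibration whose fibers are moduli of extensions $0 \to \mc F_{i-1} \to \mc F_i \to \mc Q_i \to 0$ with $\mc Q_i$ semistable of a single HN type on $E_a$. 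Irreducibility of $\tilde Z_{\underline h}$ then reduces to irreducibility of the moduli of semistable sheaves on $E_a$ of each fixed HN type, a classical result following from Atiyah's classification, together with irreducibility of $M^{tf}(v; \mc O_\infty)$.

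For the Lagrangian property and the dimension count, I would handle the single HN factor case separately. When $\underline h_i = (0, k)$ is a torsion factor, the corresponding single-step Hecke correspondence is Baranovsky's correspondence for length-$k$ torsion on a surface, shown to be Lagrangian of the right dimension in \cite{Nakajima_2003}. When $\underline h_i = (r, rd)$ is a positive-rank semistable factor, I would use Theorem \ref{thm:derived_reflection} to produce a relative derived autoequivalence $\Phi$ of $D^b_{Coh}(E\times \PP^1)$ acting on $K_0(E_a)$ by an $\SL(2, \Z)$ element sending $(r, rd) \mapsto (0, r)$, thereby identifying the single-step correspondence with Baranovsky's for length-$r$ torsion. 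Assembling these single-step results, the global dimension count for $Z_{\underline h}$ matches the half-dimension of the ambient symplectic manifold $M^{tf}(v; \mc O_\infty) \times M^{tf}(v+\alpha; \mc O_\infty) \times T^*J$, since $\langle \alpha, \alpha \rangle = 0$ by \eqref{eq:euler_char} on rank-zero classes.

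The main obstacle I expect will be controlling the closure in \eqref{eq:corr_Zh}: HN type can degenerate under specialization, potentially introducing extra irreducible components in the closure of the HN stratum. I would handle this by identifying $Z_{\underline h}$ with the scheme-theoretic closure inside the universal Hecke correspondence $\bigcup_{\underline h'} Z_{\underline h'}$ stratified by HN type, and using Proposition \ref{prop:canon_inj} together with the framing condition at $D_\infty$ to ensure no torsion component of the quotient escapes from $E_a$ in the limit, so that the closure picks up only the expected degenerations and remains irreducible.
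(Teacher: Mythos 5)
The paper itself gives no proof of this proposition --- the \verb|proof| environment following it is empty --- so there is nothing to compare your argument to; you are filling a genuine gap. Your overall skeleton (pass to the HN flag correspondence, which is birational onto $Z_{\underline h}$ because the filtration is canonical; treat one HN step at a time; move positive-rank steps to torsion steps by a relative Fourier--Mukai transform; count dimensions using $\langle\alpha,\alpha\rangle=0$ for fiber-supported classes) is the natural one and is consistent with how the surrounding text uses derived equivalences to transport Nakajima--Baranovsky correspondences.

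Two points need repair. First, the reduction of a positive-rank step $(r,rd)$ to ``Baranovsky's correspondence for length-$r$ torsion'' is not literal: the relative equivalence sends a semistable sheaf of class $r(1,d)$ on $E_a$ to a length-$r$ torsion sheaf \emph{spread along the fiber} $E_a$, not to a punctual quotient at a single point of the surface. The model you land on is therefore the fiber-supported Hecke correspondence with fixed determinant $\chi$ (the paper's $\mf Z_{\underline h}$ with $\underline h=(0,k)$), not the point-supported $\mf Z_{(0,n)}$ of \cite{Nakajima_2003} --- the paper is careful to distinguish these two in the sentence following \eqref{eq:corr_Zh}. You still need to prove irreducibility and the Lagrangian dimension count for the fiber-supported model, e.g.\ by fibering over $(a,\chi)\in T^*J$ and analyzing the Quot-scheme fibers; this is where the factor $T^*J$ and the ``reversed symplectic form'' actually enter, and your proposal does not engage with it. Relatedly, ``moduli of extensions with $\mc Q_i$ semistable'' is not the same as the moduli of the $\mc Q_i$: each fiber of your iterated fibration is a space of surjections $\mc F_i\twoheadrightarrow\mc Q_i$, and its irreducibility requires controlling the jumping of $\hom(\mc F_i,\mc Q_i)$, not just Atiyah's classification. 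Second, the ``main obstacle'' you single out is not one: the closure of an irreducible constructible set is irreducible, and the closure of an isotropic subvariety is isotropic, so degeneration of HN type in the limit cannot introduce new components or destroy the Lagrangian property; Proposition \ref{prop:canon_inj} is not needed here. The genuine remaining work is (i) isotropy of the composed correspondence (composition of Lagrangian correspondences is only isotropic/Lagrangian after a transversality and dimension check on the flag variety $\tilde Z_{\underline h}$), and (ii) the Quot-fiber analysis above; your write-up asserts rather than supplies both.
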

\begin{proof}

\end{proof}  

We now define correspondences $w_{\gamma, \underline{h}}^{a,b} \in H_T(M^{tf}(v; \mc O_\infty) \times M^{tf}(v + (0,aE,b); \mc O_\infty))$ according to the formula
\begin{equation}
w_{\gamma, \underline h}^{a,b} = \pi_{12*}(\pi_3^* \gamma \cap \mf Z_{\underline{h}}).
\end{equation}

When $\underline{h} = (0,n)$ we omit $\underline{h}$ from the notation. 
We now focus on this case $\alpha = (0, n)$ and relate these correspondences to to Nakajima operators. Identifying $S = T^*E$ with the small diagonal $S^\Delta \subset M_{0, n} \simeq \Sym^n(T^*S)$ and recalling the chosen identification $Jac(E) \simeq E$, the map $S \to T^*J$ is identified with the map $[n]\times \Id : T^*E \to T^*J$ where $[n]$ is the multiplication by $n$ map on $E$. We will frequently write $[n]$ in place of $[n]\times \Id$. In the basis $\{ E, \sigma_a, \sigma_b, \rm{pt}\}$ for $H^*(E) \simeq H^*(T^*E)$, we have $[n]^*E = E, [n]^*\sigma_{a, b} = n \sigma_{a,b},$ and $[n]^*\rm{pt} = n^2 \rm{pt}$. When $\underline h = (0, n)$ there is an equality 
\[ [n]_3^* \mf Z_{\underline h} = n \mf Z_n \subset M^{tf}(v; \mc O_\infty) \times M^{tf}(v + (0,0,n); \mc O_\infty) \times E \times \A^1\]
where $\mf Z_n$ is the correspondence defining Nakajima operators. 

The cycle defining $w_{\gamma}^{0,n}$ is 
\begin{align*} 
\pi_{12*}(\pi_3^* \gamma\cap \mf Z_{(0,n)}) &=  \frac {n}{n^2}\pi_{12*}(\pi_3^* \gamma\cap [|n|]_{3*}\mf Z_{n})\\
&= \frac {1}{n}\pi_{12*}[|n|]_{3*}([|n|]_3^*\pi_3^*\gamma \cap \mf Z_n)\\
&= \frac {1}{n}\pi_{12*}(\pi_3^*[|n|]^*\gamma \cap \mf Z_n).
\end{align*}
It follows that there are equalities 
\begin{align}\label{eq:w_to_nakajima}
  w_{\gamma}^{0,n} &= \frac {1}{n}\alpha_n([|n|]^*\gamma)\\
  [w_{\gamma}^{0, n}, w_{\gamma'}^{0, -n}] &= \frac{n}{n^2} \langle [n]^*\gamma, [n]^* \gamma'\rangle = n \langle \gamma,\gamma'\rangle. 
\end{align}. 

In particular $w_{E}^{0,n} =  \alpha_n(E)/n$ and $w_{\mrm{pt}}^{0,n}= n \alpha_n(\mrm{pt})$ which is a normalization which is used in \cite{dehority_toroidal_vosa}. 

We now relate the specialization cycles $Z_\alpha$ for $\alpha \in R^{ell}$ for $R = A_{-1}$  to the cycles $\mf Z_{\underline{h}}$. 

\begin{prop}\label{prop:cycle_decomposition} Let $\alpha \neq (k,0)$.  There exist numbers $a_{\underline h} \in \Q$ so that
\[ Z_\alpha = \sum_{\underline h} a_{\underline h} Z_{\underline h}. \]
\end{prop}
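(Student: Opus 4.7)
The plan is to identify the support of the specialization cycle $Z_\alpha$ with the union of the Harder-Narasimhan strata $Z_{\underline h}$, and then use the irreducibility and dimension of each $Z_{\underline h}$ to obtain the desired cycle-theoretic decomposition.

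First I would choose a relative derived (anti-)equivalence $\Phi$ over $B^\circ$ (available by Theorem \ref{thm:derived_reflection} applied to the generic line $B \subset S_\alpha$) sending $\alpha$ to a multiple of the point class and trivializing the restricted family $\PP_{E_{B^\circ}}(\Xi)$ to the constant family $\overline{X_R}\times B^\circ$. Under $\Phi_*$ the correspondence $Z_\alpha^\circ$ becomes the standard Baranovsky-Nakajima correspondence, parameterizing inclusions $\mc E_1'\subset \mc E_2'$ of framed torsion-free sheaves on $\overline{X_R}$ whose quotient is a torsion sheaf of class $\Phi_*\alpha$ supported set-theoretically at a single point.

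Next I would analyze the flat limit as $b\to 0$. By the universal openness of flatness in families of sheaves on noncommutative ruled surfaces (the flattening stratifications of \cite{rains19birational_arxiv}) and semistable reduction (Proposition \ref{prop:ss_red}), the specialization $Z_\alpha = \lim_{b\to 0} Z_\alpha^\circ$ has support contained in the closure of pairs $(\mc E_1\subset \mc E_2)$ on the central surface with $\mc E_2/\mc E_1$ a torsion sheaf of class $\alpha$ whose scheme-theoretic support is contained in a single elliptic fiber $E_a$. By definition \eqref{eq:corr_Zh} this is precisely $\bigcup_{\underline h} Z_{\underline h}$ as $\underline h$ ranges over HN-types of class $\alpha$. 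Since Proposition \ref{prop:cycle_ZH_irred_lagrangian} gives each $Z_{\underline h}$ as irreducible Lagrangian of the expected dimension, and specialization preserves dimension, the cycle class $[Z_\alpha]$ decomposes as a finite rational linear combination $\sum_{\underline h} a_{\underline h}[Z_{\underline h}]$ with $a_{\underline h}$ the specialization multiplicity along each stratum.

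The main obstacle is verifying the support containment rigorously, i.e. showing that the naive limit of the Baranovsky-Nakajima locus under the inverse derived equivalence $\Phi^{-1}$ really produces only pairs whose quotient has a Harder-Narasimhan filtration of the stated shape. This requires care because as $b\to 0$ the bimodule $\Xi_{b\xi}$ degenerates to the split extension $\mc O_\Delta \oplus \Delta_*\mc T_{\msc C}$, forcing the limiting quotient sheaf to decompose into HN factors that each restrict to a definite class in the semiorthogonal decomposition — but the limit as a scheme could a priori pick up embedded or non-reduced contributions, which is where the flatness results of \cite{rains19birational_arxiv} and semistable reduction are essential. The explicit determination of the multiplicities $a_{\underline h}$, expected to be given by a combinatorial product over the parts of the HN polygon, is not needed for the existence statement of the proposition and is left for subsequent computation.
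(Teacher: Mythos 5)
Your overall strategy coincides with the paper's: both arguments show the specialization is a Lagrangian cycle supported on a union of the irreducible Lagrangian loci $Z_{\underline h}$ and then invoke Proposition \ref{prop:cycle_ZH_irred_lagrangian} together with a dimension count in equivariant cohomology to get a finite $\Q$-linear combination. The one place you stop short is exactly the step you flag as "the main obstacle": the support containment. The paper closes this not via flattening stratifications or semistable reduction but by two direct observations: (i) the condition that the quotient $\mc E_2/\mc E_1$ have connected (hence single-fiber) support is a closed condition, so it persists in the limit; and (ii) the determinant-of-quotient map $\det^{\times 3}$ to $J\times J\times J$ extends from the family over $B^\circ$ to the family over all of $B$, so the compatibility $\chi_2 = \chi_1\otimes\chi_3$ also persists. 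Together these place $\overline{Z_\alpha^\circ}$ inside the locus $Z_{\det}$ whose irreducible components are precisely the $Z_{\underline h}$. Note also that your description of the limiting support omits the determinant coordinate, which is part of the data of $Z_{\underline h}$ in \eqref{eq:corr_Zh} and is what point (ii) is needed for; with that supplied, your argument matches the paper's.
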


\begin{proof}
For deformations of holomorphic symplectic varieties the specialization of a Lagrangian correspondence $Z^\circ$ is supported on its closure $\overline{Z^{\circ}}$ which is a Lagrangian cycle over the central fiber. We know that the closure of $Z^{\circ}$ is contained in 
\[ 
  Z_{\det} := \overline{\{(\mc E_1, \mc E_2, \chi, a) \mid 
  \mc E_1 \subset \mc E_2, \on{ssup}(\mc E_2/\mc E_1) = E_a, \det(\mc E_2/\mc E_1) = \chi \}}\] 
on account of the fact that the quotient $\mc E_2/\mc E_1 $ having connected support is a closed condition and the map 
\[ M^{tf}(v; \mc O_{D_\infty})\times_{B^\circ} \times M^{tf}(v + \alpha; \mc O_{D_\infty}) \times_{B^\circ} X_{B^\circ} \xrightarrow{\det^{\times 3}} J\times J \times J \] 
extends to a map 
\[ M^{tf}(v; \mc O_{D_\infty})\times_{B} \times M^{tf}(v + \alpha; \mc O_{D_\infty}) \times X_B \xrightarrow{\det^{\times 3}} J\times J \times J \] 
and $Z_\alpha^\circ$ is contained in the inverse image of the locus 
\[ \{ (\chi_1, \chi_2, \chi_3) \in \Jac^{\times 3} \mid \chi_2 = \chi_1\otimes \chi_3\}. \] 

The result follows from Proposition \ref{prop:cycle_ZH_irred_lagrangian} the irreducible components of $Z_{\det}$ are $Z_{\underline{h}}$ and so $Z_\alpha$ is a linear combination of the $Z_{\underline{h}}$ but by dimension reasons in equivariant cohomology we must have that $a_{\underline{h}} \in \Q$. 

\end{proof} 

The crucial commutation relation is the one between $Z_\alpha$ and the Nakajima correspondence $\mf{Z}_{(0,n)}$. Let $m : J \times J\to J$ denote the multiplication map and let $M = \Gamma_m \boxtimes \Gamma_{\Delta}^\tau$ denote the graph of the relative multiplication in 
\[ T^*J\times T^*J\times T^*J \] so 
\[ M = \{ (\chi_1, a_1, \chi_2, a_2, \chi_3, a_3) \mid \chi_1 \otimes \chi_2 = \chi_3 , a_1 = a_2 = a_3 \} \] 
as a set. 

\begin{prop}\label{prop:one_summand_part}
For any $\underline{h}$ we have 
\[ [\mf Z_{(0,n)}, Z_{\underline{h}}] = \sum_{\underline{h'}} c_{n, \underline{h}}^{\underline h'} [\Delta \times M]\circ  Z_{\underline{h'}}\] 
for some $c^{\underline h'} _{n, \underline{h}}\in \Q$.
\end{prop}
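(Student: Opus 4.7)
The plan is to compute both compositions $\mf Z_{(0,n)} \circ Z_{\underline h}$ and $Z_{\underline h} \circ \mf Z_{(0,n)}$ as cycles of flags $\mc E_0 \subset \mc E_1 \subset \mc E_2$ with prescribed HN type, support, and determinant data on the two successive quotients $\mc E_1/\mc E_0$ and $\mc E_2/\mc E_1$. In each case one quotient has HN type $\underline h$ with $\on{ssup}$ at $E_{a_1}$ and determinant $\chi_1$, and the other has type $(0,n)$ with $\on{ssup}$ at $E_{a_2}$ and determinant $\chi_2$. The commutator is the difference of these two correspondences in
\[
M^{tf}(v; \mc O_\infty) \times M^{tf}(v + \alpha + (0,n); \mc O_\infty) \times (J \times \A^1)^2.
\]

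First I would stratify each composition by whether $a_1 = a_2$ or $a_1 \neq a_2$ on $\A^1$. On the open stratum $a_1 \neq a_2$, the two successive quotients have disjoint set-theoretic support, so $\Ext^1$ between them vanishes and every such flag splits canonically as $\mc E_2 \simeq \mc E_0 \oplus (\mc E_1/\mc E_0) \oplus (\mc E_2/\mc E_1)$. The assignment that exchanges the roles of the two direct summands defines an isomorphism between the open strata of the two compositions respecting every parameter, so these strata cancel in the commutator.

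Next, on the closed stratum $a_1 = a_2 =: a$, both quotients are supported on the same fiber $E_a$ and $\mc E_2/\mc E_0$ has determinant $\chi_1 \otimes \chi_2$. I would stratify this locus by the HN type $\underline{h'}$ of $\mc E_2/\mc E_0$. Each stratum is an irreducible Lagrangian cycle (by the same argument as Proposition \ref{prop:cycle_ZH_irred_lagrangian}) that projects, via the map forgetting $\mc E_1$ and recording only $(\mc E_0 \subset \mc E_2, \on{det}(\mc E_2/\mc E_0), a)$, onto the cycle $Z_{\underline{h'}}$ with fiber a flag variety of intermediate subsheaves of a fixed torsion sheaf of type $\underline{h'}$; this pushforward contributes a rational multiplicity. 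The constraints $a_1 = a_2 = a$ and $\chi_1 \otimes \chi_2 = \on{det}(\mc E_2/\mc E_0)$ together cut out exactly the operator $[\Delta \times M]$ on the $(J \times \A^1)^2$ factors, so each stratum equals $[\Delta \times M] \circ Z_{\underline{h'}}$ times a rational multiplicity. Combining the resulting strata from the two compositions with appropriate signs yields an identity of the claimed form with $c^{\underline{h'}}_{n,\underline h} \in \Q$.

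The main obstacle is the diagonal computation: while the open strata cancel via a canonical isomorphism, on the diagonal the two compositions realize the same combined quotient via different filtrations (one with $\mc E_1/\mc E_0$ of type $\underline h$ as subobject, the other with $\mc E_1/\mc E_0$ of type $(0,n)$ as subobject), and these do not cancel term-by-term. Isolating the difference as a finite $\Q$-linear combination of $[\Delta \times M] \circ Z_{\underline{h'}}$ requires careful dimension counting to rule out extraneous cycle classes and an Euler-class computation on the flag-variety fibers to extract the multiplicities; compared to the usual Nakajima Heisenberg calculation the extra ingredient is keeping track of the Jacobian determinant, which is handled by showing that all boundary terms lie inside the locus $Z_{\det}$ identified in the proof of Proposition \ref{prop:cycle_decomposition}.
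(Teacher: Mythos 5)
Your proposal follows essentially the same route as the paper's proof: both convolutions are realized as flag correspondences $\mc E_0\subset\mc E_1\subset\mc E_2$ with prescribed data on the successive torsion quotients, the locus where the two quotients are supported on distinct fibers cancels in the commutator, and the remaining diagonal contribution is shown by support compatibility (determinants multiply, supports coincide) and dimension reasons to be a $\Q$-linear combination of the irreducible Lagrangian components $[\Delta\times M]\circ Z_{\underline{h'}}$ of $[\Delta\times M]\circ Z_{\det}$. One minor imprecision: for $a_1\neq a_2$ it is the torsion quotient $\mc E_2/\mc E_0$ that splits canonically into the two subquotients (disjoint supports), not $\mc E_2$ itself as a direct sum containing the torsion-free sheaf $\mc E_0$; the canonical exchange of intermediate subsheaves, and hence the cancellation, still goes through exactly as in the paper.
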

\begin{proof}
 We separate into cases $n > 0 $ and $n < 0$. First assume $n < 0$. 

We interpret the left hand side as 
\[ \mf Z_{(0, n)} \circ Z_{\underline{h}} - Z_{\underline{h}} \circ \mf Z_{(0,n)} \subset M(v: \mc O_{D_\infty}) \times M( v + \alpha - n [\pt]; \mc O_{D_\infty})\times T^*J \times T^*J .\]

We will drop the framing sheaf from the notation for simplicity. 

Let 
\[ Z_{n, \underline{h}} \subset M(v)\times M(v -n[\pt]) \times M(v + \alpha - n[\pt]) \times T^*J\times T^*J \] 
denote 
\[ \begin{split}Z_{n, \underline{h}} = \{ (\mc E_1, \mc E_2, \mc E_3,\chi_1,a_1, \chi_2, a_2 ) \mid [\mc E_2/\mc E_1] = n[\pt], \on{HN}(\mc E_2/\mc E_2) = \underline{h}\\
  \supp(\mc E_2/\mc E_1 ) = x \subset E_{a_1}, \det(\Bbbk_x)^n = \chi_1 , \supp(\mc E_3/\mc E_2) = E_{a_2}, \det(\mc E_3/\mc E_2) = \chi_2 \} 
\end{split}\] 
and define $Z_{n, \underline h}$ similarly. Owing to the transverse intersections defining the convolutions giving rise to the commutator, the commutator is 
\[ \pi_{1,3*}(Z_{n, \underline h}) - \pi_{1,3*}(Z_{\underline h, n}).\]
These cycles agree outside the locus where $\mc E_3/\mc E_1$ is a nontrivial extension
\[ 0 \to a \to \mc E_3 / \mc E_1 \to \Bbbk_Y \to 0 \] 
of a torsion sheaf $\Bbbk_Y$ supported at a point $x$ by a sheaf $a$  with HN type $\underline h$ such that $a$ is supported on an infinitesimal neighborhood on the fiber containing $x$. 

Because of the sequence 
\[0\to \mc E_2/\mc E_1  \to \mc E_3/\mc E_1 \to \mc E_3/\mc E_2 \to 0 \] 
we have 
\[ \chi(\mc E_3/\mc E_1) = \chi(\mc E_3/\mc E_2)\otimes \chi(\mc E_2/\mc E_1) = m(\chi_2, \chi_1) \] 
and by the compatibilities of the supports we know that $[\mf Z_{(0,n)}, Z_{\underline{h}}]$ is supported on 
\[ [\Delta_{M(v) \times M(v + \alpha - k[\pt])} \times M] \circ Z_{\det} \] 
and by dimension reasons, the desired commutator must be a $\Q$-linear combination of the irreducible Lagrangian components of the latter, which is the result when $n < 0$. The case $n > 0$ follows by an almost identical argument. 
\end{proof} 

\subsubsection{Elliptic curve class} 

Let 
\[ V_{T^*E} = V^+\begin{pmatrix} 0 & 1 \\ 1& 0 \end{pmatrix} \otimes \mc F_{H^{\underline 1}(E)}\] 
denote the vertex algebra from \cite{dehority_toroidal_vosa} where 
\[ V^+\begin{pmatrix} 0 & 1 \\ 1& 0 \end{pmatrix} \subset V\begin{pmatrix} 0 & 1 \\ 1& 0 \end{pmatrix}\] 
is the sub-vertex algebra $\mc F_{H^{\underline 0}(E)}\otimes \C[e^{\pm E}]$ of the lattice vertex algebra associated to the lattice $\NS(E\times \PP^1)$. 

In particular we have 
\begin{equation}\label{eq:comm_vertex_algebra} Y(e^{mE}, z) = e^{mE}: \exp \left( \sum_{n\neq 0} \frac{\alpha_n(E) z^{-n}}{-n} \right):.
\end{equation}

\begin{prop}\label{prop:mid_dim_is_vertex}
For $\alpha = m\delta_E + n\delta_{\pt}$ with $n \neq 0$ there is an equality of operators 
\[ Z_\alpha \cdot \pi_3^*([E]) = c Y(e^{mE}, z)[z^{-n}] \] 
for $c \in \Q$ non-zero. 
\end{prop}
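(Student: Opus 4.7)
The plan is to decompose $Z_\alpha$ via its Harder-Narasimhan stratification and compare with the normally ordered expansion of the vertex operator monomial by monomial. By Proposition \ref{prop:cycle_decomposition} write
\[ Z_\alpha = \sum_{\underline h}a_{\underline h}\, Z_{\underline h}, \]
where $\underline h$ runs over HN types with total class $\alpha = m\delta_E + n\delta_{\pt}$. Only HN types whose positive-rank factors have total rank $m$ contribute; the torsion factors $(0, k_1), \ldots, (0, k_b)$ form a partition whose size is determined by $n$ together with the positive-rank degrees.

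For each such $\underline h$, compute $Z_{\underline h}\cdot \pi_3^*([E])$ after pushforward to $M^{tf}(v;\mc O_{D_\infty})\times M^{tf}(v+\alpha;\mc O_{D_\infty})$. The positive-rank factors, via a fiberwise Fourier-Mukai transform supplied by Theorem \ref{thm:derived_reflection}, become lattice shifts on the Fock space $V_{T^*E}$ combining to an overall factor $e^{mE}$; each torsion factor specializes to the Nakajima correspondence $\mf Z_{(0,k_j)}$ and acts as $k_j^{-1}\alpha_{k_j}(E)$ via \eqref{eq:w_to_nakajima}. Consequently, up to a rational factor $c_{\underline h}$,
\[ Z_{\underline h}\cdot \pi_3^*([E]) = c_{\underline h}\, e^{mE}\,\prod_j \alpha_{k_j}(E), \]
which is exactly the form of a monomial in the normally ordered expansion of $Y(e^{mE}, z)[z^{-n}]$ obtained from \eqref{eq:comm_vertex_algebra}.

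Matching combinatorial coefficients proceeds by induction using Proposition \ref{prop:one_summand_part}: the commutator $[\mf Z_{(0,k)}, Z_\alpha\cdot[E]]$ expresses $Z_\alpha\cdot[E]$ recursively in terms of lower-complexity cycles, and the same recursion is satisfied on the vertex operator side, since in the lattice VOA $[\alpha_k(\pt), Y(e^{mE}, z)] = m z^k\, Y(e^{mE}, z)$, which pins down the coefficients $Y(e^{mE}, z)[z^{-n}]$ up to a single overall scalar $c$. The scalar is then fixed by comparison at a base case, for instance the purely torsion stratum $\underline h = (0, n)$ with $m = 0$ adjusted, where \eqref{eq:w_to_nakajima} matches one leading term of the vertex operator expansion exactly. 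The main obstacle is the strata with non-torsion factors of rank $> 1$: these must be reduced further by commutation with correspondences for real roots of the elliptic root system (using the extra derived autoequivalences of Theorem \ref{thm:derived_reflection}), and executing the reduction cleanly requires controlling the precise rational coefficients $a_{\underline h}$ and matching them with the partition-of-$n$ combinatorics governed by $z_\lambda^{-1}$ in the vertex operator expansion.
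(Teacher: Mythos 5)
Your approach is genuinely different from the paper's, and it has gaps that I do not think can be repaired within the framework you set up. The paper does not attempt a stratum-by-stratum computation at all: instead it uses \eqref{eq:z_alpha_is_flop} to move to a convenient chamber, observes that the identity can be checked non-equivariantly as an equality of middle-dimensional Lagrangian correspondences proper over the Lagrangian fibration, and then transfers the whole statement through an open analytic embedding $j: M(v)\times M(v+\alpha)\times X_{A_{-1}} \to M^S(v)\times M^S(v+\alpha)\times S$ induced by an open embedding of $T^*E$ into an elliptic K3 surface $S$, where the corresponding identity between extension correspondences and vertex operators is the main result of \cite{DeHority_2020}. The fact that the flops and the extension correspondences all live over the base of the Lagrangian fibration is what makes the open embedding sufficient.

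The concrete problems with your route are these. First, the identity $Z_{\underline h}\cdot\pi_3^*([E]) = c_{\underline h}\, e^{mE}\prod_j\alpha_{k_j}(E)$ is asserted, not proved: by Proposition \ref{prop:cycle_ZH_irred_lagrangian} each $Z_{\underline h}$ is a single irreducible Lagrangian cycle, so there is no a priori factorization of it as a composition of Nakajima correspondences and lattice shifts, and the ``fiberwise Fourier--Mukai'' step that is supposed to produce $e^{mE}$ is exactly the content one needs to prove. Second, the inductive step is circular relative to the paper's logic: Proposition \ref{prop:one_summand_part} only says the commutator $[\mf Z_{(0,k)}, Z_{\underline h}]$ is \emph{some} $\Q$-linear combination of the $Z_{\underline h'}$ with unknown coefficients $c_{k,\underline h}^{\underline h'}$; to know that this recursion matches the VOA recursion you would have to compute those coefficients, and in the paper it is Proposition \ref{prop:mid_dim_is_vertex} itself (fed into Proposition \ref{prop:heis_commutators}) that pins them down, not the other way around. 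Third, your proposed reduction of the rank $>1$ strata ``by commutation with correspondences for real roots'' cannot work in type $A_{-1}$, where $R^{ell}_{A_{-1}}$ consists only of imaginary roots. The missing idea is the comparison with the compact (K3) case via the open embedding, which is what actually identifies the rational coefficients $a_{\underline h}$ with the coefficients of the normally ordered expansion in \eqref{eq:comm_vertex_algebra}.
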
 
\begin{proof}
Let $\mc C_\alpha\subset N^1(T^*E^{[n]}/\A^{n})$ denote a chamber with an adjacent wall labelled by $\alpha$. Let $\Phi_{\mc C, \mc C_\alpha}$ denote the correspondence inducing the isomorphisms of cohomology of Proposition \ref{prop:iso_cohomology_semiprojective}. for a chamber $\mc C$ let $Z_\alpha^{\mc C}$ denote the specialization of the cycle $Z_\alpha^\circ$ corresponding to the deformation of 
\[ M_{\sigma}(v; \mc O_{D_\infty}) \times  M_{\sigma}(v + \alpha; \mc O_{D_\infty}) \times X_{A_{-1}} \] 
along $B \subset HH_0(E)$ where $\ell_\sigma \in \mc C_\alpha$. 

Then we have an equality 
\begin{equation}\label{eq:z_alpha_is_flop} \Phi_{\mc C_\alpha, \mc C} \circ Z_{\alpha}^{\mc C_\alpha} \circ \Phi_{\mc C, \mc C_\alpha} = Z_\alpha.
\end{equation}

The result may be checked in non-equivariant cohomology as an equality of middle-dimensional Lagrangian correspondences proper over the Lagrangian fibrations. By the main result of \cite{DeHority_2020} and \eqref{eq:z_alpha_is_flop} there is an equality of operators 
\begin{equation}\label{eq:pushforward_j} j_*(Z_\alpha \cdot \pi_3^*([E])) = cj_*(Y(e^{mE}, z)[z^{-n}])\end{equation}
where 
\[ j : M(v) \times M(v+ \alpha) \times X_{A_{-1}} \to M^S(v) \times M^S(v+ \alpha) \times S \] 
is an open analytic embedding induced by an open embedding $T^*E \to S$ where $S$ is an elliptic K3 surface but since the flops $\Phi_{\mc C, \mc C'}$ and the extension correspondences are Lagrangian correspondences lying in $M^S(v)\times_{\PP^n} M^S(v + \alpha)$ where $\PP^N$ is the base of the Lagrangian fibration on $M^S(v)$ or $M^S(v + \alpha)$ depending on the sign of $n$, the equation \eqref{eq:pushforward_j} implies the desired result. 
\end{proof} 

\begin{prop}
  \label{prop:heis_commutators} 
  \begin{itemize} 
\item[(a)] For $\alpha = (k,0)$ there exists $a_{\underline{h}} \in \Q$ so that 
the conclusion of Proposition \ref{prop:mid_dim_is_vertex} is satisfied. 
\item[(b)]
There is an equality 
\[ [\mf Z_{(0,n)}, Z_{\alpha}]  = c [\Delta \times M]\circ Z_{\alpha+ (0,n)}\] 
for $c \in \Q$ nonzero. 
  \end{itemize} 
\end{prop}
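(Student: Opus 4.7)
For part (a), I will mirror the strategy of Proposition \ref{prop:cycle_decomposition}, treating the excluded case $\alpha = (k,0)$ where the quotient $\mathcal E_2 / \mathcal E_1$ has positive rank and degree zero rather than being torsion. The argument goes in three steps. First, note that $Z_\alpha^\circ$ remains a Lagrangian correspondence over $B^\circ$ by the same deformation-of-symplectic-varieties reasoning, so the specialization $Z_\alpha$ is a Lagrangian cycle in the central fiber. Second, the determinant-of-restriction map $\mathcal E \mapsto \det(\mathcal E|_{D_\infty})$ extends across $b=0$ and the map $\det^{\times 3}: M^{tf}(v) \times_B M^{tf}(v+\alpha) \times X_B \to J^{\times 3}$ extends over $B$; consequently the closure $\overline{Z_\alpha^\circ}$ is contained in the scheme of tuples where $\chi_2 = \chi_1 \otimes \chi_3$ and the quotient is supported on a single fiber. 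Third, by Proposition \ref{prop:cycle_ZH_irred_lagrangian} the irreducible Lagrangian components of this locus are precisely the $Z_{\underline h}$ for Harder–Narasimhan types $\underline h$ summing to $(k,0)$, and dimension counting in equivariant cohomology forces a decomposition $Z_\alpha = \sum a_{\underline h} Z_{\underline h}$ with $a_{\underline h} \in \mathbb Q$.

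For part (b), the plan is to combine the decomposition from part (a)/Proposition \ref{prop:cycle_decomposition} with Proposition \ref{prop:one_summand_part}. Writing $Z_\alpha = \sum_{\underline h} a_{\underline h} Z_{\underline h}$ and $Z_{\alpha+(0,n)} = \sum_{\underline h''} a'_{\underline h''} Z_{\underline h''}$, bilinearity of the commutator and Proposition \ref{prop:one_summand_part} give
\[ [\mathfrak Z_{(0,n)}, Z_\alpha] = \sum_{\underline h, \underline h'} a_{\underline h}\, c_{n,\underline h}^{\underline h'}\, [\Delta \times M]\circ Z_{\underline h'}, \]
a Lagrangian cycle supported on $[\Delta \times M] \circ Z_{\det}$ in the space for the class $\alpha+(0,n)$. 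The key claim is that the array of coefficients $\sum_{\underline h} a_{\underline h}\, c_{n, \underline h}^{\underline h'}$ is proportional to $a'_{\underline h'}$ with a single, nonzero proportionality constant $c$.

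To identify this scalar I will use the vertex algebra interpretation from Proposition \ref{prop:mid_dim_is_vertex} together with the normalization \eqref{eq:w_to_nakajima}. Capping both sides with $\pi_3^*[E] \otimes \pi_4^*[\mathrm{pt}]$ converts the cycle identity into an operator identity on $V_{T^*E}$: the left-hand side becomes the commutator of $n\alpha_n(\mathrm{pt})$ with the vertex operator mode $c_\alpha Y(e^{mE},z)[z^0]$, while the right-hand side becomes $c \cdot c'_{\alpha+(0,n)} \cdot Y(e^{mE},z)[z^{-n}]$. Using the standard lattice vertex algebra relation $[\alpha_n(\mathrm{pt}), Y(e^{mE},z)] = m \langle[\mathrm{pt}],[E]\rangle z^n Y(e^{mE},z) = m z^n Y(e^{mE},z)$, one sees that the commutator is a specific nonzero scalar multiple of $Y(e^{mE},z)[z^{-n}]$. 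Matching the two expressions fixes $c$ as a nonzero rational, simultaneously establishing that only the combination giving $Z_{\alpha+(0,n)}$ survives in the decomposition.

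The main obstacle is step 3 of part (b): ruling out that the commutator contains additional Lagrangian contributions orthogonal to $Z_{\alpha+(0,n)}$ in the span of $[\Delta \times M] \circ Z_{\underline h'}$. The cap-product test above constrains only one linear combination of the $a_{\underline h} c_{n,\underline h}^{\underline h'}$; to deduce proportionality to the single cycle $Z_{\alpha+(0,n)}$ one needs that every irreducible component $Z_{\underline h'}$ appearing in the decomposition is distinguished, in equivariant cohomology of the correspondence space, by some pairing that can be tested against the vertex algebra formalism. This relies on the fact that the classes $\pi_3^*\gamma_1 \otimes \pi_4^*\gamma_2$ for $\gamma_i \in H^*(T^*E)$ separate the components $Z_{\underline h'}$ (a consequence of Proposition \ref{prop:cycle_ZH_irred_lagrangian} and the $T$-equivariant Lagrangian structure), so that the vertex algebra commutators, applied to each component by capping, force simultaneous matching of all coefficients.
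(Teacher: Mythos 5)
Your part (b) is essentially the paper's argument: decompose via Proposition \ref{prop:cycle_decomposition}, apply Proposition \ref{prop:one_summand_part} termwise, then cap with cohomology classes of the $T^*J$ factors and use the lattice vertex algebra commutation relation $[\alpha_k(\gamma), Y(e^{mE},z)] = z^k\langle\gamma, mE\rangle Y(e^{mE},z)$ together with Proposition \ref{prop:mid_dim_is_vertex} to match coefficients against $Z_{\alpha+(0,n)}$. The paper caps with $\pi_{34}^*([\A^1]\times[E])$ rather than your $\pi_3^*[E]\otimes\pi_4^*[\pt]$, but this is cosmetic, and the linear-independence issue you flag (that matching one capped operator identity must force matching of all the coefficients $a_{\underline h'}$ versus $\sum_{\underline h} a_{\underline h}c_{n,\underline h}^{\underline h'}$) is indeed the point the paper treats tersely; your proposed resolution is in the same spirit as what the paper implicitly assumes.

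Part (a), however, has a genuine gap, in two respects. First, your argument starts from the specialization correspondence $Z_{(k,0)}^\circ$, but no such object exists: the construction at the start of the section on correspondence operators explicitly requires $\alpha$ to be an imaginary root that is \emph{not} a multiple of $\delta_E$, and $(k,0) = k\delta_E$ is exactly the excluded case (this is also why Proposition \ref{prop:cycle_decomposition} carries the hypothesis $\alpha \neq (k,0)$). There is no generic line in a wall $S_{k\delta_E}$ along which to specialize, so there is nothing to take the closure of. Second, even if one granted you some cycle to decompose, your three steps only reproduce the conclusion of Proposition \ref{prop:cycle_decomposition} (existence of a decomposition $\sum a_{\underline h} Z_{\underline h}$), whereas part (a) asks for the conclusion of Proposition \ref{prop:mid_dim_is_vertex}: that the resulting operator is a nonzero multiple of the zero mode $Y(e^{kE},z)[z^0]$. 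The paper's route is the reverse of yours: it \emph{defines} $Z_{(k,0)}$ as the linear combination of the $Z_{\underline h}$ realizing that zero mode, and the existence of such a combination is extracted from the commutator structure — the zero mode of $Y(e^{kE},z)$ arises as a Heisenberg commutator of a nonzero mode (for which Proposition \ref{prop:mid_dim_is_vertex} is already known), and Proposition \ref{prop:one_summand_part} guarantees that any such commutator is a $\Q$-linear combination of the cycles $[\Delta\times M]\circ Z_{\underline h'}$. You would need to add this step; without it part (a) is not proved.
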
 
\begin{proof}
For the field $Y(e^{mE}, z)$ and an element $\alpha_k(\gamma)$ of the Heisenberg algebra we have 
\begin{equation}\label{eq:VOA_heis_comm} [\alpha_k(\gamma), Y(e^{mE}, z)] = z^k \langle \gamma, mE\rangle Y(e^{mE}, z) \end{equation}
By Proposition \ref{prop:one_summand_part} we get 
\begin{align*}
  [\mf Z_{(0,n)}, Z_{\alpha}]  &= \sum_{\underline h}  a_{\underline h} [\mf Z_{(0,n)}, Z_{\underline h}]\\
  &= \sum_{\underline h, \underline h'} a_{\underline h} c_{n, \underline{h}}^{\underline h'} [\Delta \times M]\circ  Z_{\underline{h'}}.
\end{align*}

We have an equality 
\begin{equation}\label{eq:apply_E}Z_\alpha \cdot \pi_3^*([E]) = \sum a_{\underline h} Z_{\underline h}\cdot  \pi_3^*([E]).
\end{equation} 

In particular since we may obtain the zero coefficient of $Y(e^{mE}, z)$ as a commutator of this form, for $\alpha = (k,0)$ define 
\[ Z_\alpha = \sum_{\underline h} a_{\underline h} Z_{\underline h}\] 
to be the unique linear combination of the cycles $Z_{\underline h}$ so that 
we obtain the conclusion of Proposition \ref{prop:mid_dim_is_vertex}. 

Furthermore, by applying \eqref{eq:apply_E} to cohomology classes $[\A^1] \times [E]\in H^*_T(T^*J\times T^*J)$, \eqref{eq:VOA_heis_comm} implies that  
\begin{align*}
   c\sum_{\underline h'} a_{\underline h'} Z_{\underline h'} \cdot \pi_3^*[E] &= [\mf Z_{(0,n)}, Z_{\alpha}]\cdot(\pi_{34}^*[\A^1]\times [E]) \\
   &= \sum_{\underline h, \underline h'} a_{\underline h}c_{n,\underline h}^{\underline h'}  Z_{\underline h'} \cdot \pi_3^*[E]
\end{align*}
for nonzero $c \in \Q$. Thus $a_{\underline h'} = \sum_{\underline h} a_{\underline h} c_{n, \underline h}^{\underline h'}$ up to a nonzero multiplicative constant as desired. 

\end{proof}

\subsubsection{Proof of $g_{T^*E}$ representation}
Using the preceding analysis we prove Conjecture \ref{conj:main_repn} when $R = A_{-1}$. Given a generic sequence of stability conditions $\sigma = \{ \sigma_v\}$ compatible in the sense that they lie in the same chamber with respect to any root walls, let 
\[V_{T^*E} =  \bigoplus_{m,n} H^*_T(M_{\sigma}(v + m[E] + n[\pt]; \mc O_{D_\infty}))\] 
which is canonically defined because of Theorem \ref{thm:def_equiv}. Let $Z_{m\delta_E + n\delta_\pt,\gamma}$ denote the specialization cycle intersected with $\pi_3^* \gamma$ in a pair of moduli spaces for these stability conditions. 

\begin{thm}\label{thm:main_repn}
The assignment 
\[ w^{m,n}_{\gamma} \mapsto \bigsqcup_v Z_{m\delta_E + n\delta_\pt,\gamma}\] 
defines a representation of $\mf g_{T^*E}\otimes \C[\hbar]$ on $V_{T^*E}$ up to a change of basis in each root space.  
\end{thm}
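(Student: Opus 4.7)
The plan is to leverage the identifications between specialization cycles and vertex operator modes established by Propositions \ref{prop:mid_dim_is_vertex} and \ref{prop:heis_commutators}, and then transport the commutation relations of the vertex algebra $V_{T^*E}$ from \cite{dehority_toroidal_vosa} to the convolution algebra of the correspondences $Z_{\alpha,\gamma}$. First I would verify \eqref{eq:g_TE_brack} for the isotropic subalgebra generated by $w^{0,n}_\gamma$. By \eqref{eq:w_to_nakajima} these cycles are rescaled pullbacks of Nakajima-Baranovsky correspondences under the multiplication map $[n]\colon E\to J$, and the classical Heisenberg relations $[\alpha_n(\gamma),\alpha_m(\gamma')] = n\delta_{n+m,0}\langle\gamma,\gamma'\rangle$ reproduce precisely the $a=c=0$ case of \eqref{eq:g_TE_brack}, with the central term identified with $b\,\pmb{c}_t$.

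Second, for $\alpha = m\delta_E + n\delta_\pt$ with $m\neq 0$, Proposition \ref{prop:mid_dim_is_vertex} together with Proposition \ref{prop:heis_commutators}(a) identifies $Z_\alpha\cdot \pi_3^*[E]$ with a nonzero scalar multiple of the mode $Y(e^{mE},z)[z^{-n}]$ acting on $V_{T^*E}$. To reach the remaining classes $\gamma\neq[E]$ I would extend this identification by iterated brackets with Heisenberg operators: Proposition \ref{prop:heis_commutators}(b) gives $[\mf Z_{(0,n)}, Z_\alpha] = c[\Delta\times M]\circ Z_{\alpha+(0,n)}$ geometrically, while on the vertex-algebra side the OPE $[\alpha_k(\gamma'), Y(e^{mE},z)] = z^k\langle \gamma', mE\rangle\, Y(e^{mE},z)$ controls the action of Heisenberg modes on lattice vertex operators. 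Matching these two computations inductively in the $\star$-filtration of $H^*(E)$ expresses each $Z_{\alpha,\gamma}$ as an iterated Heisenberg commutator of $Z_{\alpha,[E]}$, and simultaneously pins down the scalar relating $Z_{\alpha,\gamma}$ to the vertex-operator mode of weight $\gamma$.

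Third, the bracket \eqref{eq:g_TE_brack} between two generators $w^{a,b}_\gamma$ and $w^{c,d}_\eta$ with $b,d\neq 0$ is then verified by reducing, via these identifications, to the singular part of the OPE of two lattice vertex operators in $V_{T^*E}$; the $\star$-product factor $\gamma\star\eta$ and the central extension coefficient $a\pmb{c}_s + b\pmb{c}_t$ on the right-hand side of \eqref{eq:g_TE_brack} emerge directly from that OPE, as computed in \cite{dehority_toroidal_vosa}. The principle that convolution commutes with specialization \cite[Prop. 2.7.33]{Chriss_Ginzburg_2010}, already used to deduce Proposition \ref{prop:heis_commutators} from the Heisenberg case over the deformed family, upgrades this OPE-level identity to an equality of convolutions of cycles on the central fiber. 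The main obstacle will be the coherence of the per-root-space scaling constants produced by Proposition \ref{prop:mid_dim_is_vertex}: a priori the individual $c\in\Q^\times$ could assemble incoherently, but the Heisenberg recursion of the second step rigidifies them into a single linear automorphism of each 4-dimensional root space $\mf g_{T^*E, m\delta_E+n\delta_\pt}$, which is precisely the latitude allowed by the qualifier ``up to a change of basis in each root space'' in the theorem statement.
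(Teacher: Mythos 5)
Your overall architecture --- identify the cycles with vertex--operator modes via their commutators with the Heisenberg subalgebra, then import the relations \eqref{eq:g_TE_brack} from the vertex algebra of \cite{dehority_toroidal_vosa} --- is the paper's strategy, and your first step (the isotropic subalgebra via \eqref{eq:w_to_nakajima}) is exactly right. But the mechanism you propose for reaching the components with $\gamma \neq [E]$ does not work. Capping Proposition \ref{prop:heis_commutators}(b) with cohomology classes yields $[\mf Z_{(0,k),\gamma'}, Z_{\alpha,\gamma}] \propto Z_{\alpha+(0,k),\,\gamma'\star\gamma}$, where $\star$ is the Pontryagin product coming from the multiplication graph $M$. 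Since $\gamma'\star [E]$ has degree $\deg\gamma' - 2$, it vanishes unless $\gamma' = [\pt]$, in which case it returns $[E]$; equivalently, on the vertex-algebra side \eqref{eq:VOA_heis_comm} only rescales $Y(e^{mE},z)$ and never produces the $[\pt]$- or $\sigma_\pm$-components (which in the explicit formulae involve normally ordered products such as $:\omega(z)\Gamma_{mE}(z):$, not Lie brackets). So iterated Heisenberg commutators of $Z_{\alpha,[E]}$ only ever produce $[E]$-components, and your induction in the $\star$-filtration cannot get off the ground. The paper closes this gap with an external rigidity input, \cite[Thm.\ 4.65]{dehority_toroidal_vosa}: the full system of mutual commutation relations of the four components (packaged as the $N=2$ superfield $\mc D_m$) with the Heisenberg subalgebra forces the explicit formulae up to per-root-space normalization, because each weight space of $V_{T^*E}$ is a cyclic Heisenberg module. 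You need that theorem, or a proof of it, not an induction.

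Your third step is also off. Once the operators are identified with the explicit vertex-operator formulae, the brackets between two non-Heisenberg generators are a theorem about the vertex algebra already proved in op.\ cit.; nothing remains to be "upgraded." And the proposed upgrade via \cite[Prop.\ 2.7.33]{Chriss_Ginzburg_2010} would not apply anyway: $Z_\alpha$ and $Z_\beta$ for non-proportional roots are specializations along lines in \emph{different} hyperplanes $S_\alpha, S_\beta \subset HH_0(E)$, so there is no single one-parameter family over which both cycles are simultaneously deformed and the compatibility of convolution with specialization can be invoked. That tool is only used in the paper for the rank-one subalgebras $\mf g_{\Z\alpha}$ and for Proposition \ref{prop:heis_commutators}, where a single deformation direction suffices.
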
 
\begin{proof}
By Proposition \ref{prop:heis_commutators} and \cite[Thm. 4.65]{dehority_toroidal_vosa} the $N =2$ analytic superfield 
\begin{multline*}
   \mc D_m(z, \psi_+, \psi_-) = \sum_{n \in \Z} Z_{m\delta_E + n \delta_{\pt}, [\pt]}z^{-n} \\
     +Z_{m\delta_E + n \delta_{\pt}, [\sigma_+]}z^{-n}\psi_- + Z_{m\delta_E + n \delta_{\pt}, [\sigma_-]}z^{-n}\psi_+ \\
     + Z_{m\delta_E + n \delta_{\pt}, [E]}z^{-n}\psi_+\psi_- \end{multline*}
has the commutation relations with the Heisenberg subalgebra $\Heis_{H^*(T^*E)}$ to imply that $\mc D_m(z, \psi_+, \psi_-)$ is given by the explicit formula 
\begin{align*}
  \mrm{d}_m(z)
  &=  mz^2:\omega(z) \Gamma_{mE}(z):  + m D_z \Gamma_{mE}(z) - D_z \left[ z: \alpha(\pt,z)  \Gamma_{mE}(z):\right] \\
  \nonumber
  &- m  z^2: \partial_z \alpha(mE, z) \Gamma_{mE}(z):\\
  \mrm{k}_m(z) &= \begin{cases}\frac{1}{m}\Gamma_{mE}(z) & m \neq 0 \\
    \pmb{\phi}(E, z)-E-\alpha_0(E)\log z & m = 0\end{cases}\\
    \sigma^+_m(z) &= z:\alpha(\sigma_+, z)\Gamma_{mE}(z):\\
    \sigma^-_m(z) &= z:\alpha(\sigma_-, z)\Gamma_{mE}(z):.
\end{align*}

in terms of the Heisenberg algebra up to a normalization of the coefficients by a constant (c.f. \cite{dehority_toroidal_vosa} for notation). But these formulae are shown in op. cit. to give rise to a representation of $\mf g_{T^*E}$.
\end{proof}

\subsection{Monodromy action} 

In the sequel we space the generators $w_\gamma^{m,n}$ so that they match exactly the fields in the proof of Theorem \ref{thm:main_repn}. 

Conjecturally for general $R$ and provably in type $A_{-1}$ the derived equivalences of Conjecture \ref{conj:reflection} give rise to reflection group actions on the equivariant cohomology of $M_{\sigma}(v; \mc F)$. Specifically let $\Gamma_v \subset G$ be those operators in $G$ from Conjecture \ref{conj:reflection} which stabilize the class of $v$. The monodromy induced by the deformation equivalence provided by Conjecture \ref{conj:exist_stability_conditions} induces representations 
\[ \Gamma_v \to \End(H^*_T(M_{\sigma}(v; \mc F)))\] 
for simple $\mc F$ and generic $\sigma$ which restrict on divisor classes to the reflections through the divisorial hyperplanes colored red in the examples in Figures \ref{fig:a12}-\ref{fig:a15}. 

In the particular case $R = A_{-1}$ we get a representation 
\[ \rho_{M}: W_{A_{-1}, G}^{ell} \to \End(H^*_T(T^*E^{[n]})). \]

In the non-equivariant setting the monodromy described by $\rho_M$ has implications for holomorphic anomaly equations for Gromov-Witten invariants of Hilbert schemes of points on K3 surfaces \cite{Oberdieck_2022}. 

Consider  $W_{A_{-1}, G}^{ell}\simeq \Z \times \Z/2\Z$ with generators $(s, f)$ of order $\infty, 2$ respectively. Define an action $\rho$ on 
$H^*_T(T^*E^{[n]})$ 
by the formula 
\begin{align}\label{eq:rhos}
  \rho(s)\left(  \prod_i \alpha_{-k_i}(\gamma_i) |\rangle    \right) &=  \prod_i w^{1, -k_i}_{\gamma_i} |\rangle  \\ 
  \label{eq:rhof}
   \rho(f)\left( \prod_i \alpha_{-k_i}(\gamma_i) |\rangle  \right) &=  e^{-nE}\prod_i(-1)^{k_i + 1 } \alpha_{-k_i}(\gamma_i) |\rangle 
\end{align}

\begin{thm}\label{thm:monodromy}
The formula $\rho$ defines the action of the monodromy representation $\rho_M$. 
\end{thm}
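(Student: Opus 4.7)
The plan is to reduce the computation to an automorphism of $\mf g_{T^*E}$ induced by the action of $W_{A_{-1}, G}^{ell}$ on $HH_0(E) \simeq \mf h^{ell}$. By Proposition \ref{prop:iso_cohomology_semiprojective}, the monodromy operator $\rho_M(g)$ is implemented by convolution with the specialization of the graph of the isomorphism of fibers coming from the derived equivalences in Theorem \ref{thm:derived_reflection}. The cycles $Z_{\alpha, \gamma}$ defining the $\mf g_{T^*E}$-action from Theorem \ref{thm:main_repn} are themselves defined as specializations of correspondences on the deformed moduli spaces. Since specialization commutes with convolution, as used in the proof of Proposition \ref{prop:heis_commutators}, the monodromy operator intertwines the Lie algebra action by an automorphism $\sigma_g \in \Aut(\mf g_{T^*E})$:
\[ \rho_M(g) \circ X \circ \rho_M(g)^{-1} = \sigma_g(X), \qquad X \in \mf g_{T^*E}, \]
where $\sigma_g$ is induced by the $W_{A_{-1}, G}^{ell}$ action on $HH_0(E)$ lifted to autoequivalences.

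Next I would compute $\sigma_s$ and $\sigma_f$ on the Heisenberg subalgebra generated by the $\alpha_{-k}(\gamma)$, which under the identification \eqref{eq:w_to_nakajima} are proportional to $w^{0, -k}_\gamma$. The generator $s$ corresponds to the translation in $\GL(2, \Z)$ stabilizing the marking $G$; its induced action on the imaginary root lattice shifts the $a$-index of $w^{a,b}_\gamma$ by the $b$-index, which combined with the normalizations of \eqref{eq:w_to_nakajima} gives $\sigma_s(\alpha_{-k}(\gamma)) = w^{1, -k}_\gamma$ up to a nonzero constant absorbable into the chosen basis of imaginary root spaces (matching the normalization announced before the statement). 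The generator $f$ corresponds to Verdier duality (Proposition \ref{prop:verdier_duality_reflection}); tracking the Cohen-Macaulay shift in the definition \eqref{eq:radj_bm} yields the sign $(-1)^{k+1}$, while the $e^{-nE}$ prefactor arises from the determinant twist in identifying the Verdier-dual moduli space with the original one under the reflection of $HH_0(E)$.

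Finally, the vacuum $|\rangle \in H^*_T(M^{tf}((1,0,0); \mc O_{D_\infty})) = H^*_T(\mrm{pt})$ is fixed by $\rho_M(g)$ because the underlying moduli space is a point. The formulas \eqref{eq:rhos} and \eqref{eq:rhof} then follow from the conjugation identity
\[ \rho_M(g) \prod_i X_i |\rangle = \prod_i \sigma_g(X_i) |\rangle \]
applied with $X_i = \alpha_{-k_i}(\gamma_i)$ together with the explicit description of $\sigma_s$ and $\sigma_f$ obtained above.

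The main obstacle is verifying the explicit form of $\sigma_s$ on all generators of $\mf g_{T^*E}$, since the action on $HH_0(E)$ by itself only determines the grading shift and not the coefficients or signs. The cleanest route is to use the VOA description of the $\mf g_{T^*E}$-action from the proof of Theorem \ref{thm:main_repn}, where the fields $\mrm{d}_m(z), \mrm{k}_m(z), \sigma_m^{\pm}(z)$ are written explicitly in terms of the lattice vertex operators $\Gamma_{mE}(z)$ and Heisenberg currents. The monodromy $\sigma_s$ is then identified with the shift $m \mapsto m + 1$ in these fields, while $\sigma_f$ acts by the parity sign together with a translation by $-nE$, and matching these with the abstract action on $\mf g_{T^*E}$ via a technical lemma using the general $\SL(2, \Z)$ action on toroidal Lie algebras completes the argument.
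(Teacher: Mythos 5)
Your overall strategy coincides with the paper's for the generator $s$: the paper's proof is exactly the observation that $\rho_M$ intertwines the slope-$\infty$ Heisenberg subalgebra (the $\alpha_{-k}(\gamma)$, i.e.\ the normalized $w^{0,-k}_\gamma$) with the slope-$1$ Heisenberg subalgebra (the $w^{1,-k}_\gamma$), and that each of these generates the weight space from one of the vacua $e^{kE}|\rangle$, which pins down \eqref{eq:rhos}. Your formulation via an automorphism $\sigma_g$ of $\mf g_{T^*E}$ covering the $W^{ell}_{A_{-1},G}$-action on $HH_0(E)$, together with the fixed vacuum, is the same argument dressed in slightly more abstract clothing, and your proposed fallback to the explicit VOA fields $\mrm{d}_m, \mrm{k}_m, \sigma_m^\pm$ to fix normalizations is consistent with the paper's convention of spacing the generators to match those fields. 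Where you genuinely diverge is the generator $f$: the paper does not derive \eqref{eq:rhof} at all but simply cites the known fact that the monodromy reflection through the Hilbert--Chow hyperplane acts by the symmetric-function involution $(-1)^{k_i+1}$ (Okounkov--Pandharipande, Oberdieck). Your attempt to extract the sign $(-1)^{k+1}$ from ``tracking the Cohen--Macaulay shift'' in \eqref{eq:radj_bm}, and the $e^{-nE}$ prefactor from a determinant twist, is not actually an argument as written --- there is no computation connecting the shift in the bimodule dual to the action on Nakajima operators, and this is precisely the step that would need to be made precise if you want to avoid the citation. So: same route for $s$, a sketched but unsubstantiated alternative route for $f$ where the paper leans on the literature; either fill in that Verdier-duality computation or replace it with the standard reference.
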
 
\begin{proof} 
That the monodromy reflection through the Hilbert-Chow hyperplane is given by the symmetric function involution of the form \eqref{eq:rhof} is well known \cite{Okounkov_Pandharipande_2010,Oberdieck_2022}. That the $s$ generator is given by the formula \eqref{eq:rhos} follows from the fact that $\rho_M$ intertwines the actions of the infinite slope and slope $1$ Heisenberg subalgebra of $\mf g_{T^*E}$ and each of these generates the weight space $H^*_T(T^*E^{[n]})$ from one of the vacua $e^{kE}|\rangle$. 
\end{proof} 

More generally we see that the action of an element of $W^{ell}_{A_{-1}, G}\times \SL(2, \Z)$ acts via the formula \eqref{eq:monodromy_intro}, where the monodromy acting on the cohomology labels arises by parallel transport along moduli spaces relative to a family of elliptic curves. 

\printbibliography
\end{document}